\documentclass[12pt, reqno, oneside]{amsart}
\usepackage{amsmath, amsthm, amsopn, amssymb, microtype}
\usepackage{mathrsfs} 
\usepackage{mathscinet}
\usepackage{bbm}
\usepackage{enumerate, etoolbox, intcalc, geometry, caption, subcaption, afterpage}
\usepackage[english]{babel}
\usepackage{float}
\usepackage[all]{xy}
\usepackage{graphicx}

\usepackage[colorlinks=true,urlcolor=blue,pdfborder={0 0 0}]{hyperref}
\hypersetup{linkcolor=[rgb]{0,0,0.6}}
\hypersetup{citecolor=[rgb]{0,0.6,0}}
\usepackage{color}

\xyoption{color}

\title{BiLipschitz and bounded displacement equivalence of Delone sets}

\author{Yotam Smilansky}
\address{University of Manchester. Department of Mathematics. Oxford Rd, Manchester M13 9PL, United Kingdom.
	{\tt yotam.smilansky@manchester.ac.uk}
}

\author{Yaar Solomon}
\address{Ben-Gurion University of the Negev.
	Department of Mathematics.
	Beer-Sheva, 8410501, Israel. 
	{\tt yaars@bgu.ac.il}
}

\newcommand{\N}{{\mathbb{N}}}
\newcommand{\Z}{{\mathbb{Z}}}
\newcommand{\Q}{{\mathbb{Q}}}
\newcommand{\R}{{\mathbb{R}}}
\newcommand{\C}{{\mathbb{C}}}

\newcommand{\HHH}{{\mathscr{H}^{(d)}}}
\newcommand{\CC}{{\mathscr{C}}}
\renewcommand{\AA}{\mathcal{T}}
\newcommand{\LL}{\mathcal{L}}
\newcommand{\OO}{{\mathcal{O}}}
\newcommand{\PP}{{\mathcal{P}}}
\renewcommand{\SS}{{\mathcal{S}}}

\newcommand{\WW}{\mathcal{W}}
\newcommand{\bd}{\stackrel{\rm BD}{\sim}}

\newcommand{\minus}{\smallsetminus}

\newcommand{\absolute}[1] {\left|{#1}\right|}

\newcommand{\norm}[1]{\left\|{#1}\right\|}
\newcommand{\inpro}[2]{\langle{#1},{#2}\rangle}
\newcommand{\disc}[3]{\mathbf{disc}_{#1}\left({#2},{#3}\right)}

\newcommand{\dist}{\operatorname{dist}}
\newcommand{\vol}{\operatorname{vol}}

\newcommand{\diam}{\operatorname{diam}}
\newcommand{\BD}{\operatorname{BD}}
\newcommand{\inter}{\operatorname{int}}
\newcommand{\Jac}{\operatorname{Jac}}
\newcommand{\spn}{\operatorname{span}}
\newcommand{\bL}{\operatorname{biLip}}
\newcommand{\supp}{\operatorname{supp}}
\newcommand{\op}{\operatorname{op}}
\newcommand{\covol}{\operatorname{covol}}

\newcommand {\ignore}[1]  {}

\theoremstyle{plain}
\newtheorem{thm}{Theorem}[section]
\newtheorem*{thm*}{Theorem}
\newtheorem{lem}[thm]{Lemma}
\newtheorem{prop}[thm]{Proposition}
\newtheorem*{prop*}{Proposition}
\newtheorem{cor}[thm]{Corollary}

\theoremstyle{definition}
\newtheorem{definition}[thm]{Definition}

\newtheorem{example}[thm]{Example}
\newtheorem{remark}[thm]{Remark}
\newtheorem{open}[thm]{Open Problem}
\newtheorem{exe}[thm]{Exercise}

\newtheorem*{remark*}{Remark}

\numberwithin{equation}{section}
\swapnumbers

\newif\ifdraft\drafttrue

\begin{document}

\begin{abstract}
    We survey biLipschitz (BL) and bounded displacement (BD) equivalence of Delone sets, with an emphasis on examples arising in aperiodic order. After recalling the classical results of Burago--Kleiner, McMullen, and Laczkovich, we discuss criteria for rectifiability, uniform spreadness, and BD equivalence, and how these relate to discrepancy and point-counting estimates. We then focus on Delone sets associated with substitution tilings, where these questions can often be studied through the combinatorial and spectral properties of the underlying substitution rules. We also consider selected classes of cut-and-project sets and briefly discuss extensions beyond BL and BD equivalence.
\end{abstract}

\maketitle

\section{Introduction}\label{sec:introduction}
Delone sets provide a flexible but structured mathematical framework for modeling atomic configurations in crystals and quasicrystals. Unlike lattices, Delone sets need not exhibit any periodicity, yet they retain enough geometric regularity to capture essential features of ordered materials. Quantifying how closely a Delone set resembles a lattice can be approached by examining equivalence relations that preserve large-scale structure. 
This contribution is dedicated to the study of two such equivalence relations, namely, the bounded displacement and biLipschitz equivalence relations. Bounded displacement (BD) equivalence considers whether there exists a bijection between two Delone sets that displaces each point by at most a fixed constant. In contrast, biLipschitz (BL) equivalence requires a bijection between the sets that distorts distances only up to uniform multiplicative bounds. In this way, BD and BL equivalence relations provide rigorous means of comparing Delone sets to lattices and each other, allowing a classification of sets that are geometrically close to lattices despite a lack of periodicity.

The outline of this survey is as follows. We will introduce the basic relevant definitions and properties of Delone sets in \S\ref{sec:introduction} and \S\ref{sec:FLC_topology_etc} before discussing fundamental results about BL and BD equivalence in \S\ref{sec:BL} and \S\ref{sec:BD}, respectively. We will then consider constructions in aperiodic order, namely tilings defined via substitution rules in \S \ref{sec:substitution} and \S \ref{sec:multiscale}, and cut-and-project sets in \S \ref{sec:C&P}. Finally, we will discuss additional equivalence relations and properties that extend the study of BL and BD equivalence in \S \ref{sec:beyond BL and BD}. 

Throughout this document, $\norm{\cdot}$ denotes the Euclidean norm
on $\R^d$. Since all norms on a finite-dimensional vector space are
equivalent, this choice does not affect the notions of a
Delone set, BD equivalence, or BL equivalence. The Euclidean
assumption will, however, be used in statements concerning Voronoi
cells, convexity, and Euclidean volume.

\begin{definition}
    Let $\norm{\cdot}$ be a norm on $\R^d$. A set $\Lambda \subset \R^d$ is a \emph{Delone set} if it is both 
    \begin{enumerate}
        \item 
        \emph{uniformly discrete}: $r:=\inf\{\norm{x_1-x_2} ~\mid~ x_1, x_2\in \Lambda,\ x_1\neq x_2\} >0$, and 
        \item 
        \emph{relatively dense}: $R:=\sup\{\dist(y,\Lambda)~\mid~ y\in \R^d\} <\infty$, 
    \end{enumerate}
    where $\dist(y,\Lambda)=\inf_{x\in \Lambda}\norm{x-y}$ stands for the distance between a point and a set.
\end{definition}
    Observe that every open ball of radius $r/2$ contains at most one point of $\Lambda$ and every closed ball of radius $R$ contains at least one point of $\Lambda$. The constants $r$ and $R$ are often called the \emph{separation constant} and the \emph{covering radius} of $\Lambda$, respectively, while $r/2$ is referred to as the \emph{packing radius} in some contexts. 
    
    Basic examples of Delone sets in $\R^d$ include the integer lattice $\Z^d$, or more generally, any translated lattice of the form $x+g\cdot\Z^d$ for $x\in\R^d, g\in {\rm GL}_d(\R)$. In subsequent sections, we will introduce and discuss other interesting constructions, with a focus on those that come up in the study of aperiodic order, namely non-periodic sets that possess some long-range order. Examples of sets which are not Delone sets include the set of primitive lattice points $\Z^d_{\rm prim}=\{x=(x_1,\ldots,x_d)\mid \gcd(x_1,\ldots,x_d)=1\}$, for which relative denseness fails, and the union of lattices $\Z^d\cup\sqrt{2}\Z^d$, which fails to be uniformly discrete.

    A Delone set is defined purely by its geometric spacing conditions, and beyond these, it carries no further built-in structure. In particular, nothing in the definition requires any form of order, such as periodicity, a fixed density, or any restriction on the number of distinct local patches. Thus, a Delone set may exhibit a wide range of behaviors, from highly ordered lattices to completely irregular or even pathological configurations, with the only constraints being the two basic separation and covering bounds.

\subsection{The BL and BD equivalence relations} 
The BD and BL equivalence relations of Delone sets provide quantitative methods for comparing Delone sets by measuring the extent to which one set must be deformed or shifted to match another. We focus in particular on understanding when a Delone set is BD or BL equivalent to a lattice, and what structural or combinatorial properties govern these equivalence relations. 

Let $\Lambda_1, \Lambda_2 \subset \mathbb{R}^d$ be Delone sets.
\begin{definition}
We say that $\Lambda_1$ and $\Lambda_2$ are \emph{biLipschitz (BL) equivalent}, and write $\Lambda_1 \stackrel{\mathrm{BL}}\sim \Lambda_2$, if there exists a biLipschitz bijection $\psi : \Lambda_1 \to \Lambda_2$. That is, a bijection for which there exists a constant $L\ge 1$ so that
\[
\forall x_1\neq x_2\in \Lambda_1:\quad \frac{1}{L} \le \frac{\norm{\psi(x_1) - \psi(x_2)}}{\norm{x_1-x_2}} \le L.
\] 
The bijection $\psi$ is called a \emph{BL-map}, and it distorts distances only within fixed multiplicative bounds.
A Delone set $\Lambda\subset\R^d$ is \emph{rectifiable} if it is BL-equivalent to $\Z^d$.
\end{definition}

\begin{definition}\label{def:BD}
We say that $\Lambda_1$ and $\Lambda_2$ are \emph{bounded displacement (BD) equivalent}, and write $\Lambda_1 \stackrel{\mathrm{BD}}\sim \Lambda_2$, if there exists a bijection $\phi : \Lambda_1 \to \Lambda_2$ such that
\[
\sup_{x \in \Lambda_1} \norm{\phi(x) - x} < \infty. 
\]
The bijection $\phi$ is called a \emph{BD-map}, and it moves each point of $\Lambda_1$ to its image in $\Lambda_2$ by at most a uniformly bounded distance. 
A Delone set $\Lambda\subset\R^d$ is \emph{uniformly spread} if it is BD equivalent to $\alpha\Z^d$ for some $\alpha>0$.  
\end{definition}

\begin{figure}[h!]
    \centering
    \includegraphics[scale=0.58]{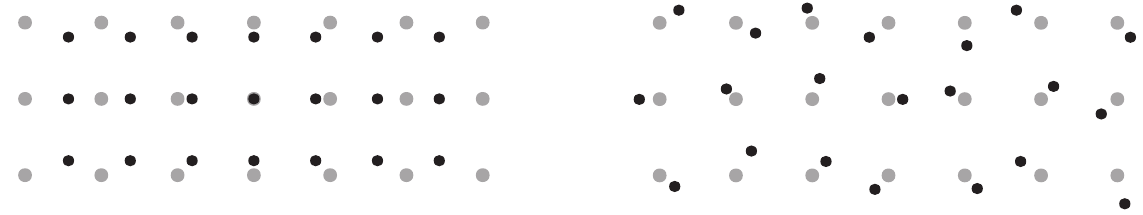}
    \caption{An illustration of a BL-map (left) and a BD-map (right). In the BL setting, pairwise distances are changed by at most a uniform multiplicative factor, although individual displacements may be arbitrarily large. In the BD setting, every point is displaced by a uniformly bounded amount. The original set is shown in gray and its image in black.}
    \label{fig:BDBL}
\end{figure}

\begin{exe}
    Verify that the above definitions define equivalence relations on the set of Delone sets in $\R^d$. 
\end{exe}

\begin{exe}
    Given a Delone set $\Lambda\subset\R^d$, find a BD-map $\phi:\Lambda\to\Lambda$ satisfying $\phi(x)\neq x$ for every $x\in\Lambda$.
\end{exe}

\begin{example}\label{exam:rectifiability_in_R}
    Any Delone set $\Lambda\subset\R$ is BL equivalent to $\Z$. This fact follows from the natural order on $\R$, since if we enumerate the elements of $\Lambda$ as  $(x_n)_{n\in\Z}$, so that $x_n<x_{n+1}$ for every $n$, and define $\psi:\Z\to \Lambda$ by $\psi(n) = x_n$, we have 
    $r\le \frac{\norm{x_n-x_m}}{\norm{n-m}}\le 2R$. Then $L=\max\{1/r, 2R\}$ is as required. 
\end{example}

\begin{exe}\label{ex:-N_cup_2N}
    Prove that $\Lambda = (-\N)\cup2\N$ is not BD equivalent to any lattice in $\R$. 
\begin{figure}[h!]
    \centering
    \includegraphics[scale=0.5]{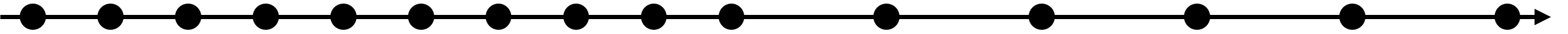}
\end{figure}
\end{exe}

\begin{prop}\label{prop:BD_implies_BL}
    For uniformly discrete sets $\Lambda_1,\Lambda_2\subset \R^d$, every BD-map from $\Lambda_1$ to $\Lambda_2$ is biLipschitz. In particular, BD equivalence implies BL equivalence for Delone sets. 
\end{prop}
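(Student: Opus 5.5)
Let $\phi:\Lambda_1\to\Lambda_2$ be a BD-map with displacement bound $C:=\sup_{x\in\Lambda_1}\norm{\phi(x)-x}<\infty$. Let $r_1,r_2>0$ be the separation constants of $\Lambda_1,\Lambda_2$. We will show directly that $\phi$ satisfies the two inequalities in the definition of a BL-map. The only tools are the triangle inequality and uniform discreteness, which lets us convert the additive error $2C$ into a multiplicative one.

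\emph{Upper bound.} Fix $x_1\neq x_2$ in $\Lambda_1$. By the triangle inequality,
\[
\norm{\phi(x_1)-\phi(x_2)}\le \norm{x_1-x_2}+2C.
\]
Since $\norm{x_1-x_2}\ge r_1$, we have $2C\le \frac{2C}{r_1}\norm{x_1-x_2}$. Hence
\[
\frac{\norm{\phi(x_1)-\phi(x_2)}}{\norm{x_1-x_2}}\le 1+\frac{2C}{r_1}.
\]

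\emph{Lower bound.} The map $\phi^{-1}:\Lambda_2\to\Lambda_1$ is also a bijection with displacement at most $C$. Since $\phi$ is injective, $y_i=\phi(x_i)$ are distinct points of $\Lambda_2$, so $\norm{y_1-y_2}\ge r_2$. The same argument applied to $\phi^{-1}$ gives
\[
\norm{x_1-x_2}\le\Bigl(1+\frac{2C}{r_2}\Bigr)\norm{\phi(x_1)-\phi(x_2)}.
\]

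Taking $L=\max\{1+2C/r_1,\,1+2C/r_2\}$ shows that $\phi$ is biLipschitz. For Delone sets, a BD equivalence therefore provides a bijection that is also a BL-map, so BD equivalence implies BL equivalence.

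The only point needing care is that both sets must be uniformly discrete. The lower bound uses the separation of $\Lambda_2$ (and injectivity of $\phi$), not that of $\Lambda_1$. Relative denseness plays no role, which is why the statement only assumes uniform discreteness.
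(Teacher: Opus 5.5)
Your proof is correct and follows the paper's argument. The upper bound $1+2C/r_1$ is identical. Your lower bound, obtained by running the same estimate for $\phi^{-1}$, is a repackaging of the paper's direct bound $\norm{\phi(x)-\phi(x')}\ge\max\{r_2,\norm{x-x'}-2C\}$, and it yields the same constant $1+2C/r_2$.
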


\begin{proof}
    Let $\phi:\Lambda_1\to \Lambda_2$ be a bijection satisfying $\sup_{x \in \Lambda_1} \norm{\phi(x) - x} = M < \infty$. Let $r_1$ and $r_2$ denote the separation constants of $\Lambda_1$ and $\Lambda_2$, respectively.  Then for every $x\neq x'$ in $\Lambda_1$ we have 
    \[
    \max\{r_2,\norm{x-x'}-2M\}\le \norm{\phi(x)-\phi(x')}
    \le\norm{x-x'}+2M.
    \]
    On the one hand, for $L_1=\frac{2M}{r_1}+1$ we have $\norm{x-x'}+2M\le L_1\norm{x-x'}$. On the other hand, for $L_2=\frac{r_2}{2M+r_2}$ 
    we have $L_2\norm{x-x'}\le \max\{r_2,\norm{x-x'}-2M\}$. Thus, $\phi$ is $L$-biLipschitz with $L=\max\{L_1,\frac{1}{L_2}\}$, as required. 
\end{proof}

\begin{prop}
      All lattices in $\R^d$ are BL equivalent to each other.   
\end{prop}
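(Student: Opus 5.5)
Since BL equivalence is an equivalence relation (the first exercise above), it suffices to show that every lattice $\Gamma\subset\R^d$ is BL equivalent to $\Z^d$. A lattice in $\R^d$ has the form $\Gamma = g\cdot\Z^d$ for some $g\in {\rm GL}_d(\R)$; the translated case $x+g\cdot\Z^d$ can also be handled, if we want to include it, because translations are isometries and hence BL-maps with constant $1$. So the plan is to use the restriction of the linear map $g$ itself as the BL-map, namely $\psi:\Z^d\to \Gamma$ with $\psi(n)=g n$ (or $\psi(n)=x+gn$ in the affine case).

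First I check that $\psi$ is a bijection. It is surjective by the definition of $\Gamma$, and it is injective because $g$ is invertible.

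Next I bound the distortion using the operator norm $\norm{g}_{\op}=\sup_{\norm{v}=1}\norm{gv}$, which is finite since we are in finite dimension. For $n\neq m$ in $\Z^d$ we have $\psi(n)-\psi(m)=g(n-m)$, and therefore
\[
\norm{g(n-m)}\le \norm{g}_{\op}\norm{n-m},\qquad \norm{n-m}=\norm{g^{-1}g(n-m)}\le \norm{g^{-1}}_{\op}\norm{g(n-m)}.
\]
Setting $L=\max\{\norm{g}_{\op},\norm{g^{-1}}_{\op},1\}$ then gives
\[
\frac1L\le \frac{\norm{\psi(n)-\psi(m)}}{\norm{n-m}}\le L,
\]
so $\psi$ is a BL-map.

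There is no real obstacle in this argument. The only point needing care is the finiteness of $\norm{g}_{\op}$ and $\norm{g^{-1}}_{\op}$. This holds because every linear map on $\R^d$ is continuous, so its supremum on the compact unit sphere is finite, and this works for any fixed norm. Finally, by symmetry and transitivity of BL equivalence, any two lattices $\Gamma_1,\Gamma_2$ are BL equivalent, since both are BL equivalent to $\Z^d$. Alternatively, one can directly use the map $g_2g_1^{-1}$ restricted to $\Gamma_1$.
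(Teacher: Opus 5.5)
Your proposal is correct and matches the paper's proof. Both reduce to comparing each lattice with $\Z^d$, use the restriction of the defining linear map as the bijection, and bound the distortion above by $\norm{g}_{\op}$ and below via $\norm{g^{-1}}_{\op}$.
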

\begin{proof}
    It suffices to prove that every lattice $\Lambda\subset\R^d$ is BL equivalent to $\Z^d$. By definition, for every such $\Lambda$ there exists an invertible $d\times d$ matrix $A$ satisfying $\Lambda = A\Z^d$. Then $A:\Z^d\to \Lambda$ is a bijection, and for every $x_1\neq x_2$ in $\Z^d$ we have  
    \[
    \frac{\norm{Ax_1-Ax_2}}{\norm{x_1-x_2}} = \frac{\norm{A(x_1-x_2)}}{\norm{x_1-x_2}} \le \norm{A}_{\op},
    \]
    where $\norm{A}_{\op}$ denotes the operator norm of $A$. To complete the proof, repeating this argument with $A^{-1}:\Lambda\to\Z^d$ yields the lower Lipschitz constant. 
\end{proof}

\subsection{Delone sets associated with tilings}

    A natural connection between discrete point sets and space-filling structures is provided by the relation between Delone sets and tilings. 

\begin{definition}\label{def:tiling}
    A \emph{tile} $T$ in $\R^d$ is a compact subset of $\mathbb{R}^d$ with nonempty interior and boundary of measure zero.
    A \emph{tiling} $\tau$ of $\R^d$ is a countable collection of tiles $\{T_i\}_{i \in I}$ that satisfies the following:
\begin{enumerate}
    \item 
    \emph{Covering:} $\displaystyle \bigcup_{i \in I} T_i = \R^d$.
    \item 
    \emph{Non-overlapping interiors:} For all $i \neq j$, $\inter(T_i) \cap \inter(T_j) = \emptyset$.
\end{enumerate} 
\end{definition}

A Delone set may be obtained from a tiling by selecting one representative point from each tile. To guarantee that such a selection can be made so that the resulting set is Delone, it is enough for the tiling to satisfy two uniform geometric bounds: (1) that every tile is contained in a ball of radius $R$, and (2) that every tile contains a ball of radius $r$. Without any further topological assumptions, these two basic hypotheses ensure that one can choose a point per tile so that the resulting set is both uniformly discrete and relatively dense, as illustrated in Figure \ref{fig:Tiling_to_Delone}.

\begin{figure}[h!]
    \centering
    \includegraphics[scale=0.22]{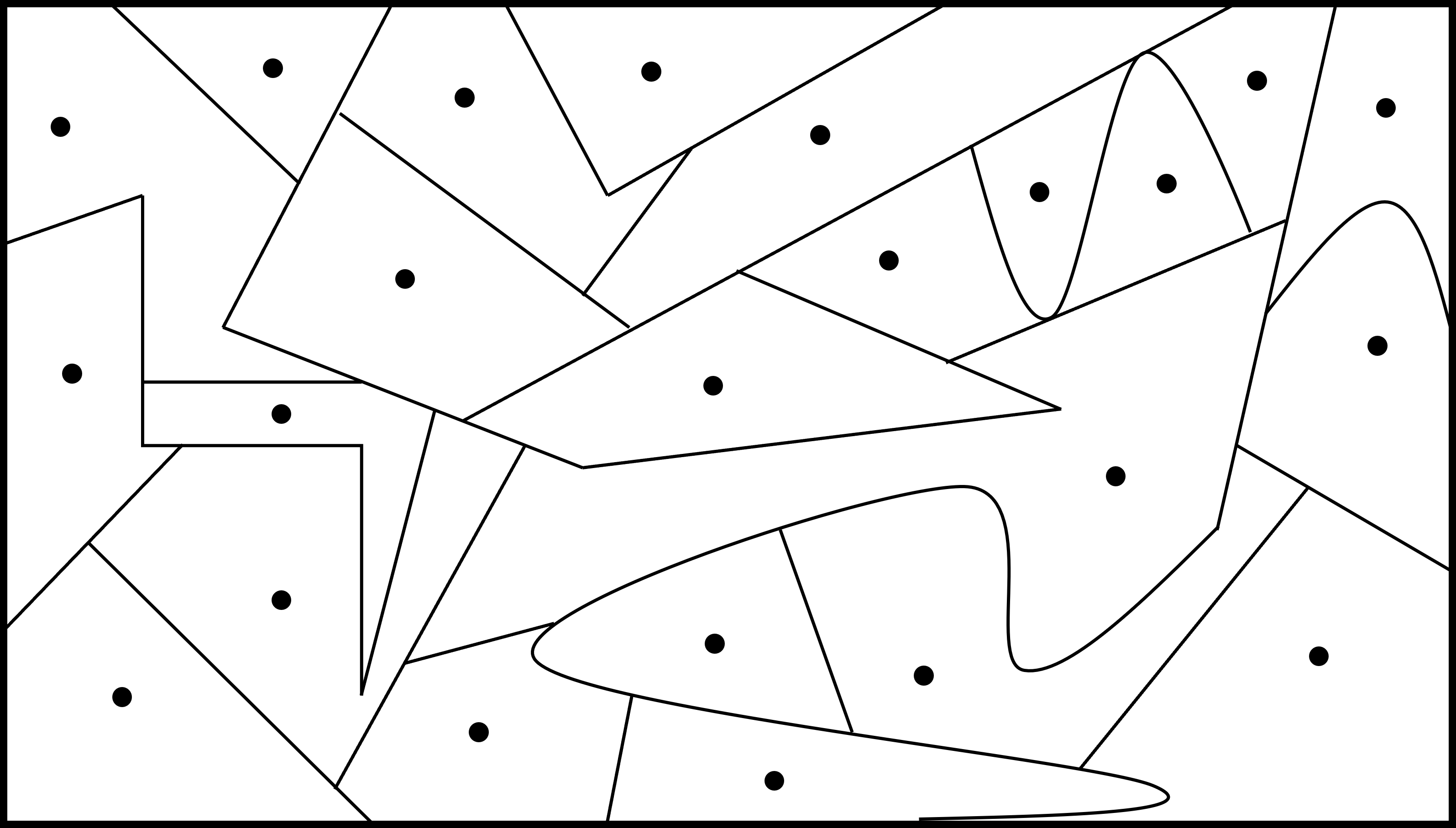}
    ~~
    \includegraphics[scale=0.22]{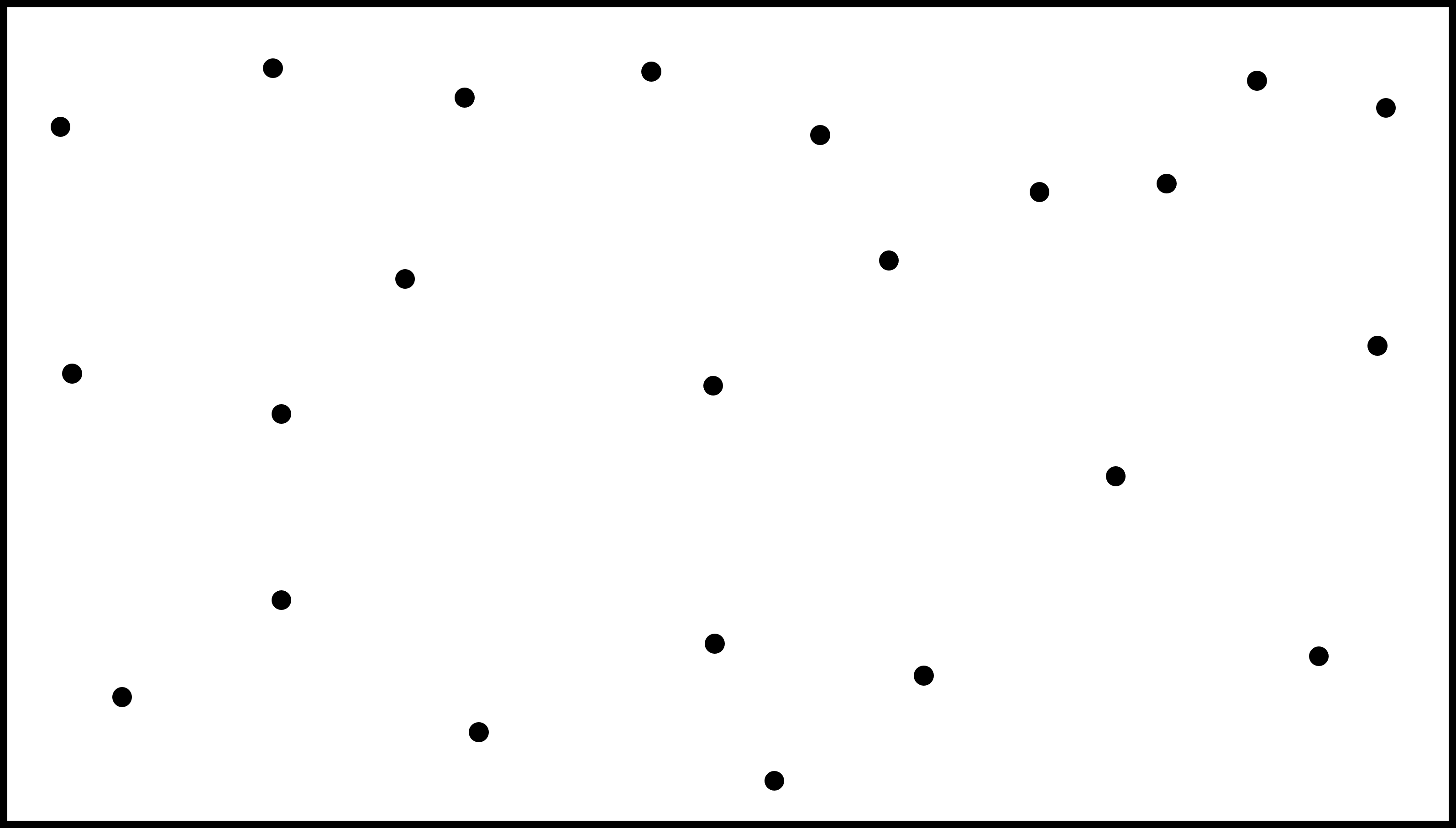}
    \caption{A tiling with a corresponding Delone set.}
    \label{fig:Tiling_to_Delone}
\end{figure}

Conversely, given a Delone set $\Lambda\subset \R^d$, one may construct the \emph{Voronoi tiling}. Each point $x\in \Lambda$ is assigned the cell $T_x$ of all points $y\in\R^d$ closer to $x$ than to any other point in $\Lambda$, namely $T_x=\{y\in\R^d \mid \forall x'\in \Lambda, \norm{x-y}\le \norm{x'-y} \}$.
\begin{figure}[h!]
    \centering
    \includegraphics[scale=0.22]{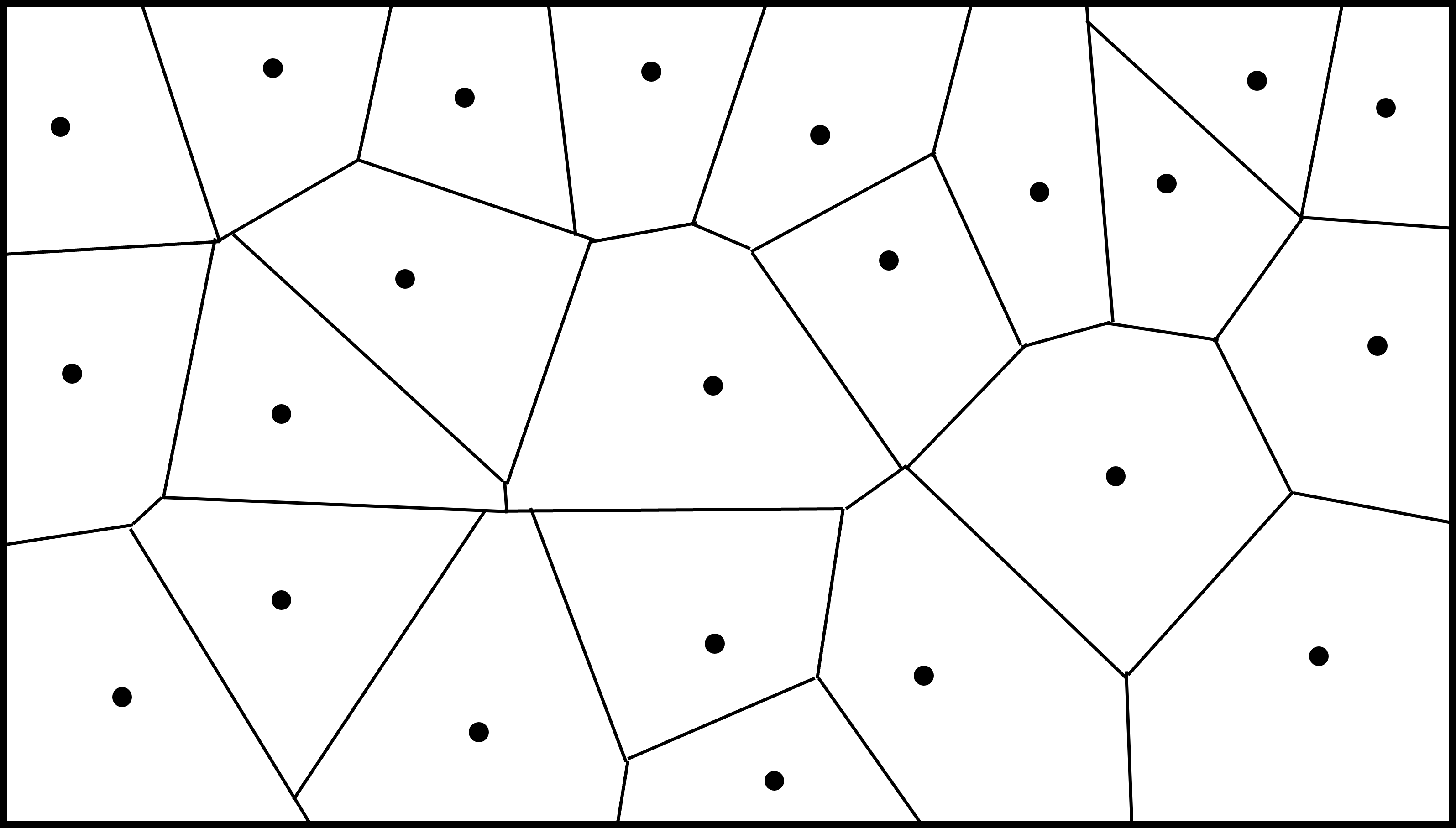}
    \caption{The Voronoi tiling of the above Delone set. 
    }
    \label{fig:Delone_to_tiling}
\end{figure}
The Voronoi tiling produces a decomposition of $\R^d$ into bounded convex polytopes, the \emph{Voronoi cells}, providing a natural geometric tiling of $\R^d$ associated with the Delone set $\Lambda$, as illustrated in Figure \ref{fig:Delone_to_tiling}.

Consider now the special case of tilings with tiles of equal volume. In the plane, examples include the square or the hexagonal tilings, Conway and Radin's pinwheel tiling \cite{Radin-1994}, the substitution tiling known as the chair tiling, as well as the hat and spectre aperiodic monotile tilings introduced by Smith et al. \cite{SmithMyersKaplanGoodman-Strauss1-2024}, \cite{SmithMyersKaplanGoodman-Strauss2-2024}. 

\begin{thm}\label{thm:same_volume_tiles}
    Let $\tau = \{T_i\}_{i \in I}$ be a tiling of $\R^d$ by tiles of equal volume $\nu>0$, and uniformly bounded diameter. Let $\Lambda_\tau = \{x_i\}_{i\in I}$ be a Delone set obtained from $\tau$ by placing one point $x_i$ in each tile $T_i$ of $\tau$. Then $\Lambda_\tau$ is BD equivalent to $\sqrt[d]{\nu}\Z^d$. 
\end{thm}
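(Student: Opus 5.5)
Without loss of generality we rescale so that $\nu=1$: replacing $\tau$ by $\nu^{-1/d}\tau$ rescales $\Lambda_\tau$ and the target lattice by the same factor, and BD-equivalence is preserved under dilations. Let $D$ bound the tile diameters. Since each $x_i\in T_i$, we have $\norm{x-y}\le D$ for all $x,y\in T_i$. It therefore suffices to construct a bijection $\phi:\Z^d\to I$ such that the unit cube $Q_n=n+[0,1)^d$ lies within bounded distance of the tile $T_{\phi(n)}$. Then $\norm{n-x_{\phi(n)}}\le C$ for a uniform constant $C$. We will build $\phi$ with Hall's marriage theorem in its infinite form (Hall--Rado), which is the standard tool here.

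Consider the bipartite graph with vertex classes $\Z^d$ and $I$, joining $n$ to $i$ when $Q_n\cap T_i\neq\emptyset$. Each vertex has finite degree. A cube meets only tiles inside its $D$-neighbourhood, and these are boundedly many because they have volume $1$ and disjoint interiors. Symmetrically, a tile meets only boundedly many cubes. By the Hall--Rado theorem for locally finite graphs, a perfect matching exists provided that Hall's condition holds on both sides. For every finite $F\subset\Z^d$ we need $|N(F)|\ge|F|$, and for every finite $G\subset I$ we need $|N(G)|\ge|G|$. A perfect matching is then a bijection $\phi$ with $Q_n\cap T_{\phi(n)}\neq\emptyset$, hence $\norm{n-x_{\phi(n)}}\le \sqrt d+D$, which is the required BD-map.

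Both Hall conditions follow from a volume argument, and this is the only substantive step. Let $F\subset\Z^d$ be finite. The cubes $Q_n$, $n\in F$, cover a set of volume $|F|$, and this set is covered by the tiles in $N(F)$. Each of these tiles has volume $1$, so $|N(F)|\ge |F|$. Conversely, let $G\subset I$ be finite. The tiles in $G$ have disjoint interiors and boundaries of measure zero, so their union has volume exactly $|G|$. This union is covered by the cubes $Q_n$ with $n\in N(G)$, which gives $|N(G)|\ge|G|$. Note that the equal volume hypothesis is used precisely here. With unequal volumes these inequalities would fail, and one would instead need a discrepancy estimate as in the Laczkovich-type criteria.

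The main obstacle is the passage from finite Hall conditions to an infinite perfect bijection. Hall's theorem for infinite locally finite graphs only guarantees a matching saturating one side. To obtain a bijection we apply it in both directions, getting injections $\Z^d\to I$ and $I\to\Z^d$ along edges. We then combine them by the Cantor--Schröder--Bernstein construction, which uses only edges of the two matchings. Alternatively, we can invoke directly the version of Hall's theorem, due to Rado, that yields a perfect matching when Hall's condition holds on both sides of a locally finite bipartite graph. Either way the displacement bound $\sqrt d+D$ is preserved, since every matched pair is an edge of the graph.
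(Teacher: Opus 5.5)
Your proof is correct and follows essentially the same route as the paper. You rescale to $\nu=1$, form the bipartite intersection graph between tiles and unit lattice cubes, verify both Hall conditions by a volume comparison, and apply the two-sided infinite Hall theorem to obtain a matching with displacement at most $\sqrt{d}+D$. The differences are cosmetic: you use half-open cubes $n+[0,1)^d$ instead of the paper's centered closed cubes, and you note explicitly that the two-sided version follows from two one-sided matchings via the Cantor--Schr\"oder--Bernstein construction, which the paper takes as part of its cited form of Hall's theorem.
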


Before the proof, let us state the following infinite graph version of a theorem by Hall, which first appeared in the context of BD equivalence in the work of Laczkovich \cite{Laczkovich-1992} and is still a central tool in its study as described in \S\ref{sec:BD}. Recall that a \emph{perfect matching} in a graph $G=(V,E)$ is a set of edges $E'\subseteq E$ such that every $v\in V$ is adjacent to exactly one edge in $E'$. If $G=(V,E)$ is a \emph{bipartite graph}, that is, $V=A\uplus B$ and every $e\in E$ has a vertex in $A$ and a vertex in $B$, a perfect matching can naturally be viewed as a bijection between the two sides $A$ and $B$. For a set of vertices $U\subseteq V$ we let $N(U) = \{v\in V \mid \{v,u\}\in E\}$ denote the set of neighbors of $U$. For a proof of the following theorem, see, e.g., \cite{Diestel-2025}, \cite{Halpern-1966}, or \cite{Rado-1949}.
\begin{thm}[Hall's marriage theorem]\label{thm:Hall}
    Let $G=(A\uplus B,E)$ be a locally finite\footnote{The degree at every vertex is finite.}, infinite bipartite graph with sides $A$ and $B$. Then $G$ contains a perfect matching if and only if the following two conditions hold:
    \begin{enumerate}
        \item
        For every finite $A'\subset A$ we have $\#A'\le \#N(A')$.
        \item
        For every finite $B'\subset B$  we have $\#B'\le \#N(B')$.
    \end{enumerate}
\end{thm}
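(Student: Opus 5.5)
Necessity is immediate. If $E'$ is a perfect matching, it gives a bijection $\mu:A\to B$ with $\mu(a)\in N(\{a\})$. Then for every finite $A'\subset A$ the set $\mu(A')\subseteq N(A')$ has $\#A'$ elements, and symmetrically for finite $B'\subset B$. So the plan concentrates on sufficiency, which I would prove in three steps: a finite Hall theorem, a compactness argument producing injections $A\to B$ and $B\to A$, and a Schröder--Bernstein type combination of the two injections into a bijection.

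\textbf{Step 1 (finite case).} First I would prove: if $H=(X\uplus Y,F)$ is a bipartite graph with $X$ finite, $N(v)$ finite for each $v\in X$, and $\#X'\le\#N(X')$ for all $X'\subseteq X$, then there is an injection $h:X\to Y$ with $h(x)\in N(\{x\})$. The proof is the standard induction on $\#X$, with two cases.
\begin{enumerate}
\item If $\#X'<\#N(X')$ for every nonempty proper $X'\subsetneq X$, match any $x\in X$ to any neighbour $y$. Deleting $x$ and $y$ then preserves the Hall condition, and we conclude by induction.
\item Otherwise some nonempty proper $X'$ is tight, i.e.\ $\#X'=\#N(X')$. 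Match $X'$ into $N(X')$ by induction. Then check that $X\minus X'$ still satisfies Hall's condition in the graph with $N(X')$ removed: for $X''\subseteq X\minus X'$ we have $\#N(X''\cup X')=\#N(X'')\minus N(X')+\#X'$, so the condition follows from the one for $X''\cup X'$.
\end{enumerate}

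\textbf{Step 2 (compactness).} I expect this to be the main obstacle, since it is where local finiteness is really used. By local finiteness each $N(\{a\})$ is a finite set; give it the discrete topology. The product $K=\prod_{a\in A}N(\{a\})$ is then compact by Tychonoff. For finite $A'\subset A$, let
\[
F_{A'}=\{f\in K \mid f|_{A'}\ \text{is injective}\}.
\]
This set is closed, since it depends on finitely many coordinates. It is nonempty by Step 1, applied to $A'$ together with $N(A')$: for $A''\subseteq A'$ the neighbourhood $N(A'')$ is unchanged, so condition (1) gives Hall's condition. Moreover $F_{A'_1}\cap\dots\cap F_{A'_k}\supseteq F_{A'_1\cup\dots\cup A'_k}\neq\emptyset$, so the family has the finite intersection property. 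Compactness gives $f\in\bigcap_{A'}F_{A'}$, which is an injection $f:A\to B$ along edges. The same argument with condition (2) gives an injection $g:B\to A$ along edges. If one prefers to avoid Tychonoff, each connected component is countable by local finiteness, and a diagonal (K\H{o}nig-type) argument works on each component.

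\textbf{Step 3 (combining $f$ and $g$).} Consider the subgraph $E_f\cup E_g$, where $E_f=\{\{a,f(a)\}\}$ and $E_g=\{\{b,g(b)\}\}$. Every vertex has degree at most $2$ in it: one edge from its own choice, and at most one because it is in the image of the other injection. Hence every component is a finite even cycle, a bi-infinite path, or a one-sided infinite path. Since every vertex has degree at least $1$, a one-sided path starts either at some $a\notin g(B)$ or at some $b\notin f(A)$, and its edges alternate between $E_f$ and $E_g$. On cycles, on bi-infinite paths, and on rays starting at $a\notin g(B)$, I take the $E_f$-edges. On rays starting at $b\notin f(A)$, I take the $E_g$-edges. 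In each case every vertex of the component is covered exactly once, and the union over all components is the desired perfect matching $E'$.
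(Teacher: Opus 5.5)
Your proof is correct. The paper does not prove this theorem; it only refers to Diestel, Halpern and Rado. Your argument is the standard one: finite Hall by induction, then compactness (Tychonoff, or K\H{o}nig's lemma on the countable components) to get an edge-injection $f:A\to B$ and likewise $g:B\to A$, then a K\H{o}nig/Cantor--Bernstein merge of the two. Local finiteness is used exactly where you say. You should also state that each $N(\{a\})$ is nonempty by condition (1) applied to singletons, since both $K\neq\emptyset$ and extending a finite injection to an element of $K$ rely on it.

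Two small points in Step 3 deserve a sentence each. First, an edge can lie in $E_f\cap E_g$, namely when $g(f(a))=a$. In the simple graph $E_f\cup E_g$ that component is a single edge rather than a cycle. Either work with the multigraph $E_f\sqcup E_g$, where it is a $2$-cycle, or treat it separately; your rule still selects that edge. Second, the bounds $1\le\deg\le 2$ do not by themselves rule out finite paths. What does rule them out is that the map $h$ with $h|_A=f$ and $h|_B=g$ is an injective self-map of $A\uplus B$. Its orbits, which are the components, are finite cycles, two-sided infinite orbits, or one-sided orbits starting at a point outside $h(A\uplus B)$. These are precisely your three cases.
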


We can now prove Theorem \ref{thm:same_volume_tiles} about Delone sets associated with tilings with tiles of equal volume.

\begin{proof}[Proof of Theorem \ref{thm:same_volume_tiles}] 
    (Compare with \cite[\S 4]{BuragoKleiner-2002}, \cite[\S 4]{McMullen-1998}.)
    By scaling the elements of $\tau$ by $1/\sqrt[d]{\nu}$ we may assume that $\nu=1$. Let $\varsigma =\{S_m\}_{m\in\Z^d}$ be the tiling by unit volume cubes centered at integer lattice points, that is, the tiling with tiles $S_m = m + \left[ -\frac12, \frac12 \right]^d$ for every $m\in\Z^d$. Consider the bipartite graph $G=(\tau\uplus \varsigma,E)$, where 
    \[
    E=\{\{T_i,S_m\} \mid T_i\cap S_m\neq \emptyset\}.
    \]
    We claim that $G$ contains a perfect matching. Given a set $F$ of $k$ tiles of $\tau$, if $|N(F)|<k$, then the set $\bigcup_{T\in F}T$ is covered by less than $k$ tiles of $\varsigma$, which yields a contradiction since 
    $\vol\left(\bigcup_{T\in F}T\right) = k$. Thus $|F|\le |N(F)|$. Similarly, for every set $F$ of $k$ tiles of $\varsigma$ we have $|F|\le |N(F)|$, so by Hall's marriage theorem (Theorem \ref{thm:Hall}), $G$ contains a perfect matching $E'$. Define a function $\phi:\Lambda_\tau\to \Z^d$ by 
    \[
    \phi(x_i) = m \quad\text{ if and only if}\quad \left\{T_i,m+\left[ -\frac12, \frac12 \right]^d\right\} \in E'. 
    \]
    Since $E'$ is a perfect matching, $\phi$ is a bijection. Now denote by 
    $M=\sup_{i\in I}\{\diam(T_i)\}$. Note that if $\phi(x_i)=m$ then the sets $T_i$ and $m+\left[ -\frac12, \frac12 \right]^d$ intersect. Thus, by the triangle inequality, 
    \[
    \norm{\phi(x_i)-x_i} = \norm{m-x_i} \le M+\sqrt{d}, 
    \]
    and the proof is complete. 
\end{proof}

The following corollary was also obtained in \cite{DuneauOguey-1990} with a direct proof that does not rely on Hall's marriage theorem. Recall that the \emph{covolume} of a lattice in $\R^d$ is the volume of any of its fundamental domains. 

\begin{cor}\label{cor:lattices_BD}
     All lattices of equal covolume in $\R^d$ are BD equivalent. Furthermore, every $d$-periodic Delone set in $\R^d$ is BD equivalent to a lattice. 
\end{cor}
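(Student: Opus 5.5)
Both claims will follow from Theorem \ref{thm:same_volume_tiles}. The plan is to realize each set as a one-point-per-tile selection from a tiling by congruent tiles of a common volume.

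For the first claim, let $\Lambda = A\Z^d$ be a lattice with covolume $\nu = |\det A|$. Take the fundamental parallelepiped $P = A[0,1)^d$, or its closure $\overline{P} = A[0,1]^d$. The translates $\{x + \overline{P}\}_{x\in\Lambda}$ form a tiling $\tau$ of $\R^d$:
\begin{itemize}
\item every point of $\R^d$ can be written as $A(n+t)$ with $n\in\Z^d$ and $t\in[0,1)^d$, so the translates cover;
\item interiors are disjoint because the representation is unique;
\item each tile is a compact parallelepiped with boundary of measure zero.
\end{itemize}
Every tile has volume $\nu$ and diameter $\diam(\overline{P})$, so the diameters are uniformly bounded. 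Placing the point $x$ in the tile $x+\overline{P}$ recovers $\Lambda_\tau = \Lambda$. Theorem \ref{thm:same_volume_tiles} then gives $\Lambda \stackrel{\mathrm{BD}}\sim \sqrt[d]{\nu}\Z^d$. Two lattices of equal covolume $\nu$ are therefore both BD equivalent to $\sqrt[d]{\nu}\Z^d$, hence to each other by transitivity (the earlier exercise establishes that BD equivalence is an equivalence relation).

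For the second claim, let $\Lambda$ be a Delone set with a lattice of periods $\Gamma$, meaning $\Lambda+\gamma=\Lambda$ for all $\gamma\in\Gamma$. The key step is to show that $\Lambda$ is a finite union of translates of $\Gamma$.
\begin{itemize}
\item Fix a bounded fundamental domain $D$ of $\Gamma$, for instance a half-open parallelepiped.
\item By uniform discreteness, $F := \Lambda\cap D$ is finite, say $F=\{y_1,\dots,y_k\}$.
\item By $\Gamma$-invariance, $\Lambda = \bigcup_{j=1}^k (y_j+\Gamma)$, and this union is disjoint.
\end{itemize}

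Now subdivide. Let $\overline{D}$ be the closed parallelepiped of $\Gamma$, and slice it along one edge direction into $k$ congruent parallelepipeds $Q_1,\dots,Q_k$. Each has volume $\covol(\Gamma)/k$. Applying all $\Gamma$-translates gives a tiling $\tau=\{\gamma+Q_j\}$ by tiles of equal volume and bounded diameter. Now define a selection: in the tile $\gamma + Q_j$ place the point $\gamma + y_j$. This is a bijection between the tiles and $\Lambda$. The trouble is that $\gamma+y_j$ need not lie in $\gamma+Q_j$. Still, it is within distance $\diam(\overline{D})$ of that tile.

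I expect the main obstacle to be that Theorem \ref{thm:same_volume_tiles} formally requires each point to lie in its tile. Two observations resolve this.
\begin{itemize}
\item Let $\Lambda'$ be the set obtained by choosing, say, the centre $c_j$ of $Q_j$ in each tile $\gamma+Q_j$. This $\Lambda'$ is Delone, and Theorem \ref{thm:same_volume_tiles} gives $\Lambda' \stackrel{\mathrm{BD}}\sim \alpha\Z^d$ with $\alpha = \sqrt[d]{\covol(\Gamma)/k}$.
\item The map $\gamma+c_j \mapsto \gamma+y_j$ is a bijection $\Lambda'\to\Lambda$ that displaces every point by at most $\max_j\norm{y_j-c_j} \le \diam(\overline{D})$. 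So it is a BD-map.
\end{itemize}
Composing the two BD-maps gives $\Lambda \stackrel{\mathrm{BD}}\sim \alpha\Z^d$, which is a lattice. This proves the second claim.
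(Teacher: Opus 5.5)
Your proof is correct and follows the paper's approach. The first claim is Theorem \ref{thm:same_volume_tiles} applied to the fundamental-domain tiling, and for the second claim the paper likewise matches the $s=\#(X\cap D)$ points in each fundamental domain with a refined lattice $\Lambda_s=\spn_\Z\{\frac{v_1}{s},v_2,\ldots,v_d\}$. Your centre set $\Lambda'$ is exactly a translate of this $\Lambda_s$ (with $s=k$), so in the second part a plain translation already finishes the argument and the detour through Theorem \ref{thm:same_volume_tiles} is unnecessary.
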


\begin{proof}
    By scaling, it suffices to show that every lattice of covolume 1 in $\R^d$ is BD equivalent to $\Z^d$, but this follows directly from Theorem \ref{thm:same_volume_tiles}.  
    
    Consider now a $d$-periodic Delone set $X\subset\R^d$ and let $(v_1,\ldots,v_d)$ denote $d$ linearly independent periods of $X$. By translating, we may assume that $0\in X$. Let $\Lambda = \spn_\Z(v_1,\ldots,v_d)$ be the lattice spanned by $(v_1,\ldots,v_d)$ and let $D$ be a parallelepiped fundamental domain for $\Lambda$ (i.e. every $y\in\R^d$ can be written uniquely as $y=\lambda+a$ with $\lambda\in \Lambda$ and $a\in D$). Since $X$ is $\Lambda$-periodic, the sets 
    \[
    \{X\cap(\lambda+D) \mid \lambda\in\Lambda\}
    \]
    differ by translations. Setting $\#(X\cap D)=s$, one easily shows that $X$ is BD equivalent to $\Lambda_s:=\spn_\Z\{\frac{v_1}{s},v_2,\ldots,v_d\}$, as required. 
\end{proof}

\section{Geometric and dynamical properties of Delone sets and tilings}\label{sec:FLC_topology_etc}

The investigation of Delone sets lies at the intersection of geometry and dynamics. On the one hand, their geometric structure, which is manifested in the arrangement of points, distances, and local configurations, captures local order and regularity. On the other hand, the dynamical viewpoint, obtained by considering the translation action on the space of Delone sets, provides a natural framework for studying how geometric organization relates to long-range order, see e.g. 
\cite{LagariasPleasants-2003}, \cite{LeeMoodySolomyak-2002}. Understanding the interplay between these two perspectives reveals deep connections between local geometric constraints and global dynamical behavior. We will now introduce several fundamental properties of Delone sets that will play an important role in subsequent sections.

\subsection{Distribution of points and patches}
A natural property to examine in a point set is the existence of a well-defined density. This can be defined with respect to different types of averaging sets. We focus here on two notions of density. 

\begin{definition}\label{def:central_density}
    A point set $\Lambda\subset\R^d$ is said to have \emph{central density} $\alpha$ if the limit 
    \[
    \lim_{R\to\infty} \frac{\#\left(\Lambda\cap \left([-R/2,R/2]^d\right)\right)}{R^d} 
    \]
    exists and is equal to $\alpha$.
\end{definition}

\begin{exe}
    Prove that if $\Lambda$ has central density $\alpha$, then for every $x\in\R^d$ we have 
    \[
    \lim_{R\to\infty} \frac{\#(\Lambda \cap (x+[-R/2,R/2]^d))}{R^d} = \alpha.
    \]
\end{exe}

A more general notion of density requires the following definition of averaging sets. 

\begin{definition}
    A \emph{van Hove} sequence in $\R^d$ is a sequence of bounded measurable subsets $(F_n)_{n\in\N}$ of $\R^d$ so that 
    \[
\lim_{n\rightarrow\infty}\frac{\vol((\partial F_n)^{+\varepsilon})}{\vol(F_n)}=0
    \]
    for all $\varepsilon>0$, where \begin{equation}\label{eq:eps_neighborhood_of_sets}
    F^{+\varepsilon}=\{y\in\R^d \mid \dist(y,F)< \varepsilon\}=\bigcup_{x\in F}B(x,\varepsilon)
    \end{equation}
    is the $\varepsilon$-neighborhood of a set $F\subset\R^d$.
\end{definition}

\begin{definition}
    A point set $\Lambda\subset\R^d$ is said to have \emph{asymptotic density} $\alpha$ if the limit 
    \[
    \lim_{n\to\infty} \frac{\#\left(\Lambda\cap F_n\right)}{\vol(F_n)}
    \]
    exists and is equal to $\alpha$ for any van Hove sequence $(F_n)_{n\in\N}$ in $\R^d$.
\end{definition}

Clearly, the existence of asymptotic density is stronger than having central density. For example, the set of primitive lattice points in $\Z^d$ is well-known to have central density $1/\zeta(d)$, where $\zeta$ is the Riemann zeta function, but does not have an asymptotic density as the set is not relatively dense. 

\begin{exe}
    Compute the central density of the set $\Lambda=(-\N)\cup 2\N$ from Exercise \ref{ex:-N_cup_2N} and show that it does not have an asymptotic density.
\end{exe}

Given a Delone set $\Lambda\subset\R^d$, any finite subset of $\Lambda$ is called a \emph{patch}. Every patch in $\Lambda$ is of the form $\Lambda\cap K$ for some compact set $K$. Similarly, if $\tau$ is a tiling of $\R^d$, we also use the term patch to refer to any set of tiles of $\tau$ that intersect some compact set $K$. The \emph{support} of a patch $P$ is the subset of $\R^d$ that is the union of all tiles in $P$, and we denote it by $\supp(P)$. We use the term \emph{$r$-patch} to refer to patches for which $K$ is a ball of radius $r>0$ centered around some point in $\R^d$.
Two patches $P$ and $Q$, either in a Delone set or in a tiling, are \emph{translation equivalent} if there exists some $v\in\R^d$ such that $P=Q+v$.  
 
\begin{definition}
    A Delone set $\Lambda \subset \R^d$ has \emph{finite local complexity (FLC)} if for every $r>0$ the collection of $r$-patches is finite, up to translation equivalence. 
\end{definition}

The notion of FLC for a tiling $\tau$ is defined similarly. Furthermore, a tiling $\tau$ has FLC if and only if it has finitely many translation-equivalence classes of tiles, called \emph{prototiles}, and every two prototiles can be positioned next to each other in only finitely many ways. Since our main focus is on Delone sets, we define the notions below in detail for Delone sets and remark here that parallel definitions can be given for tilings. 

\begin{example}
Any lattice in $\R^d$ has FLC, and so does any translated lattice. In fact, any subset of a translated lattice has FLC, even if it fails to be Delone, see for example the set of primitive lattice vectors mentioned in \S\ref{sec:introduction} and the set $\Lambda= (-\N)\cup2\N$ from Exercise \ref{ex:-N_cup_2N}.  
    
On the other hand, the following examples do not have FLC:
\begin{itemize}
    \item 
    $\Lambda = \{n+\frac{1}{n} \mid n\in\Z\minus\{0\}\}$.
    \item 
    $\Lambda= \left\{\binom{x}{y} \mid x,y\in \Z, x<0\right\} \cup \left\{\binom{x}{\alpha y} \mid x,y\in \Z, x\ge 0 \right\}$ for $\alpha\in \R\minus\Q$.   
\end{itemize}
\end{example}

\begin{exe}
    Show that any Delone set $\Lambda\subset\R^d$ is BD equivalent to an FLC Delone set.
\end{exe}
\begin{exe}
    Show that any tiling with finitely many interval prototiles in $\R$ has FLC. Find a counterexample for a polygonal tiling in $\R^2$.
\end{exe}

In view of the discussion about tilings and Delone sets in \S\ref{sec:introduction}, one can deduce that when studying BD equivalence, it suffices to consider Delone sets that arise from tilings. The following exercise demonstrates that it is even sufficient to consider only Delone sets that arise from FLC tilings. A particular method to construct tilings by finitely many tiles is substitution tilings, which is discussed in \S \ref{sec:substitution}. 

\begin{exe}\label{ex:finitely_many_tiles}
     Refine the Voronoi construction and show that every Delone set in $\R^d$ arises from an FLC tiling, which in particular has finitely many tiles, up to translation (Hint: equation \eqref{eq:Voronoi-type}). 
\end{exe}

Having established the notion of finite local complexity, which ensures that a Delone set possesses only finitely many distinct local configurations up to translation equivalence, we now turn to a stronger property that concerns the spatial distribution of these patterns and their recurrence. 

\begin{definition}\label{def:repetitivity}
    A Delone set $\Lambda\subset\R^d$ is \emph{repetitive} if for every $r>0$ there exists an $R=R(r)>0$ such that every $R$-patch contains a translated copy of every $r$-patch of $\Lambda$. The function $R(r):(0,\infty)\to(0,\infty)$ is called the \emph{repetitivity function} of $\Lambda$.  
\end{definition}

\begin{exe}
    Show that repetitivity of a Delone set implies finite local complexity.
\end{exe}

\begin{exe}\label{ex:repetitive_Delone_sets}
Are the following Delone sets $\Lambda \subset \Z^2$ repetitive?
    \begin{enumerate}
        \item 
        $\Lambda = \{(10m+r_m, 10n+r_n) ~\mid~ m,n\in\Z\}$, where $r_m\in\{0,1,\ldots,9\}$ denotes the remainder upon dividing $m$ by $10$.
        \item 
        For each $(m,n)\in 3\Z\times 3\Z$, independently pick a point in $(m,n)+\{0,1,2\}^2$, uniformly at random, and let $\Lambda$ be the set of points that were chosen. Is it true that $\Lambda$ is repetitive with probability $1$?
    \end{enumerate}
\end{exe}

Notice that the repetitivity function clearly satisfies $R(r)\ge r$. A particular focus will be on Delone sets whose repetitivity function $R(r)$ grows slowly, and in particular at most linearly in $r$, since this behavior reflects a high degree of uniformity and structural regularity. Such Delone sets are called \emph{linearly repetitive} and are studied in detail in \cite{Aliste-PrietoCoronelCortezDurandPetite-2015}. They will be examined in more detail in the following sections. 

Repetitivity ensures that every local configuration in a Delone set reappears infinitely often, but it does not quantify how frequently these patterns occur. To capture this notion, we introduce the stronger concept of uniform patch frequency, which measures the asymptotic density of appearances of each finite patch.

\begin{definition}
    Let $\Lambda\subset\R^d$ be a Delone set. We say that $\Lambda$ has \emph{uniform patch frequency (UPF)} if every patch in $\Lambda$ occurs with asymptotic density. 
\end{definition}

\subsection{Spaces of Delone sets}\label{subsec:spaces_of_Delone_sets}
As in many areas of mathematics, valuable insight can be gained by studying not only individual objects but also the space they form. In the case of Delone sets, considering the space of all such sets equipped with an appropriate topology provides a natural framework for understanding and relating their geometric and dynamical properties. Since Delone sets are closed subsets of $\R^d$, we endow the space of all Delone sets with the \emph{Chabauty--Fell topology}. This topology was introduced by Chabauty in \cite{Chabauty-1950} and further studied by Fell in \cite{Fell-1962}. See also \cite{LenzStollmann-2003} for its relation to the Hausdorff metric, which is recalled in the proposition below.

The Chabauty--Fell topology is induced by the metric given below. Roughly speaking, two sets are close if, after intersecting each with a large ball around the origin, the resulting compact sets are close in the Hausdorff metric. For spaces of FLC Delone sets, this topology coincides with the \emph{local rubber topology}, which is more commonly used in the context of aperiodic order (see, for example, \cite{Robinson-2004}, \cite{Rudolph-1988}, and \cite{Solomyak-1997}).

We denote by $\CC(\R^d)$ the collection of all closed subsets of $\R^d$. 
We define a metric on $\CC(\R^d)$ as follows. For $F_1,F_2\in\CC(\R^d)$: 
\begin{equation}\label{eq:metric}
    D(F_1,F_2) = 
\inf\left(\left\{ \varepsilon>0 \: \Bigg| \: \begin{matrix}F_2 \cap B(0,1/\varepsilon)\subset F_1^{+\varepsilon}\\
F_1 \cap B(0,1/\varepsilon)\subset F_2^{+\varepsilon}
\end{matrix}\right\} \cup\{1\}\right),
\end{equation}
where $F^{+\varepsilon}$ is the $\varepsilon$-neighborhood of the set $F$, as in \eqref{eq:eps_neighborhood_of_sets}. 
\begin{prop}
    The function $D(\cdot,\cdot)$ is a metric on $\CC(\R^d)$ and the metric space $(\CC(\R^d),D)$ is compact.
\end{prop}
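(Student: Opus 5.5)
The plan has two parts: first verify the metric axioms for $D$, then prove compactness by showing $(\CC(\R^d),D)$ is complete and totally bounded. One convention must be fixed at the outset: the empty set belongs to $\CC(\R^d)$, and $\emptyset^{+\varepsilon}=\emptyset$.

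\textbf{Metric axioms.} Symmetry is immediate from the symmetric form of \eqref{eq:metric}, and $D\le 1$ always, so $D$ is finite.

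For definiteness, suppose $D(F_1,F_2)=0$ and let $x\in F_1$. For every small $\varepsilon>0$ with $\norm{x}<1/\varepsilon$ we get $\dist(x,F_2)<\varepsilon$. Since $F_2$ is closed, $x\in F_2$. By symmetry $F_1=F_2$.

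For the triangle inequality, the natural first attempt is to take admissible $\varepsilon_1$ for $(F_1,F_2)$ and $\varepsilon_2$ for $(F_2,F_3)$ and show that $\varepsilon_1+\varepsilon_2$ is admissible for $(F_1,F_3)$. This gets stuck. A point $x\in F_1\cap B(0,1/(\varepsilon_1+\varepsilon_2))$ lies in $B(0,1/\varepsilon_1)$, so it has a partner $y\in F_2$ with $\norm{x-y}<\varepsilon_1$. But $y$ need not lie in $B(0,1/\varepsilon_2)$, because $\norm{y}<\norm{x}+\varepsilon_1$ and $1/(\varepsilon_1+\varepsilon_2)+\varepsilon_1$ can exceed $1/\varepsilon_2$.

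This radius bookkeeping is the main obstacle. Since $D\le 1$, we may assume $\varepsilon_1+\varepsilon_2<1$; otherwise the inequality is trivial. Then one has to check whether
\[
\frac{1}{\varepsilon_1+\varepsilon_2}+\varepsilon_1\le\frac{1}{\varepsilon_2}
\]
in the relevant range. If this fails in general, I would follow the standard remedy, as in \cite{LenzStollmann-2003}: either show that $D$ satisfies a weak triangle inequality and replace it by the induced genuine metric defining the same topology, or note that the proposition only needs a metric inducing the Chabauty--Fell topology. I would also check whether the failure can be absorbed by using $B(0,1/\varepsilon)$ closed versus open.

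\textbf{Compactness.} The cleanest route is a sequential argument. Given a sequence $(F_n)$ in $\CC(\R^d)$, fix balls $\overline{B}(0,k)$ for $k\in\N$. The compact sets $F_n\cap\overline{B}(0,k)$, possibly empty, live in the space of compact subsets of the ball. That space is compact in the Hausdorff metric by Blaschke's selection theorem, with $\emptyset$ as an isolated point.

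A diagonal argument then gives a subsequence along which, for each $k$, the truncations converge. The candidate limit is
\[
F=\{x \mid \text{every neighborhood of } x \text{ meets } F_n \text{ for all large } n\}.
\]
This is the Kuratowski lower limit along the subsequence, and it is closed.

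It remains to verify $D(F_n,F)\to 0$, which has two halves:
\begin{enumerate}
\item points of $F$ in $B(0,1/\varepsilon)$ are $\varepsilon$-close to $F_n$, using compactness of $\overline{B}(0,1/\varepsilon)\cap F$ and a finite cover;
\item points of $F_n$ in $B(0,1/\varepsilon)$ are $\varepsilon$-close to $F$.
\end{enumerate}
For the second half, argue by contradiction. Pick bad points $x_n$, pass to a convergent subsequence $x_n\to x$, and use the refinement to $x\in F$ by ensuring the subsequence realizes the upper limit equal to the lower limit. This is arranged by choosing the subsequence so that $F_n\cap\overline{B}(0,k)$ converges in Hausdorff distance for every $k$, with a slight enlargement of the radius to handle boundary effects.

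The boundary issue, where points near $\partial B(0,k)$ enter and leave, is handled by the slack between $1/\varepsilon$ and integer radii $k>1/\varepsilon+1$.
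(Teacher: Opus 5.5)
Your compactness argument is fine, but the metric part has a real gap: you never establish the triangle inequality. You stop at ``one has to check whether $\frac{1}{\varepsilon_1+\varepsilon_2}+\varepsilon_1\le\frac{1}{\varepsilon_2}$'' and then offer fallbacks: a weak triangle inequality with an induced metric, or any metric inducing the Chabauty--Fell topology. Neither fallback proves the proposition, which asserts that $D$ itself is a metric. Your final step (sequential compactness implies compactness) also presupposes that $D$ is a metric.

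The obstacle is illusory, though. The set of admissible $\varepsilon$ in \eqref{eq:metric} is upward closed. So if $D(F_1,F_2)+D(F_2,F_3)<1$, you can pick admissible $\varepsilon_1,\varepsilon_2$ arbitrarily close to these values with $\varepsilon_1+\varepsilon_2<1$. Then
\[
\frac{1}{\varepsilon_2}-\frac{1}{\varepsilon_1+\varepsilon_2}=\frac{\varepsilon_1}{\varepsilon_2(\varepsilon_1+\varepsilon_2)}\ge\varepsilon_1,
\]
since $\varepsilon_2(\varepsilon_1+\varepsilon_2)<1$. Take $x\in F_1\cap B(0,1/(\varepsilon_1+\varepsilon_2))$, and let $y\in F_2$ be its partner with $\norm{x-y}<\varepsilon_1$. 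The display gives $\norm{y}<1/\varepsilon_2$, so $y$ has a partner $z\in F_3$ with $\norm{y-z}<\varepsilon_2$. Hence $\norm{x-z}<\varepsilon_1+\varepsilon_2$.

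The reverse inclusion is identical with $\varepsilon_1$ and $\varepsilon_2$ interchanged, using $\varepsilon_1(\varepsilon_1+\varepsilon_2)<1$. So $\varepsilon_1+\varepsilon_2$ is admissible for $(F_1,F_3)$, and the triangle inequality holds for $D$ directly. Your remark that $1/(\varepsilon_1+\varepsilon_2)+\varepsilon_1$ ``can exceed'' $1/\varepsilon_2$ is false in the relevant range. The paper does not prove the metric axioms itself; it cites \cite{SmilanskySolomon1-2022}. With this computation added, your direct verification becomes a self-contained replacement.

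Your compactness argument follows the paper's route: Hausdorff-convergent truncations on balls $\overline{B}(0,k)$, then a diagonal subsequence. Your candidate limit, the Kuratowski lower limit, coincides along that subsequence with the paper's $L=\bigcup_k L_k$, where $L_k$ is the Hausdorff limit of the truncations.

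The one step you leave vague in half (2) should be stated explicitly. Suppose $x_{n_j}\in F_{n_j}$, $x_{n_j}\to x$, and $\norm{x}<k$. Then eventually $x_{n_j}\in\overline{B}(0,k)$, so $x\in L_k$. Every point of $L_k$ lies in the lower limit, so $x\in F$. This is exactly why the upper and lower limits agree along the diagonal subsequence, and it closes your contradiction argument.
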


\begin{proof}[Sketch of proof]
    A proof of the fact that $D$ is indeed a metric on $\CC(\R^d)$ can be found in \cite[Appendix~A]{SmilanskySolomon1-2022}. The compactness of the space follows from a standard diagonalization argument. 
    First, notice that for every compact set $K\subset\R^d$, the topology induced by $D$ on the space $\CC(K)$ of closed subsets of $K$ coincides with the topology induced by the Hausdorff metric, and therefore, $(\CC(K),D|_{\CC(K)})$ is compact. 
    Next, given a sequence $(F_m)_{m\in\N}\subset \CC(\R^d)$, we construct a convergent subsequence. Let $K_n = \overline{B(0,n)}$ be the closed ball of radius $n$ around the origin of $\R^d$. \begin{itemize}
        \item 
        The sequence $(F_m\cap K_1)$ belongs to the compact space $\CC(K_1)$ and thus admits a subsequence $(F_{m,1})$ converging to a limit $L_1\in \CC(K_1)$.  
        \item 
        The sequence $(F_{m,1}\cap K_2)$ belongs to the compact space $\CC(K_2)$, thus admits a 
        subsequence $(F_{m,2})$ converging to a limit $L_2\in \CC(K_2)$.
        \\ \vdots \\
        We continue in that manner to obtain the following sequence of sequences:
        \[
        \begin{matrix}
            F_{1,1} & F_{2,1} &F_{3,1}& F_{4,1} &F_{5,1}& F_{6,1} &\cdots\\
            F_{1,2} & F_{2,2} &F_{3,2}& F_{4,2} &F_{5,2}& F_{6,2} &\cdots\\
            F_{1,3} & F_{2,3} &F_{3,3}& F_{4,3} &F_{5,3}& F_{6,3} &\cdots\\
            F_{1,4} & F_{2,4} &F_{3,4}& F_{4,4} &F_{5,4}& F_{6,4} &\cdots\\
            F_{1,5} & F_{2,5} &F_{3,5}& F_{4,5} &F_{5,5}& F_{6,5} &\cdots\\
            F_{1,6} & F_{2,6} &F_{3,6}& F_{4,6} &F_{5,6}& F_{6,6} &\cdots\\
            &&\vdots&&\vdots&&
        \end{matrix}
        \]
    \end{itemize}  
    Clearly, $L_n\subseteq L_{n+1}$ for every $n\ge 1$. It remains to verify that the sequence $(F_{n,n})$ converges in $(\CC(\R^d),D)$ to the set $L = \bigcup_{n=1}^\infty L_n$. 
\end{proof}

\begin{exe}
    Verify that ineed $L \in \CC(\R^d)$ and $\lim_{n\to\infty}F_{n,n} = L$.  
\end{exe}

\begin{exe}
    Construct a Delone set $\Z^2\neq \Lambda \subset \R^2$ such that for every sequence $v_n\in\Z^2$ satisfying $\|v_n\|\rightarrow \infty$ as $n\to \infty$, we have 
    \[
    \lim_{n\to\infty} \Lambda + v_n = \Z^2.
    \]
    Find examples that, in addition, satisfy either
    \begin{enumerate}
        \item 
        $\Lambda$ contains a translated copy of every patch in $\Z^2$, or 
        \item 
        $\Lambda$ contains no translated copy of any patch with at least $2$ points of $\Z^2$.
    \end{enumerate}
    Furthermore, show that there is no closed set $\Lambda\subset\R^2$ and
        no $0\neq v\in\R^2$ such that
        \[
        \lim_{t\to+\infty}(\Lambda+tv)=\Z^2.
        \]
\end{exe}

Having introduced the metric, we now study the dynamics of Delone sets under the action of the group $\R^d$ by translations, noting that translating a Delone set always produces another Delone set.

\begin{definition}\label{def:hull}
    Let $\Lambda\subset\R^d$ be a Delone set. The \emph{hull} of $\Lambda$ is the orbit closure 
    \[
    X_\Lambda = \overline{\{\Lambda+t~\mid~ t\in\R^d\}},
    \]
    with respect to the action of $\R^d$ by translations and the above topology. 
\end{definition}

The hull $X_\Lambda$ of a Delone set 
$\Lambda$ provides a natural compact space on which $\R^d$ acts, allowing the study of $\Lambda$ from a dynamical perspective. In this framework, geometric properties of $\Lambda$ are reflected in the dynamical behavior of the hull. This connection is exemplified in the theorem below.

Recall that a topological dynamical system is a pair $(X,G)$ that consists of a Hausdorff compact space $X$ and a topological group $G$ that acts on it. A subsystem is a set $\emptyset\neq X'\subset X$, which is closed and invariant under the action of $G$. The system, or $X$, is called \emph{minimal} if it contains no proper subsystems. Equivalently, $X$ is minimal if, for every $x\in X$, the orbit closure $\overline{G.x}$ is equal to $X$. The reader is referred to \cite{Furstenberg-1981} for a more comprehensive treatment of this topic. Furthermore, $X$ is called \emph{uniquely ergodic} if there exists a unique $G$-invariant Borel probability measure on $X$. 

\begin{thm}\label{thm:minimal_uniquely-ergodic}
    Let $\Lambda\subset\R^d$ be an FLC Delone set. Then 
    \begin{enumerate}
        \item\label{thm_item:minimal+uni.erg.1}
        $\Lambda$ is repetitive if and only if $X_\Lambda$ is minimal.         \item\label{thm_item:minimal+uni.erg.2}
        $\Lambda$ has uniform patch frequency if and only if $X_\Lambda$ is uniquely ergodic.        
    \end{enumerate}
\end{thm}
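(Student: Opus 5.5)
The plan is to work directly with the metric $D$ from \eqref{eq:metric} and to use FLC to turn ``closeness in $D$'' into ``exact agreement of patches up to a small translation''. The basic tool is the following lemma. If $\Lambda$ has FLC and $\Lambda'\in X_\Lambda$, then every patch of $\Lambda'$ is a translate of a patch of $\Lambda$. Conversely, if $\Lambda+t$ and $\Lambda'$ are $D$-close with $\varepsilon$ small compared to the separation constant $r$, then the $1/\varepsilon$-patches around the origin agree up to a translation of size less than $\varepsilon$. This follows because a limit of translates of the finitely many $\rho$-patch classes (up to translation) must itself be one of these classes. I expect this lemma to be routine but fiddly near patch boundaries, which I will handle by enlarging radii slightly.

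\textbf{Part (1).} Suppose $\Lambda$ is repetitive and let $\Lambda'\in X_\Lambda$; I show $\Lambda\in\overline{\{\Lambda'+t\}}$. Fix $\rho$. By the lemma, $\Lambda'$ contains a translate of some $R(\rho)$-patch of $\Lambda$. By repetitivity, that patch contains a copy of the $\rho$-patch of $\Lambda$ at the origin. Hence some translate $\Lambda'+t$ agrees with $\Lambda$ on $B(0,\rho)$, and letting $\rho\to\infty$ gives $\Lambda'+t_\rho\to\Lambda$. So every orbit closure contains $\Lambda$, hence contains $X_\Lambda$, and the system is minimal.

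Conversely, suppose $\Lambda$ is not repetitive. Then for some $\rho$ there are patches $P$ and balls $B(x_n,n)$ whose $\Lambda$-patch contains no copy of $P$. By compactness, a subsequence of $\Lambda-x_n$ converges to some $\Lambda'\in X_\Lambda$. By the lemma, no patch of $\Lambda'$ is a translate of $P$. Since FLC gives only finitely many classes, we may fix $P$ along the subsequence. The set of elements of $X_\Lambda$ that avoid $P$ is closed and invariant, so the orbit closure of $\Lambda'$ is a proper subsystem, because it misses $\Lambda$.

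\textbf{Part (2).} Suppose $\Lambda$ has UPF. For a patch $P$ and a small $\delta$, define the cylinder set $C_{P,\delta}$ consisting of those $\Lambda'\in X_\Lambda$ whose patch at the origin equals $P-t$ for some $t\in B(0,\delta)$. Any invariant measure $\mu$ satisfies $\mu(C_{P,\delta}) = \text{freq}(P)\cdot\vol(B(0,\delta))$. To see this, use the ergodic theorem along a van Hove sequence for a $\mu$-generic point $\Lambda'$; by the lemma, the patch counts of $\Lambda'$ are governed by those of $\Lambda$, since patches of $\Lambda'$ on large boxes are patches of $\Lambda$, and uniformity of frequencies makes the limit independent of the box. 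Cylinder sets generate the Borel $\sigma$-algebra and form a $\pi$-system, so $\mu$ is unique.

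Conversely, suppose $\mu$ is the unique invariant measure. Unique ergodicity gives uniform convergence of ergodic averages $\frac{1}{\vol F_n}\int_{F_n} f(\Lambda+t)\,dt\to\int f\,d\mu$ for continuous $f$ and every van Hove sequence, uniformly in the base point. Apply this to continuous bump approximations of $\mathbbm{1}_{C_{P,\delta}}$. Here $\frac{1}{\vol F_n}\int_{F_n}\mathbbm{1}_{C_{P,\delta}}(\Lambda+t)\,dt$ equals $\vol B(0,\delta)$ times the count of $P$-occurrences in $F_n$ divided by $\vol F_n$, up to boundary errors that the van Hove property kills. This gives existence of uniform frequencies.

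The main obstacle is the justification that cylinder boundaries are $\mu$-null, so that the indicators can be approximated. FLC makes $C_{P,\delta}$ homeomorphic to $B(0,\delta)\times(\text{transversal})$, and invariance forces $\mu$ to be Lebesgue in the $t$-direction. Boundaries therefore have measure zero, which I would verify first.
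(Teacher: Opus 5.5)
The paper gives no proof of this theorem. It refers to general topological dynamics and Lagarias--Pleasants for (1), and to Lee--Moody--Solomyak for (2). Your outline follows the standard route for both parts. Part (1), and the implication from unique ergodicity to UPF, go through essentially as you describe. For the null-boundary step there is a cheaper argument than the product structure: with patches centred at a point of $\Lambda'$ and $\delta<r/2$, the boundaries $\partial C_{P,\delta}$ are pairwise disjoint as $\delta$ varies. So all but countably many $\delta$ give $\mu(\partial C_{P,\delta})=0$, and one such $\delta$ suffices.

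\textbf{The gap: uniqueness in UPF $\Rightarrow$ unique ergodicity.} The claim that cylinder sets form a $\pi$-system and generate the Borel $\sigma$-algebra fails as written.
\begin{enumerate}
\item Cylinders whose offset set is a ball are not closed under intersection. Suppose a larger patch $Q$ restricts on the smaller ball to $P$ shifted by a small vector $c$. Then $C_{P,\delta}\cap C_{Q,\delta}$ has offset set $B(0,\delta)\cap B(-c,\delta)$, which is a lens, not a ball.
\item You might instead centre patches at the unique point $x_0\in\Lambda'\cap B(0,\delta)$ and use only balls centred at $0$. That family is a $\pi$-system, but it no longer generates the Borel sets. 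It records $|x_0|$ but not the direction of $x_0$, and it cannot separate elements with no point near $0$.
\end{enumerate}
The repair is to use point-centred patches with arbitrary Borel offset sets $V\subset B(0,\delta_0)$, where $\delta_0<r/2$.
\begin{itemize}
\item $C_{P,V}\cap C_{Q,W}$ is then either empty or the cylinder of the larger patch over $V\cap W$.
\item These sets form a $\pi$-system generating the Borel sets of the open flow box $\mathcal{F}=\{\Lambda' : \Lambda'\cap B(0,\delta_0)\neq\emptyset\}$.
\item Your counting argument gives $\mu(C_{P,V})=\mathrm{freq}(P)\vol(V)$ for every invariant $\mu$.
\item Finitely many translates of $\mathcal{F}$ cover $X_\Lambda$, by relative denseness, so invariance carries the equality to all Borel sets.
\end{itemize}

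\textbf{You do not need $\mu$-generic points.} For non-ergodic $\mu$, a Birkhoff limit at a generic point is only a conditional expectation. Also, the pointwise ergodic theorem is not available along arbitrary van Hove sequences. Your counting already shows that the averages of $\mathbf{1}_{C_{P,V}}$ converge to $\mathrm{freq}(P)\vol(V)$ at \emph{every} point of $X_\Lambda$. Fubini and dominated convergence then give $\mu(C_{P,V})$ for every invariant $\mu$ directly.

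\textbf{Minor point on the lemma.} The compactness argument you sketch proves only the fixed-radius version: for each $\rho$ there is some $\varepsilon(\rho)>0$. The uniform ``$1/\varepsilon$-patch'' version is true, but it needs two things your argument does not supply. First, $\varepsilon$ must be small relative to the minimal gap of the finite set $(\Lambda-\Lambda)\cap B(0,3R)$, with $R$ the covering radius, not relative to $r$. Second, it needs a chaining argument through neighbouring points. Every use you make of the lemma is at a fixed radius, so this overstatement is harmless.
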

Assertion \ref{thm_item:minimal+uni.erg.1} follows from general topological dynamics, see for example \cite[\S 1.4]{Furstenberg-1981}, and was proved in the context of Delone sets in \cite[Theorem~3.2]{LagariasPleasants-2003}. A proof of assertion \ref{thm_item:minimal+uni.erg.2} can be found in  \cite[Theorem~2.7]{LeeMoodySolomyak-2002}. For related notions and results in the non-FLC setup, see \cite{FrettlohRichard-2014}.

\begin{exe}
    Use Zorn's lemma to prove that every compact topological
    dynamical system contains a minimal subsystem. Deduce that, for
    every \emph{FLC} Delone set $\Lambda\subset\R^d$, its hull
    $X_\Lambda$ contains a repetitive Delone set $\Lambda'$.
\end{exe}

Without the FLC assumption, the same topological argument produces a minimal subsystem, but its elements need only be almost repetitive (see definitions in \cite{FrettlohRichard-2014} and \cite{SmilanskySolomon2-2022}) rather than repetitive in the exact-patch sense of Definition
\ref{def:repetitivity}.

\section{BL equivalence}\label{sec:BL}
The study of biLipschitz equivalence of Delone sets is rooted in several distinct areas of mathematics. The question of the existence of Delone sets that are not biLipschitz equivalent to a lattice was posed independently by Furstenberg in the 1960s, as mentioned in \cite{BuragoKleiner-1998}, and by Gromov in \cite{Gromov-1993}. Gromov's motivation was in the context of geometric group theory and quasi-isometries: since quasi-isometric metric spaces contain biLipschitz-equivalent Delone sets, he asked whether every Delone set in $\R^d$ is biLipschitz equivalent to the standard lattice $\Z^d$. An affirmative answer would imply that all large-scale geometries of $\R^d$ induced by Delone sets are essentially the same. Furstenberg, by contrast, was motivated by ergodic theory, specifically in relation to Kakutani equivalence for $\R^d$-actions. In this context, return times to a transversal section form a Delone set, and Furstenberg wondered whether such sets could always have a biLipschitz matching to $\Z^d$, enabling a representation of the action in terms of a discrete system. 

The question was resolved negatively by Burago and Kleiner \cite{BuragoKleiner-1998}, and independently by McMullen \cite{McMullen-1998}, who constructed explicit examples of Delone sets in $\R^2$ that are not biLipschitz equivalent to $\Z^2$. Their method was rooted in geometric analysis: they associated with a Delone set a Voronoi-type tiling and a piecewise constant function $u$ reflecting local density, then showed that in some cases $u$ cannot be realized (almost everywhere) as the Jacobian of a biLipschitz homeomorphism $\Phi:\R^2 \to \R^2$. This analysis reduced the biLipschitz equivalence problem to the prescribed Jacobian problem, linking the rectifiability of a Delone set to whether its associated density function meets strong analytic criteria. While these first counterexamples were constructed by locally breaking every biLipschitz constant and had no global structure, later work by Cortez and Navas \cite{CortezNavas-2016} introduced repetitive Delone sets with uniform patch frequency that are non-rectifiable, refining the Burago--Kleiner construction to preserve global regularity. These properties add a strong form of statistical regularity to the set, yet the Cortez-Navas examples still fail to be biLipschitz equivalent to a lattice, underscoring the subtlety of the equivalence relation.

A parallel and ongoing line of research has examined Delone sets arising from constructions in aperiodic order, such as substitution tilings, cut-and-project sets, and others, aiming to find a non-rectifiable Delone set defined using finite data. In these settings, one typically starts from an aperiodic but rule-based set of tiles or points and then analyzes whether the associated Delone sets are rectifiable. For instance, it has been shown that Delone sets arising from primitive substitution tilings (including Penrose tilings) are always rectifiable \cite{Solomon-2011}. This result, as well as related results on other families of constructions, will be discussed further in \S \ref{sec:substitution}, \S \ref{sec:multiscale}, and \S\ref{sec:C&P}.

\subsection{The existence of non-rectifiable Delone sets}\label{subsec:non-rectifiable sets}
We begin this section by presenting the connection between the rectifiability of Delone sets and the problem of realizing a given function as the Jacobian determinant of a biLipschitz homeomorphism of $\R^d$, a perspective developed in the foundational works of McMullen \cite{McMullen-1998} and of Burago and Kleiner \cite{BuragoKleiner-1998}.

\begin{thm}\label{thm:Jac_implies_BL} 
The following are equivalent:
\begin{enumerate}
    \item 
    Every positive measurable function on $\R^d$, with $f$ and $1/f$ bounded, can be realized a.e. as the Jacobian of a biLipschitz homeomorphism  $\Phi:\R^d\to\R^d$.
    \item 
    Every Delone set $\Lambda\subset\R^d$ is biLipschitz equivalent to $\Z^d$. 
\end{enumerate}
\end{thm}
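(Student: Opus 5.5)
The plan is to prove both implications separately, using Theorem~\ref{thm:same_volume_tiles} to move between Delone sets and tilings with prescribed cell volumes.

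\emph{(1) $\Rightarrow$ (2).} Let $\Lambda$ have separation $r$ and covering radius $R$, and let $\{T_x\}_{x\in\Lambda}$ be its Voronoi tiling. Each cell contains $B(x,r/2)$ and lies in $\overline{B(x,R)}$, so its volume is pinched between two positive constants. Define $f=\vol(T_x)^{-1}$ on $\inter(T_x)$. Then $f$ is measurable and both $f$ and $1/f$ are bounded. By (1) there is a biLipschitz homeomorphism $\Phi$ with $\Jac\Phi=f$ a.e., and the change of variables formula for Lipschitz maps gives $\vol(\Phi(T_x))=\int_{T_x}f=1$. The sets $\Phi(T_x)$ are images of compact sets with nonempty interior and null boundary under a biLipschitz homeomorphism, so they form a tiling of $\R^d$ by unit-volume tiles. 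Their diameters are uniformly bounded by $2RL$. Choose the points $\Phi(x)\in\Phi(T_x)$. By Theorem~\ref{thm:same_volume_tiles}, $\Phi(\Lambda)\bd\Z^d$, and then Proposition~\ref{prop:BD_implies_BL} gives $\Phi(\Lambda)\stackrel{\mathrm{BL}}\sim\Z^d$. Since $\Phi|_\Lambda:\Lambda\to\Phi(\Lambda)$ is biLipschitz, we conclude $\Lambda\stackrel{\mathrm{BL}}\sim\Z^d$.

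\emph{(2) $\Rightarrow$ (1).} Given $f$ with $c\le f\le C$, build a Delone set whose local density mimics $f$ at every scale, and extract $\Phi$ from BL-maps as a limit.

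First, for $n\in\N$, tile $\R^d$ by cubes of side $1/n$. In each cube $Q$, place roughly $n^{2d}\int_Q f$ points of a grid of mesh $\asymp 1/n^2$ (rounding), and rescale by $n^2$. This produces a set $\Lambda_n$ with separation and covering constants independent of $n$, because $c\le f\le C$.

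Next, apply (2). We need the BL constant $L_n$ of the map $\psi_n:\Lambda_n\to\Z^d$ to be bounded uniformly in $n$. This is the main obstacle, since (2) gives no uniform control. The standard remedy, following Burago--Kleiner, is to argue by contradiction. If the constants were unbounded along some sequence, glue the rescaled pieces $\Lambda_{n_k}$, restricted to large disjoint cubes placed far apart, into a single Delone set $\Lambda$. A BL-map of $\Lambda$ would restrict to maps of the pieces with bounded constants, up to the $O(1)$ distortion of restricting and translating. This yields a contradiction, so there is a uniform $L$.

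Finally, rescale to obtain maps $\phi_n=n^{-2}\psi_n(n^2\,\cdot)$ from $n^{-2}\Lambda_n$ into a grid of mesh $\asymp n^{-2}$. These maps are $L$-biLipschitz up to an additive error $O(n^{-2})$. Normalize with a translation so that they roughly fix the origin, and then use Arzel\`a--Ascoli to extract a subsequence converging locally uniformly to an $L$-biLipschitz map $\Phi:\R^d\to\R^d$, which is onto by a degree or closedness argument. Counting shows that for any cube $Q$ in the $1/n$-grid, $\vol(\Phi(Q))=\lim \#(\text{points in }Q)\cdot n^{-2d}\cdot\covol=\int_Q f$, up to a global constant that we normalize away. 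By density of cubes, $\vol(\Phi(E))=\int_E f$ for all measurable $E$, hence $\Jac\Phi=f$ a.e. The delicate points here are the limit of volumes, which needs boundary control via the biLipschitz bound, and the uniform-constant step above.
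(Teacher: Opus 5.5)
Your proof of (1) $\Rightarrow$ (2) is the paper's argument (Voronoi cells, $f_\Lambda$, change of variables, Theorem~\ref{thm:same_volume_tiles}, Proposition~\ref{prop:BD_implies_BL}) and is fine. The gap is in (2) $\Rightarrow$ (1), at the step you flag yourself: the gluing does not contradict $L_{n_k}\to\infty$. Let $g:\Lambda\to\Z^d$ be an $L$-biLipschitz bijection of the glued set, and let $C_k$ be the cube you cut out of $\Lambda_{n_k}$. The restriction of $g$ to the copy of $\Lambda_{n_k}\cap C_k$ is an $L$-biLipschitz injection of a \emph{finite piece} into $\Z^d$. It is neither defined on all of $\Lambda_{n_k}$ nor onto $\Z^d$, so it gives no bound on the best constant of a bijection $\Lambda_{n_k}\to\Z^d$. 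Extracting such a bijection would need three further things: infinitely many pieces of each $\Lambda_{n_k}$ that exhaust it, a diagonal limit, and a proof that the limit map is surjective. Nor can you simply run your limiting argument with the restricted maps. Your counting step $\vol(\Phi(Q))=\lim n^{-2d}\#(\text{points in }Q)$ uses that $\psi_n$ is onto $\Z^d$, so it does not follow as written.

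The missing idea is that no uniform family $\psi_n$ is needed. Build one Delone set that contains, in far-apart cubes $S_k$, the rescaled encoding of $f$ on $[-k,k]^d$ at resolution $1/n_k$, with $\Z^d$ elsewhere. Apply (2) once to get a single $L$-biLipschitz bijection $g$, and rescale its restrictions to the $S_k$; the uniform constant is then automatic. Surjectivity of $\psi_n$ is replaced by a locality argument. Suppose $z\in\Z^d$ is within distance $D$ of $g(x)$, where $x$ is at distance more than $LD$ from $\R^d\minus S_k$. Then $z=g(y)$ with $\norm{x-y}\le LD$, so $y\in S_k$. Hence the lattice points in the image of a subcube $E$ come from points of the piece lying in a neighborhood of the corresponding subcube, up to a boundary layer of lower order. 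The counts then give $\vol(\Phi(E))=\int_E f$ in the limit. This is the paper's route, which is phrased contrapositively for a non-realisable $\rho$ on $[0,1]^2$. Your exhaustion of $\R^d$ by growing cubes, your invariance-of-domain argument for surjectivity of $\Phi$, and post-composing with a reflection if $\Jac\Phi<0$ are what adapt it to homeomorphisms of all of $\R^d$, as (1) requires.
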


This theorem appears in \cite[Theorem~4.1]{McMullen-1998}. We include here the proof of the first implication, which in particular shows that the rectifiability of a Delone set $\Lambda$ would follow if a certain density function that depends on $\Lambda$ can be realized a.e. as the Jacobian of a biLipschitz homeomorphism of $\R^d$.

\begin{proof}[Proof of (1) implies (2):]
     Let $\Lambda\subset\R^d$ be a Delone set with separation and covering constants $0< r\le R$. Consider the Voronoi tiling of $\Lambda$ (see Figure \ref{fig:Delone_to_tiling}), defined by $\tau_\Lambda = \{T_x\}_{x\in \Lambda}$ where 
    \begin{equation}\label{eq:Voronoi_tiling}
        T_x=\{y\in\R^d \mid \forall x'\in \Lambda, \norm{x-y} \le \norm{x'-y}\}.
    \end{equation}

Define a function $f_\Lambda:\R^d\to\R$ by 
\begin{equation}\label{eq:f_Lambda}
f_\Lambda(y)=\begin{cases}
    \frac{1}{\vol(T_x)},& \exists x\in \Lambda, y\in\inter(T_x). \\
    1,& \text{otherwise}.
\end{cases}  
\end{equation}
Each tile $T_x$ is a compact, convex set that satisfies $B(x,r/2)\subset T_x\subset B(x,R)$, thus the hypothesis on $f$ in (1) is satisfied. Then by assumption, there exists a biLipschitz homeomorphism $\Phi:\R^d\to\R^d$ such that for a.e. $y\in \R^d$ we have 
\[
 f_\Lambda(y)= \Jac(\Phi)(y),
\]
where $\Jac(\Phi) := \det(D\Phi)$ denotes the Jacobian determinant of $\Phi$. Then, by the theory of multivariate integration, for every $x\in \Lambda$ we have 
\[
\vol(\Phi(T_x)) = \int_{\Phi(T_x)}1=\int_{T_x}(1\circ\Phi)\cdot \Jac(\Phi)=\vol(T_x)\frac{1}{\vol(T_x)} = 1.
\]
That is, $\Phi$ maps the tiling $\tau_\Lambda$ to a new tiling $\mathcal{S}$ of $\R^d$ by tiles of volume 1. Since $\Phi$ is a biLipschitz map, the set $\Phi(\Lambda)$ is a Delone set, and it consists of a single point in each tile of $\mathcal{S}$, thus, by Theorem \ref{thm:same_volume_tiles}, $\Phi(\Lambda)$ is BD equivalent to $\Z^d$, and hence BL equivalent to $\Z^d$ by Proposition \ref{prop:BD_implies_BL}. Clearly, $\Lambda$ is BL equivalent to $\Phi(\Lambda)$ and therefore $\Lambda$ is BL equivalent to $\Z^d$.  
\end{proof}

Notice that this proof establishes a stronger assertion. Given a Delone set $\Lambda$, there is a specific function $f_\Lambda$, so that $\Lambda$ is BL equivalent to $\Z^d$ if the function $f_\Lambda$ can be realized a.e. as the Jacobian of a biLipschitz homeomorphism of $\R^d$. 

\begin{cor}\label{cor:Jac_implies_BL}
    Let $\Lambda$ be a Delone set. If $f_\Lambda$ from \eqref{eq:f_Lambda} can be realized a.e. as the Jacobian of a biLipschitz homeomorphism of $\R^d$, then $\Lambda$ is BL equivalent to $\Z^d$.
\end{cor}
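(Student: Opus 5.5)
The corollary is exactly what the proof of (1) implies (2) in Theorem \ref{thm:Jac_implies_BL} establishes for a single set $\Lambda$. That proof used hypothesis (1) only to produce a biLipschitz homeomorphism $\Phi$ with $\Jac(\Phi)=f_\Lambda$ a.e. for this particular $\Lambda$. So the plan is to rerun that argument verbatim, with the global hypothesis (1) replaced by the assumption of the corollary.

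The steps are as follows.
\begin{enumerate}
    \item Form the Voronoi tiling $\tau_\Lambda=\{T_x\}_{x\in\Lambda}$ as in \eqref{eq:Voronoi_tiling} and define $f_\Lambda$ by \eqref{eq:f_Lambda}. Record that $B(x,r/2)\subset T_x\subset B(x,R)$, so $f_\Lambda$ and $1/f_\Lambda$ are bounded. This is not even needed for the argument, only for consistency.
    \item Take the given biLipschitz homeomorphism $\Phi$ with $\Jac(\Phi)=f_\Lambda$ a.e. Apply the change of variables formula, valid for biLipschitz maps since they are differentiable a.e. by Rademacher's theorem and satisfy Lusin's (N) property. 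On each tile this gives $\vol(\Phi(T_x))=\int_{T_x}f_\Lambda=1$; here I use that $\partial T_x$ is null, so $f_\Lambda=1/\vol(T_x)$ a.e. on $T_x$.
    \item Since $\Phi$ is a homeomorphism, the images $\Phi(T_x)$ are compact with nonempty interior. They cover $\R^d$, have disjoint interiors, and their boundaries $\Phi(\partial T_x)$ are null, again by Lusin (N). So they form a tiling $\mathcal S$ by tiles of volume $1$.
    \item Each $\Phi(T_x)$ has diameter at most $2LR$, so $\mathcal S$ has uniformly bounded diameter. The set $\Phi(\Lambda)$ places one point in each tile of $\mathcal S$. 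It is Delone: separation is at least $r/L$, and relative density follows from surjectivity of $\Phi$ together with the Lipschitz bound.
    \item Theorem \ref{thm:same_volume_tiles} gives $\Phi(\Lambda)\bd\Z^d$. Proposition \ref{prop:BD_implies_BL} upgrades this to BL equivalence.
    \item Finally, $\Phi|_\Lambda:\Lambda\to\Phi(\Lambda)$ is a BL bijection. By transitivity of BL equivalence, $\Lambda$ is BL equivalent to $\Z^d$.
\end{enumerate}

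The only point needing care is the measure-theoretic justification in steps 2 and 3: the change of variables formula and null boundaries for a merely biLipschitz (not smooth) $\Phi$. I would cite the standard area formula for Lipschitz maps; everything else is bookkeeping already done in the proof above.
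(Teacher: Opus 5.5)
Your proposal is correct and is exactly the paper's argument: the corollary is the proof of (1)$\Rightarrow$(2) in Theorem~\ref{thm:Jac_implies_BL}, run for the single function $f_\Lambda$. Your additional justifications (change of variables for biLipschitz maps via Rademacher and Lusin's (N) property, null boundaries of the image tiles, the diameter bound $2LR$, and the Delone property of $\Phi(\Lambda)$) supply details the paper leaves implicit, though the containment $T_x\subset B(x,R)$ that you call unnecessary in step~1 is in fact what you use for the diameter bound in step~4.
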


The proof of (2) implies (1) is given in more detail in \cite[Lemma~2.1]{BuragoKleiner-1998}. The idea is as follows. 
\begin{proof}[Sketch of proof of (2) implies (1):]
    For simplicity of presentation, we assume $d=2$. Suppose that $\rho:[0,1]^2 \to \R_{>0}$ cannot be realized a.e. as the Jacobian of a biLipschitz homeomorphism. Choose a sequence of increasingly large axis‑aligned squares $S_k$. In each square $S_k$, encode the forbidden density $\rho$ by rescaling it to that square and subdividing $S_k$ into many tiny subsquares whose number in each cell closely matches the local average of $\rho^{-1}$. Placing exactly one point at the center of each tiny cell, and placing points of $\Z^2$ outside of $\bigcup_kS_k$, produces a Delone set $\Lambda \subset \mathbb{R}^2$. If one assumes a biLipschitz map $g:\Lambda \to \Z^2$ exists, then after rescaling and passing to a subsequence along the sequence $(S_k)$, these maps converge to a biLipschitz map $\Phi:[0,1]^2 \to \R^2$ whose Jacobian must be exactly $\rho$, contradicting the hypothesis that no such $\Phi$ exists. 
\end{proof}

In view of Theorem \ref{thm:Jac_implies_BL}, the existence of non-rectifiable Delone sets in $\R^2$ follows from the theorem below (compare \cite[\S 3]{McMullen-1998},\cite[Theorem~1.2]{BuragoKleiner-1998}).

\begin{thm}\label{thm:non-Jac_function}
    Fix $\delta>0$. There exists a measurable function $f:[0,1]^2\to[1-\delta,1+\delta]$, such that for any biLipschitz homeomorphism $\Phi:[0,1]^2\to\R^2
    $ the equation $f= \Jac(\Phi)$ a.e. does not hold.
\end{thm}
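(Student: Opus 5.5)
We plan to follow the Burago--Kleiner strategy. The function $f$ will be built as an infinite sum of increasingly fine ``checkerboard'' perturbations, arranged so that any biLipschitz homeomorphism with Jacobian $f$ would be forced, at some scale, to stretch a segment by more than its Lipschitz constant permits. The core is a \emph{single-scale lemma}: for every $L\ge 1$ there are $\varepsilon_0>0$ and $c>0$ such that the following holds. Let $Q=[0,1]^2$, and split it into a left half and a right half; on one half put density $1$, on the other density $1+c$, or alternatively alternate thin columns. Then any $L$-biLipschitz map $\Phi$ with $\Jac(\Phi)=f$ a.e. cannot map the bottom and top edges of $Q$ to curves whose endpoint distances are both within $(1+\varepsilon_0)$ of affine. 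In other words, $\Phi$ must fail to be $\varepsilon_0$-close to a similarity (or affine map) on $Q$ at the level of distances between opposite sides.

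To prove this lemma we argue by contradiction, using Arzel\`a--Ascoli compactness. Suppose $\Phi_n$ are $L$-biLipschitz with the prescribed Jacobian, and suppose the defect tends to zero. Then along a subsequence $\Phi_n$ converges uniformly to an $L$-biLipschitz map $\Phi$ that sends horizontal segments of $Q$ to segments of the same length ratio. Jacobians converge weakly$^*$, since the area of images of rectangles is preserved in the limit, so $\Jac(\Phi)=f$ still holds. Next we show that a map preserving lengths of horizontal segments up to a fixed factor, with this straightness, maps the two halves to regions of equal area. This contradicts the different densities $1$ and $1+c$. Concretely, with distance preservation along horizontals, the image of each horizontal segment is a segment of fixed length. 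Coarea-type counting along a foliation of vertical (or horizontal) lines then shows that the areas of the images of the two halves are determined by the common length, forcing them to agree.

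Now comes the multiscale step. Fix $\delta$ and choose a sequence $L_k\to\infty$ with corresponding $(\varepsilon_k,c_k)$, and take $\sum c_k<\delta$. We build $f=\prod_k(1+c_k g_k)$, or a sum, where $g_k$ is the single-scale pattern placed in a sparse family of tiny squares at scale $s_k$. We will use \emph{differentiability almost everywhere} (Rademacher's theorem) for a hypothetical $L$-biLipschitz $\Phi$. Near a point of differentiability, at sufficiently small scales $\Phi$ is $\varepsilon$-close to the affine map $D\Phi$. Rescaling a tiny square $Q'$ of level $k$ with $L_k\ge L$ by the affine normalization $D\Phi^{-1}$ gives an $L'$-biLipschitz map ($L'$ depends only on $L$) that is $\varepsilon_k$-close to the identity and has Jacobian equal to the level-$k$ pattern, up to small perturbations from the other levels. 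This contradicts the single-scale lemma. To ensure the squares of level $k$ meet points of differentiability at fine enough scale, we place them densely, e.g.\ the level-$k$ pattern fills all of $[0,1]^2$ at mesh $s_k$, and choose $s_k\to 0$ fast enough that the coarser levels are essentially constant on level-$k$ squares.

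The main obstacle is the single-scale lemma, in particular handling the fact that the affine approximation $D\Phi$ need not be a similarity. A shear would allow columns of different densities to be absorbed, so the pattern must be rigid under all affine normalizations. I would first try a pattern with both horizontal and vertical stripes, so that every direction sees a density jump. I would then adapt the limit argument so that the limit map is affine along the relevant segments, which forces equal images of equal-length strips and yields the area contradiction.
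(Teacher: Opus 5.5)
\textbf{The multiscale step has a genuine gap, and the single-scale lemma is false as stated.}

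Rademacher's theorem only says that for a.e.\ $p$, $\Phi$ deviates from $\Phi(p)+D\Phi(p)(\cdot-p)$ by $o(r)$ on $B(p,r)$. There is no rate, and the rate depends on $\Phi$. Your levels overlap and $\sum_k c_k<\delta$, so the contrasts $c_k$ tend to $0$, and the rigidity thresholds $\varepsilon_k$ must tend to $0$ as well. Already for the column pattern, $(x,y)\mapsto(\phi(x),y)$ with $\phi'\in\{1,1+c_k\}$ has exactly the prescribed Jacobian and lies within $c_k/2$ of the identity. So you would need $\Phi$ to be $\varepsilon_k$-close to affine at the specific scale $s_k$, with $\varepsilon_k\to0$, and nothing guarantees this at any point. ``Choosing $s_k\to0$ fast enough'' cannot help, because $f$, and hence every $s_k$, is fixed before $\Phi$ is.

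In fact no blow-up argument can produce the contradiction. Take a.e.\ $p$, a Lebesgue point of $f$ at which $\Phi$ is differentiable. The rescalings of $f$ converge in $L^1_{\mathrm{loc}}$ to the constant $f(p)$. The rescalings of $\Phi$ converge to the linear map $D\Phi(p)$, whose Jacobian is exactly $f(p)$. These limit objects are perfectly consistent. A pattern of non-vanishing contrast at infinitely many scales is visible only from a null set, where Rademacher says nothing.

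The single-scale lemma fails precisely in the regime you need (small $c$, large $L$). Apply Lemma~\ref{lem:BK_technical_lemma} to the left/right pattern, which is constant on the four subsquares. Multiplying the resulting boundary-fixing map by $\sqrt{1+c/2}$ gives a map with Jacobian exactly $f$ that is a similarity on all of $\partial Q$. Its biLipschitz constant is at most $\sqrt{1+c/2}\,(1+c)^C$. Relatedly, your compactness argument turns information about two edges into straightness of every horizontal segment, and that does not follow.

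If you instead require sup-norm closeness to an affine map on all of $Q$, the lemma becomes true and easy, since an affine limit has constant Jacobian. Your shear worry also disappears, because normalizing by $D\Phi(p)^{-1}$ acts on the target and leaves the domain pattern intact. But then everything rests on the quantitative step above.

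The missing idea, used in the paper following McMullen, is to obtain near-rigidity from \emph{extremality} rather than from differentiability. Burago--Kleiner argue similarly, with a gain factor $1+\epsilon$.
\begin{itemize}
\item Set $L=\sup|\Phi(b)-\Phi(a)|/|b-a|$ over the edges of squares of all levels. The construction is self-similar: every edge is covered by a strip of next-level squares carrying the same pattern. Values are replaced rather than superimposed, so the contrast never has to decay.
\item At a nearly extremal edge, no sub-edge can be stretched by more than $L$, and the area of the image of the strip is prescribed. Hence $\Phi$ maps the strip approximately onto a rectangle stretched by $L$ and compressed by $1/L$.
\item The pattern, in which the middle-third square carries most of the mass, then forces some smaller edge to be stretched by roughly $3L/2>L$. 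This contradicts the definition of $L$.
\end{itemize}
This self-improving argument needs no rate of approximation at all, which is exactly what your approach lacks.
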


We sketch the idea of McMullen's construction from \cite{McMullen-1998}. A similar construction of an unattainable density is described in \cite{BuragoKleiner-1998}, see also the related discussion in \cite{DymondKaluzaKopecka-2018}.

\begin{proof}[Sketch of proof of Theorem \ref{thm:non-Jac_function}]
    Set $I=[0,1]^2$ and let $J$ be the middle third square in $I$. Let $f_0:I\to\R_{\ge 0}$ be a function that is constant on both $J$ and $I\minus J$ and has $\int_J f_0=1-\delta$ and $\int_I f_0 = 1$. Cover the inside neighborhood of $\partial I$ by smaller squares $I_k$, with affine maps $h_k:I_k\to I$, and define 
    \[
    f_1(x)=\begin{cases}
        f_0\circ h_k(x),& x\in I_k.\\
        f_0(x), & \text{otherwise}.
    \end{cases}
    \]
    Repeat this construction, covering the edges of the small squares by even smaller squares, and iterate to define a sequence of functions $(f_n)_{n\in\N}$ on $I$. 

    Let  
    $f=\lim_{n\rightarrow\infty}f_n$. Then $f$ exists a.e. and is bounded from above and below. Assume, for the sake of contradiction, that $f=\Jac(\Phi)$ a.e. for some biLipschitz homeomorphism $\Phi:I\rightarrow\R^2$. Let $L=\sup\frac{\absolute{\Phi(b)-\Phi(a)}}{\absolute{b-a}}$, where the supremum is taken over all edges $[a,b]$ of squares of all levels that appear in the process. For simplicity, assume that the supremum is achieved on some edge $[a,b]$ of a square $I'$ of level $n$, and assume $[a,b]$ is a horizontal edge. Let $I'_k$ denote the small squares of level $n+1$ that cover the edge $[a,b]$, and denote their union by $R$. Since the area of $R$ is equal to the area of $\Phi(R)$, $\Phi$ approximately maps $R$ to a rectangle with a long edge stretched by $L$ and a short edge compressed by $1/L$. This implies that the perimeter of some small square $I'_k$ is stretched by at least $L/2$. By our construction of $f$ and our assumption that it is the Jacobian of $\Phi$, if $J'_k$ is the middle third square of $I'_k$, then the area of $\Phi(J'_k)$ occupies most of the area of $\Phi(I'_k)$. Since the perimeter of $J'_k$ is a third of the perimeter of $I'_k$, we deduce that it must be stretched by approximately $3L/2$, contradicting the definition of $L$.  
\end{proof}

A similar argument would work for any dimension $d\geq2$. As a consequence, we obtain the following theorem, which resolves the aforementioned open question of Furstenberg and Gromov. 

\begin{thm}
    There exist Delone sets in $\R^d$, $d\ge 2$, which are non-rectifiable. 
\end{thm}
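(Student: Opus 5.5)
The plan is to obtain the statement from Theorem \ref{thm:Jac_implies_BL} together with Theorem \ref{thm:non-Jac_function} and its $d$-dimensional analogue. By Theorem \ref{thm:Jac_implies_BL}, if every Delone set in $\R^d$ were rectifiable, then every positive measurable $f$ with $f$ and $1/f$ bounded on $\R^d$ could be realized a.e. as the Jacobian of a biLipschitz homeomorphism $\Phi:\R^d\to\R^d$. So it suffices to exhibit one such $f$ that is not a Jacobian.

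For $d=2$, take the function $f:[0,1]^2\to[1-\delta,1+\delta]$ from Theorem \ref{thm:non-Jac_function} and extend it to $\tilde f:\R^2\to\R$ by setting $\tilde f=1$ outside $[0,1]^2$. Then $\tilde f$ and $1/\tilde f$ are bounded. Suppose $\tilde f=\Jac(\Phi)$ a.e. for some biLipschitz homeomorphism $\Phi$ of $\R^2$. The restriction $\Phi|_{[0,1]^2}:[0,1]^2\to\R^2$ is biLipschitz and its Jacobian agrees with $f$ a.e. on $[0,1]^2$, which contradicts Theorem \ref{thm:non-Jac_function}. Hence condition (1) of Theorem \ref{thm:Jac_implies_BL} fails, and so does (2): some Delone set in $\R^2$ is non-rectifiable. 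To be more concrete, one can instead run the sketched implication (2)$\Rightarrow$(1) directly. Plant rescaled, discretized copies of $f^{-1}$ inside squares $S_k$ of growing size, and fill the rest with $\Z^2$. A biLipschitz map to $\Z^2$ would then, after rescaling and an Arzel\`a--Ascoli limit, produce a biLipschitz $\Phi$ on $[0,1]^2$ with $\Jac\Phi=f$.

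For general $d\ge2$, I would repeat McMullen's construction in $[0,1]^d$. Let $J$ be the middle cube, and choose $f_0$ constant on $J$ and on $I\minus J$ with $\int_J f_0=1-\delta$ and $\int_I f_0=1$. Cover neighborhoods of the faces of each cube by smaller cubes carrying rescaled copies of $f_0$, and iterate to get $f=\lim f_n$. The contradiction argument runs on the edge or segment $[a,b]$ achieving the maximal stretch $L$ among all generation edges. Preservation of volume on the slab of small cubes along $[a,b]$ forces some small cube to have a boundary segment stretched by roughly $L/2$. The image of its middle cube then carries almost all of the volume, which forces a stretch of about $3L/2$ on an edge of the middle cube, contradicting maximality.

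I expect the main obstacle to be this step: making the "approximately a box" argument rigorous in dimension $d$ while keeping the constants uniform. Since $\Phi$ is only biLipschitz, I would first replace the assumption that the supremum is attained by taking an edge with stretch at least $(1-\eta)L$. I would then use the fact that $\Phi$ is Lipschitz with constant $L$ along all edges of the next generation. This confines the image of the slab to a thin neighborhood of a line segment of length about $L|b-a|$, and volume comparison then forces an image cube to be elongated. I would choose the parameters $\delta$ and the covering ratio so that the resulting gain factor is strictly greater than $1$ after accounting for $\eta$.
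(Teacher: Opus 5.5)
Your proposal is correct and follows the paper's route. The paper deduces the statement from the implication (2)$\Rightarrow$(1) of Theorem \ref{thm:Jac_implies_BL} together with the non-realisable density of Theorem \ref{thm:non-Jac_function}, and only remarks that a similar argument works for every $d\ge 2$, which is exactly your plan; your extension of $f$ by $1$ outside $[0,1]^2$, with $\Phi$ then restricted back to the square, is the bridging step the paper leaves implicit.
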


Building on the methods above, Magazinov \cite{Magazinov-2011} showed that for every $d\ge 2$, the set of biLipschitz equivalence classes of Delone sets in $\R^d$ has the cardinality of the continuum. 
Cortez and Navas \cite{CortezNavas-2016} provided a new layer of structure by explicitly constructing non-rectifiable Delone subsets of $\Z^2$ that are repetitive and have uniform patch frequency (see also \cite{BhatDymond-2025} for more about repetitivity rates in non-rectifiable sets). Equivalently, the orbit closures of these Delone sets are minimal and uniquely ergodic (see Theorem \ref{thm:minimal_uniquely-ergodic}). Their method adapts the Burago--Kleiner construction of an unattainable density to the discrete setting, yielding combinatorially explicit, uniquely ergodic examples that are nonetheless not BL equivalent to a lattice. The Cortez-Navas example has the additional property that every Delone set in its hull is non-rectifiable, see \cite[Remark~14]{CortezNavas-2016}. In particular, it is not true that the hull of every Delone set contains a rectifiable Delone set.

\subsection{The Burago--Kleiner sufficient condition for rectifiability}\label{subsec:BK_condition}
In a later work \cite{BuragoKleiner-2002}, Burago and Kleiner provided 
a sufficient condition under which a Delone set in $\R^2$ is BL equivalent to a lattice. This condition, which is given in terms of the rate of convergence of the point-counting discrepancies along expanding Euclidean squares, was later extended in \cite{Aliste-PrietoCoronelGambaudo-2013} to arbitrary dimension $d\ge 2$. 

\begin{definition}\label{def:discrepancy}
    Given a discrete set $\Lambda \subset \R^d$, a measurable set $B \subset \R^d$, and a parameter $\alpha > 0$, the \emph{discrepancy} of $\Lambda$ with respect to $B$ and $\alpha$ is the quantity 
\[
\disc{\alpha}{\Lambda}{B} = \absolute{ \#(\Lambda \cap B) - \alpha \cdot \vol(B) }.
\]
\end{definition}

We let $Q(x,R)\subset\R^d$ denote the closed cube $x+[0,R]^d$ of edge-length $R$, and note that it has volume $R^d$. For a Delone set $\Lambda \subset \R^d$ and for $\alpha, R>0$ we set 
    \begin{equation}\label{eq:BK_discrepancy}
    \Delta_\Lambda(\alpha,R) = \sup_{x\in R\Z^d}\left\{\frac{\disc{\alpha}{\Lambda}{Q(x,R)}}{\alpha\cdot\vol(Q(x,R))}\right\}= \sup_{x\in R\Z^d}\left\{\absolute{\frac{\#(\Lambda\cap Q(x,R))}{\alpha\cdot R^d}-1}\right\}.
    \end{equation}

The following result is known as the Burago--Kleiner sufficient condition for rectifiability (\cite[Theorem~1.3]{BuragoKleiner-2002}, \cite[Theorem~3.1]{Aliste-PrietoCoronelGambaudo-2013}).
\begin{thm}\label{thm:BK_sufficient_condition} 
Let $\Lambda\subset\R^d$ be a Delone set. Suppose there exists an $\alpha>0$ so that  
\begin{equation}\label{eq:BK_series}
\sum_{k=1}^\infty \Delta_\Lambda(\alpha,2^k) < \infty.
\end{equation}
Then $\Lambda$ is BL equivalent to a lattice. 
\end{thm}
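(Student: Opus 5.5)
By Corollary \ref{cor:Jac_implies_BL}, it is enough to produce a biLipschitz homeomorphism whose Jacobian is a piecewise-constant density adapted to $\Lambda$. We do not use the Voronoi density $f_\Lambda$ directly. Instead, we build a homeomorphism $\Phi:\R^d\to\R^d$ that maps each dyadic cube $Q(x,2^k)$, $x\in 2^k\Z^d$, onto a region of volume exactly $\#(\Lambda\cap Q(x,2^k))/\alpha$, up to bounded error. We then conclude with a Hall-type matching. After scaling we take $\alpha=1$, and we write $\epsilon_k=\Delta_\Lambda(1,2^k)$, so that $\sum_k\epsilon_k<\infty$.

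The construction goes through the dyadic hierarchy from coarse to fine. At scale $2^k$, each cube $Q$ splits into $2^d$ children $Q_1,\dots,Q_{2^d}$. By \eqref{eq:BK_discrepancy}, the count ratios satisfy
\[
\frac{\#(\Lambda\cap Q_i)/\vol(Q_i)}{\#(\Lambda\cap Q)/\vol(Q)}\in\Big[\frac{1-\epsilon_{k-1}}{1+\epsilon_k},\frac{1+\epsilon_{k-1}}{1-\epsilon_k}\Big].
\]
We define a piecewise-affine homeomorphism $g_Q$ of $Q$ to itself that fixes $\partial Q$ and moves the interior dividing hyperplanes so that the image of each child $Q_i$ has volume proportional to $\#(\Lambda\cap Q_i)$. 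One way is a product of one-dimensional piecewise-linear maps: first split along the $x_1$ direction, then within each slab along $x_2$, and so on. This gives $g_Q$ biLipschitz constant $1+O(\epsilon_{k-1}+\epsilon_k)$. Composing these corrections across all cubes at all scales, we set $\Phi=\lim_{K\to\infty}\Phi_K$, where $\Phi_K$ handles scales $\ge 2^{-K}$ relative to a starting scale. The biLipschitz constant of $\Phi$ is then bounded by $\prod_k(1+C(\epsilon_k+\epsilon_{k-1}))<\infty$, which is exactly where the summability \eqref{eq:BK_series} enters.

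We also need to handle the fact that $\R^d$ has no coarsest cube. For this we work on $Q(0,2^N)$ and its $2^d$ orthant analogues for each $N$. The resulting maps are uniformly biLipschitz, so by Arzelà--Ascoli, after normalizing $\Phi_N(0)=0$, a subsequence converges to a biLipschitz homeomorphism of $\R^d$. We stop the refinement at a fixed scale $2^{k_0}$ comparable to the covering radius $R$. Below that scale the counts become erratic, but each cube of side $2^{k_0}$ contains boundedly many points, so the correction there is only a bounded multiplicative factor. The result is that every cube $Q$ of side $2^{k_0}$ in the grid satisfies $\vol(\Phi(Q))=\#(\Lambda\cap Q)$ exactly.

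To finish, we match the points of $\Lambda\cap Q$ bijectively to points of $\Z^d$. Subdivide each $\Phi(Q)$ further by the same slab construction into $\#(\Lambda\cap Q)$ pieces of unit volume and bounded diameter. If $\Lambda\cap Q$ is empty, which can only happen at small scales, we merge $Q$ with a neighbour; relative density makes this harmless for a suitable $k_0$. This gives a tiling of $\R^d$ by unit-volume tiles of bounded diameter, and we assign each point of $\Lambda\cap Q$ to one tile inside $\Phi(Q)$. Theorem \ref{thm:same_volume_tiles} then yields a BD map from the assigned points $\Phi$-images to $\Z^d$. The map sending $x\in\Lambda$ to its assigned point is within bounded distance of $\Phi(x)$ and is therefore BL. Composing with the BD map and using Proposition \ref{prop:BD_implies_BL}, $\Lambda$ is BL equivalent to $\Z^d$.

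The main obstacle is the distortion estimate for the composed maps. The correction at a fixed scale must stay close to the identity in the biLipschitz sense and depend only on the local ratio discrepancies, so that the errors multiply rather than accumulate additively in the derivative. The product form of the slab maps is what should make this work, since each factor has derivative within $1+O(\epsilon)$ of the identity. A second technical point is the consistency of the limit in $N$ across scales.
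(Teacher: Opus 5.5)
Your overall architecture matches the paper's Step 3. You use dyadic corrections whose Jacobians telescope, bound the biLipschitz constant by a product controlled by $\sum_k\epsilon_k$, pass to the limit with Arzel\`a--Ascoli, and finish with a unit-volume tiling and Theorem \ref{thm:same_volume_tiles}. Using the count density on cubes of side $2^{k_0}$ in place of the Voronoi-type density is a harmless variant.

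\textbf{The gap is the single-scale map $g_Q$, which is the real content of the proof.} The slab construction does not produce what you claim.
\begin{itemize}
\item \emph{It does not fix $\partial Q$.} Take $Q=[0,2]^2$. Moving the dividing line $\{x_1=1\}$ to $\{x_1=s\}$ sends the boundary point $(1,0)$ to $(s,0)$.
\item \emph{It is not continuous.} Once the $x_2$-splits in the two slabs are chosen independently, the map is $(x_1,x_2)\mapsto(\ell(x_1),m_L(x_2))$ for $x_1\le 1$ and $(\ell(x_1),m_R(x_2))$ for $x_1\ge 1$. In general $m_L(1)\neq m_R(1)$, so the two formulas disagree on $\{x_1=1\}$. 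The splits must be independent because the counts $\#(\Lambda\cap Q_i)$ are arbitrary.
\item \emph{Fixing $\partial Q$ pointwise is not optional.} Adjacent cubes of the same scale receive different corrections. Fixing boundaries pointwise is what lets you glue them into a homeomorphism of $\R^d$.
\item \emph{Controlling only the volumes of the children's images is not enough either.} A piecewise-affine map that perturbs interior vertices of a fixed triangulation would do that, but it is insufficient. Compose with the finest-scale map innermost, so that each level preserves the coarser cubes. For the volumes $\vol(\Phi(Q))$ to telescope, each level's map must then have Jacobian a.e.\ \emph{constant} on every child cube. Otherwise the volume of the image of a grandchild under the next map is not determined.
\end{itemize}

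\textbf{What you need is Lemma \ref{lem:BK_technical_lemma}:} Burago--Kleiner's Proposition 3.2 for $d=2$, or Rivi\`ere--Ye in general. It gives a biLipschitz homeomorphism of the cube that fixes the boundary pointwise, has Jacobian a.e.\ equal to the normalized piecewise-constant density, and satisfies $\bL\le(\max f/\min f)^C$. For your ratios this bound is $1+O(\epsilon_{k-1}+\epsilon_k)$.

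Apply it to $g_{k-1}/g_k$ on every cube of side $2^k$, where $g_k$ is the count density at scale $2^k$. Use half-open cubes so that children's counts add up to the parent's. Composing as $\psi_N\circ\cdots\circ\psi_{k_0+1}$ gives $\Jac=g_{k_0}/g_N$, with $g_N\to1$ uniformly. From there your remaining steps go through, with the limit taken only in the coarse direction $N$.

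Finally, the issue you flag as the main obstacle, errors multiplying rather than adding, is automatic, since biLipschitz constants are submultiplicative under composition. The difficulty lies entirely in producing the single-scale map with constant Jacobian on children and pointwise-fixed boundary.
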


Note that this condition is only sufficient and not necessary, as can be seen by the following exercise. 

\begin{exe}\label{ex:BL for which BK fails}
    Consider the Delone set 
    \[
    \Lambda = \left\{(x,y)\in\Z^2 ~\Big| ~ y\le 0\right\} \cup 
    \left\{(x,y)\in2\Z^2 ~\Big| ~ y> 0\right\}.
    \]
    Prove that Burago and Kleiner's condition fails for $\Lambda$, but 
        $\Lambda$ is rectifiable.
        
        \medskip
        \noindent\textbf{Hint:} the map $h:\Lambda\to\Z^2$, $(x,y)\mapsto\begin{cases}
            (x,y), & y\le 0, \\
            (x/2,y/2), & y>0
        \end{cases}$
        is not biLipschitz.
\end{exe}

\begin{exe}
    Prove that if a Delone set $\Lambda\subset\R^d$ does not have a central density, namely, the limit in Definition \ref{def:central_density} does not exist, then condition \eqref{eq:BK_series} fails. 
\end{exe}

\begin{exe}
    Prove that if $\Lambda$ satisfies \eqref{eq:BK_series} then every $\Lambda'\in X_\Lambda$ is rectifiable.
\end{exe}

We note that Burago and Kleiner originally stated their condition differently. Indeed, setting
\[
E_\Lambda(\alpha,R) = \sup_{x\in R\Z^d}\left\{\max\left(
\frac{\#(\Lambda\cap Q(x,R))}{\alpha\cdot R^d} , 
\frac{\alpha\cdot R^d}{\#(\Lambda\cap Q(x,R))}
\right)\right\},
\]
they proved in \cite[Theorem 1.3]{BuragoKleiner-2002} that the existence of $\alpha>0$ for which the product 
$\prod_{k=1}^\infty E_\Lambda(\alpha,2^k)$ converges is a sufficient condition for rectifiability. However, it is sometimes more convenient to work with the additive condition \eqref{eq:BK_series}, see \cite{HaynesKellyWeiss-2014} for example, and so we include the following simple computation.

We note that Burago and Kleiner originally stated their condition
multiplicatively. For $R>0$, set
\[
E_\Lambda(\alpha,R)
=
\sup_{x\in R\Z^d}
\left\{
\max\left(
\frac{\#(\Lambda\cap Q(x,R))}{\alpha R^d},
\frac{\alpha R^d}{\#(\Lambda\cap Q(x,R))}
\right)
\right\},
\]
with the convention that the second quotient, and hence
$E_\Lambda(\alpha,R)$, is infinite if
$\#(\Lambda\cap Q(x,R))=0$ for some $x\in R\Z^d$.

Burago and Kleiner proved in
\cite[Theorem~1.3]{BuragoKleiner-2002} that convergence of the
corresponding product is a sufficient condition for rectifiability.
Since finitely many initial scales play no role in their construction,
the condition may equivalently be imposed on a sufficiently far
tail of the product. The additive formulation is related to this
tail condition as follows.

\begin{prop}\label{prop:convergenct_prod_vs_sum}
    If
    $
    \sum_{k=1}^\infty\Delta_\Lambda(\alpha,2^k)<\infty,
    $
    then there exists $k_0\in\N$ such that
    $
    \prod_{k=k_0}^\infty E_\Lambda(\alpha,2^k)<\infty.
    $
\end{prop}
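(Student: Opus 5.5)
Since the series $\sum_k\Delta_\Lambda(\alpha,2^k)$ converges, its terms tend to zero. We therefore fix $k_0$ such that $\Delta_\Lambda(\alpha,2^k)\le \tfrac12$ for every $k\ge k_0$, and compare $E_\Lambda(\alpha,2^k)$ with $1+2\Delta_\Lambda(\alpha,2^k)$ for these $k$. Since every factor $E_\Lambda(\alpha,2^k)$ is at least $1$, the tail product converges exactly when $\sum_{k\ge k_0}\log E_\Lambda(\alpha,2^k)<\infty$. By the inequality $\log(1+t)\le t$, it suffices to prove that
\[
E_\Lambda(\alpha,2^k)\le 1+2\Delta_\Lambda(\alpha,2^k)\qquad (k\ge k_0),
\]
because the right-hand side then gives a summable bound on the logarithms.

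To prove this inequality, fix $k\ge k_0$ and $x\in 2^k\Z^d$. Write $q=\#(\Lambda\cap Q(x,2^k))/(\alpha 2^{kd})$ and $\Delta=\Delta_\Lambda(\alpha,2^k)$. By definition \eqref{eq:BK_discrepancy}, $\absolute{q-1}\le\Delta\le\tfrac12$. In particular $q\ge\tfrac12>0$, so the cube contains at least one point. This shows that the convention that $E_\Lambda=\infty$ is never triggered from scale $2^{k_0}$ on, and it is the only reason the tail condition is needed. The first quotient satisfies $q\le 1+\Delta$. For the second quotient we use the elementary bound $1/(1-t)\le 1+2t$, valid for $0\le t\le\tfrac12$, to get
\[
\frac1q\le\frac1{1-\Delta}\le 1+2\Delta.
\]
Taking the maximum of the two quotients and then the supremum over $x\in 2^k\Z^d$ gives $E_\Lambda(\alpha,2^k)\le 1+2\Delta$.

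It then follows that
\[
\prod_{k\ge k_0}E_\Lambda(\alpha,2^k)\le \exp\Bigl(2\sum_{k\ge k_0}\Delta_\Lambda(\alpha,2^k)\Bigr)<\infty .
\]
The argument has no genuine obstacle. The one point that needs care is the small-scale issue: for small $k$ a cube may contain no points of $\Lambda$, which makes $E_\Lambda$ infinite, and this is exactly why the conclusion is stated only for a tail of the product.
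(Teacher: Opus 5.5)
Your proof is correct and takes the same route as the paper. You choose $k_0$ so that $\Delta_\Lambda(\alpha,2^k)\le\frac12$ for $k\ge k_0$, show $E_\Lambda(\alpha,2^k)\le 1+2\Delta_\Lambda(\alpha,2^k)$ using that the normalized counts lie in $[\frac12,\frac32]$, and then sum logarithms; your remark that the tail is needed only to exclude empty cubes, where $E_\Lambda=\infty$, matches the paper's convention.
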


\begin{proof}
    Set $\delta_k=\Delta_\Lambda(\alpha,2^k)$. Since $\sum_k\delta_k<\infty$, we have $\delta_k\to0$. Choose
    $k_0$ so that $\delta_k\le\frac12$ for every $k\ge k_0$.
    For $x\in2^k\Z^d$, write
    \[
    a_{k,x}
    =
    \frac{\#(\Lambda\cap Q(x,2^k))}
         {\alpha2^{kd}}.
    \]
    By the definition of $\delta_k$,
    $\absolute{a_{k,x}-1}\le\delta_k$.
    Hence, for $k\ge k_0$ we have 
    $\frac12\le a_{k,x}\le\frac32$.
    In particular, all relevant cubes contain at least one point of
    $\Lambda$, and
    \[
    \max\{a_{k,x},a_{k,x}^{-1}\}-1
    \le
    2\delta_k.
    \]
    Taking the supremum over $x$ gives
    \[
    E_\Lambda(\alpha,2^k)-1\le2\delta_k.
    \]
    Since $\log y\le y-1$ for $y\ge1$,
    \[
    \sum_{k=k_0}^\infty
    \log E_\Lambda(\alpha,2^k)
    \le
    2\sum_{k=k_0}^\infty\delta_k
    <
    \infty,
    \]
    and the assertion follows.
\end{proof}

To illustrate how the convergence of the above series relates to the statement of the theorem, we present below a brief outline of the proof of Theorem \ref{thm:BK_sufficient_condition} in the two-dimensional case. The reader is referred to \cite{BuragoKleiner-2002} for the complete proof. A main part of Burago and Kleiner's proof consists of the following technical lemma \cite[Proposition~3.2]{BuragoKleiner-2002}, in which they solve a certain prescribed Jacobian problem in a box. The higher-dimensional analogue, which was used in the proof of \cite{Aliste-PrietoCoronelGambaudo-2013}, relies on a similar lemma, taken from \cite{RiviereYe-1996}.

\begin{lem}\label{lem:BK_technical_lemma}
    Let $T=[0,2]^2$. There exists $C>0$ such that for every function $f:T\to\R_{\ge 0}$, which is constant on each of the four open unit lattice subsquares of $T$, there exists a biLipschitz homeomorphism $\Phi:T\to T$ that satisfies 
    \begin{enumerate}
        \item 
        $\Phi$ fixes $\partial T$ pointwise. \label{item_thm:BK_lemma_1}
        \item 
        $\Jac(\Phi) = \frac{\vol(T)}{\int_T f}f$ a.e.
        \item 
        The biLipschitz constant of $\Phi$ satisfies $\bL(\Phi)\le \left(\frac{\max f}{\min f}\right)^C$.
    \end{enumerate}
\end{lem}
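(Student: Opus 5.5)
The construction will be explicit and built from piecewise affine and shear-type maps. Normalize so that $\int_T f = \vol(T)=4$; this does not affect the ratio $\max f/\min f$. Write $a_{ij}$, $i,j\in\{0,1\}$, for the value of $f$ on the subsquare $[i,i+1]\times[j,j+1]$, and set $K=\max f/\min f$. The map $\Phi$ will be a composition $\Phi=\Phi_2\circ\Phi_1$. The first map $\Phi_1$ moves mass horizontally between the left and right halves. The second map $\Phi_2$ then corrects mass within each column. Each factor is a homeomorphism of $T$ fixing $\partial T$ and having an explicitly controlled Jacobian.

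\textbf{Step 1: a one-dimensional model.} First I will solve a model problem. Given a density $g$ on a rectangle $[0,2]\times[0,h]$ that depends only on $x$ and is piecewise constant ($g=c_0$ on $x<1$ and $g=c_1$ on $x>1$, with $c_0+c_1=2$), I want a homeomorphism fixing the boundary whose Jacobian is $g$. A purely horizontal map $(x,y)\mapsto(\varphi(x),y)$ with $\varphi'=g$ solves $\Jac=g$, but it moves the top and bottom edges. To fix this, I will interpolate in $y$. Let $\varphi_t(x)$ be the map with slope $t c_0+(1-t)$ on $[0,1]$, and define $\Phi_1(x,y)=(\varphi_{\eta(y)}(x),\,\psi(x,y))$, where $\eta$ is a profile vanishing at $y=0$ and $y=h$. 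The second coordinate $\psi$ is then determined by solving the ODE $\partial_y\psi\cdot\partial_x\varphi-\partial_x\psi\cdot\partial_y\varphi = g$ along characteristics. The simpler route, which I prefer, is to avoid PDEs altogether: triangulate $T$ finely with a triangulation whose combinatorics is fixed independently of $f$. I then choose a piecewise affine homeomorphism that fixes the boundary vertices and moves the interior vertices so that each image triangle has area equal to (prescribed density) $\times$ (original area). This gives a Jacobian that is piecewise constant on triangles, matching $f$ on each unit subsquare.

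\textbf{Step 2: realizing the target areas.} On a fixed triangulation refining the four subsquares, the vertex displacements solve a system of area equations. Concretely, I will move the center vertex $(1,1)$ and the edge midpoints $(1,0)$, $(0,1)$, \dots{} only along interior directions, which is impossible for boundary midpoints. So instead I use a collar: inside each subsquare I place a smaller concentric square, and I take a fixed triangulation of each annular collar. The mass exchange between subsquares is carried out by moving the common central vertex $(1,1)$ and the interior edge midpoints $(1,\tfrac12)$, $(1,\tfrac32)$, $(\tfrac12,1)$, $(\tfrac32,1)$. These midpoints are interior points of $T$, so they are free to move. The four quadrant areas become prescribed numbers $a_{ij}$, and there are enough free parameters: shifting the central cross appropriately gives each quadrant region area $a_{ij}$. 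Within each image region I then need a map from the unit square onto a quadrilateral-like polygon of the correct area with uniform density. Here I apply the standard fact that a piecewise affine map between star-shaped polygons, coned from an interior point, can be adjusted to have constant Jacobian by radial reparametrization. This yields exactly $\Jac=f$ a.e.

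\textbf{Step 3: the biLipschitz bound, which I expect to be the main obstacle.} When $K$ is large, the displaced regions become thin, and the image polygons have aspect ratio of order $K$. Each triangle then has distortion polynomial in $K$, namely $O(K^2)$ from the ratio of the diameter squared to the area. The radial constant-Jacobian correction must also have polynomially bounded derivatives, which follows from the explicit formula $r\mapsto \sqrt{\cdot}$ applied to star-shaped domains whose inradius is at least $cK^{-1}$. Combining boundedly many such pieces gives $\bL(\Phi)\le C'K^{m}$ for some fixed $m$. Since $K\ge1$, this can be absorbed as $K^C$ when $K\ge2$. The case $K$ near $1$ needs separate care, because $K^C\to1$ forces $\Phi$ close to the identity. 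There I would check that all of the displacements above are $O(K-1)$, so that $\bL(\Phi)\le 1+O(K-1)\le K^C$ for $C$ large. This small-oscillation case is the step I would verify most carefully.
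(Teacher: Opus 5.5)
There is a genuine gap, and it is at the one step that has to produce $\Jac(\Phi)=f$ a.e.\ while keeping $\Phi$ equal to the identity on $\partial T$ and continuous across the cross. The paper does not prove this lemma; it quotes it from \cite[Proposition~3.2]{BuragoKleiner-2002}, so your argument has to stand on its own.

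The ``standard fact'' you use in Step~2 is false once boundary values are prescribed. Consider a radial map $c+t(p-c)\mapsto c'+s(t,p)\,(\beta(p)-c')$, with $p\in\partial Q$ and $t\in[0,1]$. Whatever the profile $s$ is, it sends the cone from $c$ over a boundary arc $A$ onto the cone from $c'$ over $\beta(A)$. So a constant Jacobian $J$ forces
\[
\vol\bigl(\operatorname{cone}_{c'}\beta(A)\bigr)=J\,\vol\bigl(\operatorname{cone}_{c}A\bigr)\qquad\text{for every arc }A\subset\partial Q .
\]
This is a condition on the boundary correspondence $\beta$, and no radial reparametrization can change it. In your construction $\beta$ is not at your disposal. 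On each quadrant it is the identity on the two sides lying in $\partial T$. Along the displaced cross it must be the same map seen from both neighbouring quadrants.

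For the lower-left quadrant, the identity on the bottom and left sides already forces $c'=a_{00}c$. After that, each straight piece of the displaced cross must be parametrized affinely, and it imposes one area condition from each of its two sides. With your five moving points this gives sixteen scalar conditions (one redundant). The unknowns are the ten coordinates of the moving points, plus whatever freedom you allow in the source cone points. Your choice of the cross only arranges the three conditions saying that the quadrant totals equal $a_{ij}$. Whether the full system has solutions with nondegenerate, positively oriented cones for every $f$, with geometry controlled polynomially in $K$, is essentially the whole content of the lemma, and you do not address it.

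The triangulation route of Step~1 has the same defect in a cruder form. With all boundary vertices fixed, prescribing the area of every triangle of a fixed triangulation with $V_i$ interior and $V_b$ boundary vertices imposes $2V_i+V_b-2$ conditions (one redundant) on only $2V_i$ unknowns. There is no reason for that system to be solvable for all $f$.

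As a result, Step~3 estimates a map that has not been constructed. The claims that the image regions have aspect ratio $O(K)$ and inradius $\gtrsim K^{-1}$, and that the radial correction obeys polynomial bounds, are not derived from any specific configuration. The regime $K\to1$ has the same problem. There you need $\norm{D\Phi-I}_\infty=O(K-1)$, which requires the solution of the corrected system to depend Lipschitz-continuously on $f$ near the constants. For an overdetermined system this is not automatic, and bounds on displacements alone do not give it.

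To close the gap you need a real mechanism for achieving a constant Jacobian with prescribed boundary values on each piece, together with polynomial bounds in $K$. Two possibilities are:
\begin{enumerate}
\item an explicit solution of the full set of cone-area conditions, letting the source cone points move as well;
\item a quantitative lemma extending a biLipschitz homeomorphism between the boundaries of two polygons with area ratio $J$ to a biLipschitz map with constant Jacobian $J$.
\end{enumerate}
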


Relying on the above lemma, we sketch the proof of Burago and Kleiner's sufficient condition for rectifiability in $d=2$. 
\begin{proof}[Sketch of proof of Theorem \ref{thm:BK_sufficient_condition}]
    {\bf Step 1:} Let $\tau_0$ denote the tiling of $\R^2$ by unit squares with vertices at $\Z^2$. By rescaling $\Lambda$, we may assume that every tile of $\tau_0$ contains at most one point of $\Lambda$. Define a Voronoi-type tiling $\tau_\Lambda=\{T_x\}_{x\in \Lambda}$ by 
    \begin{equation}\label{eq:Voronoi-type}
    T_x = \bigcup\{Q\in\tau_0 \mid \forall x'\in \Lambda, \dist(x,Q)\le \dist(x',Q)\},
    \end{equation}    
    (compare this with \eqref{eq:Voronoi_tiling}), where if a cube $Q$ is of equal distance from multiple points in $\Lambda$, it is arbitrarily assigned to exactly one of them. 
    Define a function $f:\R^2\to\R_{\ge 0}$, which is constant on unit lattice squares, by 
    \[
    f(y) = \begin{cases}
        \frac{1}{\vol(T_x)},& \exists x\in \Lambda, y\in\inter(T_x). \\
    1,& \text{otherwise}. 
    \end{cases}
    \]
    Then, as in the proof of (1) implies (2) in Theorem \ref{thm:Jac_implies_BL}, it suffices to show that $f$ can be realized a.e. as the Jacobian of a biLipschitz homeomorphism of $\R^2$. Furthermore, note that for a cube $Q(x,R)$ with $R$ large, the difference between $\int_{Q(x,R)}f$ and $\#(\Lambda\cap Q(x,R))$ is bounded by a linear function of $R$. Since the denominator in \eqref{eq:BK_discrepancy} is quadratic in $R$ (since $d=2$), replacing $\#(\Lambda\cap Q(x,R))$ by $\int_{Q(x,R)}f$ will not affect the convergence in \eqref{eq:BK_series}.  

    {\bf Step 2:} We show that $f$, as above, can be realized a.e. as a Jacobian. More generally, we show that if $g:\R^2\to\R_{\ge 0}$ is constant on open lattice unit squares, and satisfies 
    \[
\sum_{k=1}^\infty\sup_{x\in 2^k\Z^d}\left|\frac{\int_{Q(x,2^k)}g}{\alpha\cdot 2^{kd}}-1\right| < \infty,
    \]
    for some suitable $\alpha>0$, then $g$ can be realized a.e. as a Jacobian. Furthermore, as explained in the proof of Proposition \ref{prop:convergenct_prod_vs_sum}, it suffice to make the weaker assumption, namely that $\prod_{k=1}^\infty E_g(\alpha,2^k)<\infty$, where 
    \[
    E_g(\alpha,R)=\sup_{x\in R\Z^d}\left\{\max\left(
\frac{\int_{Q(x,R)}g}{\alpha\cdot R^d} , \frac{\alpha\cdot R^d}{\int_{Q(x,R)}g}
\right)\right\}.
    \]

    {\bf Step 3:} Observe that if we can solve the Jacobian problem for $g/\alpha$ then we can also solve it for $g$ by postcomposing a scaling map. Thus, we may assume that $\alpha=1$. 
    Denote by $\tau_n$ the tiling of $\R^2$ by $2^n$-squares with vertices at $2^n\Z^2$ and let $T^{(n)}_{i,j}$ denote the tile of $\tau_n$ with lower left corner at $(i,j)\in 2^n\Z^2$. Set $g_0=g$, and for each $n\ge 1$, denote by $g_n$ the function whose value at each $x\in\inter(T^{(n)}_{i,j})$ is the average of $g$ on $T^{(n)}_{i,j}$. 
    
    We now apply Lemma \ref{lem:BK_technical_lemma} on $g_{n-1}/g_n$ in $T^{(n)}_{i,j}$ for each $i,j\in 2^n\Z$ separately. This yields a biLipschitz homeomorphism $\psi_n:\R^2\to\R^2$, for every $n\in\N$, with 
    \[
    \Jac(\psi_n) = \frac{g_{n-1}}{g_n}=\frac{g_{n-1}}{g_n\circ\psi_n} \text{ a.e.},
    \]
    where the second equality holds because $\psi_n$ maps every
    $T^{(n)}_{i,j}$ to itself and $g_n$ is constant on that square. Furthermore, 
    \[
    \bL(\psi_n)\le \left(\frac{\max (g_{n-1}/g_n)}{\min (g_{n-1}/g_n)}\right)^C.
    \]
    Note that    
    \begin{align*}
       \frac{\max (g_{n-1}/g_n)}{\min (g_{n-1}/g_n)} &\le \frac{\max g_{n-1} \cdot \max g_n }{\min g_n \cdot \min g_{n-1}} \\
    & \le 
    \frac{\sup_{x\in 2^{n}\Z^d} 
    \left(
    \frac{\int_{Q(x,2^n)}g}{2^{nd}}
    \right) \cdot 
    \sup_{x\in 2^{n-1}\Z^d} 
    \left(
    \frac{\int_{Q(x,2^{n-1})}g}{2^{(n-1)d}}
    \right)}
    {\inf_{x\in 2^{n}\Z^d} 
    \left(
    \frac{\int_{Q(x,2^n)}g}{2^{nd}}
    \right) \cdot 
    \inf_{x\in 2^{n-1}\Z^d} 
    \left(
    \frac{\int_{Q(x,2^{n-1})}g}{2^{(n-1)d}}
    \right)}
    \\
    & \le \left(E_g(1,2^n)\right)^2 \cdot \left(E_g(1,2^{n-1})\right)^2. 
    \end{align*}
Thus 
    \[
    \bL(\psi_n)\le E_g(1,2^n)^{2C} \cdot E_g(1,2^{n-1})^{2C}.
    \]
    Setting $\phi_n=\psi_n\circ\cdots\circ\psi_1:\R^2\to\R^2$ yields 
    \[
    \bL(\phi_n)\le E_g(1,1)^{2C}\left(\prod_{k=1}^n E_g(1,2^k)\right)^{4C} \le \left(\prod_{k=0}^\infty E_g(1,2^k)\right)^{4C} <\infty,
    \]
    and by the chain rule, the Jacobian factors telescope:
    \[
    \Jac(\phi_n)
    =
    \frac{g_0}{g_n\circ\phi_n}.
    \]
    Every $\psi_j$ with $j\le n$ preserves each square of
    $\tau_j$, and therefore preserves the larger squares on which
    $g_n$ is constant. Thus
    $g_n\circ\phi_n=g_n$, and consequently
    \[
    \Jac(\phi_n)=\frac{g}{g_n}.
    \]
    
    Finally, recall that $g_n$ has average $1$ and is constant on $2^n\times 2^n$ squares with vertices in $2^n\Z^2$. Then, since $\bL(\phi_n)$ is uniformly bounded, we may apply Arzel\`a--Ascoli to obtain a partial limit $\Phi:\R^2\to\R^2$, which is a biLipschitz homeomorphism with Jacobian $g$ a.e., as required. 
\end{proof}

To conclude this part, we note that although the condition
\eqref{eq:BK_series} in Theorem
\ref{thm:BK_sufficient_condition} may be difficult to verify,
it remains an important general condition for rectifiability. It has
been used to establish rectifiability for several major families of
Delone sets, including primitive substitution Delone sets and
certain cut-and-project sets. It was also used by Aliste-Prieto,
Coronel, and Gambaudo to prove the following theorem.

\begin{thm}
    Let $\Lambda\subset\R^d$ be a linearly repetitive Delone set.
    Then $\Lambda$ is rectifiable.
\end{thm}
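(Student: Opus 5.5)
The plan is to verify the Burago--Kleiner condition \eqref{eq:BK_series} and then invoke Theorem \ref{thm:BK_sufficient_condition}. Concretely, we aim to show that there exist $\alpha>0$, $C>0$ and $\beta>0$ with
\[
\Delta_\Lambda(\alpha,R)\le C R^{-\beta} \quad\text{for all } R\ge 1.
\]
Summing this bound over $R=2^k$ gives a convergent geometric series, so \eqref{eq:BK_series} holds.

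The key input is the power-law convergence of patch frequencies in linearly repetitive sets. This is the Lagarias--Pleasants type estimate, and its proof follows the sketch below.

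1. Let $R(r)\le Kr$ be the repetitivity function. Every cube of side $Kr$ contains a copy of every $r$-patch.
2. Fix a cube $Q$ of side $r$ and let $N(r)$ range over the counts $\#(\Lambda\cap Q(x,r))$. Using FLC and linear repetitivity, a cube of side $\rho r$, with $\rho$ a fixed large constant, can be tiled by subcubes of side $r$.
3. Each such subcube contains, inside a slightly smaller cube, a copy of a patch realizing the current extreme count.
4. Consequently the normalized oscillation
\[
\omega(r)=\frac{\max_x \#(\Lambda\cap Q(x,r))-\min_x\#(\Lambda\cap Q(x,r))}{r^d}
\]
satisfies a contraction $\omega(\rho r)\le (1-c)\,\omega(r)+O(1/r)$ with $c>0$ depending only on $K$ and $d$.

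The term $O(1/r)$ comes from boundary effects. These are controlled by the covering and separation constants, since a boundary layer of width $O(1)$ in a cube of side $r$ contains $O(r^{d-1})$ points.

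Iterating the contraction gives $\omega(r)\le C r^{-\beta}$ with $\beta>0$. It follows that the normalized counts $\#(\Lambda\cap Q(x,r))/r^d$ form a Cauchy family with this rate, so they converge uniformly in $x$ to a density $\alpha>0$. Here $\alpha$ is positive because $\Lambda$ is relatively dense. The same argument yields $|\#(\Lambda\cap Q(x,r))/(\alpha r^d)-1|\le C' r^{-\beta}$, which is exactly the desired bound on $\Delta_\Lambda(\alpha,r)$.

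The main obstacle is step 4, the contraction estimate. To get a strict gain one needs the following: when comparing a large cube with the extremal density, a definite proportion of its volume is covered by copies of a cube realizing the opposite extreme. This uses linear repetitivity to place such copies with bounded gaps, so their total volume is a fixed fraction $c$ of the big cube.

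I would first try the standard approach. Fix two $r$-cubes $Q_{\max}$ and $Q_{\min}$ achieving the near-extremal counts, and partition a $\rho r$-cube into $Kr$-blocks, each containing translates of both. Comparing counts then shows that the max over $\rho r$-cubes is pulled down and the min pushed up by a definite fraction of $\omega(r)$. If this proves awkward, an alternative is to cite the known result of Lagarias--Pleasants (or Aliste-Prieto--Coronel--Gambaudo) that linearly repetitive sets have uniform patch frequencies with power-law discrepancy, and then conclude directly by Theorem \ref{thm:BK_sufficient_condition}.
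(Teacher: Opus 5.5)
Your top-level route matches the survey's. It gives no proof of its own and attributes the theorem to Aliste-Prieto, Coronel and Gambaudo, who obtain it by verifying the Burago--Kleiner condition \eqref{eq:BK_series}, which is what a power-rate bound $\Delta_\Lambda(\alpha,R)=O(R^{-\beta})$ gives you.

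The gap is in step~4, the only step with real content. Linear repetitivity places a copy $Q_{\min}+v$ of an extremal $r$-cube somewhere in each $Kr$-block. But $v$ is unrelated to your grid of $r$-subcubes, so the block minus the copy is not a union of $r$-cubes, and ``comparing counts'' produces no gain. Already for $d=1$, covering $[0,Kr]\setminus[v,v+r]$ with $v\notin r\Z$ takes $K$ intervals of length $r$, not $K-1$. So all you get is $\#(\Lambda\cap\text{block})\le K\,M(r)+m(r)$, which is worse than the trivial bound $K\,M(r)$. In any dimension, the region you must control is the union of the grid cubes meeting the copy, minus the copy. It is a union of slabs of thickness less than $r$ and volume comparable to $r^d$. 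You would need its count to within a small fraction of $\omega(r)r^d$. But thin slabs behave like smaller scales, where the oscillation exceeds the gain you are trying to harvest. So this is not a boundary layer of width $O(1)$, and the $O(1/r)$ term in your contraction is unjustified. Step~3 is also off as written: copies of extremal $r$-patches are guaranteed in $Kr$-blocks, not inside $r$-subcubes. Falling back on a citation of Aliste-Prieto--Coronel--Gambaudo would be circular, since this estimate is the substance of their proof of this very theorem.

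The missing idea is to run the extremal argument over a family of test sets that is closed under the re-partitioning you need. For $r\ge1$, let $\mathcal B(r)$ be the closed boxes all of whose side lengths lie in $[r,2r]$. Put $\overline D(r)=\sup_B\#(\Lambda\cap B)/\vol(B)$ and $\underline D(r)=\inf_B\#(\Lambda\cap\inter B)/\vol(B)$, both over $B\in\mathcal B(r)$.

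Fix $L$, depending on $K$ and $d$, so large that every $B\in\mathcal B(Lr)$ contains, at distance at least $r$ from its faces, a translate of the point pattern of a box in $\mathcal B(r)$ whose density is within $\varepsilon$ of $\underline D(r)$. Because both gaps to the faces are at least $r$, each coordinate interval of $B$ can be cut into pieces of length in $[r,2r]$, with the copy's projection as one of them. The product partition of $B$ then contains the copy as an exact piece, and all other pieces lie in $\mathcal B(r)$. The only remaining error comes from points on boundaries of pieces, $O(r^{d-1})$ per piece. That is genuinely $O(1/r)$ in density, and it yields
\[
\overline D(Lr)\le\overline D(r)-(2L)^{-d}\bigl(\overline D(r)-\underline D(r)\bigr)+O(1/r).
\]
The symmetric bound for $\underline D$ follows in the same way, using copies of near-maximal boxes.

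Hence $\overline D-\underline D=O(r^{-\beta})$. The intervals $[\underline D(L^kr),\overline D(L^kr)]$ are nested up to summable errors and shrink to some $\alpha>0$. Since cubes of side $R$ lie in $\mathcal B(R)$, you get $\Delta_\Lambda(\alpha,2^k)=O(2^{-k\beta})$, and your appeal to Theorem~\ref{thm:BK_sufficient_condition} then goes through.
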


A similar result holds in the non-FLC setting, see
\cite{SmilanskySolomon2-2022}.

\subsection{An explicit encoding obstruction to rectifiability.}
Following \cite{BhatDymond-2025}, a measurable function
$\rho:[0,1]^d\longrightarrow\R_{>0}$
is called \emph{non-realisable} if there is no biLipschitz homeomorphism
$\Phi:[0,1]^d\longrightarrow\R^d$ satisfying
\[
\Jac(\Phi)=\rho
\qquad\text{a.e. on }[0,1]^d.
\]

We say that a Delone set $\Lambda\subset\R^d$
\emph{encodes the density $\rho$} if there exist numbers
$r_n\to\infty$, closed cubes $Q_n\subset\R^d$ of edge length $r_n$,
and bijective affine maps
$\phi_n:[0,1]^d\longrightarrow Q_n$ such that the measures
\[
\mu_n
=
\frac1{r_n^d}
\sum_{x\in\Lambda\cap Q_n}
\delta_{\phi_n^{-1}(x)}
\]
converge weakly to $\rho(u)du$ on $[0,1]^d$. The following proposition appears in \cite{BhatDymond-2025}.

\begin{prop}
    Let $\Lambda\subset\R^d$ be a Delone set. If $\Lambda$ encodes
    a non-realisable density $\rho$, then $\Lambda$ is
    non-rectifiable.
\end{prop}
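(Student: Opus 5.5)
We argue by contraposition: assuming $\Lambda$ is rectifiable, we show that any density $\rho$ encoded by $\Lambda$ is realisable. Let $g:\Lambda\to\Z^d$ be an $L$-biLipschitz bijection, and let $r_n$, $Q_n$, $\phi_n$, $\mu_n$ be as in the definition of encoding, with $\mu_n\to\rho(u)\,du$ weakly.

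\textbf{Step 1: rescaled maps and a limit.} Define $g_n:\phi_n^{-1}(\Lambda\cap Q_n)\to\R^d$ by
\[
g_n(u)=\frac{1}{r_n}\bigl(g(\phi_n(u))-g(\phi_n(u_0))\bigr)
\]
for a fixed base point. Since $\phi_n$ scales distances by exactly $r_n$ (up to the fixed ratio between the cube $[0,1]^d$ and the edge length), each $g_n$ is $L$-biLipschitz on its domain. As $n$ grows, these domains become $O(1/r_n)$-dense in $[0,1]^d$. Extend $g_n$ to $[0,1]^d$ as Lipschitz maps, for instance by piecewise-affine interpolation or McShane/Kirszbraun extension, with a uniform constant. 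By Arzel\`a--Ascoli, a subsequence converges uniformly to a Lipschitz map $\Phi:[0,1]^d\to\R^d$. The lower bound $\norm{g_n(u)-g_n(v)}\ge L^{-1}\norm{u-v}-O(1/r_n)$ passes to the limit, so $\Phi$ is $L$-biLipschitz. Invariance of domain then makes it a homeomorphism onto its image.

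\textbf{Step 2: identify the Jacobian.} Push $\mu_n$ forward by $g_n$. Each point of $\Lambda\cap Q_n$ goes to a point of $r_n^{-1}\Z^d$ (after translation), and $g_n$ is injective, so $(g_n)_*\mu_n$ is $r_n^{-d}$ times counting measure on a subset of a translate of $r_n^{-1}\Z^d$. For an open set $U$ whose closure lies in the interior of $\Phi([0,1]^d)$, uniform convergence and the biLipschitz control imply that every lattice point of $r_n^{-1}\Z^d$ in $U$ is, up to $O(1/r_n)$ boundary errors, hit by some point of the domain. This holds because $g$ is surjective onto $\Z^d$ and $g^{-1}$ is Lipschitz: a lattice point near $\Phi(u)$ has preimage near $\phi_n(u)$, hence inside $Q_n$. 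Consequently $(g_n)_*\mu_n$ converges weakly to Lebesgue measure on $\Phi([0,1]^d)$. On the other hand, $(g_n)_*\mu_n\to\Phi_*(\rho\,du)$, since $\mu_n\to\rho\,du$ weakly and $g_n\to\Phi$ uniformly; this uses the standard fact that weak convergence of uniformly bounded-mass measures is preserved under uniformly convergent continuous pushforwards. Therefore $\Phi_*(\rho\,du)=\mathrm{Leb}|_{\Phi([0,1]^d)}$.

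\textbf{Step 3: conclude.} By the change of variables formula for biLipschitz maps, $\Phi_*(\rho\,du)=\mathrm{Leb}$ is equivalent to $\vol(\Phi(A))=\int_A\rho$ for all Borel sets $A$. By Lebesgue differentiation, $\absolute{\Jac(\Phi)}=\rho$ almost everywhere. If the orientation is reversed, compose $\Phi$ with a reflection, which does not affect the realisability of the density. Then $\rho$ is realisable, a contradiction.

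\textbf{Main obstacle.} The hardest step is the surjectivity and boundary bookkeeping in Step 2: showing that the pushforward measures fill Lebesgue measure on the image without mass escaping near $\partial Q_n$. I would handle it by testing only against compactly supported continuous functions on the interior of $\Phi([0,1]^d)$, where the Lipschitz bound on $g^{-1}$ places every preimage at distance $\gg 1/r_n$ inside $Q_n$. Both measures are absolutely continuous, so $\partial\Phi([0,1]^d)$ is null and this restriction loses nothing.
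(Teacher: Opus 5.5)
Your proposal is correct and follows essentially the same route as the paper's sketch. You rescale the biLipschitz bijection on $\Lambda\cap Q_n$, extend and extract an Arzel\`a--Ascoli limit $\Phi$, identify the limit of the pushed-forward empirical measures both as Lebesgue measure on $\Phi([0,1]^d)$ (using surjectivity onto $\Z^d$) and as $\Phi_*(\rho\,du)$, and conclude by change of variables plus a reflection. Your boundary bookkeeping (testing only against functions compactly supported in the interior of the image, and noting that $\Phi(\partial[0,1]^d)$ is null for both measures) fills in details the paper leaves implicit; the only cosmetic fix is that the base point must be chosen in $\phi_n^{-1}(\Lambda\cap Q_n)$ for each $n$ rather than fixed once.
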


\begin{proof}[Sketch of proof]
    Suppose that there exists a biLipschitz bijection
    \[
    f:\Lambda\longrightarrow\Z^d.
    \]
    Restrict $f$ to $\Lambda\cap Q_n$, rescale both its domain and
    range by the factor $r_n^{-1}$, and normalize by a translation.
    The resulting maps are defined on subsets of $[0,1]^d$ that
    become dense as $n\to\infty$, and they have uniformly bounded
    biLipschitz constants.

    After extending the maps and passing to a subsequence,
    Arzel\`a--Ascoli gives a biLipschitz map
    \[
    \Phi:[0,1]^d\longrightarrow\R^d.
    \]
    Since $f$ is a bijection onto $\Z^d$, the pushforwards of the
    rescaled empirical measures converge to Lebesgue measure on
    $\Phi([0,1]^d)$. On the other hand, the encoding assumption
    implies that these pushforwards converge to
    $\Phi_\#\bigl(\rho(u)\,du\bigr)$.
    It follows that
    \[
    \Phi_\#\bigl(\rho(u)\,du\bigr)
    =
    \mathbf 1_{\Phi([0,1]^d)}(y)\,dy.
    \]
    The change of variables formula therefore gives
    \[
    \absolute{\det D\Phi(u)}=\rho(u)
    \qquad\text{for a.e. }u\in[0,1]^d.
    \]
    After composing with a reflection if necessary, this contradicts
    the non-realisability of $\rho$.
\end{proof}

Thus, encoding a non-realisable density provides a checkable
sufficient condition for non-rectifiability. The following converse
question remains open.

\begin{open}
    If a Delone set $\Lambda\subset\R^d$ is non-rectifiable, must
    $\Lambda$ encode a non-realisable density?
\end{open}

\section{BD equivalence}\label{sec:BD}
Bounded displacement equivalence arises naturally in the study of large-scale geometry of discrete sets and random point processes (see also \S \ref{sec:beyond BL and BD}) in the context of lattice perturbations. Indeed, it follows immediately from Definition \ref{def:BD} that for a Delone set in $\R^d$ to be BD equivalent to a lattice, namely to be uniformly spread, simply means to be a uniformly bounded perturbation of one. It is therefore natural to attempt to classify which sets are equivalent to a lattice, and to explore properties of those that are not. Moreover, since BD equivalence implies BL equivalence (see Proposition \ref{prop:BD_implies_BL}), BD equivalence also arises in the study of biLipschitz equivalence of Delone sets. One example is the appearance of BD equivalence in the arguments of Burago--Kleiner and McMullen, where it appears as a crucial intermediate step (see the proof of Theorem \ref{thm:Jac_implies_BL}, (1) implies (2)). Another occurs in the search for non-rectifiable constructions, where it is a natural first step to explore sets that are not uniformly spread.

\subsection{The Laczkovich criterion for uniform spreadness} 
A key insight into the structure of uniformly spread Delone sets arises from the work of Laczkovich in relation to his solution of Tarski's circle-squaring problem, where he showed that being uniformly spread is equivalent to having uniformly strong discrepancy bounds. 
To state Laczkovich's result in full detail, denote by $\HHH(r)$ the set of all finite unions of axis-parallel cubes in $\R^d$ of side length $r$ with vertices in $r\Z^d$, for $r>0$, and set $\HHH=\HHH(1)$. Recall that $F^{+\varepsilon}$ denotes the $\varepsilon$-neighborhood of a set $F$ as in \eqref{eq:eps_neighborhood_of_sets}, $\disc{\alpha}{\Lambda}{B}$ is the discrepancy of $\Lambda$ with respect to $B$ and $\alpha$ is as in Definition \ref{def:discrepancy}. The following result is known as Laczkovich's criterion and appeared in \cite[Theorem~1.1]{Laczkovich-1992}. 

\begin{thm}\label{thm:Laczkovich}
The following are equivalent for a discrete set $\Lambda \subset \R^d$ and $\alpha > 0$:
\begin{enumerate}
    \item\label{item_thm:Laczkovich_1}
    There exists a constant $C > 0$ such that for every bounded measurable set $B \subset \R^d$ we have 
    \[
    \disc{\alpha}{\Lambda}{B}\le C \cdot \vol\left( (\partial B)^{+1} \right).
    \]
    \item\label{item_thm:Laczkovich_2}
    There exists a constant  $C > 0$ such that for every $U\in\HHH$ we have 
    \[
    \disc{\alpha}{\Lambda}{U}\le C \cdot \vol_{d-1}(\partial U).
    \]
    \item\label{item_thm:Laczkovich_3}
    There exists a bijection $\phi : \Lambda \to \alpha^{-1/d} \Z^d$ such that
    \[
    \sup_{x \in \Lambda} \norm{\phi(x) - x} < \infty.
    \]
\end{enumerate}
\end{thm}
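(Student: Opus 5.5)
I will prove the cycle of implications (1)$\Rightarrow$(2)$\Rightarrow$(3)$\Rightarrow$(1), working throughout with $\alpha=1$. The general case follows by rescaling $\Lambda$ by $\alpha^{1/d}$, which multiplies volumes, boundary areas, and neighborhoods by bounded factors.

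\emph{(1)$\Rightarrow$(2).} Every $U\in\HHH$ is a finite union of unit lattice cubes. Hence $(\partial U)^{+1}$ is contained in the union of the unit cubes meeting the $1$-neighborhood of the $(d-1)$-dimensional faces of $\partial U$. The number of such cubes is at most a dimensional constant $c_d$ times the number of unit facets in $\partial U$, and that number equals $\vol_{d-1}(\partial U)$. So $\vol((\partial U)^{+1})\le c_d\vol_{d-1}(\partial U)$, and (2) follows with constant $Cc_d$.

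\emph{(3)$\Rightarrow$(1).} Let $M=\sup_x\norm{\phi(x)-x}$. For a bounded measurable $B$, the difference $\#(\Lambda\cap B)-\#(\Z^d\cap B)$ is controlled by points that $\phi$ carries across $\partial B$. Every such point, and every such image, lies in $(\partial B)^{+M}$. Since $\Z^d$ and $\Lambda$ are uniformly discrete, both counts are $O(\vol((\partial B)^{+M+1}))$; the uniform discreteness of $\Lambda$ follows from its being a bounded perturbation of $\Z^d$. Similarly $\absolute{\#(\Z^d\cap B)-\vol(B)}$ is bounded by the number of unit cubes $m+[-\frac12,\frac12]^d$ meeting $\partial B$, hence by $\vol((\partial B)^{+\sqrt d})$ up to a constant. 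It remains to compare $\vol((\partial B)^{+s})$ with $\vol((\partial B)^{+1})$ for fixed $s>1$. Cover $(\partial B)^{+s}$ by balls $B(y_j,s+1)$, where $\{y_j\}\subset\partial B$ is a maximal $1$-separated set. The balls $B(y_j,\frac12)$ are disjoint and lie in $(\partial B)^{+1}$, which gives $\vol((\partial B)^{+s})\le c(s,d)\vol((\partial B)^{+1})$.

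\emph{(2)$\Rightarrow$(3).} This is the main step. I follow the proof of Theorem~\ref{thm:same_volume_tiles} and apply Hall's marriage theorem (Theorem~\ref{thm:Hall}) to the bipartite graph on $\Lambda\uplus\Z^d$ whose edges join points at distance at most $K$, for a large constant $K$ to be chosen. The graph is locally finite once we know $\Lambda$ is uniformly discrete or at least locally finite, and (2) applied to a single unit cube gives a uniform bound on $\#(\Lambda\cap Q)$ for each unit cube $Q$.

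Fix a finite $A'\subset\Lambda$. Let $U\in\HHH$ be the union of unit lattice cubes meeting $A'$, and let $U_K$ be the union of unit cubes within distance about $K$ of $U$, so that $\Z^d\cap U_K$ is contained in $N(A')$, up to boundary adjustments. The goal is
\[
\#A'\le\#(\Lambda\cap U)\le\vol(U)+C\vol_{d-1}(\partial U)\le\vol(U_K)\le\#N(A').
\]
The third inequality is the key isoperimetric-type claim: for $U\in\HHH$, the thickening satisfies $\vol(U_K\minus U)\ge c K\vol_{d-1}(\partial U)$ for some $c>0$. I expect this to be the main obstacle, because $U$ may have complicated shape and thin or nested parts. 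I would try to prove it by noting that each unit boundary facet of $U$ has, on its outward side, a column of unit cubes of length about $K$ lying in $U_K\minus U$, and that each cube of $U_K\minus U$ lies in at most $O(K^{d-1})$ such columns, one for each facet in a $K$-window. If this column counting loses too much, I would instead restrict to the facets of one fixed direction, which carry a $1/(2d)$ fraction of the boundary, and use the columns perpendicular to them. Each cube is hit by at most one column from each such facet line, so the counting there is exact. Choosing $K>C/c$ then closes the inequality.

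The reverse Hall condition, for finite $B'\subset\Z^d$, is handled symmetrically. Let $U$ be the union of the cubes of $B'$ and use (2) in the form $\#(\Lambda\cap U_K)\ge\vol(U_K)-C\vol_{d-1}(\partial U_K)$. The same thickening estimate applies, together with the bound $\vol_{d-1}(\partial U_K)\lesssim K^{d-1}\vol_{d-1}(\partial U)$; to avoid that factor, I would thicken in $d$ successive steps of bounded width. Hall's theorem then yields the perfect matching, which is a bijection with displacement at most $K$.
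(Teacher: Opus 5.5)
Your (1)$\Rightarrow$(2) matches the paper, but your (2)$\Rightarrow$(3) has a genuine gap. The thickening estimate $\vol(U_K\minus U)\ge cK\vol_{d-1}(\partial U)$, with $c$ independent of $K$, is false for general $U\in\HHH$. Let $U$ be a cube of side $L$ with the unit cubes based at the points of $2\Z^d$ in its interior removed. Each hole contributes $2d$ facets, so $\vol_{d-1}(\partial U)\asymp L^d$. But any thickening immediately fills every hole, so $\vol(U_K\minus U)\le (L/2)^d+O(KL^{d-1})$, and the ratio tends to $\frac1{2d}$ as $L\to\infty$ instead of growing like $K$. Both of your column arguments fail at exactly this point: the outward column from a facet of a hole re-enters $U$ after a single cube, whichever direction you restrict to. Worse, for this $U$ the chain $\vol(U)+C\vol_{d-1}(\partial U)\le\vol(U_K)$ itself fails for every fixed $K$ once $L$ is large (as soon as $2dC>1$). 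So no argument that applies condition~(2) to the fine set $U$ can verify Hall's condition. Your reverse direction has the same defect, and the ``successive thickenings'' do not address it.

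The missing idea is to coarse-grain before applying (2). For an integer $r$, let $U_r\in\HHH(r)\subset\HHH$ be the union of the cubes of the grid $r\Z^d$ meeting $A'$, and let $U'$ be the union of the concentric $3r$-cubes. Each exposed facet of $U_r$ has area $r^{d-1}$, while the grid cube reflected across it has volume $r^d$ and interior in $U'\minus U_r$. Each such cube occurs at most $2d$ times, so $\vol(U'\minus U_r)\ge\frac{r}{2d}\vol_{d-1}(\partial U_r)$; this is Lemma~\ref{lem:U'-U}(2). It gives $\#A'\le\vol(U_r)+C\vol_{d-1}(\partial U_r)\le\vol(U')\le\#N(A')$ for $r\ge 2dC$ and edge length $K=3\sqrt d\,r$. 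For finite subsets of the lattice, build $U_r$ from them and split $\#(\Lambda\cap U')=\#(\Lambda\cap U_r)+\#(\Lambda\cap(U'\minus U_r))$. Bound the first term below by (2), and the second by counting $\Lambda$ in the reflected $r$-cubes, each of which contains $\gtrsim r^d$ points by (2) once $r$ is large; this avoids estimating $\partial U'$ altogether. This is exactly the paper's route via Proposition~\ref{prop:BD_criterion} and the proof of Theorem~\ref{thm:non_BD_criterion}.

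Two smaller points. Under (3), $\Lambda$ need not be uniformly discrete: in $\R$, the set $\{2k,\,2k+\frac{1}{|k|+1}\}$ maps onto $\Z$ with displacement at most $1$. Your (3)$\Rightarrow$(1) survives if you count the distinct images $\phi(x)$ instead. Also, dilation by $\alpha^{1/d}$ does not preserve $\HHH$, so for (2) you should run the argument directly against $\alpha^{-1/d}\Z^d$. Apart from these, your self-contained (3)$\Rightarrow$(1), including the packing proof of Lemma~\ref{lem:Lacz_tech_lemma}(1), is correct where the paper only cites Laczkovich.
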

Note that the equivalence between conditions \eqref{item_thm:Laczkovich_1} and \eqref{item_thm:Laczkovich_3}, in dimension $2$, was proved earlier in \cite[Theorem~3.1]{KuipersNiederreiter-1974}. Laczkovich's criterion is the primary tool used to prove and disprove BD equivalence between a Delone set $\Lambda$ and a lattice. We elaborate further on applications of this theorem in \S \ref{sec:substitution}, \S \ref{sec:multiscale} and \S\ref{sec:C&P}.

Before we discuss the proof, we note that the next corollary of Theorem \ref{thm:Laczkovich} is often used in applications.

\begin{cor}\label{cor:Lacz_with_other_cubes}
    For every $r>0$, the condition below is equivalent to the conditions in the theorem.
    \begin{enumerate}
        \item[(2')] There exists a constant  $C > 0$ such that for every $U\in\HHH(r)$ we have 
    \[
    \disc{\alpha}{\Lambda}{U}\le C \cdot \vol_{d-1}(\partial U).
    \]   
    \end{enumerate} 
\end{cor}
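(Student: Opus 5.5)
We show that (2') is equivalent to condition \eqref{item_thm:Laczkovich_3}, by rescaling and applying Theorem \ref{thm:Laczkovich} to the rescaled set. Fix $r>0$ and put $\Lambda'=r^{-1}\Lambda$ and $\alpha'=\alpha r^d$. The dilation $y\mapsto r^{-1}y$ maps $\HHH(r)$ bijectively onto $\HHH$. For $U\in\HHH(r)$ we have $\#(\Lambda\cap U)=\#(\Lambda'\cap r^{-1}U)$ and $\alpha\vol(U)=\alpha' \vol(r^{-1}U)$, so
\[
\disc{\alpha}{\Lambda}{U}=\disc{\alpha'}{\Lambda'}{r^{-1}U}.
\]
Also $\vol_{d-1}(\partial U)=r^{d-1}\vol_{d-1}(\partial(r^{-1}U))$. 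Hence (2') for $(\Lambda,\alpha)$ with constant $C$ is the same as condition \eqref{item_thm:Laczkovich_2} for $(\Lambda',\alpha')$ with constant $C r^{d-1}$.

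By Theorem \ref{thm:Laczkovich}, condition \eqref{item_thm:Laczkovich_2} for $(\Lambda',\alpha')$ is equivalent to the existence of a BD bijection $\phi':\Lambda'\to \alpha'^{-1/d}\Z^d = r^{-1}\alpha^{-1/d}\Z^d$. Set $\phi(x)=r\phi'(r^{-1}x)$. This is a bijection $\Lambda\to\alpha^{-1/d}\Z^d$, and its displacement is $r$ times that of $\phi'$. The converse direction works the same way, so (2') is equivalent to condition \eqref{item_thm:Laczkovich_3} for $(\Lambda,\alpha)$. Discreteness of $\Lambda'$ is clearly preserved under dilation.

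The only real check is the scaling bookkeeping: we need $\alpha'^{-1/d}=r^{-1}\alpha^{-1/d}$ and the right power of $r$ in the boundary measure. Both follow directly from homogeneity, since volume scales by $r^d$ and $(d-1)$-dimensional boundary measure scales by $r^{d-1}$.

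As an independent sanity check, the implication \eqref{item_thm:Laczkovich_1}$\Rightarrow$(2') can also be seen directly, without rescaling. Every $U\in\HHH(r)$ satisfies $\vol((\partial U)^{+1})\le c_{d,r}\vol_{d-1}(\partial U)$. To see this, note that $\partial U$ is a union of $(d-1)$-faces of side $r$, and the $1$-neighborhood of each face has volume bounded by a constant times $r^{d-1}$.
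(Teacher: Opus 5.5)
Your proof is correct and is essentially the intended argument. The paper leaves this corollary as an exercise, to be deduced from the equivalence (2)$\Leftrightarrow$(3) of Theorem~\ref{thm:Laczkovich}, and your dilation $\Lambda\mapsto r^{-1}\Lambda$, $\alpha\mapsto r^{d}\alpha$ does exactly that: it maps $\HHH(r)$ onto $\HHH$, preserves discrepancies, scales boundary measure by $r^{d-1}$, and carries BD-maps to BD-maps onto $\alpha^{-1/d}\Z^d$.
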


\begin{exe}
    Deduce Corollary \ref{cor:Lacz_with_other_cubes} from the equivalence \eqref{item_thm:Laczkovich_2} $\Leftrightarrow$ \eqref{item_thm:Laczkovich_3} in Theorem \ref{thm:Laczkovich}.
\end{exe}

The proof of the Laczkovich criterion, namely Theorem \ref{thm:Laczkovich}, relies on Hall's marriage theorem (appearing here as Theorem \ref{thm:Hall}) and on the following technical lemma. 

\begin{lem}\label{lem:Lacz_tech_lemma}
There exists a constant $C$, depending only on $d$, such that 
    \begin{enumerate}
        \item \label{item_lem:Laczkovich_tech_1}
        For every $b\ge 1$ and every bounded set $B \subset \R^d$ we have 
        \[
        \vol\left((\partial B)^{+b}\right) \le C\cdot b^d\cdot \vol\left((\partial B)^{+1}\right) .
        \]
    \item\label{item_lem:Laczkovich_tech_2}
        For every $r\ge 1$ and $U \in \HHH(r)$ we have 
        \[
         \vol\left((\partial U)^{+1}\right)  \le C\cdot \vol_{d-1}(\partial U).
        \]
    \end{enumerate}
\end{lem}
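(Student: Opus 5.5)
For part \eqref{item_lem:Laczkovich_tech_1} I will use a covering argument. Set $A=\partial B$, which is nonempty because $B$ is bounded (I assume $B\neq\emptyset$ and $B\neq\R^d$). I choose a maximal $1$-separated subset $\{a_1,\dots,a_N\}\subset A$. This set is finite, since $A$ is bounded. By maximality, every point of $A$ lies within distance $1$ of some $a_j$. It follows that
\[
A^{+b}\subset\bigcup_j B(a_j,b+1),
\qquad\text{hence}\qquad
\vol\bigl(A^{+b}\bigr)\le N\,\omega_d\,(b+1)^d\le N\,\omega_d\,2^d b^d ,
\]
where $\omega_d$ is the volume of the unit ball. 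On the other hand, the balls $B(a_j,1/2)$ are pairwise disjoint and contained in $A^{+1}$. Therefore
\[
N\,\omega_d\,2^{-d}\le\vol\bigl(A^{+1}\bigr).
\]
Combining the two estimates gives $\vol(A^{+b})\le 4^d b^d\vol(A^{+1})$. So $C=4^d$ works for part (1).

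For part \eqref{item_lem:Laczkovich_tech_2}, write $U$ as a union of closed cubes $Q_i$ of side $r$ with vertices in $r\Z^d$. The boundary $\partial U$ is a union of $(d-1)$-dimensional faces of the grid $r\Z^d$, namely exactly those faces $F$ that separate a cube in $U$ from a cube not in $U$. Let $M$ be the number of such faces. Then $\vol_{d-1}(\partial U)=M r^{d-1}$, because distinct faces overlap only in sets of $(d-1)$-measure zero.

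Next, $(\partial U)^{+1}=\bigcup_F F^{+1}$. Since $r\ge1$, each neighbourhood $F^{+1}$ is contained in a box of dimensions $(r+2)^{d-1}\times 2$. This gives
\[
\vol\bigl(F^{+1}\bigr)\le 2(r+2)^{d-1}\le 2\cdot3^{d-1}r^{d-1}.
\]
Summing over the $M$ faces yields $\vol((\partial U)^{+1})\le 2\cdot 3^{d-1}\vol_{d-1}(\partial U)$. I then take $C$ to be the maximum of the constants from the two parts.

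The main point requiring care is the exact identification of $\vol_{d-1}(\partial U)$ with $M r^{d-1}$. One needs that $\partial U$ is precisely the union of the separating faces, which follows from the grid structure. One also needs that lower-dimensional overlaps between faces have zero $(d-1)$-measure. The covering step in part (1) is routine once the maximal separated set is chosen.
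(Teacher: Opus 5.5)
Your proof is correct, and it gives the explicit constant $C=\max\{4^d,\,2\cdot 3^{d-1}\}=4^d$. The paper has no argument of its own here: it refers to \cite[Lemma~2.1]{Laczkovich-1992} for (1) and leaves (2) as an exercise based on the proof of \cite[Lemma~2.2]{Laczkovich-1992}. Your maximal $1$-separated net for (1), comparing a packing with a covering, and your face-by-face bound on $(\partial U)^{+1}$ for (2) therefore make the lemma self-contained. Your argument for (1) never uses that $A$ is a boundary, so it holds for every bounded $A\subset\R^d$.

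The step you flagged, that $\partial U$ is the union of the separating faces, is routine. Take $y\in\partial U$, and let $S$ be the set of coordinates of $y$ lying in $r\Z$. The grid cubes containing $y$ are indexed by $\{0,1\}^S$, and two cubes whose indices differ in one coordinate share a $(d-1)$-face through $y$. Some but not all of these cubes belong to $U$; otherwise $y\in\inter U$ or $y\notin U$. Since the hypercube graph is connected, some adjacent pair is split between $U$ and its complement, which gives a separating face through $y$.
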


The reader is referred to \cite[Lemma~2.1]{Laczkovich-1992} for the proof of \eqref{item_lem:Laczkovich_tech_1}.

\begin{exe}
    Deduce \eqref{item_lem:Laczkovich_tech_2} from the proof of \cite[Lemma~2.2]{Laczkovich-1992}.
\end{exe}

Indeed, the implication \eqref{item_thm:Laczkovich_1} $\Rightarrow$ \eqref{item_thm:Laczkovich_2} in Theorem \ref{thm:Laczkovich} follows directly from \eqref{item_lem:Laczkovich_tech_2} of Lemma \ref{lem:Lacz_tech_lemma}.  
Our main focus here is the equivalence between \eqref{item_thm:Laczkovich_2} and \eqref{item_thm:Laczkovich_3}, which we prove in  \S \ref{subsec:extending_Lacz} in a more general setting, namely for two non-lattice Delone sets.
For the implication \eqref{item_thm:Laczkovich_3} $\Rightarrow$ \eqref{item_thm:Laczkovich_1}, which is crucial in refuting BD equivalence to a lattice (see examples in \S \ref{sec:substitution} and \S \ref{sec:multiscale}), Laczkovich uses a sophisticated argument to directly bound the quantities
$\pm\left(\#(\Lambda\cap B)-\alpha\vol(B)\right)$ for a measurable set $B$. The reader is referred to \cite[p.42]{Laczkovich-1992} for the full proof.  
However, when considering non-BD equivalence of Delone sets, it is often sufficient to choose a suitably regular set $B$, for example, a finite union of convex compact cells that are tiles of the corresponding Voronoi tiling. These can in turn be approximated by elements of $\HHH(r)$ for small $r$, with an error of order $\vol_{d-1}(\partial B)$, and thus condition \eqref{item_thm:Laczkovich_2} suffices. 

Several immediate applications of Laczkovich’s theorem include the following.

\begin{exe}
    Reprove Proposition \ref{prop:BD_implies_BL} for uniformly spread Delone sets by verifying that Laczkovich's condition implies the Burago--Kleiner condition in Theorem \ref{thm:BK_sufficient_condition}.
\end{exe}

\begin{exe}\label{exe:BD_of_the_entire_hull}
    Prove that if $\Lambda$ is BD equivalent to a lattice, then every $\Lambda'\in X_\Lambda$ is BD equivalent to that lattice.
\end{exe}

\begin{exe}
    Prove that if $\Lambda_1,\ldots,\Lambda_n$ are pairwise disjoint Delone sets in $\R^d$, each is BD equivalent to some lattice, then their union $\bigcup_{i=1}^n \Lambda_i$ is also BD to a lattice.      
\end{exe}

\begin{remark}
Even when the existence of a BD-map from a Delone set to a lattice can be established using Laczkovich's theorem, an explicit description of the bijection is hard to find since the argument relies on Hall's marriage theorem. However, there are examples where this can be achieved, as shown in \cite{AkiyamaHamadaIto-2025}, where explicit BD-maps are used to construct aperiodic tile sets.    
\end{remark}

\subsection{A general criterion for BD equivalence}\label{subsec:extending_Lacz}
Laczkovich’s criterion provides a quantitative method for determining whether a given Delone set is BD equivalent to a lattice, viewed as a canonical reference set. The same underlying geometric idea can be extended to compare two arbitrary Delone sets, neither of which needs to be a lattice. This leads to a generalized criterion, established in \cite{FrettlohSmilanskySolomon-2021} and \cite{SmilanskySolomon1-2022}, for BD equivalence between arbitrary pairs of Delone sets, formulated in terms of their discrepancy, with respect to sets $U\in \HHH$.

\begin{thm}\label{thm:non_BD_criterion}
	Let $\Lambda_1, \Lambda_2\subset\R^d$ be two Delone sets. Then $\Lambda_1 \stackrel{\mathrm{BD}}{\nsim} \Lambda_2$ 
    if and only if there exists a sequence $(U_m)_{m\in\N}$ of sets in $\HHH$ for which
	\begin{equation}\label{eq:non_BD_condition}
	\lim_{m\to\infty} \frac{| \#(\Lambda_1 \cap U_m) - \#(\Lambda_2 \cap U_m) |}{\vol_{d-1}(\partial U_m)} = \infty.
	\end{equation}   
\end{thm}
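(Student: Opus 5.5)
We prove the contrapositive form of both directions: $\Lambda_1\bd\Lambda_2$ holds if and only if there is a constant $C>0$ with
\[
\absolute{\#(\Lambda_1\cap U)-\#(\Lambda_2\cap U)}\le C\cdot\vol_{d-1}(\partial U)\qquad\text{for every }U\in\HHH.
\]
The existence of a sequence $(U_m)$ satisfying \eqref{eq:non_BD_condition} is exactly the failure of such a uniform bound, since every $U\in\HHH$ has $\vol_{d-1}(\partial U)\ge 2d$. The argument mirrors the equivalence \eqref{item_thm:Laczkovich_2} $\Leftrightarrow$ \eqref{item_thm:Laczkovich_3} of Theorem \ref{thm:Laczkovich}, with the lattice replaced by the second Delone set.

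\textbf{Necessity.} Suppose $\phi:\Lambda_1\to\Lambda_2$ is a BD-map with $\sup\norm{\phi(x)-x}\le M$. For $U\in\HHH$, the map $\phi$ sends $\Lambda_1\cap U$ into $\Lambda_2\cap U^{+M}$, and $\phi^{-1}$ sends $\Lambda_2\cap U$ into $\Lambda_1\cap U^{+M}$. Since $\phi$ is a bijection, the difference $\#(\Lambda_1\cap U)-\#(\Lambda_2\cap U)$ is bounded in absolute value by the number of points of $\Lambda_1\cup\Lambda_2$ lying in $(\partial U)^{+M}$. By uniform discreteness, this number is at most $c\cdot\vol((\partial U)^{+M+1})$. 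Lemma \ref{lem:Lacz_tech_lemma}\eqref{item_lem:Laczkovich_tech_1} bounds this by $C'\vol((\partial U)^{+1})$, and Lemma \ref{lem:Lacz_tech_lemma}\eqref{item_lem:Laczkovich_tech_2} bounds that in turn by $C''\vol_{d-1}(\partial U)$. Hence no sequence as in \eqref{eq:non_BD_condition} exists.

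\textbf{Sufficiency.} Assume the uniform bound with constant $C$. Fix a large integer $k$ and build the bipartite graph $G$ on $\Lambda_1\uplus\Lambda_2$, joining $x$ to $y$ whenever $\norm{x-y}\le k$. This graph is locally finite because the sets are uniformly discrete. A perfect matching in $G$ is a BD-map with displacement at most $k$, so by Theorem \ref{thm:Hall} it suffices to check Hall's condition on both sides; by symmetry, we check only finite $A'\subset\Lambda_1$.

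Let $U$ be the union of the unit cubes of the $\Z^d$-grid that meet $A'$, so $U\in\HHH$ and $A'\subset U$. For $k>2\sqrt d$, every point of $\Lambda_2\cap U$ lies within $\sqrt d$ of a point of $A'$... but this is not quite enough, because $\Lambda_2\cap U$ is not automatically contained in $N(A')$. The fix is to use the enlarged union $V$ of all grid cubes within distance $\sqrt d$ of $U$. Then $V\in\HHH$, every point of $\Lambda_2\cap U$ is in $N(A')$, and $\#N(A')\ge\#(\Lambda_2\cap U)$. Combining,
\[
\#A'\le\#(\Lambda_1\cap U)\le \#(\Lambda_2\cap U)+C\vol_{d-1}(\partial U),
\]
which leaves an error term of boundary size. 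To absorb it we use the thick layer $W$ of points of $\Lambda_2$ lying in $U^{+k/2}\setminus U$, all of which are in $N(A')$ once we pass to suitable grid unions. So the goal is to show that $\#(\Lambda_2\cap(U^{+k/2}\setminus U))\ge C\vol_{d-1}(\partial U)$ for $k$ large, with $C$ as in the hypothesis.

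This last estimate is the \textbf{main obstacle}. For a general $U\in\HHH$, with many components or thin pieces, a lower bound for the volume of a one-sided collar by $\vol_{d-1}(\partial U)$ must hold uniformly. My first attempt is to use relative denseness, which gives about $\vol(\text{collar})/R^d$ points in the collar, and then an isoperimetric-type argument: the collar of width $k/2$ around a union of unit cubes has volume at least $c_d\,k\,\vol_{d-1}(\partial U)$, or at least $c_d\vol_{d-1}(\partial U)$ after discretizing. A cleaner route is to prove instead the inequality $\#(\Lambda_1\cap U)\le\#(\Lambda_2\cap U')$ with $U'=U^{+k/2}$ rounded to a grid union, and apply the hypothesis to $U'$. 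The bound is then
\[
\#(\Lambda_1\cap U)\le\#(\Lambda_2\cap U)+\#(\Lambda_2\cap(U'\setminus U)),
\]
where the first term is controlled by the hypothesis. If the combinatorics of the collar proves delicate, an alternative is to follow Laczkovich: first reduce to $U\in\HHH(r)$ for a large $r$ via Corollary \ref{cor:Lacz_with_other_cubes}-style rescaling. At scale $r$ the boundary terms are comparable to the number of boundary cubes, and a collar of one coarse cube contains at least $c\,r^d/R^d$ points per boundary cube, which exceeds $C\cdot r^{d-1}$ for every boundary face once $r$ is large.
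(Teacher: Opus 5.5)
Your reformulation and the necessity direction are correct. The necessity direction is the paper's argument (Proposition \ref{prop:BD_criterion} combined with Lemma \ref{lem:U'-U}(1)) written out directly from the BD-map.

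\textbf{The gap in sufficiency.} The sufficiency direction has a genuine gap at exactly the step you flag. Moreover, the goal you set there is not merely unproven but false at unit scale. Take $d=2$ and $\Lambda_2=10\Z^2+(\frac12,\frac12)$. Let $U$ be the union of the closed unit squares $[i,i+1]\times[j,j+1]$ with $0\le i,j<N$ and $i+j$ even; this is the $U$ generated by any $A'$ with one point in each such square.
\begin{itemize}
\item Every interior grid edge lies in $\partial U$, so $\vol_{1}(\partial U)\asymp N^2$.
\item Every point of $\Lambda_2$ in $[0,N]^2$ lies in $U$, so $\#(\Lambda_2\cap(U^{+k/2}\minus U))=O(kN+k^2)$.
\end{itemize}
Hence for no fixed $k$ can you get $\#(\Lambda_2\cap(U^{+k/2}\minus U))\ge C\vol_{1}(\partial U)$ for all such $U$. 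The same example shows two further failures. The bound $\vol(U^{+k/2}\minus U)\ge c_dk\vol_{d-1}(\partial U)$ fails once $k$ is large, since the left side is about $N^2/2$ while $k\vol_1(\partial U)\approx 2kN^2$. And unit-size pieces of the collar (the odd squares) can contain no points at all.

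Your ``cleaner route'' through $U^{+k/2}$ does not close the gap either: it only transfers the same collar estimate to the new set. Separately, your worry that $\Lambda_2\cap U\not\subset N(A')$ is unfounded. Every cube of $U$ meets $A'$, so $\Lambda_2\cap U\subset N(A')$ as soon as $k\ge\sqrt d$, and the auxiliary set $V$ is unnecessary.

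\textbf{The missing ingredient.} What is needed is the coarse-scale collar bound, Lemma \ref{lem:U'-U}(2). Your last sentence gestures at this, and it is how the paper proceeds.

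No rescaling is needed, and you should not appeal to Corollary \ref{cor:Lacz_with_other_cubes}. That corollary concerns comparison with a lattice, and within the paper it is derived from the equivalence in Theorem \ref{thm:Laczkovich} that is obtained from the present theorem, so using it would be circular. Instead, note that for an integer $r$ we have $\HHH(r)\subset\HHH$, so your hypothesis applies directly to unions of $r$-cubes.

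The argument then runs as follows.
\begin{enumerate}
\item Given a finite $A'\subset\Lambda_1$, let $U\in\HHH(r)$ be the union of the $r$-cubes meeting $A'$, and let $U'$ be the union of the concentric $3r$-cubes.
\item For each exposed face, reflect the adjacent cube of $U$ across it. This gives an exterior $r$-cube whose interior lies in $U'\minus U$.
\item Each exterior cube arises from at most $2d$ faces, and there are $\vol_{d-1}(\partial U)/r^{d-1}$ faces.
\item By relative denseness, for $r$ large each such cube contains at least $cr^d$ points of $\Lambda_2$.
\item Hence $\#(\Lambda_2\cap(U'\minus U))\ge\frac{c}{2d}\,r\vol_{d-1}(\partial U)$.
\item Every point of $U'$ lies within $3\sqrt d\,r$ of $A'$. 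So for $r\ge 2dC/c$ and $k\ge 3\sqrt d\,r$,
\[
\#A'\le\#(\Lambda_2\cap U)+C\vol_{d-1}(\partial U)\le\#(\Lambda_2\cap U')\le\#N(A').
\]
\end{enumerate}
The same argument with the roles of $\Lambda_1$ and $\Lambda_2$ swapped gives Hall's condition on the other side. This is the contrapositive of the first half of the paper's proof. There, a set $V_m$ violating Hall's condition is coarse-grained to $U_m\in\HHH(m)$, and the ratio in \eqref{eq:non_BD_condition} is shown to exceed $C_2m$.
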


The following exercise provides a nice application of Theorem~\ref{thm:non_BD_criterion}.
\begin{exe}\label{cor:different_densities_implies_not_BD}
	Let $\Lambda_1,\Lambda_2\subset\R^d$ be Delone sets such that $\Lambda_1$ has central density  $\alpha$ (see Definition \ref{def:central_density}), while $\Lambda_2$ does not have central density $\alpha$. Prove that $\Lambda_1 \stackrel{\mathrm{BD}}{\nsim} \Lambda_2$. 
\end{exe}

The proof of Theorem \ref{thm:non_BD_criterion} relies on Proposition \ref{prop:BD_criterion} (see \cite[Proposition~3.1]{FrettlohSmilanskySolomon-2021}), which extends Laczkovich’s idea by relating BD equivalence to Hall’s marriage theorem.

\begin{prop}\label{prop:BD_criterion}
	Given two Delone sets $\Lambda_1, \Lambda_2\subset\R^d$, there  exists a BD-map $\phi:\Lambda_1 \to \Lambda_2$ if and only if there exists $s>0$ such that for every $U\in\HHH$ we have    \begin{equation}\label{eq:Hall_with_cubes}
	 \#(\Lambda_1 \cap U) \le \#(\Lambda_2 \cap U^{+s}) \qquad
	 \text{ and }  \qquad
	 \#(\Lambda_2 \cap U) \le \#(\Lambda_1 \cap  U^{+s}).
	\end{equation}
\end{prop}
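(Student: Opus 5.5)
We prove the two directions separately. The forward direction is elementary, and the backward direction reduces to Hall's marriage theorem (Theorem \ref{thm:Hall}) applied to a suitable locally finite bipartite graph.

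\emph{BD-map implies \eqref{eq:Hall_with_cubes}.} Suppose $\phi:\Lambda_1\to\Lambda_2$ is a bijection with $\sup_{x}\norm{\phi(x)-x}=M<\infty$, and take any $s>M$. For $U\in\HHH$, every $x\in\Lambda_1\cap U$ satisfies $\phi(x)\in U^{+s}$. Since $\phi$ is injective, this gives $\#(\Lambda_1\cap U)\le\#(\Lambda_2\cap U^{+s})$. Applying the same argument to $\phi^{-1}$, which is also a BD-map with the same bound, gives the second inequality.

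\emph{\eqref{eq:Hall_with_cubes} implies a BD-map.} Fix $s$ as in the hypothesis. Define the bipartite graph $G=(\Lambda_1\uplus\Lambda_2,E)$ by joining $x\in\Lambda_1$ and $y\in\Lambda_2$ whenever $\norm{x-y}\le s'$, with $s'=s+2\sqrt d$. Uniform discreteness of $\Lambda_2$ makes $G$ locally finite. A perfect matching in $G$ is exactly a BD-map with displacement at most $s'$, so it suffices to verify the two conditions of Theorem \ref{thm:Hall}.

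Let $A'\subset\Lambda_1$ be finite, and let $U\in\HHH$ be the union of the closed unit lattice cubes that meet $A'$. Then $A'\subseteq\Lambda_1\cap U$, so the hypothesis gives
\[
\#A'\le\#(\Lambda_1\cap U)\le\#(\Lambda_2\cap U^{+s}).
\]
Every point of $U^{+s}$ lies within distance $s$ of some cube of $U$. That cube has diameter $\sqrt d$ and contains a point of $A'$, so every point of $U^{+s}$ lies within $s+\sqrt d\le s'$ of some point of $A'$. Hence $\Lambda_2\cap U^{+s}\subseteq N(A')$, and so $\#A'\le\#N(A')$. The symmetric argument, using the second inequality in \eqref{eq:Hall_with_cubes}, handles finite $B'\subset\Lambda_2$.

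Theorem \ref{thm:Hall} is stated for infinite graphs, and $G$ is infinite because both sets are Delone. The theorem then yields a perfect matching, and the associated bijection moves every point by at most $s'$.

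The one point requiring care is the choice of the test set: it must belong to $\HHH$ and contain $A'$, while its $s$-neighborhood must still lie inside the $s'$-neighborhood of $A'$ itself. Taking the union of unit cubes meeting $A'$ handles both requirements. This costs only the additive $2\sqrt d$ (in fact $\sqrt d$ would suffice) in the edge threshold, which is harmless since only finiteness of the displacement matters. Everything else is a direct application of Hall's theorem.
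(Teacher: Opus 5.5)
Your proof is correct and follows essentially the same route as the paper. Both apply Hall's theorem to the distance-threshold bipartite graph, taking as test set the union of unit cubes meeting the finite set, so that $U^{+s}$ lies in its $(s+\sqrt d)$-neighborhood. The only differences are cosmetic: you derive the forward direction directly from injectivity of the BD-map rather than from Hall's condition for the matching graph, which amounts to the same thing. Also note that local finiteness of $G$ uses uniform discreteness of both $\Lambda_1$ and $\Lambda_2$, not only $\Lambda_2$.
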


\begin{proof}
	The existence of a BD-map from $\Lambda_1$ to $\Lambda_2$ is equivalent to the existence of some $r>0$ for which the locally finite bipartite graph
	\begin{equation*}
	  G_r = \big(V_1\uplus V_2,E_r\big) = \big( \Lambda_1 \uplus \Lambda_2, \big\{ \{x,y\}
	\mid x \in \Lambda_1, y \in \Lambda_2, \norm{x-y} \le r \big\} \big)
    \end{equation*}
    admits a perfect matching. 
	Thus, by Hall’s marriage theorem, it remains to show that there exists $s>0$ satisfying \eqref{eq:Hall_with_cubes} for all $U \in \HHH$ if and only if there exists $r > 0$ such that
    \begin{equation}\label{eq:Hall's_condition}
    \#F_1 \le \#N_{G_r}(F_1) \qquad \text{and} \qquad \#F_2 \le \#N_{G_r}(F_2)
    \end{equation}
    hold for all finite subsets $F_i \subset \Lambda_i$. 
    
     For a set of vertices $V$ in $G_r$, denote by $U(V)\in\HHH$ the union of all the unit cubes that intersect $V$. Since the diameter of a unit cube is $\sqrt{d}$, we clearly have $U(V)^{+s}\subset V^{+(s+\sqrt{d})}$. Thus, assuming \eqref{eq:Hall_with_cubes}, we obtain \eqref{eq:Hall's_condition} with $r=s+\sqrt{d}$ by 
     \[
     \#F_1\le \#(\Lambda_1\cap U(F_1))\le \#(\Lambda_2\cap U(F_1)^{+s}) 
     \le \#(\Lambda_2\cap F_1^{+(s+\sqrt{d})})= \# N_{G_{s+\sqrt{d}}}(F_1),
     \]
     and similarly for $F_2\subset\Lambda_2$. 
     Conversely, given $U\in \HHH$, let $F_1=\Lambda_1\cap U$.  Then 
     \[
     \#(\Lambda_1 \cap U) = \# F_1 \le \# N_{G_r}(F_1)
     \le \#(\Lambda_2 \cap U^{+r}).
     \]
     By the symmetry between the sides of $G_r$, the proof is complete. 
\end{proof}

The following lemma connects the quantities $\#(\Lambda \cap (U'\minus U))$ and $\vol_{d-1}(\partial U)$, for $U\in\HHH(r)$ and certain sets $U'$ containing $U$. The lower bound appears in \cite{Laczkovich-1992}, at the end of the proof of Lemma 2.3. We include this elegant argument here for completeness. 

\begin{lem}\label{lem:U'-U}
    Let $\Lambda\subset\R^d$ be a Delone set. There exist constants
    $C_1,C_2>0$ and $r_0>0$, depending only on $\Lambda$ and $d$,
    such that:
    \begin{enumerate}
        \item\label{item_lem:U'-U_1}
        For every $U\in\HHH$ and every $s>1$,
        \[
        \#\bigl(\Lambda\cap(U^{+s}\minus U)\bigr)
        \le
        C_1s^d\vol_{d-1}(\partial U).
        \]

        \item
        Let $r\ge r_0$ and $U\in\HHH(r)$. Write
        $U=\bigcup_{i=1}^kQ_i$, where the $Q_i$ are cubes with vertices in $r\Z^d$.
        For every $i$, let $Q_i'$ be the cube of side length $3r$ concentric with $Q_i$, and set $U'=\bigcup_{i=1}^kQ_i'$. Then
        \begin{equation*}
        C_2r\vol_{d-1}(\partial U)
        \le
        \#\bigl(\Lambda\cap(U'\minus U)\bigr).
        \end{equation*}
    \end{enumerate}
\end{lem}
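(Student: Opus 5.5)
Both parts will be proved by replacing point counts with volumes and volumes with boundary areas. Delone sets let us compare counts and volumes in both directions: balls of radius $r_\Lambda/2$ around points are disjoint, and every ball of radius $R_\Lambda$ contains a point.

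\emph{Part (1).} Every point of $\Lambda$ carries a disjoint ball of radius $r_\Lambda/2$. So the number of points in a set $A$ is at most $\vol(A^{+r_\Lambda/2})/\vol(B(0,r_\Lambda/2))$. Take $A=U^{+s}\minus U$ and note that $A\subset(\partial U)^{+s}$. Then $A^{+r_\Lambda/2}\subset(\partial U)^{+s+r_\Lambda/2}$, and we may assume $r_\Lambda/2\le 1<s$, so this lies inside $(\partial U)^{+2s}$. Lemma~\ref{lem:Lacz_tech_lemma}(1) gives
\[
\vol\bigl((\partial U)^{+2s}\bigr)\le C(2s)^d\vol\bigl((\partial U)^{+1}\bigr),
\]
and Lemma~\ref{lem:Lacz_tech_lemma}(2) with $r=1$ bounds $\vol((\partial U)^{+1})$ by $C\vol_{d-1}(\partial U)$. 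Multiplying the constants gives $C_1$.

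\emph{Part (2).} I would follow Laczkovich's counting of boundary facets. Write $\partial U$ as a union of $(d-1)$-faces of the grid $r\Z^d$, each of area $r^{d-1}$. Every such face $\sigma$ separates a cube $Q_i\subset U$ from a grid cube $P$ of side $r$ that is adjacent to $Q_i$ but not contained in $U$. Since $P\subset Q_i'$, we get $P\subset U'\minus U$. Let $N$ be the number of such outer cubes $P$. A cube $P$ has only $2d$ faces, so it is paired with at most $2d$ boundary faces. Hence
\[
N\ge \frac{\vol_{d-1}(\partial U)}{2d\,r^{d-1}}.
\]
The outer cubes have disjoint interiors. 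Choose $r_0$ large compared with $R_\Lambda$, say $r_0=4R_\Lambda$. Then each $P$ contains a ball of radius $R_\Lambda$, hence at least one point of $\Lambda$. In fact it contains at least $c\,r^d$ points, since $P$ contains $\gtrsim (r/R_\Lambda)^d$ disjoint balls of radius $R_\Lambda$. To avoid double counting, restrict to the interior ball inside each $P$. Summing over the $N$ cubes gives
\[
\#\bigl(\Lambda\cap(U'\minus U)\bigr)\ge N c\,r^d\ge \frac{c}{2d}\,r\vol_{d-1}(\partial U).
\]
This is the claim with $C_2=c/(2d)$.

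The main obstacle is the combinatorial claim in part (2) that each boundary face produces an adjacent grid cube lying in $U'\minus U$, with bounded multiplicity. This holds because $U$ is a union of grid cubes: the cube across any boundary face is a grid cube not contained in $U$, and it lies within distance $r$ of $Q_i$, hence inside $Q_i'$. The remaining step is bookkeeping to make sure the lattice-point lower bound in each $P$ is uniform. That is exactly why we need $r\ge r_0$.
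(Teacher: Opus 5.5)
Your proposal is correct and follows essentially the same route as the paper. In part (1) you bound the count by $\vol\bigl((\partial U)^{+(s+\rho)}\bigr)$ using disjoint balls around the points of $\Lambda$, and then apply both parts of Lemma~\ref{lem:Lacz_tech_lemma}. In part (2) you use the paper's construction: take the exterior grid cube across each exposed face (the paper's reflected cube $P_j$, whose interior lies in $U'\minus U$), use the multiplicity bound $2d$, and use a lower bound of order $r^d$ on the number of points of $\Lambda$ in an open cube of side $r\ge r_0$. The differences are cosmetic: you count distinct exterior cubes instead of summing with multiplicity, and your normalization $r_\Lambda/2\le 1$ should be read as replacing the ball radius by $\min\{r_\Lambda/2,1\}$ (or as using $s+\rho\le(1+\rho)s$), which is harmless.
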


\begin{proof}
    Let $r_\Lambda>0$ be the separation constant of $\Lambda$ and
    set $\rho=\frac{r_\Lambda}{3}$. So the balls $\{B(x,\rho)\}_{x\in\Lambda}$, are pairwise disjoint. Since $U^{+s}\minus U\subset(\partial U)^{+s}$, we obtain
    \begin{align*}
    &\#\bigl(\Lambda\cap(U^{+s}\minus U)\bigr)
      \vol(B(0,\rho))\le
    \vol\left((\partial U)^{+(s+\rho)}\right).
    \end{align*}
    Applying both parts of Lemma \ref{lem:Lacz_tech_lemma}
    and using $s>1$ gives
    \[
    \vol\left((\partial U)^{+(s+\rho)}\right)
    \le
    C s^d\vol_{d-1}(\partial U),
    \]
    which proves the first assertion.

    For the second assertion, relative denseness of $\Lambda$
    implies that there exist constants $c_\Lambda>0$ and
    $\ell_0>0$ such that every open cube $P$ of edge length
    $\ell\ge\ell_0$ satisfies
    \begin{equation}\label{eq:Delone_cube_lower_bound}
    \#(\Lambda\cap P)\ge c_\Lambda\ell^d.
    \end{equation}

    Write $\partial U=\bigcup_{j=1}^tF_j$,
    where the $F_j$ are the exposed $(d-1)$-dimensional
    faces of the cubes forming $U$. 
    For every $j$, let $P_j$ be the reflection across $F_j$ of the
    cube of $U$ adjacent to $F_j$. Then $P_j$ is an exterior
    $r$-cube contained in $U'$, apart from its common boundary face
    with $U$, and thus $\inter P_j\subset U'\minus U$. 
    In the sequence $P_1, \ldots, P_t$, the same cube can appear at most $2d$ times, which is the number of possible reflections of a cube. Then by
    \eqref{eq:Delone_cube_lower_bound}, for every
    $r\ge \ell_0$, we have
    \[
    \#(\Lambda\cap P_j)
    \ge
    c_\Lambda r^d.
    \]
    Therefore,
    \begin{align*}
    2d\,
    \#\bigl(\Lambda\cap(U'\minus U)\bigr)
    &\ge
    \sum_{j=1}^t
    \#(\Lambda\cap P_j)
    \ge
    c_\Lambda tr^d
    =
    c_\Lambda r\vol_{d-1}(\partial U).
    \end{align*}
    Absorbing the factor $2d$ into the constant proves the second
    assertion.
\end{proof}

Having established the necessary preliminaries, we turn to the proof of Theorem~\ref{thm:non_BD_criterion}.

\begin{proof}[Proof of Theorem \ref{thm:non_BD_criterion}]

    First, assume that
    $\Lambda_1\stackrel{\mathrm{BD}}{\nsim}\Lambda_2$.
    By Proposition \ref{prop:BD_criterion}, for every
    $m\in\N$ there exists $V_m\in\HHH$ such that
    \begin{equation*}\label{eq:not_Hall_with_cubes}
	 \#(\Lambda_1 \cap V_m) > \#(\Lambda_2 \cap V_m^{+2\sqrt d m}) \qquad
	 \text{ or }  \qquad
	 \#(\Lambda_2 \cap V_m) > \#(\Lambda_1 \cap  V_m^{+2\sqrt d m}).
	\end{equation*}
    Passing to a subsequence and interchanging the roles of the two
    sets if necessary, we may assume that for every $m$ we have
    \[
    \#(\Lambda_1\cap V_m)
    >
    \#\left(\Lambda_2\cap V_m^{+2\sqrt d\,m}\right)
    \]
    
    Let $U_m\in\HHH(m)$ be the union of the $m$-cubes that
    intersect $V_m$, and let $U_m'$ be the union of the concentric
    $3m$-cubes, as in Lemma \ref{lem:U'-U}. Then
    \[
    V_m\subset U_m\subset U_m'
    \subset V_m^{+2\sqrt d\,m}.
    \]
    Hence
    \[
    \#(\Lambda_1\cap U_m)
    >
    \#(\Lambda_2\cap U_m')
    =
    \#(\Lambda_2\cap U_m)
    +
    \#\bigl(\Lambda_2\cap(U_m'\minus U_m)\bigr).
    \]
    Using the lower bound in Lemma
    \ref{lem:U'-U}, for every sufficiently large $m$ we obtain that 
    \[
    \frac{
        \#(\Lambda_1\cap U_m)-\#(\Lambda_2\cap U_m)
    }{
        \vol_{d-1}(\partial U_m)
    }
    >
    C_2m
    \xrightarrow{m\to\infty}
    \infty.
    \]

    Conversely, let $U_m\in\HHH$ be a sequence satisfying \eqref{eq:non_BD_condition}. For contradiction, assume that $\Lambda_1\bd\Lambda_2$. Proposition \ref{prop:BD_criterion} then implies that there exists a constant $s>0$ so that \eqref{eq:Hall_with_cubes} holds for every $U\in \HHH$.
	Let $m$ be large enough so that 
	\[\absolute{ \#(\Lambda_1 \cap U_m) - \#(\Lambda_2 \cap U_m) } > C_1\cdot s^d\cdot \vol_{d-1}(
	\partial U_m), \]
	where $C_1$ is as in \eqref{item_lem:U'-U_1} of Lemma \ref{lem:U'-U}. By passing to a subsequence, we may assume without loss of generality that
	\[\#\left(\Lambda_1 \cap U_m\right) > \#\left(\Lambda_2 \cap U_m\right) + C_1\cdot s^d\cdot\vol_{d-1}(
	\partial U_m) \]
	holds for every $m\in\N$. Invoking \eqref{item_lem:U'-U_1} of Lemma \ref{lem:U'-U} thus yields that  
	\[\#\left(\Lambda_1 \cap U_m\right) > \#\left(\Lambda_2 \cap U_m\right) + \#\left(\Lambda_2 \cap (U_m^{+s}\minus U_m)\right) = \#\left(\Lambda_2 \cap U_m^{+s}\right),\] 
	contradicting \eqref{eq:Hall_with_cubes}, and the proof is complete.
\end{proof}

\subsection{A dichotomy for BD equivalence}\label{subsec:dichotomy}

Having established the framework and tools for detecting non-BD equivalence between Delone sets, we now turn to a broader perspective and consider the possible range of distinct BD classes that may be represented within the hull of a single Delone set (see Definition \ref{def:hull}). 
Clearly, a Delone set is BD equivalent to each of its global translations. However, non-equivalent Delone sets may still arise as limits of these translations, taken with respect to the Chabauty--Fell topology, see \S\ref{sec:FLC_topology_etc}.
While some hulls of Delone sets exhibit only finitely many BD equivalence classes, others display far more intricate behavior. For a set $X \subset \CC(\R^d)$ of Delone sets, we denote by $\BD(X)$ the number of distinct BD classes represented in $X$. Before discussing a general result on $\BD(X)$, we first present a concrete example that illustrates how multiple BD classes can appear in a simple, non-repetitive setting. 

\begin{example}\label{ex:3_BD_classes}
    Consider the Delone set $\Lambda = (-\N)\cup2\N\subset \R$. The hull $X_\Lambda$ contains three types of sets: translations of $\Lambda$, $\Z$, and $2\Z$. Translations of $\Lambda$ are all equivalent to $\Lambda$, which by Exercise \ref{ex:-N_cup_2N} is non-equivalent to both $\Z$ and $2\Z$. By a direct computation, or by Exercise \ref{cor:different_densities_implies_not_BD}, $\Z$ and $2\Z$ are non-equivalent as well. It follows that $\BD(X_\Lambda)=3$.
\end{example}

The following result was established in \cite{SmilanskySolomon1-2022}. It is a dichotomy on possible values of $\BD(X)$ for minimal spaces of Delone sets, and shows in particular that a phenomenon such as that of Example \ref{ex:3_BD_classes} cannot occur if $\Lambda$ is repetitive.  

\begin{thm}\label{thm:BD_dichotomy}
    Let $X\subset\CC(\R^d)$ be a minimal space of Delone sets. Then the following dichotomy holds: either
    \begin{enumerate}
        \item\label{item_thm1:dichotomy} 
        $\BD(X) = 1$, in which case every $\Lambda\in X$ is BD equivalent to a lattice, or
        \item\label{item_thm2:dichotomy} 
        $\BD(X) = 2^{\aleph_0}$, in which case no $\Lambda\in X$ is BD equivalent to a lattice.
    \end{enumerate}
\end{thm}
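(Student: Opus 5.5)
The plan is to split according to whether some element of $X$ is BD equivalent to a lattice, and to show the two cases lead to alternatives \eqref{item_thm1:dichotomy} and \eqref{item_thm2:dichotomy} respectively.

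\emph{First case.} Suppose some $\Lambda\in X$ is BD equivalent to a lattice, say to $\alpha^{-1/d}\Z^d$. By minimality, $X=X_\Lambda$. By Exercise \ref{exe:BD_of_the_entire_hull}, every $\Lambda'\in X_\Lambda$ is BD equivalent to the same lattice. Hence $\BD(X)=1$, and every element of $X$ is uniformly spread.

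\emph{Second case.} Suppose no element of $X$ is BD equivalent to a lattice. It then suffices to produce $2^{\aleph_0}$ pairwise non-equivalent elements. The upper bound is automatic: $X$ is a compact metric space, so it has cardinality at most $2^{\aleph_0}$.

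The first step is a "local oscillation" lemma. Fix $\Lambda\in X$. I claim that for every $M>0$ there are a set $U\in\HHH(r)$, for suitable large $r$, and two translation vectors $x,y$ such that
\[
\absolute{\#(\Lambda\cap(U+x))-\#(\Lambda\cap(U+y))}>M\vol_{d-1}(\partial U).
\]
I would argue by contradiction. If for every $U\in\HHH(r)$ all translates of $U$ had counts within $C\vol_{d-1}(\partial U)$ of each other, then averaging over translates of large cubes would show that $\Lambda$ has a density $\alpha$. The same bound would give $\disc{\alpha}{\Lambda}{U}\le C'\vol_{d-1}(\partial U)$. By Corollary \ref{cor:Lacz_with_other_cubes} and Theorem \ref{thm:Laczkovich}, $\Lambda$ would then be BD equivalent to $\alpha^{-1/d}\Z^d$, a contradiction. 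The averaging step needs care: the counts along $r\Z^d$-translates must be compared with counts over a large cube made of many copies of $U$.

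The second step is a Cantor-type construction, using minimality to glue patches. I choose rapidly increasing scales $M_k$, windows $U_k$ and pairs $x_k,y_k$ from the first step. I also choose radii $\rho_k$ so large that $\vol_{d-1}(\partial U_k)M_k$ dominates $\rho_{k-1}^d$ and all earlier discrepancies. For each $\omega\in\{0,1\}^{\N}$ I inductively build translates $\Lambda+t^\omega_k$ that agree, up to $\varepsilon_k$ in the metric $D$, on $B(0,\rho_{k-1})$ with the previous stage. At stage $k$ the window $U_k+z_k$, placed at a location $z_k$ independent of $\omega$, sees the configuration of $U_k+x_k$ if $\omega_k=0$ and of $U_k+y_k$ if $\omega_k=1$. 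Minimality (repetitivity in the Chabauty--Fell sense) is what allows this. Both configurations recur in every large enough ball, at positions next to any fixed large patch, after composing translations. This works up to small perturbations, which change counts on $U_k$ only by $O(\vol_{d-1}(\partial U_k))$ by the first part of Lemma \ref{lem:U'-U}. The limit $\Lambda_\omega=\lim_k(\Lambda+t^\omega_k)$ lies in $X$.

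Finally, suppose $\omega,\omega'$ differ at infinitely many $k$. Along those $k$, the sets $\Lambda_\omega$ and $\Lambda_{\omega'}$ have counts on $U_k+z_k$ differing by at least roughly $M_k\vol_{d-1}(\partial U_k)$, minus the negligible accumulated error from earlier stages. After rescaling $\HHH(r)$ to $\HHH$, Theorem \ref{thm:non_BD_criterion} gives $\Lambda_\omega\stackrel{\mathrm{BD}}{\nsim}\Lambda_{\omega'}$. Choosing a family of $2^{\aleph_0}$ sequences that pairwise differ infinitely often (for example, one from each class modulo finite differences) yields $\BD(X)=2^{\aleph_0}$.

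I expect the main obstacle to be this inductive gluing, particularly without FLC. One must ensure that the stage-$k$ insertion does not destroy the earlier windows. One must also ensure that the windows sit at the same location in $\Lambda_\omega$ and $\Lambda_{\omega'}$, so that the difference of counts on a common $U\in\HHH$ is really large. I would first try to handle this by shrinking $\varepsilon_k$ summably and placing $z_k$ far outside $B(0,\rho_{k-1})$.
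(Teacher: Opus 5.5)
Your reduction differs from the paper's, and it is sound. The paper splits on $\BD(X)=1$ versus $\BD(X)>1$. In the second case it starts from two non-equivalent $\Lambda_1,\Lambda_2\in X$, takes windows $U_m$ from the forward direction of Theorem~\ref{thm:non_BD_criterion}, and uses minimality to replace $\Lambda_2\cap U_m$ by $(\Lambda_1+v_m)\cap U_m$. The implication ``$\BD(X)=1\Rightarrow$ uniformly spread'' is deferred to \cite[\S4]{SmilanskySolomon1-2022} (Remark~\ref{rem:one_BD_class}).

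You split instead on whether some element is uniformly spread. The first case is then just Exercise~\ref{exe:BD_of_the_entire_hull}, and your oscillation lemma replaces the paper's steps (1)--(2). The lemma holds for every Delone set, already with $r=1$ and without minimality. Suppose all translates of every $U\in\HHH$ have counts within $M\vol_{d-1}(\partial U)$ of each other. Cubes then give a density $\alpha$ with $\absolute{\#(\Lambda\cap Q(x,S))-\alpha S^d}=O(S^{d-1})$ uniformly in $x$, for integer $S$. Next, the identity $\int_{Q(0,S)}\#(\Lambda\cap(U+x))\,dx=\int_U\#(\Lambda\cap Q(u,S))\,du$ shows that the average count over translates tends to $\alpha\vol(U)$. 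Hence every translate satisfies \eqref{item_thm:Laczkovich_2} of Theorem~\ref{thm:Laczkovich}. Use this continuous average rather than ``a large cube made of copies of $U$'', which is impossible for general $U$. The gain is real: Remark~\ref{rem:one_BD_class} becomes a by-product of the construction, and you only need the easy direction of Theorem~\ref{thm:non_BD_criterion}.

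The gap is in the gluing, exactly where you expected it, and your proposed remedy does not close it. Minimality only guarantees that an $\varepsilon$-copy of a patch occurs somewhere in every large enough ball, never at a prescribed offset. So you cannot place the stage-$k$ window at a $z_k$ independent of $\omega$; its position depends on $\omega_1,\dots,\omega_k$. Shifting a window by $v$ changes counts by an amount that can be of order $\norm{v}\vol_{d-1}(\partial U_k)$, and is at most $O(\norm{v}^d\vol_{d-1}(\partial U_k))$ by the packing argument in Lemma~\ref{lem:U'-U}. This error scales with the boundary. Requiring $M_k\vol_{d-1}(\partial U_k)$ to dominate $\rho_{k-1}^d$, shrinking $\varepsilon_k$, or pushing $z_k$ away from the origin does not control it. Moreover, if you search for the $U_k$-configurations near the old patch, as ``recur \dots\ next to any fixed large patch'' suggests, the misalignment is bounded only by a repetitivity radius of the $U_k$-patches. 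That radius is determined after $M_k$, so $M_k$ cannot dominate it.

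The missing point is the order of choices.
\begin{enumerate}
\item First fix $R_k$ such that every $R_k$-ball of $\Lambda$ contains $\varepsilon_{k-1}$-copies of all $2^{k-1}$ stage-$(k-1)$ patches on $B(0,\rho_{k-1})$ (almost repetitivity, equivalent to minimality).
\item Then take $M_k\gg R_k^d$, and only then obtain $U_k,x_k,y_k$ from your lemma.
\item Define the stage-$k$ set as $\Lambda-p$, where $p\in B(x_k,R_k)$ (resp.\ $B(y_k,R_k)$) is an occurrence of the old patch.
\end{enumerate}
The window $U_k+x_k-p$ then carries exactly $\#(\Lambda\cap(U_k+x_k))$ points, whatever the prefix. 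The windows of $\Lambda_\omega$ and $\Lambda_{\omega'}$ differ by a shift of length at most $2R_k$, so the resulting $O(R_k^d\vol_{d-1}(\partial U_k))$ error is absorbed by $M_k$. Perturbations, and rounding the window into $\HHH$, each cost only $O(\vol_{d-1}(\partial U_k))$. Finally choose $\rho_k$ so that $B(0,\rho_k)$ contains these windows.

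This is the mechanism behind step (3) of the paper's sketch. There $U_m$ is taken, via Lemma~\ref{lem:U_m_contains_balls} and a subsequence, so large that every $U_m$-patch already contains copies of all $U_{m-1}$-patches. The old patch is found inside the new window, with a radius depending only on the previous stage, rather than the new window being found near the old patch.
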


The main idea of constructing continuously many BD classes that underlies the proof of Theorem \ref{thm:BD_dichotomy} first appeared in \cite{FrettlohSmilanskySolomon-2021}, in the specific context of mixed substitution tilings. It was later further developed in \cite{Solomon-2020}, which presented a primitive substitution tiling whose hull intersects continuously many BD classes, before being applied in full generality. We also note that a similar result was obtained independently in \cite{FrettlohGarberSadun-2022}, where the dichotomy between $1$ and $2^{\aleph_0}$ BD-class representatives in a minimal hull is proved for FLC Delone sets that admit an asymptotic density. 

\begin{remark}\label{rem:one_BD_class}
While it is clear that if $\Lambda$ is BD equivalent to a lattice then every $\Lambda'$ in its hull is BD to a lattice as well (see Exercise~\ref{exe:BD_of_the_entire_hull}), deducing that $\Lambda$ is BD to a lattice from the assumption that $\BD(X)=1$ is nontrivial, particularly without assuming the existence of an asymptotic density. A proof of this fact can be found in \cite[\S 4]{SmilanskySolomon1-2022}.
\end{remark}

We give a sketch of the main ideas of the proof of Theorem \ref{thm:BD_dichotomy}. First, we establish the following simple result. 

\begin{lem}\label{lem:U_m_contains_balls}
    Let $\Lambda_1,\Lambda_2\subset\R^d$ be two Delone sets and suppose that $U_m\in \HHH$ is a sequence satisfying \eqref{eq:non_BD_condition}. Then for every $R>0$, there exists an $M$ so that every $U_m$ with $m\ge M$ contains a ball of radius $R$.    
\end{lem}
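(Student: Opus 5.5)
We argue by contradiction, using the fact that if $U\in\HHH$ contains no ball of radius $R$, then $U$ is ``thin'': its volume is controlled by its boundary measure. The plan is to show that for such thin sets the quantity $\absolute{\#(\Lambda_1\cap U)-\#(\Lambda_2\cap U)}/\vol_{d-1}(\partial U)$ is bounded by a constant depending only on $R$, $d$ and the two Delone sets. That contradicts \eqref{eq:non_BD_condition} along any subsequence of $U_m$'s failing to contain an $R$-ball.

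\textbf{Step 1 (counting by volume).} Let $r_i$ be the separation constants of $\Lambda_i$. Since the balls $B(x,r_i/2)$, $x\in\Lambda_i$, are pairwise disjoint and lie in $U^{+r_i/2}$, we get
\[
\#(\Lambda_i\cap U)\le c_i\vol(U^{+1})\le c_i\bigl(\vol(U)+\vol((\partial U)^{+1})\bigr),
\]
with $c_i$ depending on $r_i$ and $d$. By Lemma \ref{lem:Lacz_tech_lemma}\eqref{item_lem:Laczkovich_tech_2}, $\vol((\partial U)^{+1})\le C\vol_{d-1}(\partial U)$. Hence
\[
\absolute{\#(\Lambda_1\cap U)-\#(\Lambda_2\cap U)}\le (c_1+c_2)\bigl(\vol(U)+C\vol_{d-1}(\partial U)\bigr),
\]
and it suffices to bound $\vol(U)$ by a constant multiple of $\vol_{d-1}(\partial U)$ whenever $U$ contains no ball of radius $R$.

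\textbf{Step 2 (isoperimetric-type bound for thin sets).} This is the main step. Suppose $U\in\HHH$ contains no ball of radius $R$. Then every point $y\in U$ lies within distance $R$ of $\partial U$, since otherwise $B(y,R)\subset U$. So $U\subset(\partial U)^{+R}$, and by both parts of Lemma \ref{lem:Lacz_tech_lemma} (taking $R\ge1$),
\[
\vol(U)\le\vol((\partial U)^{+R})\le CR^d\vol((\partial U)^{+1})\le C^2R^d\vol_{d-1}(\partial U).
\]
The only care needed is to check that the point-to-boundary argument is valid for the closed set $U$: if $\dist(y,\partial U)\ge R$ and $y\in U$, then the open ball $B(y,R)$ is connected and misses $\partial U$, so it lies in $U$. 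A closed ball of a slightly smaller radius suffices, and constants absorb this.

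\textbf{Step 3 (conclusion).} Combining the two steps, any $U\in\HHH$ containing no $R$-ball satisfies
\[
\frac{\absolute{\#(\Lambda_1\cap U)-\#(\Lambda_2\cap U)}}{\vol_{d-1}(\partial U)}\le K(R)
\]
for a finite constant $K(R)$. Since the ratio in \eqref{eq:non_BD_condition} tends to infinity, it exceeds $K(R)$ for all $m\ge M$. For every such $m$, the set $U_m$ must therefore contain a ball of radius $R$. We expect no real obstacle here beyond correctly invoking Lemma \ref{lem:Lacz_tech_lemma}.
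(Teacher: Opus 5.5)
Your proof is correct and follows the paper's argument. The key step, $U\subset(\partial U)^{+R}$ combined with both parts of Lemma \ref{lem:Lacz_tech_lemma} to get $\vol(U)\le C R^d\vol_{d-1}(\partial U)$, is exactly what the paper does; your Step 1 simply makes explicit the volume-counting bound that the paper leaves implicit in the phrase ``clearly contradicts'' (if $r_i>2$, take balls of radius $\min\{r_i/2,1\}$ so that they lie in $U^{+1}$).
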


\begin{proof}
    For contradiction, suppose that there is some $R>0$ so that $U_m$ does not contain any $R$-ball for infinitely many values of $m$. This implies that for each $m$ we have $U_m\subset (\partial U_m)^{+R}$. Thus, Lemma \ref{lem:Lacz_tech_lemma} yields that 
    \[
    \vol(U_m)\le C\cdot R^d\cdot \vol_{d-1}(\partial U_m), 
    \]
    which clearly contradicts \eqref{eq:non_BD_condition}.
\end{proof}

To obtain continuously many BD-class representatives, we introduce a ``comparison set'' $\Omega$ which has the cardinality of the continuum. The construction will proceed by matching a distinct BD class representative to every element $\omega\in\Omega$. The set is described in the following exercise.

\begin{exe}\label{exe:Omega=cont.}
    Consider the equivalence relation on $\{0,1\}^\N$ in which $\sigma_1,\sigma_2\in\{0,1\}^\N$ are equivalent if they differ by finitely many entries. Let $\Omega\subset \{0,1\}^\N$ be a set of equivalence class representatives. Then $\absolute{\Omega} = 2^{\aleph_0}$.
\end{exe}

We now outline the main ideas behind the proof of Theorem \ref{thm:BD_dichotomy}.

\begin{proof}[Sketch of the proof of Theorem \ref{thm:BD_dichotomy}]
    In view of Remark \ref{rem:one_BD_class}, we will only explain how to construct continuously many BD classes from the assumption that $\BD(X)>1$. The proof proceeds through the following steps:
\begin{enumerate}
    \item
    Choose a pair $\Lambda_1,\Lambda_2\in X$ of Delone sets so that $\Lambda_1 \stackrel{\mathrm{BD}}{\nsim} \Lambda_2$. By Theorem \ref{thm:non_BD_criterion}, there exists a sequence of sets $(U_m)$ in $\HHH$ that satisfies \eqref{eq:non_BD_condition}.  
    \item 
    Since $X$ is minimal, it is equal to the orbit closure of $\Lambda_1$, and in particular, $\Lambda_2$ can be obtained as a limit of translations of $\Lambda_1$. This implies that the patches $\Lambda_2\cap U_m$ can essentially be found in $\Lambda_1$, up to a small perturbation. More explicitly, for every $m\in\N$, there is a translation vector $v_m \in\R^d$ such that 
    $\#(\Lambda_2 \cap U_m) = \#((\Lambda_1+v_m) \cap U_m)$, and so
    \begin{equation}\label{eq:non_BD_dichotomy}
	\lim_{m\to\infty} \frac{| \#(\Lambda_1 \cap U_m) - \#((\Lambda_1+v_m) \cap U_m) |}{\vol_{d-1}(\partial U_m)} = \infty.
	\end{equation}   
    \item 
    Applying Lemma~\ref{lem:U_m_contains_balls}, we deduce that for every $R>0$, there exists $M\in\N$ such that every $U_m$ with $m\ge M$ contains an $R$-ball. Using the minimality of $X$, and after passing to a subsequence if necessary, we may assume that the sets $U_m$ grow sufficiently fast so that every $U_m$-patch of $\Lambda_1$ contains, up to a $\frac{1}{m}$-perturbation, any $U_{m-1}$-patch of $\Lambda_1$ (this relies on the fact that minimality is equivalent to a weaker form of repetitivity in the non-FLC setting, see, e.g., \cite[Theorem~3.11]{FrettlohRichard-2014} and \cite[Theorem~6.5]{SmilanskySolomon-2021} for a precise definition and a proof of this equivalence).
    \item 
    By \eqref{eq:non_BD_dichotomy}, for every $m\in\N$, one of the two sets $\Lambda_1\cap U_m$ and $(\Lambda_1+v_m)\cap U_m$ contains significantly more points than the other. We refer to the one with more points as \textbf{dense} (D), and to the one with fewer points as \textbf{sparse} (S). Accordingly, for each $m$, we denote by $D_m$ and $S_m$ the patches $\Lambda_1\cap U_m$ and $(\Lambda_1+v_m)\cap U_m$, where $D_m$ is the denser one and $S_m$ the sparser.
    \item 
    We now consider the set $\Omega$ from Exercise~\ref{exe:Omega=cont.}. For every element $\omega \in \Omega$ we construct a Delone set in $X$ as follows.  
    For each $\omega \in \Omega$, define a sequence of patches by  
    \[
    P^\omega_m =
    \begin{cases}
    D_m, & \omega(m) = 0, \\
    S_m, & \omega(m) = 1~.
    \end{cases}
    \]
    Next, choose translation vectors $t^\omega_m$ such that each patch $P^\omega_m + t^\omega_m$ contains the origin, and the sequence $(P^\omega_m + t^\omega_m)$ is “essentially nested” (see \cite{SmilanskySolomon1-2022} for technical details).  
    Set $\Lambda_\omega = \lim_{m \to \infty} (P^\omega_m + t^\omega_m)$.  
    Then, for every pair of distinct elements $\sigma, \omega \in \Omega$ and every $m \in \N$ with $\sigma(m) \neq \omega(m)$, there exists a translate $\tilde U_m$ of $U_m$ such that, essentially,
    \[
    \frac{\left|\#(\Lambda_\sigma \cap \tilde U_m) - \#(\Lambda_\omega \cap \tilde U_m)\right|}
    {\vol_{d-1}(\partial \tilde U_m)}
    = \Theta\!\left(
    \frac{\left|\#(\Lambda_1 \cap U_m) - \#((\Lambda_1 + v_m) \cap U_m)\right|}
    {\vol_{d-1}(\partial U_m)}
    \right),
    \]
    which, by \eqref{eq:non_BD_dichotomy}, tends to infinity as $m \to \infty$.  
    Since $\sigma(m)$ and $\omega(m)$ differ for infinitely many $m$, we obtain a sequence of sets in $\HHH$ satisfying \eqref{eq:non_BD_condition} for the Delone sets $\Lambda_\sigma$ and $\Lambda_\omega$, and the assertion follows.\qedhere
\end{enumerate}
\end{proof}

The dichotomy in Theorem \ref{thm:BD_dichotomy} applies to many constructions in aperiodic order. These include the hulls of Delone sets associated with primitive substitution tilings, incommensurable multiscale substitution tilings and cut-and-project sets, described in \S\ref{sec:substitution}, \S\ref{sec:multiscale} and \S\ref{sec:C&P}, respectively, all of which are minimal, see \cite{Solomyak-2006} and \cite[\S6]{SmilanskySolomon-2021} (note that for both tiling constructions, to define $\Lambda=\Lambda_\tau$ with a minimal hull, points must be chosen by first marking a fixed point in each prototile). In fact, the combination of Theorems \ref{thm:BL_substitution} and \ref{thm:BD_substitution} gives rise to minimal systems of Delone sets that contain representatives from continuously many BD classes, all of which are rectifiable. An even stronger result, which further clarifies the relationship between BD and BL equivalence, was established by Dymond and Kalu{\v z}a in \cite[Theorem~5.1]{DymondKaluza-2024}.

\begin{thm}
    Every BL equivalence class of Delone sets in $\R^d$, $d\geq 1$, contains representatives from continuously many BD classes.
\end{thm}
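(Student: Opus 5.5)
Fix a Delone set $\Lambda\subset\R^d$; the plan is to produce, inside its BL class, a family $\{\Lambda_\omega\}_{\omega\in\Omega}$ indexed by the set $\Omega$ of Exercise~\ref{exe:Omega=cont.}, whose members are pairwise BD-inequivalent. The BL class is contained in the class of all Delone sets, which has cardinality $2^{\aleph_0}$, so this family gives exactly continuum many BD classes. To keep BL equivalence, every $\Lambda_\omega$ will be the image $\Phi_\omega(\Lambda)$ of a biLipschitz homeomorphism $\Phi_\omega:\R^d\to\R^d$ with constant bounded uniformly in $\omega$. The image of a Delone set under such a map is Delone and BL equivalent to it, by restriction. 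To separate BD classes, I will use Theorem~\ref{thm:non_BD_criterion}: it suffices to find sets $U_m\in\HHH$ on which the counts of $\Lambda_\sigma$ and $\Lambda_\omega$ differ by much more than $\vol_{d-1}(\partial U_m)$ whenever $\sigma(m)\neq\omega(m)$. This happens for infinitely many $m$ when $\sigma\ne\omega$ in $\Omega$.

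\textbf{Construction.} Choose disjoint, widely separated cubes $Q_m$ of side $\ell_m\to\infty$, for example centred on the first coordinate axis at rapidly increasing distances. Inside $Q_m$, I want a local compression: a biLipschitz homeomorphism $h_m$ that is the identity outside $Q_m$ and maps the concentric cube $\frac12 Q_m$ onto $\frac{1-\eta}{2}Q_m$ by a homothety about the centre. Here $\eta>0$ is small and fixed. In the annulus between the two cubes, $h_m$ interpolates radially, using the sup-norm radius. This map has a biLipschitz constant $L(\eta)$ that does not depend on $m$, because the construction is a rescaling of a single model map. Set $\Phi_\omega$ to be $h_m$ on $Q_m$ when $\omega(m)=1$, and the identity elsewhere. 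Since the supports are disjoint, $\Phi_\omega$ is $L(\eta)$-biLipschitz: the Lipschitz estimate across different cubes follows by concatenating along the segment joining the two points, because each piece is the identity on the boundary of its cube.

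\textbf{Counting.} Let $U_m\in\HHH$ be a union of unit cubes approximating $\frac{1-\eta}{2}Q_m$ from inside, so that $\vol_{d-1}(\partial U_m)\asymp \ell_m^{d-1}$. If $\omega(m)=1$, then $\Lambda_\omega\cap U_m$ is essentially $h_m\bigl(\Lambda\cap \frac12 Q_m\bigr)$, which has about $\#(\Lambda\cap\frac12Q_m)$ points. If $\omega(m)=0$, the count is $\#(\Lambda\cap\frac{1-\eta}2Q_m)$. The difference is $\#\bigl(\Lambda\cap(\frac12Q_m\minus\frac{1-\eta}2Q_m)\bigr)$, up to an error of order $\ell_m^{d-1}$ coming from the unit-cube approximation and Lemma~\ref{lem:U'-U}. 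This shell has width of order $\eta\ell_m$, so it contains a cube of side $\asymp\eta\ell_m$. Relative denseness (as in \eqref{eq:Delone_cube_lower_bound}) then shows that the shell contains $\gtrsim \eta^d\ell_m^d$ points of $\Lambda$. Dividing by $\ell_m^{d-1}$ gives a ratio $\gtrsim\eta^d\ell_m\to\infty$. By Theorem~\ref{thm:non_BD_criterion}, $\Lambda_\sigma\stackrel{\mathrm{BD}}{\nsim}\Lambda_\omega$ whenever $\sigma\ne\omega\in\Omega$.

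\textbf{Main obstacle.} The delicate step is making the counting argument independent of the other cubes and of the boundary errors. Because the $Q_m$ are disjoint and $U_m\subset Q_m$, only the $m$-th modification affects counts inside $U_m$, which resolves the first issue. The remaining task is to check carefully that $h_m$ maps $\Lambda\cap\frac12Q_m$ into $\frac{1-\eta}2Q_m$ and everything else outside it. For $d=1$ the same scheme works with intervals: the shell count is $\gtrsim\eta\ell_m$, while the boundary measure $\vol_0(\partial U_m)$ stays bounded.
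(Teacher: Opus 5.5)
Your proof is correct. The survey does not prove this statement; it only cites \cite[Theorem~5.1]{DymondKaluza-2024}. Your argument is therefore a self-contained route built from the survey's own tools. You use uniformly biLipschitz radial compressions on a sparse family of cubes, switched on or off by $\omega\in\Omega$ in the same way patches are switched in the sketch of Theorem~\ref{thm:BD_dichotomy}, and you detect inequivalence through Theorem~\ref{thm:non_BD_criterion}. The step you call delicate is immediate. $h_m$ is a homeomorphism of $\R^d$ carrying the closed cube $\frac12Q_m$ onto $\frac{1-\eta}{2}Q_m$, hence the complement onto the complement, and $\Phi_\omega$ preserves every $Q_{m'}$. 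So $\#(\Lambda_\omega\cap\frac{1-\eta}{2}Q_m)$ is exactly $\#(\Lambda\cap\frac12Q_m)$ or $\#(\Lambda\cap\frac{1-\eta}{2}Q_m)$, and only the passage to $U_m\in\HHH$ costs $\OO(\ell_m^{d-1})$.

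For the bare statement, this machinery can be bypassed. The dilations $t\Lambda$, $1\le t\le 2$, are all BL equivalent to $\Lambda$, and your shell count, applied once around the origin, separates them. Indeed, $s\Lambda\stackrel{\mathrm{BD}}{\sim}t\Lambda$ with $s<t$ would give, after rescaling by $1/s$, a BD-map $\Lambda\to c\Lambda$ with $c=t/s>1$ and some displacement bound $M$. This forces $\#(\Lambda\cap[-R,R]^d)\le\#(\Lambda\cap[-R',R']^d)$ with $R'=(R+M)/c$. But the shell between these two cubes contains $\gtrsim R^d$ points by \eqref{eq:Delone_cube_lower_bound}, a contradiction for large $R$. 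This needs neither $\Omega$, nor gluing of local maps, nor approximation by sets in $\HHH$.

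What your construction buys in exchange is locality. Each $\Lambda_\omega$ differs from $\Lambda$ only inside $\bigcup_m Q_m$. So, with sufficiently sparse cubes, all the $\Lambda_\omega$ keep the central density of $\Lambda$ when it exists. The continuum of BD classes you produce is then not merely a consequence of differing densities (compare Exercise~\ref{cor:different_densities_implies_not_BD}), which the dilation family does not give you.
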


\section{Substitution tilings}\label{sec:substitution}
Every Delone set in $\R^d$ can be obtained by choosing a point in each tile of a tiling of finite local complexity, see Exercise \ref{ex:finitely_many_tiles}. A simple but fascinating method for constructing such tilings is via substitution rules.
Recall that in symbolic dynamics, substitution systems provide a systematic way to generate sequences with rich combinatorial and dynamical structure, often combining long-range order with aperiodicity. The analogous construction in the geometric setting of tilings replaces the symbolic alphabet by a finite collection of prototiles, and the substitution rule acts on tiles rather than on symbols. A well-known and fundamental example is given by the Penrose tilings \cite{Penrose-1979}, see also Figures \ref{fig:Penroserule} and \ref{fig:Penrosepatches} below. Additional examples are explored in e.g. \cite[\S6]{BaakeGrimm-2013}, \cite{Robinson-2004}. This geometric version, generally known as a substitution tiling, plays a similar role in the study of aperiodic order as substitution sequences do in one-dimensional symbolic dynamics. Note that the techniques and computations developed in this section carry over, with only minor modifications, to the setting of substitution sequences over a finite alphabet.

\subsection{Primitive substitution rules and tilings}
Recall our definitions of a tiling $\tau$ and tiles in $\R^d$, see Definition \ref{def:tiling}. We consider tiles to be of the same \emph{type} if they differ by a translation, and as in \S\ref{sec:FLC_topology_etc}, we use the term \emph{prototiles} to refer to a set of representatives of the resulting equivalence classes. In some contexts, including in tile-counting estimates as in \S\ref{subsec:tile_counting}, it may be beneficial to allow a larger group of isometries, see Remark \ref{rem:prototiles}.

Similarly to the symbolic dynamics setup, given a set of tiles $\AA$, we let $\AA^*$ denote the collection of all finite patches whose tiles are translation equivalent to tiles in $\AA$. Every element of $\AA^*$ tiles a bounded subsets of $\R^d$.

\begin{definition}\label{def:SubRule}
Let $\xi>1$ be fixed and let $\AA=\{T_1,\ldots,T_n\}$ be a set of $d$-dimensional tiles. A \emph{substitution rule} is a map $\rho:\AA\to\xi^{-1}\AA^*$ satisfying ${\supp(T_i) = \supp(\rho(T_i))}$ for every $i$. That is, $\rho$ provides a set of dissection rules defined on the tiles in $\AA$, which partition each tile in $\AA$ into rescaled copies of tiles from $\AA$. The constant $\xi$ is called the \emph{inflation constant} of $\rho$.
\end{definition}

\begin{example}\label{ex:Penrose}
    The Penrose-Robinson substitution rule $\rho$ is illustrated in Figure \ref{fig:Penroserule}. Here, the substitution rule is applied to thin and wide triangles. A thin triangle is mapped to a patch consisting of two rescaled thin triangles and a single rescaled wide triangle. A wide triangle is mapped to a patch consisting of a single rescaled thin triangle and a single rescaled wide triangle. The scaling constant $\xi$ is computed in Exercise \ref{ex:Penrose_subs}.      
\begin{figure}[h!]
    \centering
    \includegraphics[scale=0.5]{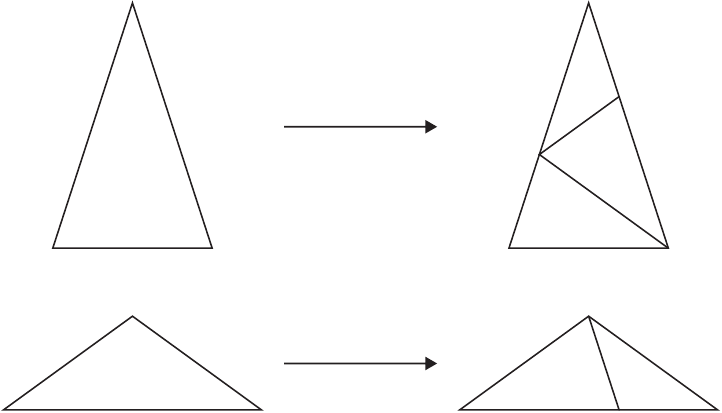}
    \caption{The Penrose-Robinson substitution rule in $\R^2$.}
    \label{fig:Penroserule}
\end{figure}

Strictly speaking, since each of the thin and wide triangles can appear in multiple orientations, the associated set $\AA$ is rather large (but finite). Nevertheless, as discussed in Remark \ref{rem:prototiles}, for the results of this section, it can be regarded as a rule on two tiles only. See \cite[\S6.2]{BaakeGrimm-2013} and the reference therein for further details about this construction. 
\end{example}

The mapping $\rho$ is naturally extended to elements of $\AA^*$, as well as to infinite tilings with tiles in $\AA$, by applying $\rho$ to each tile separately. 

\begin{definition}\label{Def:subsTiling}
Let $\rho$ be a substitution rule defined on a finite set $\AA$ of tiles in $\R^d$. The collection of \emph{legal patches} is defined by 
\begin{equation*}
    \mathcal{P}_\rho=\left\{(\xi \rho)^m(T)\mid m\in\N\:,\: T\in\AA\right\}.
\end{equation*}
The substitution \emph{tiling space} $X_\rho$ is the set of all tilings of $\R^d$ such that every patch of the tiling is a subpatch of some element of $\mathcal{P}_\rho$. The tilings $\tau\in X_\rho$ are called \emph{substitution tilings} associated with $\rho$, with prototiles from the set $\AA$. 
\end{definition}

\begin{figure}[h!]
    \centering
    \includegraphics[scale=0.4]{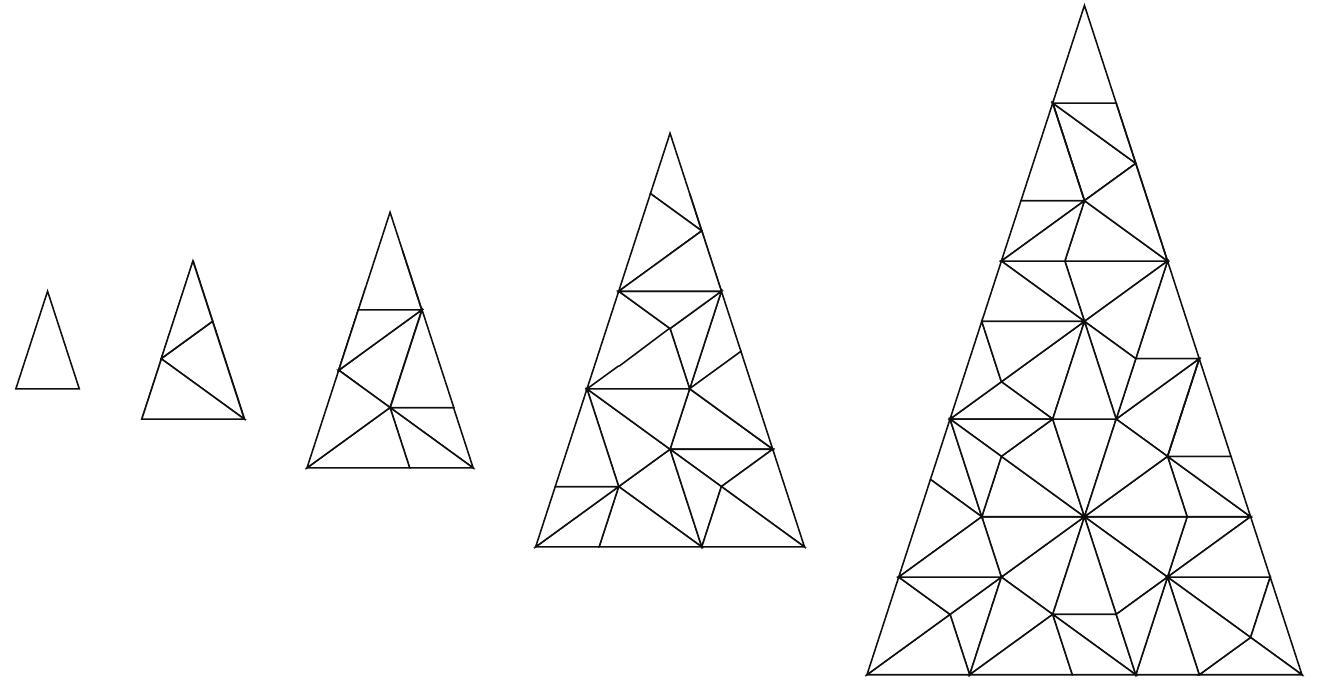}
    \caption{Legal patches for the Penrose-Robinson substitution rule of Example \ref{ex:Penrose}.}
    \label{fig:Penrosepatches}
\end{figure}

The definitions given above clearly describe how to tile bounded regions using repeated substitutions, but they are somewhat implicit regarding how to obtain a tiling of $\R^d$ from these finite tilings. In particular, although it is true, it is not immediately clear from the definition that $X_\rho \neq \emptyset$. Several explicit constructions exist, relying on different compactness arguments, to extend the finite tilings generated by the substitution to a full-space tiling. One approach uses the topology defined in \S \ref{subsec:spaces_of_Delone_sets}, while another is described in the following exercise.  

\begin{exe}\label{ex:Konig} 
    Define a locally finite tree whose vertices correspond to legal patches, and apply K\H{o}nig’s Lemma (see, e.g., \cite{Diestel-2025}  or \cite{Levy-2002}) to construct tilings of $\R^d$, thereby showing that indeed $X_\rho \neq \emptyset$.
\end{exe}

\begin{definition}\label{Def:SubMatrix+Primitive}
Let $\rho$ be a substitution rule on a set of tiles $\AA=\{T_1, \ldots,T_n\}$ in $\R^d$. Define the \emph{substitution matrix} $M_\rho=(a_{ij})$, an $n\times n$ matrix, where $a_{ij}$ is the number of copies of $\xi^{-1}T_i$ in $\rho(T_j)$. 
\end{definition}   

A substitution rule $\rho$ is called \emph{primitive} if the matrix $M_\rho$ is a primitive matrix, that is, if there exists an $m\in \N$ such that all entries of $M_\rho^m$ are strictly positive (see e.g. \cite{Queffelec-2010}). In such a case, the set of prototiles of any tiling in $X_\rho$ is precisely $\AA$, and so $\AA$ is referred to as the set of prototiles of $\rho$. Primitivity is often assumed in the study of substitution systems and has many implications. In particular, it allows the application of the powerful Perron--Frobenius theorem, as we will see shortly. Primitivity of the Penrose-Robinson substitution rule is illustrated in Exercise \ref{ex:Penrose_subs}.

We record the following lemma, which is an important feature of primitive substitution tilings that appears as an exercise in \cite[Exercise~5]{Robinson-2004}. We present a short sketch of the proof, which follows directly from the definition of the substitution tiling space. Informally, it states that every tiling $\tau \in X_\rho$ admits a $\rho$-preimage.

\begin{lem}\label{lem:tau_m_Def}
Let $\rho$ be a primitive substitution rule. Then for every $\tau\in X_\rho$ and for every $m\in\N$, there exists a tiling $\tau_m\in X_\rho$ that satisfies $(\xi \rho)^m(\tau_m)=\tau$.  
\end{lem}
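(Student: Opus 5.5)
It suffices to treat $m=1$, since iterating gives the general case: if $\tau_1$ is a preimage of $\tau$ and $\tau_2$ is a preimage of $\tau_1$, then $(\xi\rho)^2(\tau_2)=\tau$, and so on. So fix $\tau\in X_\rho$. For each $k\in\N$ consider the finite patch $P_k$ of tiles of $\tau$ meeting the closed ball $\overline{B(0,k)}$. By the definition of $X_\rho$, $P_k$ is a subpatch of some legal patch $(\xi\rho)^{n_k}(T_{j_k})$. The first step is to arrange that $n_k\ge 1$. This is where primitivity enters. Choose $N$ with $M_\rho^N>0$; then every prototile appears in $(\xi\rho)^N(T_j)$ for every $j$. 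Hence every legal patch $(\xi\rho)^{n}(T_i)$ appears, up to translation, inside $(\xi\rho)^{n+N}(T_j)$. Replacing $n_k$ by $n_k+N$, we may therefore assume $n_k\ge1$.

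Now write $(\xi\rho)^{n_k}(T_{j_k})=\xi\rho\bigl((\xi\rho)^{n_k-1}(T_{j_k})\bigr)$. Let $Q_k$ be the subpatch of $(\xi\rho)^{n_k-1}(T_{j_k})$ consisting of those tiles whose images under $\xi\rho$ contain a tile of (the embedded copy of) $P_k$. Then $Q_k$ is a legal subpatch, and $\xi\rho(Q_k)\supseteq P_k$. Since $\rho$ maps each tile to a patch with the same support and $\xi$ expands, every tile $T$ satisfies $\supp(\xi\rho(T))=\xi T$. Consequently the support of $\xi^{-1}Q_k$ lies within the fixed distance $\max_i\diam(T_i)$ of $\xi^{-1}\supp(P_k)$. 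Moreover $\supp(Q_k)$ covers $\xi^{-1}\overline{B(0,k)}$, so the patches $Q_k$, placed in the frame of $\tau$, cover balls of radius growing to infinity.

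\textbf{Main step (compactness).} By finite local complexity, which holds because there are finitely many prototiles and legal patches, there are only finitely many possible choices of the patch $Q_k$ near any fixed ball around the origin. Apply a diagonal argument, or König's lemma as in Exercise~\ref{ex:Konig}, to the tree of consistent choices. This yields a nested sequence whose union is a tiling $\tau_1$ of $\R^d$ with $\xi\rho(\tau_1)=\tau$. Here nestedness is arranged by restricting $Q_{k'}$ for $k'>k$ to the tiles covering the smaller ball, and the identity $\xi\rho(\tau_1)=\tau$ holds on each ball by construction. Every patch of $\tau_1$ lies in some $Q_k$, which is a subpatch of a legal patch, so $\tau_1\in X_\rho$.

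I expect the main obstacle to be justifying that $\tau$ determines $Q_k$ only up to finitely many options, so that the compactness argument applies. A preimage tile's image need not align uniquely in general, but only finitely many translates are compatible with a given finite patch. Finiteness is all we need here, not uniqueness: no recognizability is required, since we only need existence.
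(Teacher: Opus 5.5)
This is correct and is essentially the paper's argument: reduce to $m=1$, embed the ball-patches of $\tau$ into legal patches $(\xi\rho)^{n_k}(T_{j_k})$, pass to the corresponding subpatches of $(\xi\rho)^{n_k-1}(T_{j_k})$, and extract the limit tiling $\tau_1$ by a diagonal or K\H{o}nig argument. Your primitivity step forcing $n_k\ge1$ is harmless but unnecessary, since $n_k\to\infty$ automatically as the supports grow. One correction: justify the finiteness of choices as in your last paragraph (with $\tau$ fixed, each tile of $\tau$ has only finitely many possible parent tiles), not by finite local complexity, which does not follow from having finitely many prototiles and can fail for substitution tilings.
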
 

\begin{proof}[Sketch of proof]
Let $\tau \in X_\rho$. It is clearly sufficient to find a 
tiling $\tau_1 \in X_\rho$ such that $(\xi \rho)(\tau_1) = \tau$. 
For every $n \in \N$, consider the patch $Q_n$ consisting of all tiles in $\tau$ that intersect the ball $B(0,n)$. 
By Definition~\ref{Def:subsTiling}, for each $n$, there exists a legal patch $P_n \in \mathcal{P}_\rho$ that contains $Q_n$ as a subpatch. 
Recall that $P_n = (\xi \rho)^{j_n}(T_{i_n})$ for some integer $j_n$ and $T_{i_n} \in \AA$. 
Now, consider the sequence of patches $P_n' = (\xi \rho)^{j_n - 1}(T_{i_n})$. 
Using either the topology induced by the metric in~\eqref{eq:metric} or K\H{o}nig’s Lemma (as in Exercise~\ref{ex:Konig}), one can extract a convergent subsequence whose limit defines a tiling $\tau_1$ of $\R^d$, which implies the assertion.
\end{proof}

 \subsection{Tile-counting estimates} \label{subsec:tile_counting}
 In order to verify either the Burago--Kleiner condition or Laczkovich's criterion, one needs good estimates on the quantity $\#(\Lambda \cap U)$ for certain target sets $U$. In the context of Delone sets associated with substitution tilings, such counting problems can be approached conveniently and efficiently using linear algebra. The ideas presented below are drawn from \cite{Solomon-2011} and \cite{Solomon-2014}, which provide additional details.

We use vectors with positive integer entries to record the number of tiles of each type appearing in a given patch. Specifically, if there are $n$ types of tiles, each patch is naturally associated with a vector in $\mathbb{N}^n$, whose $i$-th coordinate counts the number of tiles of type $i$ within the patch. For instance, if $e_i$ denotes the $i$-th vector of the standard basis of $\mathbb{R}^n$, then $e_i$ represents a patch containing a single tile of type $i$. Note that the patch cannot be reconstructed from this vector, which serves as an abelianized representation of the patch, retaining only the number of tiles of each type and forgetting their spatial arrangement.

\begin{exe}\label{ex:vector_representation}
    Let $\rho$ be a substitution rule on a set of tiles $\AA=\{T_1, \ldots,T_n\}$ in $\R^d$ and let $P$ be a patch in $\tau\in X_\rho$ represented by the vector $w_P\in\N^n$. Show that for every integer $k\ge 1$, the vector that represents the patch $(\xi\rho)^k(P)$ is $M_\rho^k(w_P)$. 
\end{exe}

Let us now fix our notation and conventions. From here on, we let $\rho$ denote a primitive substitution rule with an inflation factor $\xi > 1$, defined on a set of prototiles $\AA=\{T_1, \ldots,T_n\}$ in $\R^d$. Assume the prototiles have $d$-dimensional volumes $s_1, \ldots, s_n$, and set $u_{\vol} = (s_1,\ldots,s_n)^t$. For a tiling $\tau \in X_\rho$, an associated Delone set $\Lambda=\Lambda_\tau\subset \R^d$ is defined by picking a point in each tile of $\tau$.

Let $M_\rho$ be the substitution matrix, and denote its eigenvalues by $\lambda_1, \ldots, \lambda_n$, ordered so that $|\lambda_1| \ge \cdots \ge |\lambda_n|$. By the Perron--Frobenius theorem, $\lambda_1$ is a simple eigenvalue and has a positive right eigenvector $v_1$. We work over $\C$ and fix a Jordan basis of $M_\rho$. I.e., a Jordan chain associated with an eigenvalue $\lambda$
has the form $v_{i_1},v_{i_2},\ldots,v_{i_r}$, 
where
\[
(M_\rho-\lambda I)v_{i_1}=0
\quad\text{and}\quad
(M_\rho-\lambda I)v_{i_q}=v_{i_{q-1}}
\qquad (2\le q\le r).
\]
We say that every vector in this chain corresponds to $\lambda$.
For a vector $v_{i_q}$ in this chain, set
\begin{equation}\label{eq:k_{i_q}}
    k_{i_q}=r-q+1.
\end{equation}

Thus, $k_{i_q}$ is the number of vectors in the Jordan chain from
$v_{i_q}$ to its last vector, including $v_{i_q}$ itself. In
particular, the eigenvector $v_{i_1}$ has $k_{i_1}=r$, while the last
generalized eigenvector has $k_{i_r}=1$. As before, $v_i(j)$ denotes
the $j$-th coordinate of $v_i$.

\begin{lem}\label{lem:u_1}
    The leading eigenvalue  $\lambda_1$ of $M_\rho$ satisfies $\lambda_1 = \xi^d > 1$, and $u_{\vol}$ is the left eigenvector of $M_\rho$ that corresponds to $\lambda_1$.
\end{lem}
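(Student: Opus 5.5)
We will first show directly that $u_{\vol}$ is a left eigenvector of $M_\rho$ with eigenvalue $\xi^d$. Then we will use the Perron--Frobenius theorem to identify $\xi^d$ with the leading eigenvalue $\lambda_1$.

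\textbf{Volume count.} Fix $j$ and compare volumes in the identity $\supp(T_j)=\supp(\rho(T_j))$ from Definition \ref{def:SubRule}. The patch $\rho(T_j)$ consists of $a_{ij}$ copies of $\xi^{-1}T_i$ for each $i$. These copies have pairwise disjoint interiors and boundaries of measure zero (Definition \ref{def:tiling}). Since $\vol(\xi^{-1}T_i)=\xi^{-d}s_i$, additivity of Lebesgue measure gives
\[
s_j=\vol(T_j)=\sum_{i=1}^n a_{ij}\,\xi^{-d}s_i .
\]
In matrix form this says $u_{\vol}^t M_\rho=\xi^d u_{\vol}^t$. So $u_{\vol}$ is a left eigenvector of $M_\rho$ with eigenvalue $\xi^d$. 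Moreover $\xi^d>1$, because $\xi>1$.

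\textbf{Identifying the eigenvalue.} Every tile has nonempty interior, so each $s_i>0$ and $u_{\vol}$ is a strictly positive vector. Apply the Perron--Frobenius theorem to the primitive matrix $M_\rho$, or equivalently to its transpose $M_\rho^t$, which is also primitive. Its right eigenvector $v_1$ for $\lambda_1$ is strictly positive. Pairing the eigenvector relation for $u_{\vol}$ with $v_1$ gives
\[
\xi^d\,u_{\vol}^t v_1=u_{\vol}^t M_\rho v_1=\lambda_1\,u_{\vol}^t v_1 .
\]
Here $u_{\vol}^t v_1>0$ because both vectors are positive, so we may divide by it and conclude $\lambda_1=\xi^d$. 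Since $\lambda_1$ is simple, $u_{\vol}$ spans the left eigenspace for $\lambda_1$. In particular, $u_{\vol}$ is \emph{the} left Perron eigenvector, up to normalization.

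\textbf{Main obstacle.} The argument is essentially routine. The only step needing care is the pairing argument that identifies $\xi^d$ as the Perron eigenvalue rather than merely some eigenvalue. It relies on positivity of both $u_{\vol}$ and $v_1$, which holds because all the $s_i$ are positive and $M_\rho$ is primitive.
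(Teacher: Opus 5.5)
Your proposal is correct and follows the paper's proof. Both use the same volume count to get $u_{\vol}^tM_\rho=\xi^d u_{\vol}^t$ and then Perron--Frobenius to identify $\xi^d$ with $\lambda_1$; the only difference is that the paper cites the fact that $\lambda_1$ is the unique eigenvalue admitting a strictly positive eigenvector, whereas you prove it here by pairing with the positive right eigenvector $v_1$, and you also note that simplicity of $\lambda_1$ makes $u_{\vol}$ the left eigenvector up to scaling.
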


\begin{proof}
    The assertion follows from a direct computation. For every $1\le j\le n$, the $j$-th entry of $M_\rho^t u_{\vol}$ is given by 
    \begin{equation}\label{eq:left_eigenvector} 
        (M_\rho^t u_{\vol})(j) = 
        \sum_{i=1}^n a_{ij}s_i = 
        \sum_{i=1}^n \#\{\text{tiles of type } i \text{ in } \rho(T_j) \}\cdot s_i.
    \end{equation}
    By the definition of $\rho$, the expression on the right-hand side of \eqref{eq:left_eigenvector} is equal to $\vol(\xi T_j)=\xi^ds_j$. This holds for all $1\leq j\leq n$ and so $M_\rho^t u_{\vol} = \xi^du_{\vol}$. Since $M_\rho$ and $M_\rho^t$ have the same eigenvalues, and since the Perron--Frobenius theorem asserts that the only eigenvalue of $M_\rho^t$ that admits a strictly positive eigenvector is $\lambda_1$, we deduce that $\lambda_1=\xi^d>1$, as required. 
\end{proof}

\begin{remark}\label{rem:prototiles}
    The Penrose-Robinson substitution rule described in Example \ref{ex:Penrose} determines a partition of thin and wide tiles regardless of their orientation. While there are more than two (but still finitely many) translation equivalence classes of tiles in patches and tilings generated by this substitution rule, for the tile-counting results described here, there is no difference if translation equivalence is extended to isometries. In other words, in this context, we may consider the Penrose-Robinson substitution rule to have two prototiles only, and a substitution matrix of order $2$.
\end{remark}

\begin{exe}\label{ex:Penrose_subs}
    Find a $2\times 2$ substitution matrix for the Penrose-Robinson substitution rule illustrated in Figure \ref{fig:Penroserule} and verify that the rule is primitive. Compute $\lambda_1$ and $u_{\vol}$ and deduce that the inflation constant is the golden ratio. Using Figure \ref{fig:Penrosepatches}, verify the assertion of Exercise \ref{ex:vector_representation} for $T$, the thin triangle, and $k=1,\ldots,4$.
\end{exe}

The Perron--Frobenius theorem implies something even stronger, namely that the span of $\{v_2,\ldots,v_n\}$ intersects the positive cone  
$\R^n_{\ge 0} = \{v\in\R^d ~\mid~ \forall i,~ v(i)\ge 0\}$ only at the origin (see \cite[\S26]{Hogben-2006} or \cite{SchneiderBarker-1989}).  
In particular, this implies that for every $v\in\R^n_{\ge 0}$, if  
$v=\sum_{i=1}^n c_i v_i$ is expressed as a linear combination of the Jordan basis of $M_\rho$, then $c_1 \neq 0$.  
This observation plays an important role in the following lemma, which relates $v_1$ to the leading term in tile-counting asymptotics. 

\begin{lem}\label{lem:v_1_main_term}
    For every $i,j\in\{1,\ldots,n\}$ and every integer $k\ge 1$, the proportional number of tiles of type $j$ contained in a legal patch of the form $(\xi\rho)^k(T_i)$, approaches $\frac{v_1(j)}{\inpro{v_1}{\mathbf{1}}}$ at an exponential rate. More precisely, we have 
    \[
    \absolute{\frac{\#\{\text{tiles of type }j \text{ in }(\xi\rho)^k(T_i)\}}{\#\{\text{tiles in }(\xi\rho)^k(T_i)\}}
    - \frac{v_1(j)}{\inpro{v_1}{\mathbf{1}}}} = \OO\left( \frac{\lambda_2^k \cdot k^{r-1}}{\lambda_1^k} \right),
    \]
    where $r$ is the maximal size of a Jordan block of $\lambda_2$, and $\mathbf{1}=(1,1,\ldots,1)^t$.
\end{lem}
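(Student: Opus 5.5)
The plan is to expand the standard basis vector $e_i$, which records the single tile $T_i$, in the fixed Jordan basis of $M_\rho$, and apply Exercise~\ref{ex:vector_representation}. The vector counting the tiles of $(\xi\rho)^k(T_i)$ is $M_\rho^k e_i$. Write
\[
e_i=\sum_{l=1}^n c_l v_l .
\]
Since $e_i\in\R^n_{\ge0}\minus\{0\}$, the Perron--Frobenius remark preceding the lemma gives $c_1\neq 0$. Because $\lambda_1$ is simple, $M_\rho^k v_1=\lambda_1^k v_1$.

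The next step is to bound the remaining terms. For a generalized eigenvector $v_{i_q}$ in a Jordan chain of length $r'$ for an eigenvalue $\lambda$, the binomial expansion of $(\lambda I+N)^k$ gives
\[
M_\rho^k v_{i_q}=\sum_{p=0}^{q-1}\binom{k}{p}\lambda^{k-p}v_{i_{q-p}} ,
\]
whose norm is $\OO(|\lambda|^k k^{r'-1})$. Every eigenvalue other than $\lambda_1$ has modulus at most $|\lambda_2|$. For those of modulus exactly $|\lambda_2|$, I will take $r$ to be the maximal Jordan block size among them, which is how I read the statement. Then
\[
M_\rho^k e_i=c_1\lambda_1^k v_1+E_k,\qquad \|E_k\|=\OO\bigl(|\lambda_2|^k k^{r-1}\bigr).
\]
If $\lambda_2$ is complex, the whole sum is still real, so there is no issue. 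Eigenvalues of strictly smaller modulus contribute $o$ of this, after enlarging the constant.

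Finally, take the ratio. The numerator is the $j$-th coordinate
\[
c_1\lambda_1^k v_1(j)+E_k(j),
\]
and the denominator is
\[
\inpro{M_\rho^k e_i}{\mathbf 1}=c_1\lambda_1^k\inpro{v_1}{\mathbf 1}+\inpro{E_k}{\mathbf 1}.
\]
Since $\inpro{v_1}{\mathbf 1}>0$ and $|\lambda_2|<\lambda_1$, the denominator is at least $\tfrac12|c_1|\lambda_1^k\inpro{v_1}{\mathbf 1}$ for large $k$. The elementary estimate
\[
\left|\frac{a+\epsilon}{b+\eta}-\frac ab\right|\le\frac{|\epsilon|\,|b|+|a|\,|\eta|}{|b|\,|b+\eta|}
\]
then yields the claimed $\OO(|\lambda_2|^k k^{r-1}/\lambda_1^k)$ bound. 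For the finitely many small $k$, the left side is bounded by $2$, which can be absorbed into the constant. The constants depend on $i$, but there are only $n$ choices, so they can be made uniform.

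The only genuine subtlety is the step $c_1\neq0$, which must be nonzero uniformly. It is supplied by the cone property quoted before the lemma. One also checks that $c_1>0$ when $v_1$ is positive; this is not needed, since only $|c_1|>0$ enters. The rest is routine Jordan form bookkeeping.
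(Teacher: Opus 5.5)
Your proposal is correct and follows the paper's proof: expand $e_i$ in the Jordan basis, obtain $c_1\neq 0$ from the Perron--Frobenius cone property, bound the non-Perron contribution to $M_\rho^k e_i$ by $O(|\lambda_2|^k k^{r-1})$, and compare the type-$j$ count with the total count. Beyond the paper's sketch, you spell out the ratio estimate with the denominator lower bound, and you make explicit that $r$ should be read as the maximal Jordan block size over all eigenvalues of modulus $|\lambda_2|$; the paper leaves both points implicit.
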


\begin{proof}
    Let $P=(\xi\rho)^k(T_i)$, for some $i\in\{1,\ldots,n\}$ and $k\ge 1$. As in Exercise \ref{ex:vector_representation}, the number of tiles of each type in $P$ is represented by the vector $M_\rho^k(e_i)$, that is, 
    \[
    \#\{\text{tiles of type }j \text{ in } P\} = [M_\rho^k(e_i)](j).
    \]
    Writing $e_i = \sum_{\ell=1}^n c_\ell v_\ell$, a linear combination of the Jordan basis of $M_\rho$, we obtain
    \[
    M_\rho^k(e_i) = M_\rho^k\left(\sum_{\ell=1}^n c_\ell v_\ell \right) = \sum_{\ell=1}^nc_\ell M_\rho^k v_\ell = 
    c_1\lambda_1^kv_1 + \sum_{\ell=2}^nc_\ell M_\rho^k v_\ell.
    \]
    As mentioned above, we have $c_1\neq 0$, and so 
    \[
    \#\{\text{tiles of type }j \text{ in }P\} = c_1\lambda_1^kv_1(j) + \sum_{\ell=2}^n c_\ell [M_\rho^k v_\ell](j) = c_1\lambda_1^kv_1(j) + \OO(\lambda_2^k\cdot k^{r-1}). 
    \]
    It follows that 
    \[
    \#\{\text{tiles in }P\} = \inpro{c_1\lambda_1^kv_1 + \sum_{\ell=2}^n c_\ell M_\rho^k v_\ell}{\mathbf{1}} = c_1\lambda_1^k\inpro{v_1}{\mathbf{1}} + \OO(\lambda_2^k\cdot k^{r-1}), 
    \]
    and the proof is complete.
\end{proof}

In view of Lemma \ref{lem:v_1_main_term}, the asymptotic density of $\Lambda$ must be equal to 
\begin{equation}\label{eq:alpha}
\alpha = 
\frac{\sum_{i=1}^n v_1(i)}{\sum_{i=1}^n v_1(i)\, s_i}
= \frac{\langle \mathbf{1}, v_1 \rangle}{\langle u_{\vol}, v_1 \rangle}.
\end{equation}
Using the same formalism, we can express the discrepancy of a general patch in linear-algebraic terms. 
Let $\mathscr{P}$ denote the set of all patches of $\tau$. For every patch $P \in \mathscr{P}$, let $w_P$ be the vector representing the number of tiles of each type in $P$. Then 
\begin{equation*}
\#(\Lambda \cap P) = \sum_{i=1}^n w_P(i) = \inpro{\mathbf{1}}{w_P}, 
\quad \text{and} \quad 
\vol(P) = \sum_{i=1}^n w_P(i)\, s_i = \inpro{u_{\vol}}{w_P}.
\end{equation*}
The discrepancy of $P$ therefore satisfies 
\[
\disc{\alpha}{\Lambda}{P} = \absolute{F_\rho(w_P)},
\]
where $F_\rho(\cdot)$ is the following linear functional:
\begin{equation*}
F_\rho(w_P) = \inpro{\mathbf{1}}{w_P} - \frac{\inpro{\mathbf{1}}{v_1}}{\inpro{u_{\vol}}{ v_1}}\,
  \inpro{u_{\vol}}{w_P}.
\end{equation*}

The next consequence, concerning the Jordan basis $(v_1,\ldots,v_n)$ of $M_\rho$, plays an important role in discrepancy estimates. 

\begin{lem}\label{lem:lambda_t}
For every Jordan-basis vector $v_i$ corresponding to an eigenvalue
$\lambda_i\neq\lambda_1$, we have
\[
\inpro{u_{\vol}}{v_i}=0
\quad\text{and consequently}\quad
F_\rho(v_i)=\inpro{\mathbf{1}}{v_i}.
\]
In particular, if $v_i\in\mathbf{1}^\perp$, then $F_\rho(v_i)=0$.
Furthermore, $F_\rho(v_1)=0$.
\end{lem}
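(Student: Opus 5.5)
The key fact is that $u_{\vol}$ is a left eigenvector of $M_\rho$ for $\lambda_1$ (Lemma \ref{lem:u_1}), i.e.\ $u_{\vol}^t M_\rho = \lambda_1 u_{\vol}^t$. I will show that the linear functional $w\mapsto \inpro{u_{\vol}}{w}$ kills every generalized eigenvector belonging to an eigenvalue $\lambda\neq\lambda_1$. Over $\C$ I use the bilinear pairing $\inpro{a}{b}=\sum_j a(j)b(j)$ without conjugation. This is harmless because $u_{\vol}$ and $\mathbf{1}$ are real, so the pairing is compatible with transposition.

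Fix a Jordan chain $v_{i_1},\ldots,v_{i_r}$ for an eigenvalue $\lambda\neq\lambda_1$, and argue by induction on $q$. For $q=1$, we have $M_\rho v_{i_1}=\lambda v_{i_1}$, so
\[
\lambda_1\inpro{u_{\vol}}{v_{i_1}}=\inpro{M_\rho^t u_{\vol}}{v_{i_1}}=\inpro{u_{\vol}}{M_\rho v_{i_1}}=\lambda\inpro{u_{\vol}}{v_{i_1}}.
\]
Since $\lambda\neq\lambda_1$, this gives $\inpro{u_{\vol}}{v_{i_1}}=0$. For $q\ge 2$, we have $M_\rho v_{i_q}=\lambda v_{i_q}+v_{i_{q-1}}$. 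The same computation gives
\[
\lambda_1\inpro{u_{\vol}}{v_{i_q}}=\lambda\inpro{u_{\vol}}{v_{i_q}}+\inpro{u_{\vol}}{v_{i_{q-1}}},
\]
and the last term vanishes by the induction hypothesis. Hence $(\lambda_1-\lambda)\inpro{u_{\vol}}{v_{i_q}}=0$, and again $\inpro{u_{\vol}}{v_{i_q}}=0$. This inductive step, which handles generalized eigenvectors and not only genuine eigenvectors, is the only point requiring any care. It works because of the specific normalization of Jordan chains fixed just before the lemma.

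The remaining claims follow directly from the definition of $F_\rho$. Substituting $\inpro{u_{\vol}}{v_i}=0$ into $F_\rho(v_i)$ gives $F_\rho(v_i)=\inpro{\mathbf{1}}{v_i}$. If in addition $v_i\in\mathbf{1}^\perp$, this is $0$. For $v_1$, note that $\inpro{u_{\vol}}{v_1}>0$ because both vectors are positive, so the fraction in $F_\rho$ is well defined. Then
\[
F_\rho(v_1)=\inpro{\mathbf{1}}{v_1}-\frac{\inpro{\mathbf{1}}{v_1}}{\inpro{u_{\vol}}{v_1}}\inpro{u_{\vol}}{v_1}=0.
\]
Since $\lambda_1$ is simple by Perron--Frobenius, $v_1$ is the only Jordan-basis vector for $\lambda_1$, so no further cases need to be checked.
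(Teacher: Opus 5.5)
Your proof is correct and is essentially the paper's argument. The paper compresses your induction along the Jordan chain into a single step: it picks $q$ with $(M_\rho-\lambda_i I)^q v_i=0$ and uses $u_{\vol}^t(M_\rho-\lambda_i I)^q=(\lambda_1-\lambda_i)^q u_{\vol}^t$, which also shows that the particular chain normalization is not needed, and it handles $F_\rho(v_i)$ and $F_\rho(v_1)$ by the same direct substitution you use.
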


\begin{proof}
Clearly,
\[
F_\rho(v_1)
=
\inpro{\mathbf{1}}{v_1}
-
\frac{\inpro{\mathbf{1}}{v_1}}
     {\inpro{u_{\vol}}{v_1}}
\inpro{u_{\vol}}{v_1}
=0.
\]

Let $v_i$ be a generalized eigenvector corresponding to
$\lambda_i\neq\lambda_1$. Choose $q\ge 1$ such that
\[
(M_\rho-\lambda_i I)^qv_i=0.
\]
Since $u_{\vol}^tM_\rho=\lambda_1u_{\vol}^t$, we have
\[
0
=
u_{\vol}^t(M_\rho-\lambda_i I)^qv_i
=
(\lambda_1-\lambda_i)^q u_{\vol}^tv_i.
\]
Since $\lambda_i\neq\lambda_1$, it follows that $\inpro{u_{\vol}}{v_i}=0$. Therefore,
$F_\rho(v_i) = \inpro{\mathbf{1}}{v_i}$.
In particular, this vanishes whenever $v_i\in\mathbf{1}^\perp$.
\end{proof}

Having established the necessary preliminaries, we can now state the
main estimate for the discrepancy of inflated tiles. For an eigenvalue
$\lambda$ of $M_\rho$, let
\[
W_\lambda:=\ker(M_\rho-\lambda I)^n
\]
denote the generalized eigenspace corresponding to $\lambda$.
Let $t\ge 2$ be the minimal index such that
\[
W_{\lambda_t}\nsubseteq\mathbf{1}^\perp.
\]
Using the numbers $k_i$ associated with the Jordan-basis vectors as
defined in \eqref{eq:k_{i_q}}, set
\begin{equation}\label{eq:visible_Jordan_length}
k
=
\max\left\{
k_i
\,\middle|\,
v_i\in W_{\lambda_t}
\text{ and }
v_i\notin\mathbf{1}^\perp
\right\}.
\end{equation}
Thus, $k-1$ is the maximal polynomial degree produced by a
Jordan-basis vector corresponding to $\lambda_t$ that is detected by
the discrepancy functional $F_\rho$.

As before, the Delone set $\Lambda=\Lambda_\tau$ corresponds to a
fixed tiling $\tau\in X_\rho$. For $m\in\N$, we denote by $\tau_m$ the
$m$-th preimage of $\tau$ under $\rho$, as in Lemma
\ref{lem:tau_m_Def}, and by $\mathscr{T}^{(m)}$ the set of all
tiles of $\tau_m$.

\begin{thm}\label{thm:Discrepancy_Estimate_substitution}
Let $t\ge 2$ be the minimal index such that $W_{\lambda_t}\nsubseteq\mathbf{1}^\perp$and let $k$ be as in
\eqref{eq:visible_Jordan_length}. Then there exist constants
$c_1,c_2>0$, depending only on the parameters of the substitution
rule, with the following properties:
\begin{enumerate}
\item\label{item_thm:lower_bound}
There exist $j\in\{1,\ldots,n\}$ and $m_0\in\N$ such that for every
$m\ge m_0$ and every $T\in\mathscr{T}^{(m)}$ of type $j$,
\begin{equation}\label{eq:disc_lower_bound}
c_1\,m^{k-1}\absolute{\lambda_t}^{m}
\le
\disc{\alpha}{\Lambda}{T}.
\end{equation}

\item\label{item_thm:upper_bound}
For every $m\in\N$ and every $T\in\mathscr{T}^{(m)}$,
\begin{equation}\label{eq:disc_upper_bound}
\disc{\alpha}{\Lambda}{T}
\le
c_2\,m^{k-1}\absolute{\lambda_t}^{m}.
\end{equation}
\end{enumerate}
\end{thm}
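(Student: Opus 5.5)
A tile $T\in\mathscr{T}^{(m)}$ of type $j$ is, after inflating $m$ times, exactly the patch $(\xi\rho)^m(T_j)$ in $\tau$. By Exercise \ref{ex:vector_representation} its tile-count vector is $w=M_\rho^m e_j$. As observed above, $\disc{\alpha}{\Lambda}{T}=\absolute{F_\rho(M_\rho^m e_j)}$. So the whole problem reduces to the growth of the linear functional $F_\rho$ along the orbit $M_\rho^m e_j$.

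We expand $e_j=\sum_\ell c^{(j)}_\ell v_\ell$ in the fixed Jordan basis. By Lemma \ref{lem:lambda_t}, $F_\rho(v_1)=0$, and $F_\rho(v_\ell)=\inpro{\mathbf{1}}{v_\ell}$ for every other $v_\ell$. For a vector $v_{i_q}$ in a Jordan chain for $\lambda$,
\[
M_\rho^m v_{i_q}=\sum_{p=1}^{q}\binom{m}{q-p}\lambda^{m-q+p}v_{i_p},
\]
and applying $F_\rho$ gives $\sum_{p}\binom{m}{q-p}\lambda^{m-q+p}\inpro{\mathbf{1}}{v_{i_p}}$. If $2\le s<t$, then $W_{\lambda_s}\subseteq\mathbf{1}^\perp$ by the minimality of $t$, and these chains contribute nothing. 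The chains for $\lambda_t$ contribute $\OO(m^{k-1}\absolute{\lambda_t}^m)$, by the definition of $k$ in \eqref{eq:visible_Jordan_length}. We still need the bookkeeping that the top power of $m$ multiplying a nonzero $\inpro{\mathbf{1}}{v_{i_p}}$ is at most $m^{k-1}$: the coefficient of $v_{i_p}$ carries $\binom{m}{q-p}$ with $q\le r$, and these are exactly the polynomial factors accounted for by the $k_i$. Eigenvalues $\lambda_s$ with $s>t$ have $\absolute{\lambda_s}\le\absolute{\lambda_t}$. If $\absolute{\lambda_s}<\absolute{\lambda_t}$ they are negligible. If $\absolute{\lambda_s}=\absolute{\lambda_t}$, we group them with $\lambda_t$, and the polynomial degree they carry must also be controlled by $k$. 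Here I would interpret the ordering and the choice of $t$ so that all equal-modulus eigenvalues are treated together. Taking $c_2$ as the maximum over the finitely many types $j$ proves the upper bound \eqref{eq:disc_upper_bound}.

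For the lower bound, choose a chain vector $v_{i_q}\in W_{\lambda_t}$ with $v_{i_q}\notin\mathbf{1}^\perp$ and $k_{i_q}=k$. Then $F_\rho(M_\rho^m e_j)$ equals
\[
\lambda_t^{m}m^{k-1}\bigl(C_j+o(1)\bigr)
\]
plus possibly further terms with $\absolute{\lambda}=\absolute{\lambda_t}$ but different arguments. Here $C_j$ is a fixed linear combination of the coordinates $c^{(j)}_\ell$ of $e_j$ along the relevant chain members, paired with the values $\inpro{\mathbf{1}}{v_\ell}$. Since $e_1,\dots,e_n$ form a basis, not all $C_j$ can vanish, for otherwise the corresponding coordinate functional would vanish on the whole space. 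Fix such a $j$; then $\absolute{F_\rho(M_\rho^m e_j)}\ge c_1 m^{k-1}\absolute{\lambda_t}^m$ for all $m\ge m_0$.

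The main obstacle is cancellation among several eigenvalues of the same modulus $\absolute{\lambda_t}$, for instance a complex-conjugate pair or roots of unity times $\lambda_t$. In that case the leading term is a quasi-periodic sum $\sum_\theta a_\theta e^{im\theta}$, which can be small for infinitely many $m$. I would first try to handle this by replacing $\rho$ with a power $\rho^p$, which does not change the tiling space, so that unimodular ratios which are roots of unity become $1$. The real case then follows as above. If irrational rotations remain, I would use the fact that a nonzero almost-periodic sequence is bounded below along a syndetic set of $m$. The constant $c_1$ and the threshold $m_0$ are then chosen along that set, or the statement is read with $\lambda_t$ real and dominant, as in the sources \cite{Solomon-2011}, \cite{Solomon-2014}.
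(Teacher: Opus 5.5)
Your argument is the paper's. You expand $M_\rho^m e_j$ in the Jordan basis, and Lemma~\ref{lem:lambda_t} together with the minimality of $t$ removes $v_1$ and every earlier generalized eigenspace. The surviving polynomial degree is bounded by $k-1$ through $q-p\le r-p=k_{i_p}-1$. The lower bound comes from the nonzero leading-coefficient functional evaluated at some $e_j$. You go beyond the paper in treating several distinct eigenvalues of modulus $\absolute{\lambda_t}$; the paper's proof does not address this and only absorbs eigenvalues of strictly smaller modulus.

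Your concern is justified, and for the lower bound it is not a technicality.

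Consider the Tribonacci substitution $a\mapsto ab$, $b\mapsto ac$, $c\mapsto a$ on intervals, where $M_\rho$ has characteristic polynomial $x^3-x^2-x-1$. Here $t=2$ and $k=1$, because $\mathbf{1}$ is not proportional to $u_{\vol}$. Moreover $F_\rho(M_\rho^m e_j)=2\,\mathrm{Re}(a_j\lambda_2^m)$ exactly, for constants $a_j$. Since $\lambda_1^N$ has degree $3$ over $\Q$, its conjugates $\lambda_2^N$ and $\bar\lambda_2^N$ are distinct, so no power of $\lambda_2$ is real. Hence $\liminf_{m}\disc{\alpha}{\Lambda}{T}/\absolute{\lambda_2}^m=0$ for every type $j$, and part (1) fails as stated.

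Your syndetic-set version is therefore the correct replacement. Infinitely many $m$ is all that part~\eqref{item_thm:is_not_uniformly_spread} of Theorem~\ref{thm:BD_substitution} needs.

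Two corrections to your remedies:
\begin{enumerate}
\item Passing to $\rho^p$ gives the bound only at levels $m\in p\N$, not for every $m\ge m_0$. Handling each residue class of $m$ modulo $p$ separately does not fix this, since the good type $j$ may change from one class to another.
\item In the upper bound, complex-conjugate pairs need no regrouping. Since $M_\rho$ and $\mathbf{1}$ are real, $\bar\lambda_t$ has the same visible Jordan length as $\lambda_t$. For other ties, such as $\pm\lambda_t$, taking $k$ to be a maximum over all eigenvalues of modulus $\absolute{\lambda_t}$ changes the statement rather than reading it. With the paper's arbitrary tie-breaking, a longer visible chain at $-\lambda_t$ would make the stated bound false.
\end{enumerate}
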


\begin{proof}
Let $T\in\mathscr{T}^{(m)}$ be of type $j$. Then
$w_T=M_\rho^me_j$. 
Write
\[
M_\rho^me_j
=
\sum_{i=1}^n c_i^{(j)}(m)v_i
\]
in the fixed Jordan basis. 
Suppose that
$
v_{i_1},\ldots,v_{i_r}
$
is a Jordan chain associated with an eigenvalue $\lambda$. For
$1\le q\le r$, the standard formula for powers of a Jordan block gives
\[
M_\rho^m v_{i_q}
=
\sum_{p=0}^{q-1}
\binom{m}{p}\lambda^{m-p}v_{i_{q-p}}.
\]
Consequently, the coefficient of a given vector $v_i$ in
$M_\rho^me_j$ is bounded by a constant times
\[
m^{k_i-1}\absolute{\lambda_i}^{m}.
\]

By Lemma \ref{lem:lambda_t},
\[
F_\rho(v_i)=\inpro{\mathbf{1}}{v_i}
\]
for every non-Perron Jordan-basis vector. Moreover, by the definition
of $t$, every Jordan-basis vector corresponding to an eigenvalue that
precedes $\lambda_t$ belongs to $\mathbf{1}^\perp$ and hence is
annihilated by $F_\rho$. By the definition of $k$, the contribution
from the generalized eigenspace $W_{\lambda_t}$ is therefore bounded
by
\[
C\,m^{k-1}\absolute{\lambda_t}^{m}.
\]
The contributions from eigenvalues of smaller modulus can be absorbed
into the same bound. Hence
\[
\disc{\alpha}{\Lambda}{T}
=
\absolute{F_\rho(M_\rho^me_j)}
\le
c_2\,m^{k-1}\absolute{\lambda_t}^{m},
\]
which proves \eqref{item_thm:upper_bound}.

For the lower bound, choose an index $\ell$ satisfying
\[
v_\ell\in W_{\lambda_t}\setminus\mathbf{1}^\perp
\quad\text{and}\quad
k_\ell=k,
\]
where $k$ is as in \eqref{eq:visible_Jordan_length}. The coefficient of $m^{k-1}\lambda_t^m$ in
$F_\rho(M_\rho^m x)$, viewed as a linear functional of $x\in \C^n$,
is nonzero: indeed, a vector situated $k-1$ places after $v_\ell$ in
its Jordan chain produces a nonzero multiple of
\[
m^{k-1}\lambda_t^m F_\rho(v_\ell),
\]
and $F_\rho(v_\ell)=\inpro{\mathbf{1}}{v_\ell}\neq0$.
Since $e_1,\ldots,e_n$ form a basis, this leading-coefficient
functional is nonzero on at least one standard basis vector $e_j$.
For that index $j$,
\[
\absolute{F_\rho(M_\rho^me_j)}
\ge
c_1\,m^{k-1}\absolute{\lambda_t}^{m}
\]
for all sufficiently large $m$, proving
\eqref{item_thm:lower_bound}.
\end{proof}

\subsection{Rectifiability and conditions for uniform spreadness}

We now apply the estimates in Theorem \ref{thm:Discrepancy_Estimate_substitution} to study rectifiability and uniform spreadness of Delone sets that correspond to primitive substitution tilings. As we will see, while $\absolute{\lambda_2}<\lambda_1$ is enough to imply rectifiability, additional algebraic constraints on the eigenvalues of $M_\rho$ are required to establish uniform spreadness.


\begin{thm}\label{thm:BL_substitution}
    Every Delone set that corresponds to a primitive substitution tiling is rectifiable.
\end{thm}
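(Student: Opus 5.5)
The plan is to verify the Burago--Kleiner condition of Theorem~\ref{thm:BK_sufficient_condition} with $\alpha$ the density from \eqref{eq:alpha}. We must show that $\sum_k \Delta_\Lambda(\alpha,2^k)<\infty$. Since $\Delta_\Lambda$ is normalised by $\alpha R^d$, it suffices to prove a bound of the form
\[
\disc{\alpha}{\Lambda}{Q}\le C R^{d-\eta}
\]
for every cube $Q=Q(x,R)$ with $R\ge 1$, for some fixed $\eta>0$. Then $\Delta_\Lambda(\alpha,2^k)\le C' 2^{-k\eta}$, which is summable.

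To get such a bound, decompose $Q$ hierarchically into supertiles. Let $\mathscr{T}^{(m)}$ be the tiles of $\tau_m$ from Lemma~\ref{lem:tau_m_Def}; these are nested, and a tile of level $m$ has diameter comparable to $\xi^m$. Let $m_1$ be maximal with $\xi^{m_1}\lesssim R$. Write $Q$, up to a boundary layer, as the disjoint union of:
\begin{itemize}
\item[(a)] the level-$m_1$ supertiles contained in $Q$;
\item[(b)] for each $m<m_1$, the level-$m$ supertiles contained in $Q$ but whose parent supertile is not contained in $Q$;
\item[(c)] a remainder of tiles of level $0$ meeting $\partial Q$.
\end{itemize}
The tiles in (b) at level $m$ lie within distance $O(\xi^{m+1})$ of $\partial Q$. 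Since the supertiles have volume $\asymp\xi^{md}$, their number is $O(R^{d-1}\xi^{-m(d-1)})$. The same counting shows there are $O(1)$ tiles at level $m_1$ and $O(R^{d-1})$ boundary tiles at level $0$. Because the discrepancy is additive over disjoint unions of tiles, each level-$0$ tile contributes $O(1)$.

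Now apply the upper bound \eqref{eq:disc_upper_bound} of Theorem~\ref{thm:Discrepancy_Estimate_substitution}, which gives $\disc{\alpha}{\Lambda}{T}\le c_2 m^{k-1}|\lambda_t|^m$ for $T\in\mathscr{T}^{(m)}$. If every generalized eigenspace other than the Perron one lies in $\mathbf 1^\perp$, then the discrepancy of every supertile is bounded and the estimate only improves. Otherwise, primitivity and Perron--Frobenius give $|\lambda_t|\le|\lambda_2|<\lambda_1=\xi^d$ (Lemma~\ref{lem:u_1}), so $m^{k-1}|\lambda_t|^m\le C\xi^{m(d-\epsilon)}$ for some $\epsilon>0$. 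Summing over levels gives
\[
\disc{\alpha}{\Lambda}{Q}\lesssim R^{d-1}+\sum_{m\le m_1} R^{d-1}\xi^{-m(d-1)}\xi^{m(d-\epsilon)}\lesssim R^{d-1}\sum_{m\le m_1}\xi^{m(1-\epsilon)}.
\]
If $\epsilon<1$ this is $O\bigl(R^{d-1}R^{1-\epsilon}\bigr)=O(R^{d-\epsilon})$. If $\epsilon\ge1$ it is $O(R^{d-1}\log R)$. In either case we obtain the desired power saving $\eta>0$.

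The main obstacle is the geometric counting step in (b). To bound the number of level-$m$ supertiles lying near $\partial Q$ by $O(R^{d-1}\xi^{-m(d-1)})$, I would use that $\tau_m$ is a copy of an element of $X_\rho$ rescaled by $\xi^m$. Its tiles therefore contain balls of radius $c\xi^m$ and lie in balls of radius $C\xi^m$. A volume-packing argument in the $O(\xi^{m+1})$-neighbourhood of $\partial Q$ then gives the bound. This needs only finitely many prototiles, with no assumption on their boundaries, because we count tiles near the cube's boundary rather than near the tiles' boundaries.
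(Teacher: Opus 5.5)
Your proof is correct, but it takes a different route from the paper's.

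The paper works at a single intermediate scale. It takes $m$ minimal with $R\le\xi^{2m}$ and keeps the $\Theta(\lambda_1^m)$ level-$m$ supertiles contained in $Q(x,R)$. It bounds each of their discrepancies by $\OO((\absolute{\lambda_2}+\varepsilon)^m)$ and simply discards the boundary layer of width $\OO(\xi^m)=\OO(\sqrt R)$, at a cost of $\OO(R^{d-1/2})$. Balancing these gives a normalized discrepancy $\OO\bigl(((\absolute{\lambda_2}+\varepsilon)/\lambda_1)^m+\xi^{-m}\bigr)$, which is summable along $R=2^\ell$.

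You instead pack the cube hierarchically by supertiles of all levels $m\le m_1$. You use that only $\OO(R^{d-1}\xi^{-m(d-1)})$ of them occur at level $m$, since they lie within $\OO(\xi^{m+1})$ of $\partial Q$ and $\vol(Q\cap(\partial Q)^{+s})=\OO(R^{d-1}s)$. This costs more bookkeeping. In particular, you need a genuinely nested hierarchy, so you should choose $\tau_{m+1}$ as a $\xi\rho$-preimage of $\tau_m$ by iterating Lemma~\ref{lem:tau_m_Def}. The lemma as stated only supplies each $\tau_m$ separately, not compatibly across $m$.

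In return, your route gives a power saving about twice as large as the paper's, with exponent governed by $\absolute{\lambda_t}$. It even gives $\disc{\alpha}{\Lambda}{Q(x,R)}=\OO(R^{d-1})$ when $\absolute{\lambda_t}<\lambda_1^{(d-1)/d}$. In effect you are running the proof of Theorem~\ref{thm:BD_substitution}\eqref{item_thm:is_uniformly_spread} restricted to cubes. There the economical packing of Lemma~\ref{lem:economic_packing} reduces to a plain volume count, which is why no regularity of the tiles is needed.

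The Burago--Kleiner condition only requires some power saving, so the paper's one-scale argument is the shorter way to the statement. Yours is the sharper estimate.
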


\begin{proof}
    Throughout the proof, the symbol $c$ denotes a constant depending only on the initial parameters of the tiling, whose value may change from equation to equation.
    
    Let $\rho$ be a primitive substitution rule on the $d$-dimensional tiles $\{T_1,\ldots,T_n\}$, let $\tau\in X_\rho$ and let $\Lambda=\Lambda_\tau$ be a corresponding Delone set. We claim that $\Lambda$ satisfies the Burago--Kleiner condition from Theorem \ref{thm:BK_sufficient_condition}. That is, we show that 
    \begin{equation}
    \sum_{\ell=1}^\infty \Delta_\Lambda(\alpha,2^\ell) < \infty,
    \end{equation}
    where 
    \begin{equation}
    \Delta_\Lambda(\alpha,R) = \sup_{x\in R\Z^d}\left\{\frac{\disc{\alpha}{\Lambda}{Q(x,R)}}{\alpha\cdot\vol(Q(x,R))}\right\},
    \end{equation} 
    and $Q(x,R)\subset\R^d$ is a closed cube $x+[0,R]^d$ of edge-length $R$.

     Fix $\varepsilon>0$ such that 
     \[
    \absolute{\lambda_2}+\varepsilon <\lambda_1.
     \]
     For $R>0$, let $m$ be the minimal integer for which $R\le \xi^{2m}$, and let $\tau_m$ be an $m$-th preimage under $\rho$, as in Lemma \ref{lem:tau_m_Def}. Note that for the choice $R=2^\ell$, we can write 
    $m=\Theta(\ell)$. Given an $R$-cube $Q(R)=Q(x,R)$, let $P$ be the set of all tiles of $\tau_m$ that are contained in $Q(R)$, and let $U\subseteq Q(R)$ denote their union (see Figure \ref{fig:substitution_BL_proof}). 
    
\begin{figure}[h!]
    \centering
    \includegraphics[scale=0.3]{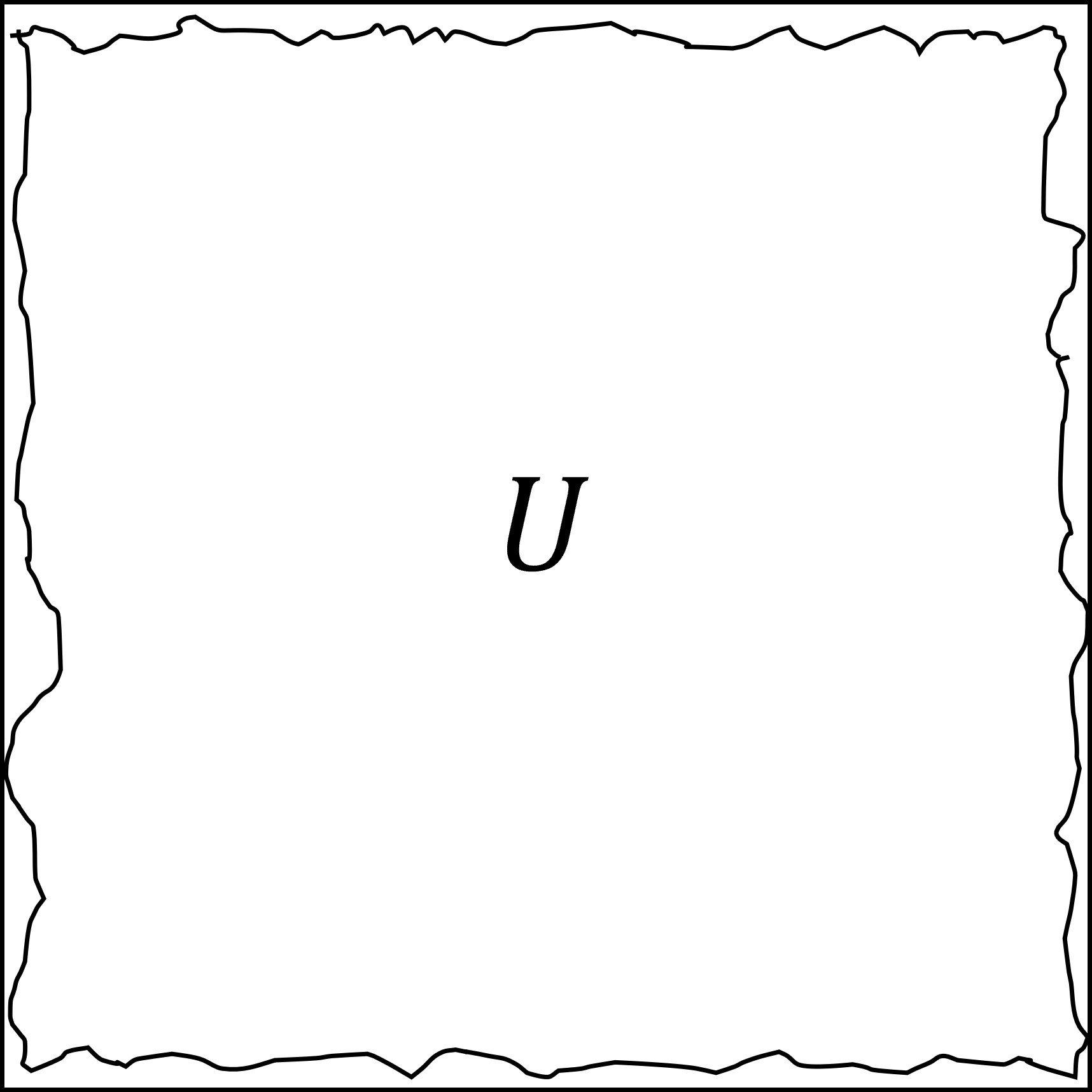}
    \caption{The region $U$ is the union of tiles of $\tau_m$ that are contained in $Q(R)$.}
    \label{fig:substitution_BL_proof}
\end{figure}

    Note that the volume of the tiles of $\tau_m$ is $\Theta((\xi^d)^m) = \Theta(\lambda_1^m)$, thus $P$ contains $\Theta\left(\vol(Q(R))/\lambda_1^m\right) = \Theta(\lambda_1^{2m}/\lambda_1^m) = \Theta(\lambda_1^m)$ tiles of $\mathscr{T}^{(m)}$. For each such tile $T$, the estimates of Theorem \ref{thm:Discrepancy_Estimate_substitution} hold, and by \eqref{eq:disc_upper_bound}, for every $\varepsilon>0$ we have 
    \[
    \disc{\alpha}{\Lambda}{T} = \OO((\absolute{\lambda_2}+\varepsilon)^m).
    \]
    Using the fact that $\disc{\alpha}{\Lambda}{A\cup B} \le \disc{\alpha}{\Lambda}{A}+\disc{\alpha}{\Lambda}{B}$, for disjoint sets $A$ and $B$, we deduce that 
    \begin{equation}\label{eq:disc_U}
    \disc{\alpha}{\Lambda}{U} \le \sum_{T\in P}\disc{\alpha}{\Lambda}{T} = \OO(\lambda_1^m\cdot (\absolute{\lambda_2}+\varepsilon)^m).
    \end{equation}
   On the other hand, the maximal diameter of a tile in $\tau_m$ is $\OO(\xi^m) = \OO(\sqrt{R})$. Hence, $\vol(Q(R)\minus U) = \OO(R^{d-\frac12})$ and $\#(\Lambda\cap (Q(R)\minus U)) = \OO(R^{d-\frac12})$, and so
    \begin{equation}\label{eq:disc_U^c}
    \disc{\alpha}{\Lambda}{Q(R)\minus U} = \OO(R^{d-\frac12}) = \OO(\vol(Q(R))/\xi^m).
    \end{equation}
    
    Combining \eqref{eq:disc_U} and \eqref{eq:disc_U^c} implies that  
    \[
     \frac{\disc{\alpha}{\Lambda}{Q(R)}}{\alpha\cdot\vol (Q(R))}
    \le 
    \frac{\disc{\alpha}{\Lambda}{U} + \disc{\alpha}{\Lambda}{Q(R)\minus U}}{\alpha\cdot\vol (Q(R))}
    = \OO\left(\left(\frac{\absolute{\lambda_2}+\varepsilon}{\lambda_1}\right)^m + \frac{1}{\xi^m}\right).   
    \] 
    As $\varepsilon<\lambda_1 - \absolute{\lambda_2}$, it follows that  
    \[
    \sum_{\ell=1}^\infty \Delta_\Lambda(\alpha,2^\ell) 
    =\sum_{\ell=1}^\infty \OO\left(\left(\frac{\absolute{\lambda_2}+\varepsilon}{\lambda_1}\right)^{\ell} + \frac{1}{\xi^{\ell}}\right) < \infty,
    \]
    and so the Burago--Kleiner rectifiability condition is satisfied, as required.
\end{proof}

\begin{open}
    In view of Corollary \ref{cor:Jac_implies_BL}, McMullen asked\footnote{Personal communication.} whether the function $f_\Lambda$ in \eqref{eq:f_Lambda} that arises from the Penrose tiling can be realized a.e. as a Jacobian. This was confirmed in \cite{Solomon-2013}, where it was shown that $f_\Lambda$ can be realized a.e. as a Jacobian for every primitive substitution tiling by star-shaped tiles. Can this result be generalized to any substitution tiling? 
\end{open}

To establish uniform spreadness, the coarse estimates employed above will not suffice. The following result was established in \cite{Solomon-2014}.  
For the positive result in \ref{item_thm:is_uniformly_spread} below, the proof relies on a stronger regularity assumption on the tiles, namely that they are \emph{biLipschitz homeomorphic} to closed balls. This is required for the technical step in Lemma \ref{lem:economic_packing} below, which was originally proved by Laczkovich in \cite{Laczkovich-1992} for sets $U\in \HHH$, providing an economical packing of $U$ by dyadic cubes. It was later extended to the setting of substitution tilings in \cite{Solomon-2014}, which is the version stated here. 

\begin{thm}\label{thm:BD_substitution}
Suppose that $\Lambda$ is a Delone set arising from a primitive
substitution rule on a set of tiles that are biLipschitz homeomorphic
to closed balls in $\R^d$. Let $t\ge2$ be the minimal index such that $W_{\lambda_t}\nsubseteq\mathbf{1}^\perp$.
\begin{enumerate}
\item\label{item_thm:is_uniformly_spread}
If
$
\absolute{\lambda_t}<\lambda_1^{\frac{d-1}{d}},
$
then $\Lambda$ is uniformly spread.

\item\label{item_thm:is_not_uniformly_spread}
If
$
\absolute{\lambda_t}>\lambda_1^{\frac{d-1}{d}},
$
then $\Lambda$ is not uniformly spread.
\end{enumerate}
\end{thm}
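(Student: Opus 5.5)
Both parts will go through Laczkovich's criterion (Theorem \ref{thm:Laczkovich}), with the density $\alpha$ from \eqref{eq:alpha}. Set $\beta=\log_{\xi}\absolute{\lambda_t}$. Since $\lambda_1=\xi^d$ by Lemma \ref{lem:u_1}, the hypothesis in \eqref{item_thm:is_uniformly_spread} reads $\beta<d-1$ and the one in \eqref{item_thm:is_not_uniformly_spread} reads $\beta>d-1$.

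For \eqref{item_thm:is_not_uniformly_spread}, the natural test sets are supertiles. By Theorem \ref{thm:Discrepancy_Estimate_substitution}\eqref{item_thm:lower_bound} there is a type $j$ such that every $T\in\mathscr{T}^{(m)}$ of type $j$ satisfies $\disc{\alpha}{\Lambda}{T}\ge c_1 m^{k-1}\absolute{\lambda_t}^m$. Such a $T$ is a copy of $\xi^m T_j$. Since $T_j$ is biLipschitz homeomorphic to a ball, its boundary has finite upper Minkowski content, so $\vol((\partial T)^{+1})=\OO(\xi^{m(d-1)})$. Therefore
\[
\frac{\disc{\alpha}{\Lambda}{T}}{\vol((\partial T)^{+1})}\gtrsim m^{k-1}\Bigl(\frac{\absolute{\lambda_t}}{\xi^{d-1}}\Bigr)^m\to\infty ,
\]
so condition \eqref{item_thm:Laczkovich_1} fails for every $C$. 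The implication \eqref{item_thm:Laczkovich_3}$\Rightarrow$\eqref{item_thm:Laczkovich_1} then shows that $\Lambda$ is not BD equivalent to $\alpha^{-1/d}\Z^d$. It is not BD equivalent to any other lattice either: any lattice BD equivalent to $\Lambda$ must have density $\alpha$ (compare Exercise \ref{cor:different_densities_implies_not_BD}), and by Corollary \ref{cor:lattices_BD} all lattices of that covolume are BD equivalent to one another. I expect this part to be routine.

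For \eqref{item_thm:is_uniformly_spread}, I will verify condition \eqref{item_thm:Laczkovich_2}, namely $\disc{\alpha}{\Lambda}{U}\le C\vol_{d-1}(\partial U)$ for every $U\in\HHH$. The idea is a hierarchical decomposition of $U$ into supertiles, using Lemma \ref{lem:economic_packing}. I expect that lemma to say the following: by using the nested hierarchy $\tau,\tau_1,\tau_2,\dots$ from Lemma \ref{lem:tau_m_Def} and taking maximal supertiles inside $U$, one can write $U$, up to a remainder of volume $\OO(\vol_{d-1}(\partial U))$, as a disjoint union of supertiles in which the number $N_m$ of level-$m$ supertiles satisfies $N_m\le C\,\vol_{d-1}(\partial U)\,\xi^{-m(d-1)}$. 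This is where the biLipschitz-ball hypothesis enters. It guarantees that the level-$m$ supertiles meeting the $\xi^{m+1}$-neighbourhood of $\partial U$ number about $\vol_{d-1}(\partial U)/\xi^{m(d-1)}$, because each supertile has inradius and boundary measure comparable to its diameter to the appropriate power.

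Given this packing, subadditivity of discrepancy and the upper bound \eqref{eq:disc_upper_bound} give
\[
\disc{\alpha}{\Lambda}{U}\le \OO(\vol_{d-1}(\partial U))+\sum_{m\ge0} N_m\, c_2 m^{k-1}\absolute{\lambda_t}^m\le C\vol_{d-1}(\partial U)\sum_{m\ge0} m^{k-1}\Bigl(\frac{\absolute{\lambda_t}}{\xi^{d-1}}\Bigr)^m .
\]
The series converges exactly when $\absolute{\lambda_t}<\lambda_1^{(d-1)/d}$; the polynomial factor $m^{k-1}$ is harmless in the strict case. The main obstacle is the packing lemma itself. One needs to control, at every scale $m$ simultaneously, the number of supertiles that are maximal inside $U$ by a boundary-measure count, even though $U$ is an arbitrary union of unit cubes whose geometry is unrelated to the tiles. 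My approach will be to charge each maximal level-$m$ supertile to its level-$(m+1)$ parent, which must meet $\partial U$, and then count such parents through a covering of $\partial U$ by balls of radius $\xi^{m+1}$. Lemma \ref{lem:Lacz_tech_lemma} controls the volume of these neighbourhoods by $\vol_{d-1}(\partial U)$ times the appropriate scale factor.
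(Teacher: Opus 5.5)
\textbf{Part (2).} Your treatment of part \eqref{item_thm:is_not_uniformly_spread} is correct and is essentially the paper's argument.

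\textbf{Part (1): the anticipated packing does not exist.} Part \eqref{item_thm:is_uniformly_spread} has a genuine gap. You expect $U$ to be a disjoint union of supertiles contained in $U$, up to a remainder of volume $\OO(\vol_{d-1}(\partial U))$, with $N_m\le C\vol_{d-1}(\partial U)\xi^{-m(d-1)}$. Such a packing does not exist in general when $d\ge2$. Your counting step already signals this. Part (1) of Lemma \ref{lem:Lacz_tech_lemma} bounds $\vol((\partial U)^{+b})$ by $b^d$ times $\vol_{d-1}(\partial U)$, not by $b$ times it. So counting parents inside $(\partial U)^{+\xi^{m+1}}$ only gives $N_m=\OO(\vol_{d-1}(\partial U))$, with no decay in $m$, and this closes your argument only when $\absolute{\lambda_t}<1$.

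The loss is real, as the following example shows.
\begin{itemize}
\item Let $U\in\HHH$ be a cube of side $N\gg\xi^{Md}$ containing $K\asymp(N\xi^{-M})^d$ unit-cube holes. The holes are pairwise, and from the outer faces, at distance a large multiple of $\xi^M$. Then $\vol_{d-1}(\partial U)\asymp K$.
\item If $A$ is a level-$M$ supertile meeting the interior of a hole, then every supertile of level at least $M$ meeting $\inter(A)$ contains $A$. Such a supertile is therefore not contained in $U$.
\item The remainder has volume only $\OO(K)$. Hence supertiles of level below $M$ must cover a volume $\gtrsim K\xi^{Md}$. With your bound on $N_m$ they would cover only $\OO(K\xi^{M})$, so the anticipated packing is false.
\item Worse, a level-$j$ supertile enters your sum with weight $\gtrsim\absolute{\lambda_t}^{j}=\xi^{jd}(\absolute{\lambda_t}/\xi^d)^{j}$, and $\absolute{\lambda_t}<\xi^d$. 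So covering volume $V$ by supertiles of level below $M$ costs $\gtrsim V(\absolute{\lambda_t}/\xi^d)^M$.
\item Consequently, for every disjoint-union decomposition your bound is $\gtrsim K\absolute{\lambda_t}^M$. This is not $\OO(K)$ when $\absolute{\lambda_t}>1$, a case the theorem covers for $d\ge2$.
\end{itemize}

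\textbf{The missing idea: proper differences.} You must allow proper differences as well as disjoint unions. This is exactly why Lemma \ref{lem:economic_packing} only asserts $U\in S(\{\tilde T_1,\ldots,\tilde T_s\})$. The region around a hole is written as a large supertile minus a few level-$0$ tiles, at cost $\OO(1)$ per hole.

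Such a packing is obtained as follows.
\begin{itemize}
\item Reduce to $U=\supp(P)$ for a patch $P$, as the paper does.
\item Descend the hierarchy from an inflated tile $T\supset U$ with $\vol(U)\le\frac12\vol(T)$. For each supertile $T'$, represent $U\cap T'$ as a disjoint union of pieces inside the children of $T'$ if $U$ fills at most half of $T'$. Otherwise, represent it as $T'$ minus such a representation of $T'\setminus U$.
\item A level-$m$ supertile is used only where this status changes from its parent to it. This forces the parent to be balanced, i.e.\ both $U$ and its complement occupy a fixed proportion of it.
\item For a balanced level-$(m+1)$ supertile $T'$, a relative isoperimetric inequality gives $\vol_{d-1}(\partial U\cap\inter(T'))\gtrsim\xi^{m(d-1)}$.
\item Supertiles of one level have disjoint interiors and boundedly many children. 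This yields \eqref{item_lem:eco_packing_3} of Lemma \ref{lem:economic_packing}.
\end{itemize}

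This relative isoperimetric inequality, transported from the ball, is where the biLipschitz hypothesis on the tiles is really used. Comparability of inradius and diameter holds for any finite set of prototiles and does not need it. With the packing in this form, and with subadditivity of discrepancy under proper differences, your final summation is exactly the paper's.
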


Let $\rho$ denote a primitive substitution with prototiles $\{T_1,\ldots,T_n\}$, now assumed to be biLipschitz homeomorphic to closed balls in $\R^d$. Fix a substitution tiling $\tau\in X_\rho$ and an associated Delone set $\Lambda=\Lambda_\tau$, and as before let $\mathscr{P}$ denote the collection of all patches in $\tau$. We denote by $\mathscr{T} = \bigcup_{m=0}^\infty\mathscr{T}^{(m)}$ the set of all inflated tiles of all the tilings $\tau_m$, as described by Lemma \ref{lem:tau_m_Def}. For a set of tiles $\mathcal{D}\subset \mathscr{T}$, we denote by $S(\mathcal{D})$ the closure of $\mathcal{D}$ under the operations of disjoint union and proper difference (that is, the operation $A\minus B$ only for sets $A,B$ that satisfy $B\subseteq A$), where every element of $\mathcal{D}$ can be used only once (compare \cite[pp.~40--41]{Laczkovich-1992} for a constructive definition).

\begin{lem}\label{lem:economic_packing}
    Given any patch $P\in \mathscr{P}$ with $U=\supp(P)$, and an inflated tile $T\in \mathscr{T}$ that satisfies $U\subset T$ and $\vol(U)\le \frac12\vol(T)$, there exist $\tilde T_1,\ldots,\tilde T_s\in \mathscr{T}$ such that 
    \begin{enumerate}
        \item 
        $\tilde T_i\subset T$ for every $1\le i\le s$.
        \item\label{item_lem:eco_packing_2}
        $U\in S(\{\tilde T_1,\ldots,\tilde T_s\})$.
        \item\label{item_lem:eco_packing_3}
        There is a constant $C$, depending only on the parameters of the tiling, such that for every $m\in\N$ we have 
        \[
        \#\{i~\mid~ \tilde T_i\in \mathscr{T}^{(m)}\}\le C\cdot\frac{\vol_{d-1}(\partial U\cap \inter(T))}{\xi^{m(d-1)}}.
        \]
    \end{enumerate}
\end{lem}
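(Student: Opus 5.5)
We will follow Laczkovich's dyadic-cube argument, with the hierarchy of cubes replaced by the hierarchy $\mathscr{T}=\bigcup_m\mathscr{T}^{(m)}$ of inflated tiles. Since $U=\supp(P)\subset T$ and $\tau$ refines every $\tau_m$, each inflated tile $T'\in\mathscr{T}$ with $T'\subseteq T$ satisfies exactly one of three conditions: $T'\subseteq U$, $\inter(T')\cap U=\emptyset$, or $T'$ meets both $U$ and its complement in its interior. In the third case we call $T'$ a \emph{boundary tile}; its interior must then meet $\partial U\cap\inter(T)$. We descend the tree of inflated tiles inside $T$, starting at $T$ and passing at each step to the children $\xi\rho$-preimages. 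At each level $m$ we record two sets: the tiles of $\mathscr{T}^{(m)}$ that are children of a boundary tile of level $m+1$ and lie fully inside $U$, and those that lie fully outside $U$. We keep descending only inside boundary tiles, which terminates at level $0$, where every tile is either inside or outside $U$.

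To express $U$ in $S(\cdot)$, we use that every parent tile is the disjoint union of its children. So $U\cap T'$, for a boundary tile $T'$, is the disjoint union of the children of $T'$ lying in $U$ together with the sets $U\cap T''$ for the boundary children $T''$. Recursing from $T$ down gives $U$ as a disjoint union of tiles, which already lies in $S$. Using only disjoint unions, however, costs a number of tiles proportional to the number of children of boundary tiles that are inside $U$, and this number is bounded by a constant times the number of boundary tiles. Since the number of children of any tile is bounded by some $N=N(\rho)$, this is fine.

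The real point is to bound the number of boundary tiles of level $m$ by $C\vol_{d-1}(\partial U\cap\inter T)/\xi^{m(d-1)}$. This is the main obstacle, and it is where the assumption that the prototiles are biLipschitz balls enters. Each tile $T'\in\mathscr{T}^{(m)}$ is a $\xi^m$-rescaled biLipschitz image of a ball, with uniform constants. We will prove a relative isoperimetric inequality: if $T'$ is split by $\partial U$ into two parts each of volume at least $c\vol(T')$, then $\vol_{d-1}(\partial U\cap\inter T')\ge c'\xi^{m(d-1)}$. The biLipschitz image of a ball inherits the relative isoperimetric inequality of the ball up to constants, and scaling supplies the factor $\xi^{m(d-1)}$.

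Boundary tiles whose intersection with $U$ is unbalanced, meaning $U$ occupies only a tiny or an almost full fraction of them, need separate handling. Here we would first try the trick from Laczkovich: use proper differences. When $U$ fills most of $T'$, write $U\cap T'=T'\setminus(T'\setminus U)$ and recurse on the complement. This way only tiles where the smaller side has proportion at least some fixed $c$ are charged at level $m$. Otherwise we follow the small side down the tree until it becomes comparable to a subtile; bounded geometry of children (volume ratios bounded by primitivity) guarantees such a level exists. The hypothesis $\vol(U)\le\frac12\vol(T)$ ensures that the root $T$ itself is balanced in this sense, so the descent starts correctly.

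With this in place, each charged tile at level $m$ carries at least $c'\xi^{m(d-1)}$ of the surface $\partial U\cap\inter(T)$ inside its interior. Tiles of the same level have disjoint interiors, so summing over them gives item (3). The remaining work is routine bookkeeping: checking that every set used is an inflated tile contained in $T$ and appears only once, which gives items (1) and (2). We expect the delicate part to be making the unbalanced case rigorous while keeping the charging within a single level, since the small side may only become balanced several levels below the tile where it was first detected.
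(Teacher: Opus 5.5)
Your route is the one the paper relies on. The paper defers the proof to \cite[Proposition~3.5]{Solomon-2014}, which runs Laczkovich's dyadic-cube argument on the supertile hierarchy $\mathscr{T}$. The biLipschitz-ball hypothesis enters exactly as you say, through a scale-invariant relative isoperimetric inequality in every $T'\in\mathscr{T}^{(m)}$. The gap is the accounting, which is the entire content of item (3) and which you leave as the ``delicate part''.

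Your stated ``real point'' is false. Let $T$ have level $M$, and let $U$ consist of one level-$0$ tile inside each of the $K\approx\xi^{(M-m_0)d}$ subtiles of level $m_0$. Then $\vol_{d-1}(\partial U\cap\inter T)\approx K$, yet there are about $K$ boundary tiles at every level $m\le m_0$, not $\OO(K/\xi^{m(d-1)})$. The same overcounting hits the tiles your disjoint-union scheme actually lists once $U$ is a large region perforated by such scattered holes. So it is not enough to charge ``balanced'' tiles and to ``follow the small side down until it becomes comparable''. You must say exactly which tiles are listed and show that each is paid for by a balanced tile. The proposal stops short of this, and even anticipates charges that would have to travel across several levels.

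The missing observation makes that worry disappear. Carry a current set $E(T')\subseteq T'$ satisfying $\vol(E(T'))\le\frac12\vol(T')$ and $\partial E(T')\cap\inter T'=\partial U\cap\inter T'$, starting with $E(T)=U$. This is what the hypothesis $\vol(U)\le\frac12\vol(T)$ is for. It does not make $T$ balanced; it ensures $T$ itself is never listed, which could not be paid for since $\partial U\cap\inter T$ may be tiny.

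For each child $T''$ of $T'$, proceed as follows.
If $E(T')\cap T''$ occupies at most half of $T''$, set $E(T'')=E(T')\cap T''$.
Otherwise, list $T''$, set $E(T'')=\overline{T''\minus E(T')}$, and use $E(T')\cap T''=T''\minus E(T'')$.
At level $0$ all current sets are empty, so induction upward gives (1) and (2), with each tile used once.

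A child is listed only if $\vol(E(T'))>\frac12\vol(T'')\ge c\,\vol(T')$, where $c>0$ depends only on $\rho$. At the same time $\vol(T'\minus E(T'))\ge\frac12\vol(T')$. Hence the \emph{parent} is balanced. By your isoperimetric inequality it carries at least $c'\xi^{(m+1)(d-1)}$ of $\partial U\cap\inter T'$. Every listed tile of level $m$ is therefore one of at most $N$ children of a balanced tile of level $m+1$. Disjointness of interiors at level $m+1$ then gives (3). Unbalanced tiles produce no listed tiles at all, so no charge ever travels more than one level.
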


The reader is referred to \cite[Proposition~3.5]{Solomon-2014} for the proof of Lemma \ref{lem:economic_packing}, which we do not include here. Relying on the lemma, we deduce Theorem \ref{thm:BD_substitution}.

\begin{proof}[Sketch of proof of Theorem \ref{thm:BD_substitution}]
First notice that for any tile $T\in\mathscr{T}^{(m)}$ we have 
\[
\vol(T) = \Theta((\xi^d)^m)=\Theta(\lambda_1^m) ~ \text{ and }~ 
\vol_{d-1}(\partial T) = \Theta((\xi^{d-1})^m)=\Theta\left(\left(\lambda_1^{\frac{d-1}{d}}\right)^m\right).
\]
Thus, \eqref{item_thm:is_not_uniformly_spread} is a direct consequence of \eqref{item_thm:lower_bound} of Theorem \ref{thm:Discrepancy_Estimate_substitution} together with condition \eqref{item_thm:Laczkovich_1} of Laczkovich's criterion in Theorem \ref{thm:Laczkovich}. We therefore turn to the proof of \eqref{item_thm:is_uniformly_spread}.

We prove \eqref{item_thm:Laczkovich_2} of Theorem \ref{thm:Laczkovich} for sets of the form $U = \supp(P)$ with some $P\in\mathscr{P}$. Given such a set $U$, we fix $T$ as in Lemma \ref{lem:economic_packing}, to obtain the inflated tiles  $\tilde T_1,\ldots,\tilde T_s \in \mathscr{T}$. Observe that whenever either $B\subseteq A$ or $A\cap B=\emptyset$ holds, the discrepancy is subadditive, in the sense that 
    \begin{align*}
    & \disc{\alpha}{\Lambda}{A\minus B} \le \disc{\alpha}{\Lambda}{A} + \disc{\alpha}{\Lambda}{B}, ~\text{ or }
    \\
    & \disc{\alpha}{\Lambda}{A\cup B} \le \disc{\alpha}{\Lambda}{A} + \disc{\alpha}{\Lambda}{B},
    \end{align*}   
respectively. By \eqref{item_lem:eco_packing_2} of Lemma \ref{lem:economic_packing} and the definition of $S(\mathcal{D})$, we therefore obtain that
\[
\disc{\alpha}{\Lambda}{U}\le \sum_{i=1}^s \disc{\alpha}{\Lambda}{\tilde T_i}.
\]
Each $\tilde T_i$ belongs to some $\mathscr{T}^{(m)}$. Thus, combining \eqref{item_thm:upper_bound} of Theorem \ref{thm:Discrepancy_Estimate_substitution} and \eqref{item_lem:eco_packing_3} of Lemma \ref{lem:economic_packing} yields 

\begin{align*}
& \sum_{i=1}^s
\disc{\alpha}{\Lambda}{\tilde T_i}
\\
&\le
\sum_{m=0}^\infty
\#\{i \mid \tilde T_i\in\mathscr{T}^{(m)}\}
\cdot c_2\,m^{k-1}\absolute{\lambda_t}^{m}
\\
&\le
\sum_{m=0}^\infty
C c_2\,
\vol_{d-1}(\partial U\cap\inter(T))
\frac{m^{k-1}\absolute{\lambda_t}^{m}}
     {\xi^{m(d-1)}}.
\end{align*}
The series converges because $\absolute{\lambda_t}
<
\lambda_1^{\frac{d-1}{d}}
=
\xi^{d-1}
$.
Thus,
$
\disc{\alpha}{\Lambda}{U}
=
\OO\bigl(\vol_{d-1}(\partial U)\bigr),
$
proving \eqref{item_thm:Laczkovich_2} of Theorem
\ref{thm:Laczkovich} and completes the argument.
\end{proof}

\begin{exe}
    Show that it is indeed sufficient to prove \eqref{item_thm:Laczkovich_2} of Theorem \ref{thm:Laczkovich} for sets of the form $U = \supp(P)$, for some $P\in\mathscr{P}$, instead of sets $U\in\HHH$.
\end{exe}

\begin{open}
    Generalize \eqref{item_thm:is_uniformly_spread} of Theorem \ref{thm:BD_substitution} to a broader setting of primitive substitution tilings, namely, to cases with weaker or no regularity assumptions on the tiles.
\end{open}

\begin{remark}
In the borderline case
$
\absolute{\lambda_t}
=
\lambda_1^{\frac{d-1}{d}},
$
there exist examples of both uniformly spread Delone sets and Delone
sets that are not uniformly spread.
With $k$ defined by
\eqref{eq:visible_Jordan_length}, Theorem
\ref{thm:Discrepancy_Estimate_substitution} implies that
$\Lambda$ is not uniformly spread whenever $k\ge2$. Indeed, in this
case the discrepancy of the inflated tiles supplied by
\eqref{item_thm:lower_bound} grows at least as
$
m^{k-1}
\left(\lambda_1^{\frac{d-1}{d}}\right)^m,
$
whereas their boundary measure grows only as
$
\left(\lambda_1^{\frac{d-1}{d}}\right)^m.
$
In particular, $k\ge2$ if $\lambda_t$ has a nontrivial Jordan block
containing at least two generalized eigenvectors that do not belong
to $\mathbf{1}^\perp$. More generally, it is enough that some
Jordan-basis vector $v_i\notin\mathbf{1}^\perp$ corresponding to
$\lambda_t$ satisfies $k_i\ge2$.

The mere existence of a nontrivial Jordan block for $\lambda_t$ is
not sufficient. For example, if the eigenvector at the beginning of
a block lies in $\mathbf{1}^\perp$ and only the final generalized
eigenvector lies outside $\mathbf{1}^\perp$, then the corrected
parameter may be $k=1$, and the above argument gives no conclusion.

Furthermore, in \cite{FrettlohSmilanskySolomon-2021}, an example is
given of two primitive substitution rules $\rho_1$ and $\rho_2$ on
the same set of tiles such that
$
M_{\rho_1}=M_{\rho_2},
$
but the Delone sets corresponding to $\rho_1$ are uniformly spread,
whereas those corresponding to $\rho_2$ are not.
\end{remark}

\section{Multiscale substitution tilings}\label{sec:multiscale}
As in the case of the well-studied primitive substitution tilings of \S\ref{sec:substitution} and cut-and-project sets of \S\ref{sec:C&P}, the framework of \emph{multiscale substitution tilings} offers another general method for constructing aperiodic Delone sets with strong regularity properties that depend only on finite initial data. An early example is given by Sadun's generalized pinwheel tiling studied in \cite{Sadun-1998}, which extends the construction of the pinwheel tiling \cite{Radin-1994} by allowing the triangle on which the substitution rule is defined to appear in two distinct scales. The general framework was introduced and explored in \cite{SmilanskySolomon-2021}, where abstract substitution rules with multiple distinct scales on a set of finite tiles in any dimension were considered. Under a simple irrationality assumption on the participating scales, a rich family of tilings with hierarchical and self-similar structure that is disjoint from the classical ``fixed-scale'' construction is defined. Many favorable properties, such as minimality and unique ergodicity, carry over to the multiple scale construction, while new and distinct geometric and combinatorial behaviors emerge. This flexibility makes multiscale substitution tilings a particularly interesting object of study in the context of equivalence relations on Delone sets.

\subsection{Multiscale substitution rules and tilings}
Once again, recall our definitions of tiles and tilings of $\R^d$ as given in Definition \ref{def:tiling}.

\begin{definition}\label{def:substitution_scheme}
A \emph{multiscale substitution rule} $\sigma$ is a map on a set $\AA=\{T_1,\ldots,T_n\}$ of unit-volume tiles in $\R^d$, so that $\sigma (T_i)$ is a patch of tiles with $\supp(T_i)=\supp(\sigma (T_i))$ for every $T_i\in \AA$. The patch $\sigma (T_i)$ contains at least two tiles, each a translation of a rescaled copy of a tile in $\AA$. We let $\omega_\sigma(T_i)$ denote the list of rescaled tiles whose translations appear in the patch $\sigma (T_i)$, presented as  
\begin{equation*}
\omega_\sigma(T_i)=\left(\alpha_{ij}^k T_j \mid \,j=1,\ldots,n,\,\,k=1,\ldots,k_{ij}\right).
\end{equation*} 
 
Here the number $k_{ij}$ of rescaled copies of $T_j$ in $\sigma (T_i)$ is assumed to be finite and $\alpha_{ij}^k\in(0,1)$ for all $i,j,k$.
\end{definition}

The main difference between Definition  \ref{def:substitution_scheme} and the setup of a substitution rule given in Definition \ref{def:SubRule} in \S~\ref{sec:substitution} is in the scales at which tiles appear in the patches $\sigma(T_i)$ and $\rho(T_i)$. While in $\rho(T_i)$ all tiles are rescaled copies of elements of $\AA$ with respect to a single scaling constant $\xi^{-1}$, in the multiscale setup, multiple scaling constants are allowed. 
We note that multiscale substitution rules in $\R$ were explored by Kamae in his work on numeration systems \cite{Kamae-2005}.

\begin{example}\label{ex:square_scheme}
    Figure \ref{fig:square_scheme} describes a multiscale substitution scheme in $\R^2$ with a single square tile, decomposed into 17 squares of two distinct scales. 
\end{example}

\begin{figure}[h!]
\centering
	\includegraphics[scale=0.5]{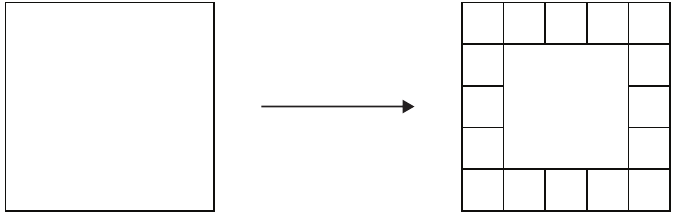}\caption{A multiscale substitution rule on a square in $\R^2$, with scaling constants $1/5$ and $3/5$.
	}	\label{fig:square_scheme}
\end{figure}

\begin{remark}
    The assumption that tiles in $\AA$ are of unit-volume is not crucial. However, since every substitution rule is essentially equivalent to one with unit-volume tiles (see \cite[\S3.1]{Smilansky-2020}), we can assume this throughout our discussion with no loss in generality, fixing the values of $\alpha_{ij}^k$ in a unique and consistent way.
\end{remark}

The major difference between Definition \ref{def:substitution_scheme} above and Definition \ref{def:SubRule} of the substitution rule considered in \S\ref{sec:substitution} is that tiles may appear in any number of distinct scales. 
When multiple scaling factors are allowed, the standard substitution procedure, in which all tiles are substituted simultaneously, would cause the ratios between tile volumes to diverge, eventually producing tiles of arbitrarily large or small scales. Tilings defined this way would therefore not be suitable for the study of Delone sets, and so a new procedure is required to generate appropriate tilings in the multiscale setup.

\begin{definition}\label{def: substitution flow}
	Let $\sigma$ be a multiscale substitution rule on $\AA$. For each $T_i\in \AA$, the \emph{substitution semi-flow} defines a family of patches in the following way. At $t=0$, set $F_0(T_i)=T_i$, which is a patch consisting of a single tile. As $t$ increases, inflate the patch by a factor of $e^{t}$ and substitute tiles of volume larger than $1$ according to the substitution rule  $\sigma$ (here we extend the substitution rule to act on rescaled copies of tiles in the natural way).  Equivalently, $F_t(T_i)$ is the patch supported on $e^tT_i$, which is the result of repeated substitution until all tiles in the patch are of unit volume or less. Tiles that emerge as rescaled copies of $T_i\in \AA$ are called \emph{tiles of type $i$}. Note that in contrast with \S\ref{sec:substitution}, tiles of the same type need not be translation equivalent, but are rescaled copies of each other.
	
	We assume that the tiles $T_i$ are positioned so that the origin of $\R^d$ lies in their interior. In this case, their inflations exhaust $\R^d$. Denote
	\begin{equation*}
	\PP_{i}=\left\{ F_t(T_{i}) \mid \,t\in\R_{\ge 0} \right\}.
	\end{equation*}
	Then the patches $\PP_\sigma=\bigcup_{i=1}^{n}\PP_{i}$ are called the \emph{generating patches}, and play the role of the legal patches described in Definition \ref{Def:subsTiling}. 
\end{definition}

\begin{figure}[h!]
    \centering
	\includegraphics[scale=0.115]{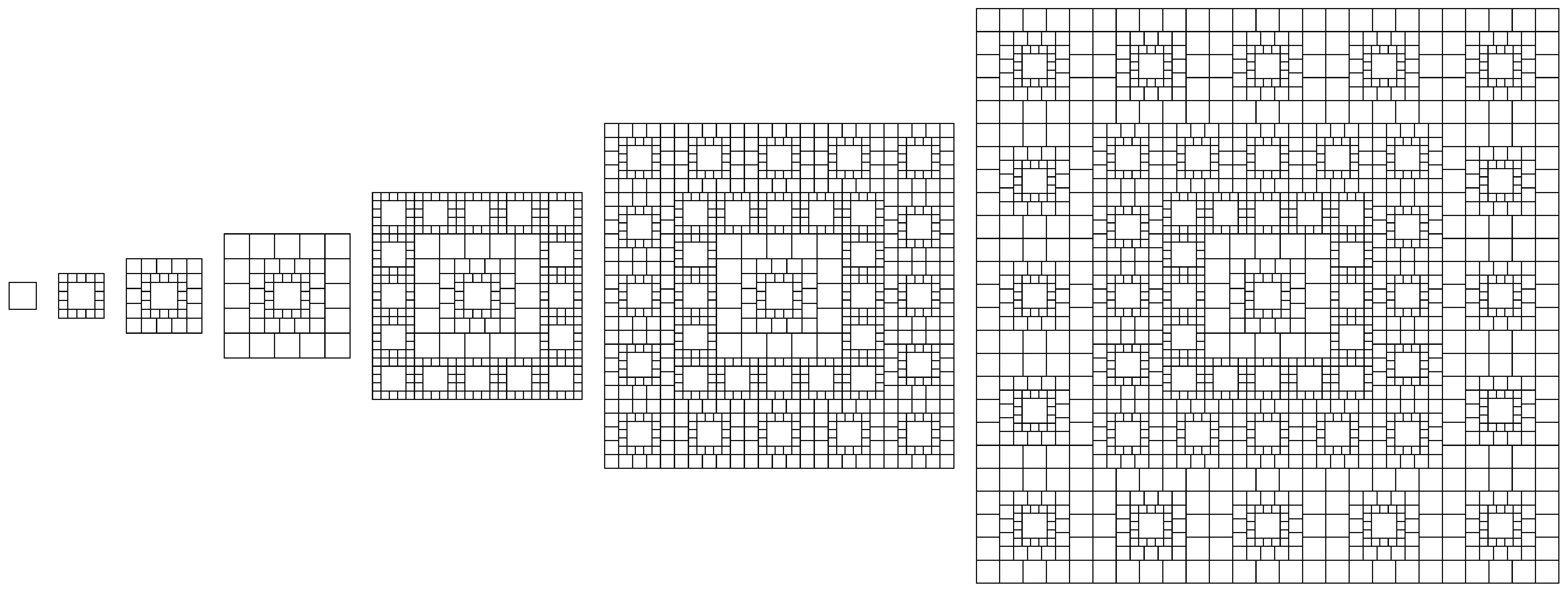}\caption{Generating patches for the square multiscale substitution rule of Example \ref{ex:square_scheme}.\label{fig:square_generating_patches}}	
\end{figure}

\begin{remark}
    Let $(t_m)_{m\in \N}$ be the strictly increasing sequence of times for which $F_{t_m}(T_i)$ contains a tile of unit volume, and consider the rescaled patches
    $$
    \pi_m:=\frac{1}{\vol(e^{t_m}T_i)}F_{t_m}(T_i).
    $$
    Then $(\pi_m)_{m\in \N}$ is the \emph{Kakutani sequence of partitions} of $T_i$ generated by $\sigma$. These sequences, produced by substituting all tiles of maximal volume at step $m$ to define the subsequent partition $\pi_{m+1}$, can be viewed as a multi-tile and higher-dimensional generalization of Kakutani's construction of sequences of partitions of the unit interval \cite{Kakutani-1976} and studied in \cite{Smilansky-2020}. 
\end{remark}

In analogy to Definition \ref{Def:subsTiling} for the substitution tiling framework, we define the \emph{tiling space} $X_\sigma$ as the set of all tilings of $\R^d$ obtained as limits of translations of generating patches $\mathcal{P}_\sigma$ under the metric given in \eqref{eq:metric}. 
For each $j \in \{1, \ldots, n\}$, let
\[
\alpha_j = \min\{\alpha_{ij}^k \mid \exists\, i \text{ such that } \alpha_{ij}^k T_j \in \omega_\sigma(T_i)\},
\]
that is, the minimal scale $\alpha$ for which a tile of the form $\alpha T_j$ appears in the subdivision $\sigma(T_i)$ of some $T_i\in \AA$. It follows directly from the construction that for every $j$, every scale of a tile of type $j$ in a tiling $\tau \in X_\sigma$ belongs to $[\alpha_j, 1]$. In particular, selecting a point in the interior of each tile in $\AA$ according to a consistent rule gives rise to a Delone set $\Lambda=\Lambda_\tau$.

\subsection{Incommensurability and the associated graph}
The following properties are fundamental in the study of multiscale substitution rules and the tilings they generate. 
\begin{definition}
A multiscale substitution rule $\sigma$ is called \emph{irreducible} if for every pair $i,j \in \{1,\ldots,n\}$, there exists $t > 0$ such that $F_t(T_i)$ contains a tile of type $j$. It is said to be \emph{incommensurable} if there exist tiles $T_i, T_j \in A$ and tiles of types $i$ and $j$ appearing in patches from $\PP_i$ and $\PP_j$, respectively, which have unit volume at times $t_1$ and $t_2$ with $t_1 \notin \Q t_2$. Otherwise, the scheme is called \emph{commensurable}.
\end{definition}

As we will see shortly in Exercise \ref{exe:incom_in_graphs}, both properties can be described in an equivalent and easy-to-verify way using the graphs associated with the substitution rule. 
Incommensurability forces a real distinction between the tilings in $X_\sigma$ and primitive substitution tilings as discussed in \S\ref{sec:substitution}. The following results were established in \cite{SmilanskySolomon-2021}. 

\begin{thm}\label{thm:multiscaledynamics}
    Let $\sigma$ be an irreducible incommensurable substitution rule on $\AA$ in $\R^d$. Then every tiling in $X_\sigma$ is non-FLC and consists of tiles of $|\AA|$ types that appear in a dense set of scales. The system $(X_\sigma, \R^d)$ is minimal and uniquely ergodic.
\end{thm}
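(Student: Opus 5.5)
The theorem makes three assertions: tilings in $X_\sigma$ are non-FLC with every type appearing at a dense set of scales; the system is minimal; and it is uniquely ergodic. I would prove them in that order, working throughout with the semi-flow patches $F_t(T_i)$ and the scale structure they generate.

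\textbf{Scales and non-FLC.} Record the scales at which tiles appear as the exponents accumulated along substitution paths. A tile of type $j$ inside $F_t(T_i)$ reached by a chain of substitutions has volume $e^{td}\prod \alpha^{d}$, with the product over the scaling constants on the chain. Taking logarithms, these scales are governed by the additive semigroup generated by the path lengths $-\log\alpha_{ij}^k$ around closed cycles in the associated graph.

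Incommensurability gives two such cycle lengths $\ell_1,\ell_2$ with $\ell_1/\ell_2\notin\Q$. By Kronecker/Dirichlet, the set $\{a\ell_1-b\ell_2 : a,b\in\N\}$ is then dense modulo the relevant range. Irreducibility lets us attach these cycles to a path from any type $i$ to any type $j$. Consequently, in a large generating patch, tiles of type $j$ occur at scales dense in $[\alpha_j,1]$.

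Since every $\tau\in X_\sigma$ is a limit of translated generating patches, and the dense-scale property holds uniformly once patches are large enough, every $\tau$ contains tiles of each type at a dense set of scales. Infinitely many distinct scales of one prototype cannot be translation equivalent. Hence there are infinitely many $r$-patches up to translation for suitable $r$, and $\tau$ is non-FLC.

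\textbf{Minimality.} Since FLC fails, I would use the non-FLC characterization cited in the sketch of Theorem~\ref{thm:BD_dichotomy}: minimality is equivalent to \emph{almost repetitivity}. That is, for every $r,\varepsilon$ there is $R$ such that every $R$-ball of $\tau$ contains an $\varepsilon$-perturbed translate of every $r$-patch.

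Any $r$-patch of $\tau$ lies, up to $\varepsilon$, inside some $F_{s}(T_i)$ with $s$ bounded in terms of $r$. Conversely, every $R$-ball of $\tau$ contains a whole tile of a high-level supertile, hence a copy of $F_{t}(T_k)$ for some $k$ with $t$ large. Irreducibility together with the density of scales ensures that $F_t(T_k)$ contains a tile of type $i$ whose scale is within $\delta$ of the one needed. The flow is continuous in scale, so this yields an $\varepsilon$-close copy of $F_s(T_i)$.

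Compactness of $[\alpha_i,1]$ gives the uniformity in $R$. This step is the main obstacle: the continuity of $t\mapsto F_t(T_i)$ in the metric~\eqref{eq:metric} fails at the discrete substitution times. I would handle this by perturbing $s$ slightly into a continuity interval, which is allowed because we only need approximate patches.

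\textbf{Unique ergodicity.} I would show that patch frequencies, defined with respect to approximate (weak) patches and continuous test functions, exist uniformly along van Hove sequences. For the flow, the number of tiles of type $j$ with scale in an interval $I$ inside $F_t(T_i)$ satisfies a renewal equation driven by the graph's matrix of measures $\sum_k\delta_{-\log\alpha_{ij}^k}$.

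Irreducibility and incommensurability make this a non-lattice (non-arithmetic) Markov renewal process. The key renewal theorem then gives $e^{-td}\cdot\#\to c_{ij}(I)$, with a limit independent of the base tile up to the Perron normalization. Tiling an arbitrary large ball of $\tau$ by supertiles $F_{t}(T_k)$ up to a boundary error of small density transfers these limits uniformly to all translates.

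Uniform convergence of ergodic averages for a dense family of cylinder functions then yields a unique invariant measure. This is the weak-topology analogue of Theorem~\ref{thm:minimal_uniquely-ergodic}\eqref{thm_item:minimal+uni.erg.2}.
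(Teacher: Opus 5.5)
The paper does not prove this theorem; it cites \cite{SmilanskySolomon-2021}, and your outline follows the same route. That route gets dense scales from two incommensurable cycles in $G_\sigma$ plus strong connectivity, minimality from approximate copies of generating patches, and unique ergodicity from walk asymptotics in $G_\sigma$. Using the Markov key renewal theorem in place of the Wiener--Ikehara path counting of \cite{KiroSmilanskySmilansky-2020} is a legitimate substitute. After tilting by subtile volumes the kernel is stochastic, and incommensurability is exactly the non-lattice condition.

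The genuine gap is in the unique ergodicity step. Your renewal equation only controls one-tile statistics, namely how many tiles of type $j$ have scale in $I$. Such statistics do not determine an invariant measure on $X_\sigma$, so they say nothing about test functions that depend on configurations of several tiles. Your sentence about cutting a large ball into supertiles only transfers the one-tile limits $c_{ij}(I)$ to all translates. What is missing is the hierarchical reduction:
\begin{enumerate}
\item Given a continuous $f$ on $X_\sigma$, choose $\eta>0$ and $r$ so that $f(\tau)$ is determined up to $\eta$ by $\tau\cap B(0,r)$.
\item Decompose a large ball of $\tau$ into level-$s$ supertiles $S$. The $r$-neighbourhoods of their boundaries have small relative volume for large $s$, because $\vol(\partial T_k)=0$.
\item Outside those neighbourhoods, the integral of $f(\tau-x)$ over $S$ depends, up to $\eta\vol(S)$, only on the type $k$ and level $s'$ of $S$. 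It does so piecewise continuously in $s'$, with finitely many jumps in the bounded window of levels.
\item Only now does the renewal theorem enter, applied to the counts of type-$k$ tiles with scale in small intervals at time $t-s$. Then let $s\to\infty$.
\end{enumerate}

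There is a secondary problem in the minimality paragraph. You claim every $r$-patch lies, up to $\varepsilon$, in some $F_s(T_i)$ with $s$ bounded in terms of $r$. But an $r$-ball can straddle supertile boundaries of every level, and in general the bound must depend on $\varepsilon$ as well. Already for an incommensurable $\alpha$-Kakutani rule, the pairs of lengths of adjacent tiles are dense in a two-dimensional region. Generating patches with $s\le s_0$ produce only finitely many segments of such pairs.

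A bound $s\le s_0(r,\varepsilon)$ does hold, but it comes from compactness of $X_\sigma$, not of $[\alpha_i,1]$. Take a finite subcover of $X_\sigma$ by small $D$-balls, on each of which the central $r$-patch is approximated by a piece of one fixed generating patch.

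You can also avoid the uniformity altogether. To show $\tau\in\overline{\tau'+\R^d}$ for all $\tau,\tau'\in X_\sigma$, two facts suffice. First, each $r$-patch of $\tau$ is $\varepsilon$-close to a piece of some generating patch, by definition of $X_\sigma$. Second, $\tau'$ contains approximate copies of every generating patch. Your supertile argument gives this, together with dense path lengths and your perturbation of $s$ into a continuity interval.

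Your non-FLC paragraph needs the same local fact, that every large ball contains a whole high-level supertile. A limit of translated generating patches may retain only a corner of each patch, so global density of scales in each patch is not enough.
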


A substitution matrix for a multiscale substitution rule $\sigma$ could be defined as in Definition \ref{Def:SubMatrix+Primitive}. However, it would not carry any information about the different scales appearing in $\omega_\sigma$. To overcome this, we define a directed weighted graph that serves to carry both the combinatorial information of the multiscale substitution rule and the participating scales. In addition, it is convenient to view the substitution semi-flow as being modeled by a semi-flow along the edges of an underlying graph.

Let $G = (V, E, \ell)$ denote a directed weighted multigraph, where each edge $e\in E$ is assigned a positive weight, interpreted as its length and denote by $\ell(e)$. 
By convention, an edge is assumed to include its terminal vertex but not its initial one, so that every vertex is regarded as a point contained in each of its incoming edges. We consider \emph{directed walks} in $G$, which need not begin or end at vertices in $V$. A walk that starts and ends in vertices in $V$ is called a \emph{path}. Every edge $e\in E$ is equipped with a linear parametrization by the interval $(0,\ell(e)]$, and the length $\ell(\gamma)$ of a walk in $G$ is defined accordingly. The path of length zero with initial vertex $i\in V$ is taken to consist solely of the vertex $i$.

We can now define the graph associated with a multiscale substitution rule. 

\begin{definition} \label{def:associated_graph}
Let $\sigma$ be a multiscale substitution rule on $\AA=\{T_1,\ldots,T_n\}$. The \emph{associated graph} $G_\sigma$ is a directed weighted graph with vertices $V=\left\{1,\ldots,n\right\}$, where the vertex $i\in V$ is associated with the tile $T_i\in \AA$. Edges with initial vertex $i\in V$ are defined according to the elements in $\omega_\sigma(T_i)$, where for each $\alpha T_j\in\omega_\sigma(T_i)$, an edge $ e\in E$ is associated, with terminal vertex $j$, and length 
	\begin{equation*}
	\ell( e)=\log\tfrac{1}{\alpha}.
	\end{equation*}	
\end{definition}

Note that $G_\sigma$ depends only on the elements in $\omega_\sigma(T_i)$, and not on the specific geometry in which they appear in the patches $\sigma(T_i)$. Thus, as already mentioned, $G_\sigma$ can indeed be thought of as the abelianization of $\sigma$, with a role similar to that of the substitution matrix $M_\rho$ in \S\ref{sec:substitution}. 

 The associated graph offers the simplest method for verifying incommensurability, which we leave as Exercise \ref{exe:incom_in_graphs}. For a detailed discussion of incommensurability and equivalent formulations, see \cite[\S 3.1]{SmilanskySolomon-2021}.

\begin{exe}\label{exe:incom_in_graphs}
    Show that in terms of the associated graph $G_\sigma$, irreducibility and incommensurability are equivalent to $G_\sigma$ being strongly connected and containing two closed paths with lengths $a$ and $b$ satisfying $a \notin \Q b$, respectively. 
\end{exe}

\begin{example} \label{ex:Kak_rule_graph}
    Let $\alpha\in(0,1)$. The $\alpha$-Kakutani multiscale substitution rule is defined on the unit interval $I=[0,1]$ in $\R$, which is substituted into two rescaled copies of the interval, one of length $\alpha$ and one of length $1-\alpha$. The rule and its associated graph are illustrated in Figure \ref{fig:Kakgraph}. Incommensurability holds for any $\alpha\in(0,1)$ for which $\frac{\log \alpha}{\log(1-\alpha)}\notin\Q$, implying it can be viewed as a typical property.
    
    \begin{figure}[h!]
    \centering
	\includegraphics[scale=0.38]{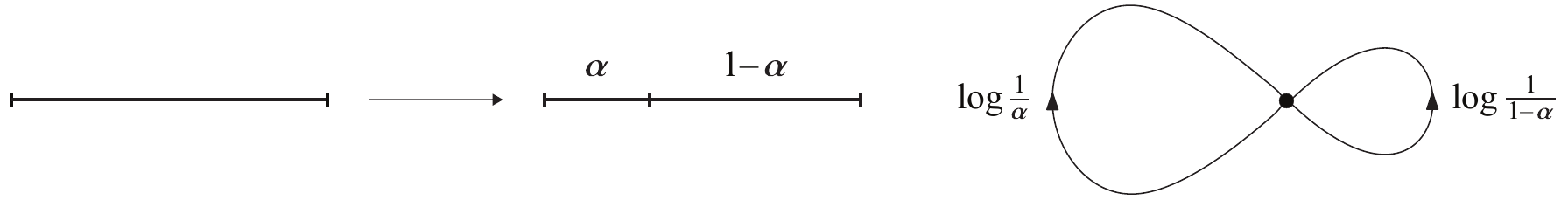}\caption{The $\alpha$-Kakutani multiscale substitution rule and its associated graph with a single vertex and two loops corresponding to the single tile $I$ and the two rescaled copies in its substitution rule. }	\label{fig:Kakgraph}	
    \end{figure}
    
\end{example}

\begin{exe}
    Sketch the graph associated with the substitution scheme described in Example \ref{ex:square_scheme} and verify that it is irreducible and incommensurable. Do the same for the multiscale substitution scheme illustrated in Figure \ref{fig:triangle_scheme} below, where, as in \S\ref{sec:substitution}, we allow rotations and consider $\AA$ to consist of two elements. 
    
\begin{figure}[h!]
\centering
	\includegraphics[scale=0.5]{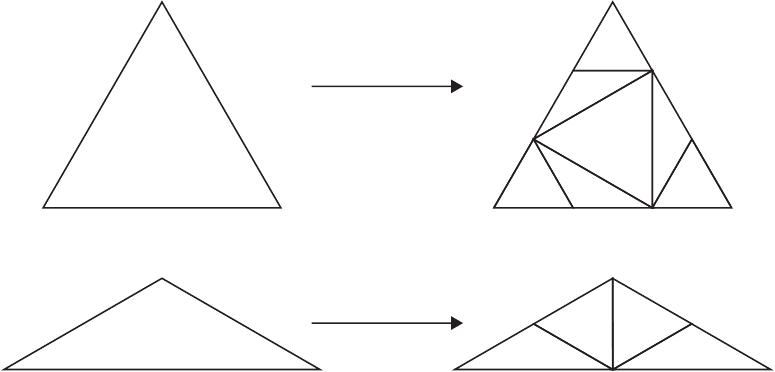}\caption{A multiscale substitution rule on two triangles in $\R^2$.
	}	\label{fig:triangle_scheme}
\end{figure}
\end{exe}

A very useful property of $G_\sigma$ is the correspondence between walks in $G_\sigma$ and tiles in the generating patches $\mathcal{P}_\sigma$, described in Lemma \ref{lem:scales_and_paths} below. For full details about this correspondence, see \cite[\S2]{SmilanskySolomon-2021}.

\begin{lem}\label{lem:scales_and_paths}
    Let $t>0$ and $T_i\in \AA$. Then every tile in $F_t(T_i)$ is in one-to-one correspondence with a walk in $G_\sigma$ of length $t$ that starts at the vertex $i\in V$. A tile $T\in F_t(T_i)$ of type $j$ corresponds to a walk $\gamma_T$ with termination point contained in an edge that ends at the vertex $j\in V$. The volume of $T$ is then determined only by the distance between the termination point of $\gamma_T$ and the vertex $j$.
\end{lem}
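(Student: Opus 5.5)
I will prove Lemma \ref{lem:scales_and_paths} by induction along the substitution semi-flow, building the bijection one substitution event at a time.

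The plan is to track a single tile from birth to time $t$. At time $0$ the patch $F_0(T_i)$ is the single tile $T_i$, which corresponds to the length-zero path consisting of the vertex $i$. Between substitution events a tile is only inflated by $e^t$. The matching move on the graph side is to advance the endpoint of the walk along the current edge at unit speed.

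The key invariant is as follows. A tile $T$ of type $j$ present at time $t$, of the form $\beta T_j$ with $\beta\le 1$ (so $\vol(T)=\beta^d$, since $\vol(T_j)=1$), corresponds to a walk from $i$ of length $t$. That walk ends on an edge $e$ terminating at $j$, at distance $\log\frac1\beta$ before $j$, measured along the parametrization $(0,\ell(e)]$. So:
\begin{enumerate}
\item a tile of unit volume sits exactly at the vertex $j$, consistent with the convention that edges contain their terminal vertex;
\item inflating by $e^{s}$ changes $\beta$ to $e^{s}\beta$, which reduces $\log\frac1\beta$ by exactly $s$, matching the movement of the endpoint.
\end{enumerate}

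Next I handle a substitution event. When $T=T_j$-sized reaches unit volume and time increases past that point, $T$ is replaced by the tiles of $\sigma(T)$. These tiles are listed in $\omega_\sigma(T_j)$ as $\alpha_{jk}^{m}T_k$, and they biject with the outgoing edges of $j$, each edge having length $\log\frac1{\alpha_{jk}^m}$. A newborn tile of scale $\alpha$ therefore corresponds to the walk extended by an initial point on that edge, at distance $\log\frac1\alpha$ from its terminal vertex $k$, and the invariant is preserved. Every walk of length $t$ from $i$ arises exactly once, because a walk is determined by its sequence of edges together with its final position, and these choices correspond exactly to the choices of subtiles at each substitution. This gives the bijection.

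I expect the main obstacle to be the bookkeeping at the vertices. I must check that the half-open convention (edges include their terminal vertex but not their initial one) matches the rule that only tiles of volume strictly larger than $1$ are substituted. Otherwise a walk sitting exactly at a vertex could be double-counted as lying at the start of each outgoing edge. I also need the fact that each walk of length $t$ passes through only finitely many edges. This holds because all edge lengths are bounded below by $\min\log\frac1{\alpha_{ij}^k}>0$, so the induction terminates. With these checked, the final claim of the lemma follows from the invariant: the volume of $T$ equals $e^{-d\cdot\mathrm{dist}}$, where $\mathrm{dist}$ is the distance from the endpoint of $\gamma_T$ to $j$.
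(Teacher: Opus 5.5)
Your argument is correct and is essentially the standard one behind the reference the survey gives in place of a proof (\cite[\S2]{SmilanskySolomon-2021}). It tracks each tile's ancestry: inflation by $e^{s}$ corresponds to unit-speed motion along an edge, and each substitution event corresponds to choosing one outgoing edge per entry of $\omega_\sigma(T_j)$, with parallel edges accounting for multiplicities. Matching the half-open edge convention to the rule that only tiles of volume strictly greater than $1$ are substituted is exactly the bookkeeping needed; note that a newborn tile really sits just past the excluded initial vertex, not at distance $\log\frac1\alpha$ from $k$.
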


\begin{example}
     Figure \ref{fig:Kakwalks} is an illustration of three patches in $F_t(I)$, generated by the $\alpha$-Kakutani multiscale substitution rule described in Example \ref{ex:Kak_rule_graph}, and their corresponding walks on the graph. 

\begin{figure}[h!]
\centering
	\includegraphics[scale=0.38]{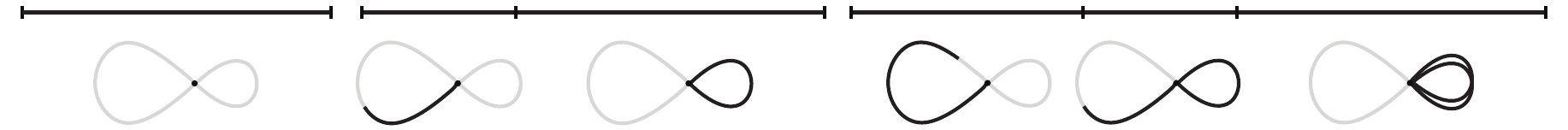}\caption{Generating patches and their corresponding walks on the associated graph for the $\alpha$-Kakutani multiscale substitution rule from Example \ref{ex:Kak_rule_graph}.}	\label{fig:Kakwalks}	
\end{figure}
\end{example}

Examples of incommensurable multiscale substitution rules with multiple tile-types can easily be constructed in every dimension $d\ge 1$, see \cite{Smilansky-2020}, \cite{SmilanskySolomon-2021}, \cite{SmilanskySolomon2-2022}.
Commensurable substitution rules are, of course, interesting in their own right. They include all ``fixed-scale'' substitution rules, namely those discussed in \S \ref{sec:substitution}. The Rauzy fractal rule introduced in \cite{Rauzy-1982} provides an example of a commensurable multiscale substitution rule on a single tile with multiple distinct scales. 
For further discussion and additional examples of commensurable multiscale substitution rules, see \cite{Smilansky-2020}.

\subsection{Tile-counting estimates and implications for BL and BD equivalence}
As we have seen in previous sections, a key step in the study of BD and BL equivalence relations on Delone sets is to establish estimates of the density and the discrepancy. In Delone sets associated with tilings, this means estimates on the main term and bounds on the error term of tile-counting functions. For primitive substitution tilings, it was shown in \S\ref{sec:substitution} that such results can be achieved using elementary linear algebra. This is no longer the case for incommensurable multiscale substitution tilings, which require estimates concerning the distribution of walks in the associated graph.

Given a substitution rule $\sigma$ on a set $\AA=\{T_1,\ldots,T_n\}$ of unit-volume tiles in $\R^d$, let $S_\sigma\in M_n(\Z)$ and $V_\sigma, H_\sigma\in M_n(\R)$ denote the matrices defined by

		\begin{equation*}
		\left(S_\sigma\right)_{ij}=\sum\limits_{\substack{T\in\omega_\sigma(T_j) \\ T\text { of type }i}}1 ~;~ \,
			\left(V_\sigma\right)_{ij}=\sum\limits_{\substack{T\in\omega_\sigma(T_j) \\ T\text { of type }i}}\vol T~; ~\,
				\left(H_\sigma\right)_{ij}=\sum\limits_{\substack{T\in\omega_\sigma(T_j) \\ T\text { of type }i}}-\vol T\cdot\log(\vol T).
		\end{equation*} 
        The matrices $S_\sigma, V_\sigma$, and $H_\sigma$ hold the combinatorial, volume, and entropy information associated with $\sigma$, respectively. Since tiles in $\AA$ are of unit volume, the vector $u_{\vol} = \mathbf{1}\in \R^n $ is a left Perron--Frobenius eigenvector of $V_\sigma$, with eigenvalue $\mu=1$.

\begin{thm}\label{thm:multiscale_tile_counting}
	Let $\sigma$
	be an irreducible incommensurable substitution rule. Then
	\[
	\frac{\#\{\text{Tiles in }F_t(T)\}}{\vol(F_{t}(T))}=\frac{{\bf 1}^t(S_\sigma - V_\sigma )v_1}{{\bf 1}^tH_\sigma v_1}+o\left(1\right),\quad t\rightarrow\infty,
	\]
    independently of the choice of $T\in A$, where $v_1\in\R^n$ is any right Perron--Frobenius eigenvector of $V_\sigma$.
    \end{thm}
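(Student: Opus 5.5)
The plan is to convert the tile count into a system of renewal equations indexed by the vertices of $G_\sigma$ and then apply the Markov (matrix) renewal theorem. By Lemma \ref{lem:scales_and_paths}, the tiles of $F_t(T_i)$ correspond bijectively to walks of length $t$ in $G_\sigma$ starting at $i$. Set $N_i(t)=\#\{\text{tiles in }F_t(T_i)\}$, with $N_i(s)=1$ for $s\le 0$. Since the tile $e^tT_i$ is substituted as soon as $t>0$, conditioning on the first edge of the walk gives
\[
N_i(t)=\sum_{e:\, i\to j}N_j\bigl(t-\ell(e)\bigr),\qquad t>0 .
\]
The edge lengths are $\ell(e)=\log\frac1\alpha$, and I take the scales $\alpha$ to be volume scales, as the definitions of $V_\sigma$ and $H_\sigma$ suggest. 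Then $\vol(F_t(T_i))=e^t$, so I normalize $n_i(t)=e^{-t}N_i(t)$. Writing $N_j(t-\ell(e))=\mathbf 1_{\{\ell(e)\ge t\}}+N_j(t-\ell(e))\mathbf 1_{\{\ell(e)<t\}}$ turns the system into
\[
n_i(t)=\sum_{e:\,i\to j}e^{-\ell(e)}\,n_j\bigl(t-\ell(e)\bigr)\mathbf 1_{\{\ell(e)<t\}}+z_i(t),\qquad z_i(t)=e^{-t}\#\{e\text{ out of }i:\ \ell(e)\ge t\}.
\]
This is a vector renewal equation $n=F*n+z$. The kernel $F$ is a matrix of atomic measures whose total-mass matrix is $\bigl(\sum_{e:i\to j}e^{-\ell(e)}\bigr)=\bigl(\sum \vol T\bigr)$, which is $V_\sigma^t$. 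Since $\mathbf 1^tV_\sigma=\mathbf 1^t$, the Perron eigenvalue is $1$, and the renewal equation is critical, as required.

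Next I would verify the hypotheses of the Markov renewal theorem (Crump, or Athreya--Murthy). Irreducibility of the mass matrix is the strong connectivity of $G_\sigma$. The non-lattice condition, namely that lengths of closed paths do not all lie in a discrete subgroup $c\Z$, is exactly incommensurability, via Exercise \ref{exe:incom_in_graphs}. The forcing term $z_i$ is bounded, vanishes for $t>\max\ell(e)$, and is monotone between finitely many jumps, so it is directly Riemann integrable. The mean matrix has entries $\sum_{e:i\to j}\ell(e)e^{-\ell(e)}=\sum -\vol T\log\vol T$, which is $H_\sigma^t$.

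The theorem then yields
\[
n_i(t)\longrightarrow \frac{u_i\;\sum_j (v_1)_j\int_0^\infty z_j}{\,u^tH_\sigma v_1\,},
\]
where $u$ and $v_1$ are the left and right Perron vectors of $V_\sigma$ (with transposes handled consistently). Here $u=\mathbf 1$, which gives independence of $i$, that is, of the choice of $T$. The integral is
\[
\int_0^\infty z_j(s)\,ds=\sum_{e\text{ out of }j}\bigl(1-e^{-\ell(e)}\bigr)=\#\omega_\sigma(T_j)-\sum_{T\in\omega_\sigma(T_j)}\vol T=\bigl(\mathbf 1^t(S_\sigma-V_\sigma)\bigr)_j .
\]
This produces the stated limit $\mathbf 1^t(S_\sigma-V_\sigma)v_1/\mathbf 1^tH_\sigma v_1$. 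The expression is homogeneous of degree zero in $v_1$, so any Perron vector may be used.

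The main obstacle I expect is matching the index conventions (row versus column, and transpose) of the abstract Markov renewal theorem with $S_\sigma$, $V_\sigma$ and $H_\sigma$ so that the normalization constant comes out exactly right. I would settle this by checking the formula on the $\alpha$-Kakutani rule, where it reduces to the scalar renewal theorem with limit $1/(-\alpha\log\alpha-(1-\alpha)\log(1-\alpha))$. A secondary point is the precise translation of the incommensurability definition into the non-lattice condition required by the theorem, which should follow from Exercise \ref{exe:incom_in_graphs}.
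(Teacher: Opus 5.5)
Your proposal is correct, but it follows a different route from the one the paper relies on. The paper omits the proof and cites the result as a special case of \cite[Theorem~2.3]{Smilansky-2022}. There, tiles of $F_t(T)$ are counted as walks in $G_\sigma$ (Lemma \ref{lem:scales_and_paths}) using the path-counting asymptotics of \cite{KiroSmilanskySmilansky-2020}. In outline, the Laplace transform of the walk-counting function is controlled by $(I-M(s))^{-1}$, where $M(s)_{ij}=\sum_{e:i\to j}e^{-s\ell(e)}$. Incommensurability rules out poles on the critical line other than the Perron one, and the Wiener--Ikehara Tauberian theorem extracts the leading term, with the entropy matrix entering through $M'(s)$ in the residue.

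You instead write a renewal system for the counts themselves and let the Markov renewal theorem do the analytic work. Its non-lattice hypothesis encodes the same input, namely that $\det(I-M(s))$ has no further zeros on the critical line. It returns the constant directly as the stationary average of $\int z$ divided by the mean holding time.

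Your index bookkeeping is right, and you can resolve your stated worry as follows.
\begin{itemize}
\item The mass matrix $V_\sigma^t$ is in fact row-stochastic, since its $i$-th row sum is $\sum_{T\in\omega_\sigma(T_i)}\vol T=1$. So you have a genuine finite-state Markov renewal process: the types of the nested tiles containing a uniformly random point.
\item Its right eigenvector is $\mathbf 1$ and its stationary vector is proportional to $v_1$, so the denominator is $v_1^tH_\sigma^t\mathbf 1=\mathbf 1^tH_\sigma v_1$, as you claim.
\item Non-arithmeticity of the return times to a fixed vertex is equivalent to incommensurability. By strong connectivity, the closed walks through one vertex generate the same subgroup of $\R$ as all closed walks.
\end{itemize}

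Your argument is shorter and more transparent for this single density formula. The path-counting route gives asymptotics for walks between prescribed vertices and edges. That is what \cite{Smilansky-2022} needs for its finer statistics, such as frequencies of tile types within scale windows. Those could also be obtained from your system by changing the forcing term.

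One convention point. In the paper the $\alpha_{ij}^k$ are linear scales and $F_t(T)$ is supported on $e^tT$, so $\vol(F_t(T))=e^{dt}$, not $e^t$. This does not affect your argument. If you normalize by $e^{-dt}$ instead, the kernel masses are still $e^{-d\ell(e)}=\vol T$. The mean matrix becomes $\frac1d H_\sigma^t$ and $\int_0^\infty z_j=\frac1d\bigl(\mathbf 1^t(S_\sigma-V_\sigma)\bigr)_j$, so the factor $\frac1d$ cancels. Your ``volume scale'' reading amounts to the time change $t\mapsto dt$, which does not affect the limit.
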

It follows from the arguments in \cite[\S9]{SmilanskySolomon-2021} that the asymptotic density $\alpha$ of any Delone set associated with a tiling in $X_\sigma$ is equal to the limit in Theorem \ref{thm:multiscale_tile_counting}. 
Note that here the Perron-Frobenius vectors are taken with respect to the volume matrix $V_\sigma$, compare this with \eqref{eq:alpha}, the asymptotic density for primitive substitution tilings.
Theorem \ref{thm:multiscale_tile_counting} is a special case of \cite[Theorem~2.3]{Smilansky-2022}, which establishes formulas for the asymptotic frequencies of tiles of given types and within a given interval of scales in generating patches. The proof involves considerably heavier analytical machinery than that provided by linear algebra in \S\ref{sec:substitution}, and relies on path-counting results established in \cite{KiroSmilanskySmilansky-2020}, in which the Wiener-Ikehara Tauberian theorem plays a central role. We omit the proof here and refer the reader to \cite{Smilansky-2022} and to \cite[\S 7]{SmilanskySolomon-2021}.

\begin{exe}
    Compute the asymptotic density of a multiscale substitution tiling generated by an incommensurable $\alpha$-Kakutani rule, as described in Example \ref{ex:Kak_rule_graph}.
\end{exe}

While Theorem \ref{thm:multiscale_tile_counting} clarifies the existence and value of the density $\alpha$, as the next result shows, the discrepancy in the incommensurable setting may remain large. We say that two edges of $G_\sigma$ are \emph{parallel edges} if they have the same length and the same initial and terminal vertices. Denote by $C = C_\sigma$ the size of a maximal set of non-parallel edges of $G_\sigma$.

\begin{thm}\label{thm:multiscale_large_discrepancy}
    Let $\sigma$ be an irreducible incommensurable substitution rule,
    and let $\alpha$ be the asymptotic tile density from Theorem
    \ref{thm:multiscale_tile_counting}. Then there exist
    $T\in\AA$ and arbitrarily large $t>0$ such that
    \begin{equation}\label{eq:multiscale_lower_bound}
    \absolute{
        \#F_t(T)-\alpha\vol(F_t(T))
    }
    =
    \Omega\left(
        \frac{\vol(F_t(T))}
             {\log^{C-1}(\vol(F_t(T)))}
    \right).
    \end{equation}
\end{thm}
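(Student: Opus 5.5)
The plan is to reduce tile counting in $F_t(T)$ to counting walks of length $t$ in $G_\sigma$, via Lemma \ref{lem:scales_and_paths}, and then exploit the fact that the number of such walks is a step function of $t$ with large jumps. By Lemma \ref{lem:scales_and_paths}, $\#F_t(T_i)$ equals the number $N_i(t)$ of walks of length $t$ starting at vertex $i$. Meanwhile $\vol(F_t(T_i))=e^{dt}$ varies continuously in $t$. The function $N_i(t)$ is nondecreasing and right-continuous. It jumps exactly at the times when some tile reaches unit volume and is substituted. At such a time $t_0$ the count increases by at least $(\text{number of tiles of volume }1)\cdot(\min_j\#\omega_\sigma(T_j)-1)$. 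Hence if there is a time $t_0$ at which the number $J(t_0)$ of unit-volume tiles in $F_{t_0}(T_i)$ is large, then comparing $t_0^-$ and $t_0$ gives a jump of size $\gtrsim J(t_0)$ in $\#F_t$, while $\alpha\vol(F_t)$ changes only by $o(1)$. Therefore one of the two times $t_0^-$ or $t_0$ has discrepancy at least $J(t_0)/3$. The statement thus reduces to showing that $J(t_0)=\Omega(e^{dt_0}/t_0^{C-1})$ for arbitrarily large $t_0$, since $\log\vol(F_t)=dt$.

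The next step is to count the tiles that reach unit volume at time $t_0$. These correspond to paths (walks ending exactly at a vertex) of length exactly $t_0$ starting at $i$. Any path decomposes by its edge multiset, so its length is $\sum_{e}n_e\ell(e)$, where $n_e$ counts the uses of edge $e$ among the $C$ non-parallel edge classes. I would choose a closed path through all non-parallel edge classes; strong connectivity follows from irreducibility, as in Exercise \ref{exe:incom_in_graphs}. I would then concentrate mass on a single length. Fix a large integer $N$ and consider paths from $i$ back to $i$ that traverse a fixed spanning set of cycles $c_1,\dots,c_C$ (one through each edge class) with multiplicities $m_1,\dots,m_C$ summing in a prescribed linear way. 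All choices of $(m_1,\dots,m_C)$ with $\sum m_k\ell(c_k)=t_0$ contribute to the same time $t_0$. The number of walks realizing a fixed vector of edge multiplicities is a multinomial-type count, of order $e^{h t}$ divided by a polynomial factor. This is the standard local central limit heuristic: walks of length $t$ distribute their edge-count vector over roughly $t^{C-1}$ lattice cells, after accounting for the single constraint of total length $t$. Here $h$ is the growth rate, which equals $d$ because $\#$walks $\asymp e^{dt}$ by Theorem \ref{thm:multiscale_tile_counting}.

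Combining these, I would take $t_0$ to be the length of the most popular edge-count vector. The pigeonhole principle over the $O(t^{C-1})$ possible integer vectors $(n_e)$ compatible with length in $[t-1,t]$ then yields some exact length $t_0$ carrying $\Omega(e^{dt_0}/t_0^{C-1})$ paths ending at vertices. Some care is needed here: I must count walks whose endpoints lie at vertices, not interior points. This is handled by noting that a positive proportion (bounded below) of walks of length $t$ have their last completed vertex within bounded distance, and then truncating. The fixed vertex $i$, and hence the tile $T=T_i$, is chosen by pigeonhole over $n$ starting vertices.

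I expect the main obstacle to be the pigeonhole step. The number of admissible edge-count vectors must be shown to be $O(t^{C-1})$ and not $O(t^{C})$. This needs the constraint $\sum n_e\ell(e)\in[t-c,t]$, which cuts one dimension off the $C$-dimensional simplex. It also needs parallel edges to be merged so that distinct edge classes give distinct lengths structures; this is exactly why $C$ counts non-parallel edges. I would first try the crude bound: the vectors lie in a slab of width $O(1)$ in a simplex of diameter $O(t)$, which gives $O(t^{C-1})$ lattice points. This bound should suffice.
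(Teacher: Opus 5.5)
Your argument is essentially the paper's. The paper bounds the number of distinct tile volumes in $F_{t_0}(T_i)$ by $O(t_0^{C-1})$ via the same slab count of fully-traversed edge vectors (Lemma \ref{lem:poly(log)_bound}). It then pigeonholes the $\#F_{t_0}(T_i)\ge e^{dt_0}$ tiles to obtain $\Omega(e^{dt}/t^{C-1})$ tiles reaching unit volume simultaneously (Corollary \ref{cor:counting_gaps}). It finishes with the same jump-versus-continuous-volume triangle inequality.

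The one difference is direction: the paper flows \emph{forward} a bounded time from a common volume class, which avoids your backward truncation. In your version you still need two small points. Each prefix path has at most $\max_j\#\omega_\sigma(T_j)$ extensions, and the window should be $(t-\max_e\ell(e),t]$ rather than $[t-1,t]$.

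The spanning-cycle/local-CLT paragraph and the appeal to Theorem \ref{thm:multiscale_tile_counting} are unnecessary; the trivial bound $\#F_t(T_i)\ge e^{dt}$ suffices. Also, with the paper's conventions $t\mapsto\#F_t(T_i)$ is left-continuous, not right-continuous, so the jump occurs between $t_0$ and $t_0+\varepsilon$.
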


The theorem follows from the combinatorial argument below. We note
that this is a slight improvement on the bounds appearing in
\cite[\S8]{SmilanskySolomon-2021}, although the argument is similar.

\begin{lem}\label{lem:poly(log)_bound}
    The maximal size of a set of tiles in $F_t(T_i)$ consisting of
    tiles of pairwise distinct volumes is
    $\OO\left(t^{C-1}\right)$,
    independently of $T_i\in\AA$.
\end{lem}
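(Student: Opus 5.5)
Via Lemma \ref{lem:scales_and_paths}, the plan is to turn this into a statement about walks in $G_\sigma$ and then count lattice points. A tile $T\in F_t(T_i)$ of type $j$ corresponds to a walk $\gamma_T$ of length $t$ from $i$. The walk passes through a path $p$ from $i$ to some vertex $k$ and then ends partway along an edge $e$ leaving $k$. The volume of $T$ is determined by the position of the endpoint on $e$. Since all tiles in $\AA$ have unit volume, $\vol(T)=e^{-d\,s}$ up to a fixed factor, where $s=t-\ell(p)$ is how far along $e$ the endpoint lies. So $\vol(T)$ is determined by the pair $(\ell(p),e)$. Equivalently, since $t$ is fixed, it is determined by $\ell(p)$ together with the length $\ell(e)$ and the terminal vertex of $e$. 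Hence the number of distinct volumes among tiles of $F_t(T_i)$ is at most the number of distinct values of $\ell(p)$ over paths $p$ from $i$ with $t-\ell(e)<\ell(p)\le t$, multiplied by the bounded number $|E|$ of choices for $e$.

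Next, group the edges into classes of parallel edges and choose representatives $e_1,\ldots,e_C$. The length of a path $p$ is $\ell(p)=\sum_{r=1}^C n_r\,\ell(e_r)$, where $n_r\ge 0$ counts the traversals of edges in class $r$. So $\ell(p)$ depends only on the vector $(n_1,\ldots,n_C)\in\N^C$, and distinct values of $\ell(p)$ come from distinct vectors. The lengths are bounded below by $\ell_{\min}=\min_e\ell(e)>0$, so $\sum_r n_r\le t/\ell_{\min}$. The constraint $t-\ell_{\max}<\sum_r n_r\ell(e_r)\le t$ confines the vector to a slab of bounded width transverse to the direction $(\ell(e_1),\ldots,\ell(e_C))$, intersected with the simplex $\{n\in\R^C_{\ge0}:\sum n_r\le t/\ell_{\min}\}$.

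The key counting step is to bound the number of integer points $n\in\N^C$ with $\inpro{n}{(\ell(e_r))_r}\in(t-\ell_{\max},t]$. I would fix $n_1,\ldots,n_{C-1}$ freely in $[0,t/\ell_{\min}]$, which gives $\OO(t^{C-1})$ choices. The constraint then leaves $n_C$ in an interval of length $\ell_{\max}/\ell(e_C)$, so at most $\OO(1)$ values. This yields $\OO(t^{C-1})$ vectors, hence $\OO(t^{C-1})$ path lengths, hence $\OO(t^{C-1})$ distinct volumes. The constants depend only on $G_\sigma$ and so are uniform in $i$.

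The main subtlety is the first reduction: justifying that the volume is a function of $(\ell(p),e)$ alone, and that parallel edges give identical contributions. This follows from the unit-volume normalization and Lemma \ref{lem:scales_and_paths}, which is why $C$ counts only non-parallel edges. Another point to check is whether the terminal edge should be counted among the $C$ classes, which would give exponent $C$. It should not: because the endpoint position is determined by $t-\ell(p)$, only the path before the last edge contributes free parameters, and that path itself has one constrained coordinate, as shown above.
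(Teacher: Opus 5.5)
Your proposal is correct and is essentially the paper's argument: both use Lemma~\ref{lem:scales_and_paths} to reduce to counting vectors of traversal counts over the $C$ classes of parallel edges lying in a slab of bounded width, fix $C-1$ coordinates with $\OO(t)$ choices each so that the last has only $\OO(1)$ choices, and multiply by the boundedly many choices of terminal edge (the paper uses the $C$ classes, you use $|E|$, which works equally well). One harmless slip: a tile whose walk ends at distance $s$ along $e$ has volume $e^{-d(\ell(e)-s)}$, which increases with $s$, not $e^{-ds}$ up to a constant; since you only use that the volume is determined by $s$ and $\ell(e)$, nothing changes.
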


\begin{proof}
    Let
    $
    \mathcal{E}_1,\ldots,\mathcal{E}_C
    $
    denote the equivalence classes of parallel edges of $G_\sigma$,
    and let $\beta_\ell>0$ be the common length of the edges in
    $\mathcal E_\ell$. Set
    \[
    \beta_{\max}=\max_{1\le\ell\le C}\beta_\ell.
    \]

    Let $\gamma$ be a walk of length $t$ corresponding, by Lemma
    \ref{lem:scales_and_paths}, to a tile in $F_t(T_i)$.
    For every $\ell\in\{1,\ldots,C\}$, let $x_\ell(\gamma)$ be the
    number of edges from $\mathcal E_\ell$ that are fully traversed
    by $\gamma$.

    If $\gamma$ terminates in the interior of an edge, then
    \begin{equation}\label{eq:number_of_walks}
    t-\beta_{\max}
    <
    \sum_{\ell=1}^C\beta_\ell x_\ell(\gamma)
    <
    t.
    \end{equation}
    Note that walks terminating at a vertex correspond to tiles of unit volume and therefore contribute at most one additional volume.

    We claim that the number of vectors
    $
    x=(x_1,\ldots,x_C)\in\Z_{\ge0}^C
    $
    satisfying \eqref{eq:number_of_walks} is $\OO(t^{C-1})$. Indeed, each of
    $x_1,\ldots,x_{C-1}$ has $\OO(t)$ possible values. Once these
    coordinates are fixed, $x_C$ must lie in an interval of length
    at most
    \[
    \frac{\beta_{\max}}{\beta_C},
    \]
    and hence there are only $\OO(1)$ possible values of $x_C$, which proves the claim.

    Finally, once the vector $x$ and the equivalence class of the
    terminal edge are fixed, the distance of the terminal point of
    the walk from the terminal vertex is determined. By Lemma
    \ref{lem:scales_and_paths}, this determines the type and
    volume of the corresponding tile. Since there are only $C$
    possible terminal-edge classes, the total number of distinct
    tile volumes is $\OO(t^{C-1})$.
\end{proof}

As a consequence, the pigeonhole principle gives the following quantitative jump estimate.

\begin{cor}\label{cor:counting_gaps}
    For every $T_i\in\AA$ and every $t_0\ge1$, there exist
    $t\ge t_0$ and $\varepsilon_0>0$ such that, for every
    $\varepsilon\in(0,\varepsilon_0]$,
    \[
    \#F_{t+\varepsilon}(T_i)-\#F_t(T_i)
    \ge
    c_\sigma\frac{e^{dt}}{t^{C-1}},
    \]
    where $c_\sigma>0$ depends only on the parameters of $\sigma$.
\end{cor}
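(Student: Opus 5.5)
Since $F_t(T_i)$ is the patch on $e^tT_i$ and $T_i$ has unit volume, $\vol(F_t(T_i))=e^{dt}$. The plan is a pigeonhole argument on the volumes of the tiles in $F_t(T_i)$, combined with Lemma~\ref{lem:poly(log)_bound}.

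First, I will note how the patch changes as the semi-flow runs. Going from time $t$ to $t+\varepsilon$ with $\varepsilon$ small, the only tiles that get substituted are those whose volume crosses $1$ in between. These are exactly the tiles of $F_t(T_i)$ with volume in $(e^{-d\varepsilon},1]$. If the clock is set so that the tiles of volume exactly $1$ at time $t$ are substituted just after $t$, then these are the tiles of $F_t(T_i)$ of unit volume. Each substituted tile is replaced by at least two tiles, by Definition~\ref{def:substitution_scheme}. So each such tile increases the count by at least $1$, and
\[
\#F_{t+\varepsilon}(T_i)-\#F_t(T_i)\ge N_t:=\#\{T\in F_t(T_i)\mid \vol(T)=1\}
\]
for all $\varepsilon\in(0,\varepsilon_0]$. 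Here $\varepsilon_0$ is chosen smaller than the gap to the next volume value present in $F_t(T_i)$. That gap is positive because the patch is finite. The task therefore reduces to finding $t\ge t_0$ with $N_t\gtrsim e^{dt}/t^{C-1}$.

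To find such a $t$, I will consider a window of times $[s,s+\delta]$ with $s\ge t_0$ and $\delta>0$ a small constant. Every tile present at time $s$ gets substituted at some time $u$ with $\vol(T)e^{d(u-s)}=1$. By Lemma~\ref{lem:poly(log)_bound}, the tiles of $F_s(T_i)$ take only $K=\OO(s^{C-1})$ distinct volumes. Tiles of equal volume are substituted simultaneously, so there are at most $K$ substitution times coming from the tiles of $F_s(T_i)$. I will count only the original tiles of $F_s(T_i)$ with volume in $[e^{-d\delta},1]$. Tiles created during the window are ignored, since they only add to the count. The number of these tiles is $\gtrsim e^{ds}$, because all tiles have volume at most $1$ and $e^{-d\delta}$-small tiles cannot fill everything. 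To see this, I will use the bounded scale range $[\alpha_j,1]$: all tiles at time $s$ have volume comparable to $1$, so the patch has $\Theta(e^{ds})$ tiles. I also need that a positive proportion of these lie in the top volume band. If needed, I will take $\delta$ equal to the whole log-range $d^{-1}\log(1/\min_j\alpha_j^d)$, in which case every tile is substituted within the window.

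With that choice of $\delta$, all $\Theta(e^{ds})$ tiles of $F_s(T_i)$ are distributed among at most $K=\OO(s^{C-1})$ substitution times in $[s,s+\delta]$. By pigeonhole, some time $t\in[s,s+\delta]$ has $N_t\ge c\,e^{ds}/s^{C-1}$. Since $t\le s+\delta$, this gives $N_t\ge c'e^{dt}/t^{C-1}$, which is the claim with $c_\sigma$ depending only on $\sigma$.

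The main obstacle is bookkeeping at the substitution moments. The tiles counted at time $t$ must be actual tiles of $F_t(T_i)$ of unit volume: each is an original tile of $F_s(T_i)$ inflated by $e^{t-s}$, not yet substituted because its volume has just reached $1$. The convention about whether $F_t$ contains tiles of volume exactly $1$ (it does, by Definition~\ref{def: substitution flow}) makes the jump appear on the right at $t$. This is exactly the form $\#F_{t+\varepsilon}-\#F_t$ for all small $\varepsilon$.
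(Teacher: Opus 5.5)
Your proposal is correct and follows the paper's argument. At least $e^{ds}$ tiles of $F_s(T_i)$ are pigeonholed into the $\OO(s^{C-1})$ distinct volumes (equivalently, substitution times) given by Lemma~\ref{lem:poly(log)_bound}. The flow then advances by the bounded time $\frac1d\log(1/v)$, bounded because all scales lie in $[\alpha_j,1]$, until the most popular volume class reaches unit volume and splits. Your detour through the top volume band is unnecessary: taking $\delta$ to be the full log-range, as you end up doing, is exactly what the paper does.
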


\ignore{
\begin{proof}
    Since
    $\supp(F_{t_0}(T_i))=e^{t_0}T_i$ and the prototiles have unit volume, we have $\vol(F_{t_0}(T_i))=e^{dt_0}$. 
    Every tile in $F_{t_0}(T_i)$ has volume at most one, and
    therefore
    $\#F_{t_0}(T_i)\ge e^{dt_0}$.

    By Lemma \ref{lem:poly(log)_bound}, these tiles occur in at most $C_0t_0^{C-1}$ distinct volumes, where $C_0$ depends only on $\sigma$. Hence some volume $v\in(0,1]$ occurs with multiplicity at least
    \[
    c_0\frac{e^{dt_0}}{t_0^{C-1}}.
    \]

    Set
    \[
    s=\frac1d\log\frac1v
    \qquad\text{and}\qquad
    t=t_0+s.
    \]
    At time $t$, all of these tiles have unit volume. Since the
    possible scales of the tiles are uniformly bounded away from
    zero, $s=O(1)$, independently of $t_0$. Thus
    \[
    e^{dt_0}=\Theta(e^{dt})
    \qquad\text{and}\qquad
    t_0=\Theta(t).
    \]

    For every sufficiently small $\varepsilon>0$, all of the
    selected unit-volume tiles are subdivided. Since every
    substitution contains at least two tiles, each such
    subdivision increases the number of tiles by at least one.
    Consequently,
    \[
    \#F_{t+\varepsilon}(T_i)-\#F_t(T_i)
    \ge
    c_\sigma\frac{e^{dt}}{t^{C-1}},
    \]
    as required.
\end{proof}

\begin{proof}[Proof of Theorem
\ref{thm:multiscale_large_discrepancy}]
    For $T\in\AA$, write
    \[
    N_T(s)=\#F_s(T)
    \qquad\text{and}\qquad
    V_T(s)=\vol(F_s(T))=e^{ds}.
    \]
    By Corollary \ref{cor:counting_gaps}, there exist
    arbitrarily large $t$ and arbitrarily small $\varepsilon>0$
    such that
    \[
    N_T(t+\varepsilon)-N_T(t)
    \ge
    c_\sigma\frac{e^{dt}}{t^{C-1}}.
    \]
    Since $\varepsilon$ may be chosen arbitrarily small, we may also
    require that
    \[
    \alpha\bigl(V_T(t+\varepsilon)-V_T(t)\bigr)
    \le
    \frac{c_\sigma}{2}\frac{e^{dt}}{t^{C-1}}.
    \]

    By the triangle inequality,
    \begin{align*}
    N_T(t+\varepsilon)-N_T(t)
    &\le
    \absolute{
        N_T(t+\varepsilon)-\alpha V_T(t+\varepsilon)
    }\\
    &\quad+
    \absolute{
        N_T(t)-\alpha V_T(t)
    }\\
    &\quad+
    \alpha\bigl(V_T(t+\varepsilon)-V_T(t)\bigr).
    \end{align*}
    It follows that at least one of the two quantities
    \[
    \absolute{N_T(t)-\alpha V_T(t)}
    \quad\text{and}\quad
    \absolute{
        N_T(t+\varepsilon)-\alpha V_T(t+\varepsilon)
    }
    \]
    is bounded from below by
    \[
    \frac{c_\sigma}{4}
    \frac{e^{dt}}{t^{C-1}}.
    \]
    Since
    \[
    \vol(F_t(T))=e^{dt}
    \qquad\text{and}\qquad
    \log\vol(F_t(T))=dt,
    \]
    this proves \eqref{eq:multiscale_lower_bound}.
\end{proof}

\begin{exe}
    Verify that the constants in Corollary
    \ref{cor:counting_gaps} can be chosen independently of
    $T_i$, and justify explicitly the comparisons
    \[
    e^{dt_0}\asymp e^{dt}
    \qquad\text{and}\qquad
    t_0\asymp t
    \]
    used in its proof.
\end{exe}
}

\begin{exe}
    Deduce Theorem \ref{thm:multiscale_large_discrepancy} and Corollary \ref{cor:counting_gaps} from Lemma \ref{lem:poly(log)_bound}.
\end{exe}


To deduce non-uniform spreadness from Theorem
\ref{thm:multiscale_large_discrepancy}, we require quantitative
control on the boundary neighborhoods of the prototiles.

\begin{cor}\label{cor:multiscale_not_uniformly_spread}
    Let $\sigma$ be an irreducible incommensurable multiscale
    substitution rule in $\R^d$. Suppose that there exists
    $\eta>0$ such that, for every $T_i\in\AA$,
    \begin{equation}\label{eq:multiscale_boundary_condition}
    \vol\left(
        \left(\partial\supp(F_t(T_i))\right)^{+1}
    \right)
    =
    \OO\left(e^{(d-\eta)t}\right).
    \end{equation}
    Then every Delone set associated with a tiling in $X_\sigma$ is
    not uniformly spread.
\end{cor}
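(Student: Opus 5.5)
We argue by contradiction using the implication \eqref{item_thm:Laczkovich_3}$\Rightarrow$\eqref{item_thm:Laczkovich_1} of Laczkovich's criterion (Theorem \ref{thm:Laczkovich}). Fix $\tau\in X_\sigma$ and an associated Delone set $\Lambda=\Lambda_\tau$. If $\Lambda$ were uniformly spread, it would be BD equivalent to $\beta^{-1/d}\Z^d$ for some $\beta>0$. By Exercise \ref{cor:different_densities_implies_not_BD}, $\beta$ must equal the asymptotic density $\alpha$ of Theorem \ref{thm:multiscale_tile_counting}, since $\Lambda$ has asymptotic density $\alpha$. Hence there is $C_0>0$ with $\disc{\alpha}{\Lambda}{B}\le C_0\vol((\partial B)^{+1})$ for every bounded measurable $B$. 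The aim is to contradict this by exhibiting sets $B$ of the form $\supp(F_t(T))$, suitably translated inside $\tau$.

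The first step transfers the lower bound of Theorem \ref{thm:multiscale_large_discrepancy} from the generating patches to $\Lambda$. That theorem gives $T\in\AA$ and arbitrarily large $t$ with $\absolute{\#F_t(T)-\alpha e^{dt}}\ge c\,e^{dt}/(dt)^{C-1}$. The patch $F_t(T)$ must actually appear in $\tau$, at least approximately. By minimality (Theorem \ref{thm:multiscaledynamics}) and the definition of $X_\sigma$ as limits of translated generating patches, for each such $t$ we would find, inside $\tau$, a patch that is an arbitrarily small perturbation of a translate $F_t(T)+x_t$. The natural route is to use the hierarchical structure directly: by a preimage argument analogous to Lemma \ref{lem:tau_m_Def}, $\tau$ itself is a union of such supertiles $F_s(T_j)$ for any prescribed large $s$ and some type $j$. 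Combined with irreducibility, this gives a genuine copy of $F_{t}(T)$, up to a bounded shift in $t$, with its tiles exactly tiles of $\tau$.

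With $B_t=\supp(F_t(T))+x_t$, the count $\#(\Lambda\cap B_t)$ equals $\#F_t(T)$ up to points of tiles meeting $\partial B_t$. Since tiles have diameter bounded above and volume bounded below, that error is $\OO(\vol((\partial B_t)^{+1}))$. Also $\vol(B_t)=e^{dt}$. Therefore
\[
\disc{\alpha}{\Lambda}{B_t}\ge c\,\frac{e^{dt}}{t^{C-1}}-\OO\left(e^{(d-\eta)t}\right),
\]
using \eqref{eq:multiscale_boundary_condition}. The same hypothesis gives $\vol((\partial B_t)^{+1})=\OO(e^{(d-\eta)t})$. Since $e^{dt}/t^{C-1}$ dominates $e^{(d-\eta)t}$ for large $t$, the ratio $\disc{\alpha}{\Lambda}{B_t}/\vol((\partial B_t)^{+1})$ tends to infinity along the sequence. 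This contradicts condition \eqref{item_thm:Laczkovich_1}.

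The main obstacle is the embedding step. Because the tilings are non-FLC, copies of $F_t(T)$ appear in $\tau$ only as limits, and the theorem's specific times $t$ must be realised exactly, or with perturbation error that does not spoil the count. I would first try to show that every $\tau\in X_\sigma$ decomposes into supertiles $F_s(T_j)$ at all scales. If only approximate copies are available, I would instead take a near-copy whose tiles are $\delta$-close to those of $F_t(T)$, and bound the number of discrepant points by those near $\partial B_t$. This uses that small Chabauty--Fell perturbations change the count in $B_t$ only inside $(\partial B_t)^{+1}$.
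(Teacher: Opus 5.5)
Your overall scheme and final estimate are sound, but the embedding step only works in the form of your fallback. The primary route (supertile decomposition plus irreducibility) has a real problem.

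Theorem \ref{thm:multiscale_large_discrepancy} gives the lower bound only at the specific times $t$ coming from Corollary \ref{cor:counting_gaps}. There, $\#F_s(T)$ makes a single jump of size $\gtrsim e^{dt}/t^{C-1}$ at $s=t$. So the discrepancy is large at $t$ or just after it, and nothing is known at a time $t'$ with $\absolute{t'-t}$ of order one. A copy of $F_{t'}(T)$ ``up to a bounded shift in $t$'' is therefore useless. Nor can you expect an exact copy at the prescribed $t$ made of tiles of $\tau$. An exact copy of $F_s(T_j)$ is a finite patch of $\tau$ whose support has volume $e^{ds}$. Hence the set of times realized by exact supertiles in a fixed $\tau$ is countable and depends on $\tau$. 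In this non-FLC setting it need not meet the times supplied by the theorem.

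The approximate-copy version does close the argument, provided you use the right input. The definition of $X_\sigma$ only says that patches of $\tau$ approximate pieces of generating patches, which is the wrong direction. You need a tiling $\tau'\in X_\sigma$ that contains $F_t(T)$ exactly. Minimality then gives $\tau+v_k\to\tau'$. Continuity of $\tau\mapsto\Lambda_\tau$ together with uniform discreteness lets you match points with displacement $<\delta$. Then $\absolute{\#(\Lambda\cap(B_t-v_k))-\#F_t(T)}$ is bounded by the number of points of $\Lambda$ within $\delta$ of $\partial B_t-v_k$. That number is $\OO(\vol((\partial B_t)^{+1}))=\OO(e^{(d-\eta)t})$, and your conclusion follows.

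The paper runs the limit in the opposite direction. Assuming one associated set is uniformly spread, minimality and Exercise \ref{exe:BD_of_the_entire_hull} make all of them uniformly spread, with a BD bound, and hence a Laczkovich constant, that is uniform over the hull. It then applies Laczkovich's inequality directly to the Delone set $\Lambda'$ of a tiling containing $F_t(T)$ exactly. There $\#(\Lambda'\cap\supp F_t(T))=\#F_t(T)$, with no error term.

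Your route keeps a single $\Lambda$ and its own constant. It pays a perturbation error, which \eqref{eq:multiscale_boundary_condition} absorbs, and needs continuity of the point selection. The paper's route needs uniformity of the BD bound across the hull instead. Both rely on the fact that every generating patch extends to a tiling in $X_\sigma$.
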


\begin{proof}
    Suppose, for contradiction, that one of the associated Delone
    sets is uniformly spread. Since $(X_\sigma,\R^d)$ is minimal by
    Theorem \ref{thm:multiscaledynamics}, and uniform
    spreadness is inherited by every element of the hull, all the
    associated Delone sets are uniformly spread. Moreover, the usual
    proof of this inheritance gives a BD bound that is uniform over
    the hull. Therefore, by Laczkovich's criterion, there exists a
    constant $c>0$ such that
    \begin{equation}\label{eq:uniform_Lacz_multiscale}
    \absolute{\#(\Lambda\cap U)-\alpha\vol(U)}
    \le
    c\vol\left((\partial U)^{+1}\right)
    \end{equation}
    for every associated Delone set $\Lambda$ and every bounded
    measurable set $U$.

    Every generating patch extends to a tiling in $X_\sigma$.
    We may therefore apply
    \eqref{eq:uniform_Lacz_multiscale} to the generating
    patches supplied by Theorem
    \ref{thm:multiscale_large_discrepancy}. For these patches,
    the discrepancy is bounded from below by
    \[
    c_1\frac{e^{dt}}{t^{C-1}},
    \]
    whereas \eqref{eq:multiscale_boundary_condition} gives
    \[
    \vol\left(
        \left(\partial\supp(F_t(T_i))\right)^{+1}
    \right)
    =
    \OO\left(e^{(d-\eta)t}\right).
    \]
    Since
    \[
    \frac{e^{dt}/t^{C-1}}{e^{(d-\eta)t}}
    =
    \frac{e^{\eta t}}{t^{C-1}}
    \longrightarrow\infty,
    \]
    inequality \eqref{eq:uniform_Lacz_multiscale} is
    contradicted.
\end{proof}

Condition \eqref{eq:multiscale_boundary_condition} holds, for
example, for polygonal prototiles, and more generally for prototiles
with Lipschitz boundary. In dimension one it holds automatically for
interval prototiles. In particular, every Delone set arising from an
incommensurable $\alpha$-Kakutani multiscale substitution tiling of
$\R$ is not uniformly spread.

\begin{cor}
    Let $\Lambda$ be a Delone set that arises from an incommensurable multiscale substitution tiling, whose prototiles
have Lipschitz boundary. Then $\Lambda$ is not uniformly spread. 
\end{cor}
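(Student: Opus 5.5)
The plan is to deduce the statement directly from Corollary \ref{cor:multiscale_not_uniformly_spread}. It therefore suffices to verify the boundary condition \eqref{eq:multiscale_boundary_condition} for every prototile $T_i\in\AA$, for some $\eta>0$. Irreducibility and incommensurability are implicit in the phrase ``incommensurable multiscale substitution tiling''; if irreducibility is not assumed explicitly, I would restrict to a strongly connected component of $G_\sigma$ that carries the incommensurability, as in Exercise \ref{exe:incom_in_graphs}.

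The first step is to observe that $\supp(F_t(T_i)) = e^tT_i$, so the condition concerns only the geometry of the dilated prototile:
\[
(\partial (e^tT_i))^{+1} = e^t\bigl((\partial T_i)^{+e^{-t}}\bigr),
\]
and hence
\[
\vol\bigl((\partial\supp F_t(T_i))^{+1}\bigr) = e^{dt}\vol\bigl((\partial T_i)^{+e^{-t}}\bigr).
\]
It therefore suffices to show that $\vol((\partial T_i)^{+\delta}) = \OO(\delta)$ as $\delta\to0$, which yields \eqref{eq:multiscale_boundary_condition} with $\eta=1$.

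The second step is to prove this estimate for Lipschitz boundary. I would cover the compact set $\partial T_i$ by finitely many charts in which $\partial T_i$ is the graph of a Lipschitz function $g:\R^{d-1}\to\R$ over a bounded domain. Within each chart, the $\delta$-neighborhood of the graph lies in $\{(y,s) : |s-g(y)|\le \sqrt{1+\mathrm{Lip}(g)^2}\,\delta\}$, up to a slightly enlarged base domain. Its volume is therefore at most a constant times $\delta$. Summing over finitely many charts gives the bound.

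The main obstacle is only mild: one must confirm that the hypotheses of Corollary \ref{cor:multiscale_not_uniformly_spread} (minimality, and the uniform Laczkovich bound over the hull) apply to \emph{every} associated Delone set, including one defined by an arbitrary choice of points in the tiles. Any two point selections in the same tiling are BD equivalent, with displacement at most the maximal tile diameter. So if one selection were uniformly spread, the consistently chosen selection would be too. The corollary then gives the contradiction.
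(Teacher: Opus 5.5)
Your proof is correct and follows the paper's argument. The paper obtains this corollary from Corollary~\ref{cor:multiscale_not_uniformly_spread} by simply noting that Lipschitz boundary gives \eqref{eq:multiscale_boundary_condition}, and your scaling identity $\vol\bigl((\partial\supp F_t(T_i))^{+1}\bigr)=e^{dt}\vol\bigl((\partial T_i)^{+e^{-t}}\bigr)$ plus the $\OO(\delta)$ chart estimate fills in exactly that step (with $\eta=1$); your remark that arbitrary point selections are BD equivalent is a correct bonus. You should drop the aside on the reducible case: irreducibility is part of the intended hypotheses, and restricting to a strongly connected component would not work as stated, since a component need not be closed under substitution and the two incommensurable cycles may lie in different components.
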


In particular, any Delone set that arises from an incommensurable $\alpha$-Kakutani multiscale substitution tiling of $\R$ is not uniformly spread. Moreover, in the commensurable case, there are precisely five distinct values of $\min\{\alpha,1-\alpha\}$
that give rise to uniformly spread Delone sets. This exceptional set of values is related to a certain family of Pisot-Vijayaraghavan numbers, see \cite{Smilansky-2025}.

With regard to BL equivalence, obtaining a bound of the form \eqref{eq:multiscale_lower_bound} with $C-1 \le 1$ implies the failure of the Burago--Kleiner sufficient condition for rectifiability, stated in Theorem \ref{thm:BK_sufficient_condition}. Indeed, the substitution rule in Example \ref{ex:square_scheme} is of this kind, since the two participating scales correspond to two lengths of edges in the associated graph, and thus $C=2$ (see also \cite[Proposition~5.4]{SmilanskySolomon-2021} for a more careful analysis of the square substitution rule). 
This makes Delone sets arising from incommensurable multiscale substitution schemes particularly interesting candidates for potential non-rectifiable examples, although the following question remains open:

\begin{open}
    Does there exist a non-rectifiable Delone set arising from a multiscale substitution scheme?
\end{open}

\section{Cut-and-project sets}\label{sec:C&P} 

A central class of aperiodic Delone sets in $\R^d$ is constructed via the projection method. 
These cut-and-project sets, or model sets, originated in the work of Meyer~\cite{Meyer-1970} and have since received considerable attention, both as mathematical models for quasicrystals and as fundamental examples in the study of aperiodic order. 
They provide a rich and flexible framework that naturally yields Delone sets with long-range order properties such as FLC and repetitivity, but without translational periodicity.

Fix $d,m,n\in\N$ with $n=d+m$, and a direct sum decomposition $\R^n=\R^d\oplus \R^m$. We refer to $\R^d$ as the \emph{physical space} and to $\R^m$ as the \emph{internal space}, and denote the corresponding orthogonal projections from the \emph{ambient space} $\R^n$ by $\pi_{\rm{phys}}$ and $\pi_{\rm{int}}$, respectively. Given a lattice $\LL\subset \R^n$ and a \emph{window} $\WW\subset \R^m$, we define the associated \emph{cut-and-project set} $\Lambda=\Lambda(\LL,\WW)\subset \R^d$ by
\begin{equation*}
\Lambda(\LL,\WW)=\pi_{\rm{phys}}(\LL\cap \pi_{\rm{int}}^{-1}(\WW)),
\end{equation*}
namely the physical projection of the lattice points with internal projection in $\WW$. We note that we also allow $\LL$ to be a translated lattice, that is, a set of the form $x_0+\LL_0$ for some lattice $\LL_0$. 

The literature contains several regularity assumptions on cut-and-project schemes. 
Typically, one assumes that $\pi_{\mathrm{int}}(\LL)$ is dense in $\R^m$ and that the restriction $\pi_{\mathrm{phys}}|_{\LL}$ is injective, ensuring that $\Lambda$ cannot be realized within a similar setup involving smaller dimensions $n$ and $m$. 
In addition, the window $\WW$ is often assumed to be Borel measurable, bounded, with nonempty interior and boundary of measure zero. Under such standard assumptions, the resulting cut-and-project set $\Lambda(\LL,\WW)$ is an aperiodic Delone set in the physical space $\R^d$ with asymptotic density $\vol(\WW)/\covol(\LL)$, see \cite{Moody-2002}.

In contrast with the geometric definition above, cut-and-project sets can also be defined using a dynamical setup, as the set of return times to a section that is transverse to an $\R^d$-flow on the $n$-dimensional torus. More precisely, let $\mathbb{T}^n = \R^n/\Z^n$ denote the $n$-dimensional torus, and let $\pi:\R^n\to\mathbb{T}^n$ be the canonical projection. For $d<n$, a $d$-dimensional subspace $V\cong \R^d$ of $\R^n$ acts naturally on $\mathbb{T}^n$ by 
\[
\forall v\in V, x\in\R^n:\quad v.\pi(x) = \pi(x+v).
\]
We refer to this action as the $V$-flow. Let $\SS \subset \mathbb{T}^n$ be a Poincar\'{e} section of complementary dimension, that is, an $(n-d)$-dimensional submanifold everywhere transverse to the $V$-flow, and let $x_0 \in \mathbb{T}^n$. We then define the \emph{toral dynamics Delone set} by

\[
\Lambda(V, \SS,x_0) = \{v\in V ~\mid~ v.x_0\in \SS\}.
\]
As in the geometric setting, certain regularity assumptions are imposed on $V$ and $\SS$, which depend on the context, see, for instance, \cite{AdiceamSolomonWeiss-2022}, \cite{HaynesKellyWeiss-2014} and \cite{HaynesKoivusalo-2016}. 
Under the assumption that $\SS$ is a \emph{linear section}, i.e., $\SS = \pi(K)$ for some bounded set $K \subset U$ with nonempty interior, where $U$ is an $(n-d)$-dimensional affine subspace transverse to $V$, the definition of $\Lambda(V, \SS, x_0)$ coincides with that of the cut-and-project set $\Lambda(\LL, \WW)$. In the proof of this equivalence, $V$ and $U$ play the roles of the physical and internal spaces, respectively, with $\WW = -K$ and $\LL = x_0 + \Z^n$. The reader is encouraged to verify the equivalence between the two definitions, which can also be found in \cite[Proposition~2.3]{AdiceamSolomonWeiss-2022}.

In the context of BL and BD equivalence relations, when it comes to discrepancy estimates, each of these two formulations offers distinct advantages and relies on different mathematical tools. 
The dynamical viewpoint recasts the counting problem as that of studying return times of a linear flow on a torus, and thus draws on methods from ergodic theory and homogeneous dynamics. 
Quantitative equidistribution results for such flows, together with Diophantine properties of the subspace $V$, provide effective control on the uniformity of returns, and hence on discrepancy bounds. 
In contrast, the geometric formulation interprets the construction as a lattice-point counting problem in acceptance domains, which are certain regions in the window $\WW$. 
This viewpoint naturally invokes tools from Fourier analysis and the geometry of numbers, such as Poisson summation formulas, estimates on lattice-point counting errors in expanding regions, and Diophantine properties of the lattice $\LL$. 
Since these approaches involve advanced analytical and dynamical methods, we shall not include the proofs here, but rather state some central results concerning BL and BD equivalence in cut-and-project sets in a concise, survey-oriented manner.

The study of the BL equivalence relation for cut-and-project sets was initiated by Burago and Kleiner, who considered the case $(n,d) = (3,2)$ in their paper \cite[\S 1]{BuragoKleiner-2002}, where the sufficient condition from Theorem \ref{thm:BK_sufficient_condition} was established. They asked whether, for particular constructions of the form $\Lambda(V,\SS,x_0)$, a suitable Diophantine condition on the slope of $V$ suffices to imply rectifiability. This question was later investigated in much greater generality by Haynes, Kelly and Weiss in \cite{HaynesKellyWeiss-2014}, who considered arbitrary dimensions $(n,d)$ and obtained results concerning both BL and BD equivalence of toral dynamics Delone sets. Their analysis assumes that $\SS$ is a linear section satisfying certain regularity conditions on $\partial \SS$, expressed in terms of its Minkowski dimension $\dim_M(\partial \SS)$ (see, e.g., \cite{Mattila-1995}).

\begin{thm}\label{thm:HKW_BL}
    For almost every $d$-dimensional subspace $V\subset\R^n$, every $x_0\in\mathbb{T}^n$ and every linear section $\SS\subset\mathbb{T}^n$ satisfying $\dim_M(\partial \SS)<n-d$, the Delone set $\Lambda(V,\SS,x_0)$ is rectifiable. 
\end{thm}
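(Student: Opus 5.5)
The plan is to verify the Burago--Kleiner condition of Theorem \ref{thm:BK_sufficient_condition}, with $\alpha$ the asymptotic density of $\Lambda=\Lambda(V,\SS,x_0)$, namely $\alpha=\vol_{n-d}(\SS)$ measured transversally (equivalently $\vol(\WW)/\covol(\LL)$ in the cut-and-project picture). It suffices to show a power saving: there exists $\delta>0$ such that for almost every $V$ and all cubes $Q(x,R)\subset V$,
\[
\disc{\alpha}{\Lambda}{Q(x,R)}=\OO\bigl(R^{d-\delta}\bigr),
\]
uniformly in $x$ and in $x_0$. Then $\Delta_\Lambda(\alpha,2^k)=\OO(2^{-k\delta})$, the series \eqref{eq:BK_series} converges, and $\Lambda$ is BL equivalent to a lattice, hence rectifiable.

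\textbf{Reduction to an ergodic sum.} By the dynamical description, $\#(\Lambda\cap Q(x,R))$ is the number of $v\in Q(x,R)$ with $v.x_0\in\SS$. Thicken $\SS$ along $V$ to a set $\SS_\varepsilon=\{w.s: s\in\SS,\ w\in V,\ \norm{w}\le\varepsilon\}$ of positive measure. The count then becomes $\varepsilon^{-d}$ times $\int_{Q(x,R)}\mathbbm{1}_{\SS_\varepsilon}(v.x_0)\,dv$, up to a boundary error $\OO(R^{d-1})$. So the problem becomes quantitative equidistribution of the linear $V$-flow on $\mathbb{T}^n$ for the indicator of $\SS_\varepsilon$.

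\textbf{Smoothing.} Next I would sandwich $\mathbbm{1}_{\SS_\varepsilon}$ between smooth functions $f^\pm$ whose integrals differ by $\OO(\vol((\partial\SS)^{+\eta}))$. The hypothesis $\dim_M(\partial\SS)<n-d$ makes this $\OO(\eta^{\kappa})$ for some $\kappa>0$, while the $C^s$-norms of $f^\pm$ grow like $\eta^{-s}$.

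\textbf{Fourier step.} Expand $f^\pm$ in Fourier series on $\mathbb{T}^n$. For a frequency $0\ne q\in\Z^n$, the integral of $e^{2\pi i\inpro{q}{v.x_0}}$ over $Q(x,R)$ is bounded by $\prod_j\min\bigl(R,\absolute{\pi_V(q)_j}^{-1}\bigr)$. For almost every $V$, a Borel--Cantelli Diophantine estimate gives $\norm{\pi_V(q)}\gg\norm{q}^{-\beta}$ for some $\beta$ depending on $(n,d)$. Summing over $q$ against the decay of $\hat f^\pm$, which is $\norm{q}^{-s}\eta^{-s}$ up to factors, bounds the error by $R^{d-1}\eta^{-A}$ for some $A$. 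Choosing $\eta=R^{-\theta}$ with $\theta$ small balances $R^d\eta^\kappa$ against $R^{d-1}\eta^{-A}$ and yields the power saving $\delta>0$.

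The main obstacle is this Fourier step: in higher codimension the small divisors $\pi_V(q)$ can be tiny in some coordinates. I would use the multi-dimensional Diophantine condition for generic $V$ (or alternatively an exponential mixing or effective equidistribution input) and carefully tune $s,\eta$ so that all exponents close with a positive $\delta$, uniformly in $x_0$ and $x$.
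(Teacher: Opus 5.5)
Your plan is correct and follows the route the paper points to. The theorem is quoted there without proof from \cite{HaynesKellyWeiss-2014}, where rectifiability is likewise obtained by verifying the additive Burago--Kleiner condition \eqref{eq:BK_series} through a discrepancy estimate for cubes, driven by a Diophantine condition on a generic $V$ and the Minkowski-dimension hypothesis on $\partial\SS$. The step you flag as the main obstacle is in fact harmless: bound all but the largest of the $d$ one-dimensional factors by $R$, use $\max_j\absolute{\pi_V(q)_j}\ge\norm{\pi_V(q)}/\sqrt d$, and apply the Borel--Cantelli bound $\norm{\pi_V(q)}\gg_V\norm{q}^{-\beta}$ (valid for a.e.\ $V$ once $\beta>(n-d)/d$, independently of $\SS$ and $x_0$). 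This already gives the error term $R^{d-1}\eta^{-(n+\beta)}$, so the exponents close with $\delta=\kappa/(\kappa+n+\beta)$.
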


We remark that the ``almost every'' statement in the theorem relates to an explicit Diophantine condition regarding the slope of the subspace $V$. 
Sharp quantitative bounds on the discrepancy of cut-and-project sets were recently established in \cite{KoivusaloLagace-2025}, clarifying the dependence on the regularity of the window $\WW$ and on explicit Diophantine conditions for the underlying lattice $\LL$. Even so, the following problem remains open: 

\begin{open}
    Does a non-rectifiable cut-and-project set exist?
\end{open}

As mentioned above, \cite{HaynesKellyWeiss-2014} also contained a study of BD equivalence for cut-and-project sets. Unlike the case of substitution tilings, here no dichotomy is known that determines whether $\Lambda$ is BD equivalent to a lattice based on the parameters of the construction. Nonetheless, the following results constitute the content of \cite[Theorem~1.2]{HaynesKellyWeiss-2014} regarding toral dynamics Delone sets with associated dimensions $(n,d)$.

\begin{thm}\label{thm:HKW_BD1} 
    Let $\frac{n+1}2<d<n$. Then for almost every $V\subset\R^n$, every $x_0\in\mathbb{T}^n$ and every $(n-d)$-dimensionally open linear section $\SS$ that satisfies $\dim_M(\SS)=n-d-1$, the set $\Lambda(V,\SS,x_0)$ is BD equivalent to a lattice. 
\end{thm}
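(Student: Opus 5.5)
The plan is to verify condition \eqref{item_thm:Laczkovich_2} of Laczkovich's criterion (Theorem \ref{thm:Laczkovich}), in the form of Corollary \ref{cor:Lacz_with_other_cubes}, with $\alpha$ equal to the asymptotic density $\vol(\WW)/\covol(\LL)$. That is, we must show that for every $U\in\HHH$ the discrepancy $\disc{\alpha}{\Lambda}{U}$ is $\OO(\vol_{d-1}(\partial U))$. In the dynamical formulation, $\#(\Lambda\cap U)$ counts the $v\in U\subset V$ with $v.x_0\in\SS$. This is a Birkhoff-type sum over $U$ of the indicator of a thin transversal, and we compare it with the expected value $\alpha\vol(U)$.

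\textbf{Step 1: reduce to unit cubes.} Decompose $U$ into unit cubes $Q=x+[0,1]^d$ and write the count as a sum over these cubes. Thicken $\SS$ along $V$ to a bounded region $\SS\times[0,1]^d$ in $\mathbb{T}^n$. For each cube, the count in $Q$ equals the value at the point $x.x_0$ of a function $f=\mathbf 1_{-\SS\times[0,1]^d}$ on $\mathbb{T}^n$, up to boundary effects. Hence
\[
\#(\Lambda\cap U)=\sum_{x\in U\cap\Z^d} f(x.x_0).
\]

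\textbf{Step 2: Fourier expansion on $\mathbb{T}^n$.} Expand $f$ in a Fourier series, smoothing it first by a mollifier at scale $\varepsilon$. The zero mode gives exactly $\alpha\vol(U)$. Each nonzero mode $k\in\Z^n$ contributes $\hat f(k)\sum_{x\in U\cap\Z^d}e(\inpro{k}{x_0+x})$, restricted to $V$. A sum over a union of cubes of a character on $V$ factors cube by cube. For each character $e(\inpro{\pi_V k}{\cdot})$ it can be summed by parts into boundary terms, bounded by $\vol_{d-1}(\partial U)\cdot\min\{1,\norm{\pi_V k}^{-1}\}$ after one integration along a coordinate direction. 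The smoothing error is controlled by the count of points in the $\varepsilon$-neighbourhood of $\partial\SS$. Since $\partial\SS$ is only $(n-d-1)$-dimensional, this contributes $\OO(\varepsilon\vol(U))$; we choose $\varepsilon$ depending on the size of $U$, or we handle it by a multiscale dyadic decomposition of $U$.

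\textbf{Step 3: summability via Diophantine conditions.} We must show that $\sum_{k\neq0}|\hat f(k)|/\norm{\pi_V k}$ converges. The decay of $\hat f$ comes from $\SS$ having codimension-one boundary inside an $(n-d)$-dimensional linear piece; for a polytope-like section this gives roughly $|\hat f(k)|\lesssim\prod$ of $\min(1,|k_i|^{-1})$ in suitable coordinates. For almost every $V$, a Khintchine-type Borel--Cantelli argument gives $\norm{\pi_V k}\gg\norm{k}^{-(n-d)/d-\delta}$. Combining the two, the series converges exactly when the exponent balance holds, and that balance is what $d>\frac{n+1}{2}$ should encode.

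I expect Step 3 to be the main obstacle: matching the Fourier decay of the section indicator, which is only of size $\norm{k}^{-1}$ in the worst direction, against the small-divisor loss. A crude $\ell^1$ bound likely diverges. In that case I would instead apply a dyadic decomposition in $k$ and use the metric (almost-every $V$) average of $\norm{\pi_Vk}^{-1}$ over shells, which is $\OO(\log\norm{k})$, to obtain convergence with the available decay. This averaging step is where the dimension restriction $d>\frac{n+1}{2}$ should be needed.
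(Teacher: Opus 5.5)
The paper does not prove this statement; it quotes \cite[Theorem~1.2]{HaynesKellyWeiss-2014}. Your plan therefore has to stand on its own. Its framework (Laczkovich's criterion, Fourier analysis along the orbit, a metric Diophantine condition on $V$) is the right one, but there is a genuine gap: the central estimate cannot work as designed.

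\textbf{The series in Step~3 diverges, independently of $V$.} Up to a constant, a phase, and the factor from the $[0,1]^d$-thickening (which is $\asymp1$ when $\norm{\pi_Vk}$ is small), $\hat f(k)$ is the Fourier transform of $K$ (where $\SS=\pi(K)$) evaluated at the internal component of $k$. For generic $V$ there are $\asymp A^{n-d}$ vectors $k$ with $\norm{\pi_Vk}$ small and internal component of norm at most $A$. On these the weight $1/\norm{\pi_Vk}$ is at least $1$. Already for a ball window, $\absolute{\widehat{\mathbf 1_K}(\eta)}$ has size $\absolute{\eta}^{-(n-d+1)/2}$ on average. These terms alone therefore give partial sums of order $A^{(n-d-1)/2}$ (of order $\log A$ if $n-d=1$). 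Under the sole hypothesis $\dim_M(\partial\SS)=n-d-1$ there is no pointwise decay of $\hat f$ to exploit at all. Your fallback, averaging $\norm{\pi_Vk}^{-1}$ over shells, addresses small divisors and does not touch this divergence, which comes from the internal directions.

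\textbf{The per-mode bound of Step~2 is too weak, even after smoothing.} Smoothing at scale $\varepsilon$ leaves $\asymp\varepsilon^{-(n-d)}$ relevant modes. As written, your bound is also false. For a cube of side $N$ and $\norm{\pi_Vk}\lesssim1/N$ the sum has size $\asymp N^d$. Moreover, for your discrete sum the relevant quantities are the distances from $\inpro{k}{b_i}$ to $\Z$, with $b_1,\ldots,b_d$ the basis of $V$ defining your cubes, not $\norm{\pi_Vk}$. Even the corrected bound $\vol_{d-1}(\partial U)/\norm{\pi_Vk}$, combined with the trivial bound on the window's coefficients (all the hypothesis allows), gives at best $\varepsilon N^d+\varepsilon^{-(n-d)}N^{d-1}$ on a cube of side $N$. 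This is at least of order $N^{d-1/(n-d+1)}$, so never $\OO(N^{d-1})$ in any dimension. The threshold $d>\frac{n+1}2$ cannot come from the balance you propose.

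\textbf{What is missing.} First reduce to a rotation on $\mathbb{T}^{n-d}$ and exploit the product structure on cubes.
\begin{itemize}
\item Slide $\SS$ along $V$ onto a rational coordinate subspace; this moves each point of $\Lambda$ by a bounded amount. In linear coordinates on $V$, $\Lambda$ becomes a translate of $\{p\in\Z^d\mid y+pA\in K'\bmod\Z^{n-d}\}$. Here $A$ is a $d\times(n-d)$ matrix with rows $a_1,\ldots,a_d$ (a.e.\ $V$ corresponds to a.e.\ $A$), and $\dim_M(\partial K')=n-d-1$.
\item On a cube of side $N$ the exponential sums factor: $\absolute{\sum_p e(\inpro{h}{pA})}\le\prod_{i=1}^d\min\{N,\norm{\inpro{h}{a_i}}_{\R/\Z}^{-1}\}$. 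This is $\OO(1)$ for a typical mode, not $N^{d-1}$. A first-moment and Borel--Cantelli argument shows that for a.e.\ $A$ the sum of these products over $0<\norm{h}\le H$ is $\OO\bigl(H^{n-d}(\log NH)^c\bigr)$.
\item Majorize and minorize $\mathbf 1_{K'}$ by functions smoothed at scale $\varepsilon$. The boundary layer costs $\OO(\varepsilon^{1-\delta}N^d)$; this is the only place the Minkowski dimension enters. The surviving modes, estimated with only the trivial bound on their coefficients, cost $\OO\bigl(\varepsilon^{-(n-d)}(\log N)^c\bigr)$.
\item Taking $\varepsilon=N^{-d/(n-d+1)}$ gives $\disc{\alpha}{\Lambda}{Q}=\OO\bigl(N^{d(n-d)/(n-d+1)+\delta}\bigr)$, uniformly over aligned cubes $Q$ of side $N$ and over $x_0$. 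The inequality $\frac{d(n-d)}{n-d+1}<d-1$ holds exactly when $d>\frac{n+1}2$. This balance, not a small-divisor balance, is the source of the threshold.
\item A power saving on dyadic cubes suffices for condition~\eqref{item_thm:Laczkovich_2} of Theorem~\ref{thm:Laczkovich}. Laczkovich's economical packing expresses any $U\in\HHH$ through unions and proper differences of dyadic cubes, using $\OO\bigl(\vol_{d-1}(\partial U)/2^{j(d-1)}\bigr)$ cubes of side $2^j$. One then sums over $j$ exactly as in the proof of Theorem~\ref{thm:BD_substitution}.
\end{itemize}
The multiscale decomposition you mention only as an option is therefore indispensable.
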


Under stronger assumptions on the geometry of $\SS$, they also obtained the following: 

\begin{thm}\label{thm:HKW_BD2} 
Let $2\le d<n$. 
    \begin{enumerate}
        \item 
        For almost every $V$, every $x_0\in\mathbb{T}^n$ and every linear section $\SS$ that is an axis-parallel box, the set $\Lambda(V,\SS,x_0)$ is BD equivalent to a lattice. 
        \item\label{item_thm2:HKW_BD2}
        For almost every parallelotope linear section $\SS$, there is a residual set of subspaces $V$ for which $\Lambda(V,\SS,x_0)$ is not BD equivalent to a lattice for every $x_0$.
    \end{enumerate}
\end{thm}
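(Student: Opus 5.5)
Both parts rest on Laczkovich's criterion (Theorem \ref{thm:Laczkovich}) in the form of Corollary \ref{cor:Lacz_with_other_cubes}, applied to the toral dynamics set $\Lambda=\Lambda(V,\SS,x_0)$ with density $\alpha=\vol(\SS)$ (normalized so that $\covol(\Z^n)=1$). The first step is to rewrite discrepancy as an ergodic sum. For a box $U\subset V$ we have $\#(\Lambda\cap U)=\#\{v\in U\mid v.x_0\in\SS\}$. Thickening $\SS$ slightly in the direction of $V$ turns this count into $\int_U \mathbf{1}_{\SS^\delta}(v.x_0)\,dv$ up to boundary terms of order $\vol_{d-1}(\partial U)$. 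We then expand $\mathbf{1}_{\SS}$ in Fourier series on $\mathbb{T}^n$. The discrepancy of $U$ becomes
\[
\sum_{0\ne k\in\Z^n}\widehat{\mathbf{1}_{\SS}}(k)\,e(\inpro{k}{x_0})\,\widehat{\mathbf{1}_U}(\pi_V k),
\]
and only the nonzero frequencies $k$ contribute. Smoothing the indicator by a mollifier at scale $\varepsilon$ keeps the error at $\OO(\varepsilon\vol(U))$. This is how both parts turn into questions about small divisors $\norm{\pi_V k}$.

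For part (1), $\SS$ is an axis-parallel box, so $\widehat{\mathbf{1}_{\SS}}(k)$ factors into a product of one-dimensional factors of size $\prod_i \min(1,|k_i|^{-1})$. It suffices to check Laczkovich's condition on cubes $U$ of side $R$, because $\HHH(r)$ is built from them and discrepancy is subadditive. For each cube, $\widehat{\mathbf{1}_U}(\pi_Vk)$ is again a product of sines divided by the coordinates of $\pi_Vk$. The target bound is
\[
\sum_{k\ne0}\prod_i\frac{1}{\max(1,|k_i|)}\cdot\min\Bigl(R^d,\ \prod_j\frac{1}{|(\pi_Vk)_j|}\Bigr)=\OO(R^{d-1})
\]
uniformly in $x_0$. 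To get it, we use a Borel--Cantelli argument over dyadic shells of $k$ to produce a multiplicative Diophantine condition on $V$ that holds for almost every $V$. It should have the shape $\prod_j\max(1,|(\pi_Vk)_j|)\gtrsim\norm{k}^{-(n-d)}\log^{-A}\norm{k}$. Summing the series under this condition should give $R^{d-1}$, perhaps with a logarithmic loss. That loss is then absorbed by summing discrepancies over the dyadic decomposition of a general $U\in\HHH(r)$, as in Lemma \ref{lem:economic_packing}. The assumption $d\ge2$ is what leaves room: the required saving is $R^{d-1}$ against $R^d$.

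For part (2), we argue by Baire category. For a fixed parallelotope $\SS$, consider the set $\mathcal{G}_N$ of subspaces $V$ for which there exist a box $U$ and a point $x_0$ with discrepancy greater than $N\,\vol_{d-1}(\partial U)$. Each $\mathcal{G}_N$ should be open, or at least contain a dense open set, since the discrepancy of a fixed bounded $U$ depends continuously on $V$ away from a null set of degenerate positions. Density of $\mathcal{G}_N$ is obtained by perturbing $V$ to a rational or nearly rational subspace, for which the flow is periodic on a subtorus. For generic parallelotopes, the orbit then hits $\SS$ with a systematic bias that grows linearly in the side length of $U$ in the periodic direction; here "almost every $\SS$" excludes parallelotopes whose faces align with rational directions and would cancel the bias. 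The intersection $\bigcap_N\mathcal{G}_N$ is residual, and by Theorem \ref{thm:Laczkovich} no $V$ in it gives a uniformly spread set. Finally, the statement must hold for every $x_0$, not just some. For this we use minimality of the hull together with Exercise \ref{exe:BD_of_the_entire_hull}: all $x_0$ give sets in a common hull, so one bad $x_0$ propagates to all.

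The main obstacle is the small-divisor summation in part (1). We need the Diophantine condition both to be a full-measure condition and to give the sharp exponent $d-1$. Controlling the product structure of $\widehat{\mathbf{1}_U}$ against near-resonant $k$ is delicate. My first approach would be to split the $k$-sum at $\norm{\pi_Vk}\approx R^{-1}$ and use the box structure to gain one factor of $|k_i|^{-1}$ in each direction.
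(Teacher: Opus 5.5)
The paper gives no proof of this theorem; it only quotes \cite[Theorem~1.2]{HaynesKellyWeiss-2014}. Your plan therefore has to stand on its own, and its quantitative core in part (1) does not close.

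First, the justification that cubes suffice ``because discrepancy is subadditive'' is false: subadditivity over the unit cubes of $U\in\HHH$ only gives $\OO(\vol(U))$. What does pass from cubes to $\HHH$ is Laczkovich's dyadic decomposition \cite{Laczkovich-1992}, the $\HHH$-analogue of Lemma \ref{lem:economic_packing}. It builds $U$ from dyadic cubes by disjoint unions and proper differences, using at most $c\,\vol_{d-1}(\partial U)2^{-j(d-1)}$ cubes of side $2^j$. So if $D(R)$ bounds the discrepancy of cubes of side $R$, then
\[
\disc{\alpha}{\Lambda}{U}\le c\,\vol_{d-1}(\partial U)\sum_{j\ge0}\frac{D(2^j)}{2^{j(d-1)}}.
\]
With your target $D(R)=\OO(R^{d-1})$ the sum just counts scales, giving about $\log\diam(U)$. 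With $D(R)=R^{d-1}\log^AR$ it gives about $\log^{A+1}\diam(U)$. Logarithmic losses are amplified, not absorbed. You need $\sum_jD(2^j)2^{-j(d-1)}<\infty$, i.e.\ a cube bound far below the boundary term.

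That bound is impossible for the raw point count: already $\#(\Z^d\cap[0,R]^d)-R^d\ge dR^{d-1}$. So the dyadic summation must be run on an additive quantity. One option is your thickened integral, with its $\OO(\vol_{d-1}(\partial U))$ error paid once for the whole $U$ rather than once per dyadic cube. A simpler option: permute coordinates so the box lies in $\{0\}\times\R^{n-d}$ and write $V=\{(t,Lt)\}$. Then the net is the linear image of $\{m-x_0'\mid m\in\Z^d,\ y_0+Lm\in\SS+\Z^{n-d}\}$ with $y_0=x_0''-Lx_0'$. For $U$ a union of unit cubes centred at points of $\Z^d-x_0'$, the discrepancy is exactly $\sum_m(\mathbf{1}_{\SS}(y_0+Lm)-\vol(\SS))$. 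For this Kronecker sum, use the Erd\H{o}s--Tur\'an--Koksma inequality averaged over $L$ (for $h\neq0$, $L^th$ is equidistributed in $\mathbb{T}^d$), plus Borel--Cantelli over dyadic $R$. This gives a \emph{polylogarithmic} cube bound for a.e.\ $L$, uniformly in $y_0$ and over all boxes. Then $\sum_jj^A2^{-j(d-1)}<\infty$ precisely because $d\ge2$; that, not ``$R^{d-1}$ against $R^d$'', is the role of the hypothesis.

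Your Diophantine condition is vacuous as written, since $\prod_j\max(1,\cdot)\ge1$. No pointwise lower bound on $\norm{\pi_Vk}$ of simultaneous type would suffice anyway. The box weights $\prod_i\max(1,\absolute{k_i})^{-1}$ are multiplicative, so frequencies near a coordinate axis are not controlled termwise. What is needed is the metric bound on the whole weighted small-divisor sum, as above.

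In part (2) the Baire scheme is sound: openness comes from a nondegenerate witness, and density from rational $V$, where the net is $(V\cap\Z^n)$-periodic with biased density. But the quantifiers do not close. $\mathcal{G}_N$ lets $x_0$ depend on $N$, so $\bigcap_N\mathcal{G}_N$ yields no single bad $x_0$ for Exercise \ref{exe:BD_of_the_entire_hull} to propagate. Argue by uniformity instead:
\begin{itemize}
\item For $V$ not contained in a rational hyperplane (a residual condition), every $V$-orbit is dense.
\item Hence the hull of \emph{every} $\Lambda(V,\SS,x_1)$ contains all $\Lambda(V,\SS,x_0)$ with $x_0$ nonsingular, i.e.\ with orbit avoiding $\partial\SS$.
\item If one $\Lambda(V,\SS,x_1)$ were BD to a lattice with displacement $M$, all of these would be BD with displacement at most $M$, hence satisfy Laczkovich's inequality with a single constant.
\item Build nonsingularity of $x_0$ into $\mathcal{G}_N$; the witnesses $(U_N,x_0^{(N)})$ then contradict that constant for large $N$.
\end{itemize}

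Finally, make the role of ``almost every $\SS$'' precise. Let $\Gamma'_V$ be the projection of $\Z^n$ along $V$ onto the plane of $\SS$ (a lattice there), and let $y$ depend on $x_0$. At a rational $V$ the density seen from $x_0$ is $\#\bigl((y+\Gamma'_V)\cap\SS\bigr)/\covol(V\cap\Z^n)$. You need this to be non-constant in $x_0$, which holds whenever $\vol(\SS)/\covol(\Gamma'_V)\notin\Z$. Over the countably many rational $V$ this excludes only a null set of parallelotopes. Face alignment by itself is not the criterion.
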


In connection with the discussion in \S\ref{subsec:dichotomy}, Theorem \ref{thm:BD_dichotomy} implies that the hull of any $\Lambda(V,\SS,x_0)$ from \eqref{item_thm2:HKW_BD2} of Theorem \ref{thm:HKW_BD2} contains representatives of continuously many distinct BD classes. Earlier work by Frettl\"oh and Garber explicitly showed that BD equivalence need not be preserved within a hull of cut-and-project sets. They presented a concrete one-dimensional example of the phenomenon of two BD non-equivalent cut-and-project sets from the same hull (see \cite[Theorem~6.4]{FrettlohGarber-2018}). In their construction, they introduced the \emph{half-Fibonacci} sequences $F_1$ and $F_2$, obtained by splitting the standard Fibonacci window into two halves, and proved that $F_1$ and $F_2$ lie in the same hull but are not BD equivalent. A posteriori, as a consequence of Theorem \ref{thm:BD_dichotomy}, this example also yields a cut-and-project set whose hull contains representatives of continuously many distinct BD classes.

Another fruitful link in the study of BD equivalence for cut-and-project sets is the notion of \emph{bounded remainder sets (BRS)} for toral dynamics Delone sets, developed in \cite{HaynesKellyKoivusalo-2017}, \cite{HaynesKoivusalo-2016}. Roughly speaking, the BRS property concerns the boundedness of a similar type of discrepancy, now associated with the action of a toral translation, hence its close connection to BD equivalence. Haynes and Koivusalo \cite{HaynesKoivusalo-2016} provided an explicit geometric construction of an infinite family of windows whose associated cut-and-project sets satisfy the BRS property, and are therefore BD equivalent to a lattice.
In a related work \cite{EtkindGrepstadKolountzakisLev-2026}, the connection between BD equivalence of cut-and-project sets and equidecomposability of the corresponding windows is explored. 
Further study of BD equivalence of $1$-dimensional cut-and-project sets can be found in \cite{AmbrozMasakovaPelantova-2021}.

\section{Beyond BL and BD equivalence}\label{sec:beyond BL and BD}

In this section, we give a brief overview of some recent results that extend the study of BL and BD equivalence. We do not include proofs here and only provide the general framework and references for further reading.

\subsection{Generalized regularity and displacement equivalence}

The notions of BD and BL equivalence for Delone sets are defined in terms of bounded perturbations and biLipschitz distortions of point sets. By manipulating and relaxing these definitions, Dymond and Kalu{\v z}a \cite{DymondKaluza-2023}, \cite{DymondKaluza-2024} introduced and explored additional equivalence relations that allow a finer distinction between Delone sets.

Let $\omega:(0,a)\rightarrow(0,\infty)$ be a strictly increasing, concave function with $\lim_{t\rightarrow0}\omega(t)=0$, and let $\Lambda_1,\Lambda_2\subset\R^d$ be Delone sets. The \emph{modulus of regularity} $\omega$ was used in \cite{DymondKaluza-2023} to extend the notion of a biLipschitz map and BL equivalence in the following way.
\begin{definition}
A map $\psi:\Lambda_1\rightarrow\Lambda_2$ is a \emph{homogeneous $\omega$-mapping} if there exist constants $K>0$ and $c\in(0,1)$ such that 
\[
\|\psi(x_2)-\psi(x_1)\|\leq K\cdot R\cdot \omega\left(\frac{\|x_1-x_2\|}{R}\right),
\]
for all $R>0$ and $x_1,x_2\in B(0,R)$ with $\|x_2-x_1\|<cR$. We say that $\Lambda_1$ is \emph{$\omega$-regular} with respect to $\Lambda_2$ if there exists a bijection $\psi:\Lambda_1\rightarrow\Lambda_2$ so that both $\psi$ and $\psi^{-1}$ are homogeneous $\omega$-mappings.
\end{definition}

Note that for $\omega(t)=t$, this definition reduces to BL equivalence. McMullen has shown in \cite{McMullen-1998} that given any pair of Delone sets in $\R^d$ and the weaker H{\" o}lder moduli of continuity $\omega(t)=t^\beta$, each set is $\omega$-regular with respect to the other for some choice of $\beta\in(0,1)$. Intermediate forms of regularity, which correspond to moduli of regularity situated between the Lipschitz and the H{\" o}lder moduli, were considered by Dymond and Kalu{\v z}a, who established the following result in \cite[Theorem~1.2]{DymondKaluza-2023}.

\begin{thm} \label{thm:DK intermediate}
    There exist $\alpha>0$ and a Delone set in $\R^d$ that is $\omega$-irregular with respect to $\Z^d$ for any modulus of continuity $\omega$ of the form $\omega(t)=t\left(\log\frac{1}{t}\right)^\alpha$ for $t>0$ sufficiently small.
\end{thm}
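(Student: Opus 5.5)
We plan to adapt the Burago--Kleiner/McMullen obstruction, quantified at the finer scale $\omega(t)=t(\log\frac1t)^\alpha$. We first prove a continuous statement: there is a bounded density $\rho:[0,1]^d\to[1-\delta,1+\delta]$ such that no homeomorphism $\Phi$ with $\Phi$ and $\Phi^{-1}$ both homogeneous $\omega$-mappings satisfies $\Phi_\#(\rho\,du)=\text{Leb}$. We then transfer it to a Delone set by the encoding procedure from the sketch of (2) implies (1) in Theorem~\ref{thm:Jac_implies_BL}. In that procedure we place rescaled discretized copies of $\rho$ in a sequence of cubes $S_k$ of side $r_k\to\infty$ and fill the rest with $\Z^d$. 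Suppose $\psi:\Lambda\to\Z^d$ is a bijection with $\psi,\psi^{-1}$ homogeneous $\omega$-mappings. Rescaling by $r_k^{-1}$, the definition gives $\norm{\psi_k(u)-\psi_k(v)}\le K\omega(\norm{u-v})$, because the homogeneous condition is normalized by $R$ and so survives rescaling. The maps $\psi_k$ are therefore equicontinuous with modulus $K\omega$, and the same holds for their inverses. Arzel\`a--Ascoli then produces a limit homeomorphism $\Phi$ with modulus $K\omega$ in both directions. As in the encoding proposition, the rescaled counting measures converge to $\rho\,du$, and their pushforwards converge to Lebesgue measure. This yields the forbidden $\Phi$.

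The core is the continuous statement. For it we would revisit McMullen's iterated construction from the sketch of proof of Theorem~\ref{thm:non-Jac_function}, keeping track of stretch factors across scales instead of arguing by contradiction with a single supremum $L$. Let $\lambda_n$ be the maximal ratio $\norm{\Phi(b)-\Phi(a)}/\norm{b-a}$ over edges of level-$n$ squares. Each square has side $\theta^n$ for a fixed $\theta<1$. The mass-redistribution argument (the rectangle $R$, then the middle square $J'_k$) should give a multiplicative gain $\lambda_{n+1}\ge(1+c\delta)\lambda_n$ whenever the map is nearly affine at that scale. Alternatively, one can run the Burago--Kleiner argument: a density that alternates on a chessboard at scale $\theta^n$ forces a definite additional stretch at each scale where $\Phi$ is close to affine. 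A map with modulus $\omega$ can only stretch by $\omega(t)/t=(\log\frac1t)^\alpha\approx (n\log\frac1\theta)^\alpha$ at scale $t=\theta^n$.

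The decisive quantitative step is to show that the gains accumulate. At least a fixed proportion of scales among the first $N$ must produce a gain. Otherwise, by a Rademacher/Dorronsoro-type quantitative differentiability estimate for maps with modulus $\omega$ (available since $\int_0\omega(t)^2t^{-3}dt$ controls the number of non-affine scales), $\Phi$ would be close to affine at most scales. This forces the stretch to grow at least like $N^{c}$ for some $c>0$. Choosing $\alpha<c$ then contradicts the bound $(N\log\frac1\theta)^\alpha$.

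I expect this step to be the main obstacle. The lower bound on how many scales are nearly affine has to beat the polylogarithmic bound $(\log\frac1t)^\alpha$. Plain Lipschitz compactness, which suffices in Theorem~\ref{thm:non-Jac_function}, is no longer available, so we must make the per-scale gain of McMullen's argument explicit and count good scales. The existence of a small $\alpha$ in the statement signals exactly this tradeoff, and we would first try to run the argument with a crude counting estimate and accept any positive $\alpha$ it yields.
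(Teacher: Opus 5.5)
The survey gives no proof of this statement. It only quotes \cite[Theorem~1.2]{DymondKaluza-2023}, where irregular nets are produced, following Burago--Kleiner and McMullen, from suitably non-realisable densities. So your two-step architecture is the natural one.

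Your transfer step is standard up to one correction. The homogeneous condition is normalized by balls centred at the origin. For a cube $S_k$ at distance $D_k\ge r_k$ from $0$, the rescaled maps therefore only satisfy $\norm{\psi_k(u)-\psi_k(v)}\le K s\bigl(\log\frac{D_k}{r_k}+\log\frac1s\bigr)^\alpha$ with $s=\norm{u-v}$. You must place the cubes with $D_k=\OO(r_k)$.

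The genuine gap is the continuous statement, which carries all the content, and the mechanism you propose for it does not work.
\begin{itemize}
\item Your parenthetical justification is vacuous. $\int_0\omega(t)^2t^{-3}\,dt=\int_0(\omega(t)/t)^2\,dt/t$ diverges for every concave modulus, already for $\omega(t)=t$. Its truncation at $\theta^N$ is of order $N^{1+2\alpha}$, which exceeds the number of scales.
\item More fundamentally, Rademacher/Dorronsoro-type estimates are measure-theoretic. They give near-affinity at most scales only outside a set of small measure.
\item For bounded $\rho$, Lebesgue differentiation makes $\rho$ nearly constant at fine scales around almost every point. Near-affinity there is fully compatible with $\Phi_\#(\rho\,du)=\mathrm{Leb}$, so no gain can be extracted at such points.
\item In McMullen's construction the obstruction sits on the level-$n$ squares, whose total area decays geometrically in $n$. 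It is located at the particular edge that $\Phi$ stretches most, and nothing in your argument puts near-affinity there.
\item In McMullen's and Burago--Kleiner's proofs, the thin-strip behaviour around that edge is not an input from differentiability. It is forced by the edge being maximally stretched among all edges involved, and that is exactly what is lost when the global supremum is infinite.
\end{itemize}

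Your bookkeeping is also inconsistent. A uniform factor $(1+c\delta)$ at a positive proportion of scales would give exponential growth of $\lambda_N$, since subdividing an edge cannot lower the maximal stretch. That would prove the result for every $\alpha>0$, whereas you aim for $N^{c}$ and the statement only claims small $\alpha$. The gain has to be allowed to deteriorate with the current distortion, and quantifying this is the real work.

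What is needed is to keep maximality, but over a finite family:
\begin{itemize}
\item Let $\lambda_n$ be the maximal stretch over the edges of all levels $\le n$. This maximum is attained.
\item McMullen's step at a maximizing edge must then be made stable. If all edges a bounded number of levels further down are stretched by at most $(1+c)\lambda_n$, one must still reach a contradiction, with $c=c(\lambda_n)\ge\lambda_n^{-p}$ for a fixed $p$. One expects such a loss because the closeness estimates are relative to the width of the image strip, which shrinks like $1/\lambda_n$.
\item This yields $\lambda_n\gtrsim n^{1/p}$, which contradicts $\lambda_n\le K(n\log\frac1\theta)^\alpha$ once $\alpha<1/p$.
\end{itemize}
Equivalently, on a window of scales where $\omega(t)/t$ is essentially constant, $\Phi$ behaves like an $L$-biLipschitz map with $L\approx(\log\frac1t)^\alpha$. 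The Burago--Kleiner/McMullen contradiction must then be reached within a number of levels polynomial in $L$. Your proposal does not establish this polynomial dependence, and it is the heart of the theorem.
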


It is important to note that the set of Delone sets that are $\omega$-regular with respect to $\Z^d$ strictly contains the set of rectifiable Delone sets in $\R^d$ \cite[Theorem~7.1]{DymondKaluza-2024}. This implies that Theorem \ref{thm:DK intermediate} does indeed strengthen McMullen's and Burago and Kleiner's result about the existence of non-rectifiable sets in $\R^d$ for $d\geq 2$, as described in \S \ref{subsec:non-rectifiable sets}. Only three such regularity relations are currently known to be pairwise distinct, namely H{\" o}lder regularity, the $\omega$-regularity of Theorem \ref{thm:DK intermediate}, and BL equivalence, though it is conjectured in \cite{DymondKaluza-2024} that the aforementioned intermediate moduli of continuity provide a fine hierarchy of distinct equivalence relations. Intermediate regularities and their relation to various rates of repetitivity were recently studied in \cite{Inoquio-RenteriaViera-2024}, further extending the discussion in \cite{Aliste-PrietoCoronelGambaudo-2013}.

Next, let $\varphi:(0,\infty)\rightarrow (0,\infty)$ be an increasing, concave function and let $\Lambda_1,\Lambda_2\subset\R^d$ be Delone sets. The asymptotic growth rate of $\varphi$ at $\infty$ was used in \cite{DymondKaluza-2024} to extend the notion of a bounded displacement map and BD equivalence in the following way.
\begin{definition}
Let $\Lambda\subset \R^d$ be a Delone set. For $R$ greater than the relative denseness constant of $\Lambda$, the \emph{$R$-displacement} of a function  $f:\Lambda\rightarrow \R^d$ is given by
\[
{\rm disp}(f,R):=
\sup\left\{\norm{f(x)-x} ~\mid~  x\in \Lambda\cap \overline{B_R(0)} \right\}.
\] 
We say that $\Lambda_1$ and $\Lambda_2$ are \emph{$\varphi$-displacement equivalent} if there exists a bijection $\phi:\Lambda_1\rightarrow\Lambda_2$ such that ${\rm disp}(\phi,R)=\OO(\varphi(R))$. 
\end{definition}

Clearly, if $\varphi$ is bounded, then $\varphi$-displacement equivalence reduces to BD equivalence. In contrast, any two Delone sets admit a bijection with linear $R$-displacement, and so $\varphi$-displacement equivalence does not distinguish between Delone sets for $\varphi(R)=\Omega(R)$. However, in the unbounded case and when $\varphi(R)=o(R)$, this definition distinguishes between Delone sets according to the rates at which their displacement functions diverge in a very strong way, giving rise to a continuous spectrum of equivalence relations on Delone sets. The following result, proved in \cite[Corollary~1.7]{DymondKaluza-2024}, shows that the family of $\varphi$-displacement equivalence relations forms a strict hierarchy.

\begin{thm}
    Let $\varphi_1,\varphi_2:(0,\infty)\rightarrow (0,\infty)$ be increasing, concave functions such that $\varphi_1(R)=o(\varphi_2(R))$. Then $\varphi_2$-displacement equivalence is a strictly weaker notion than $\varphi_1$-displacement equivalence.
\end{thm}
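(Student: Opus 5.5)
The plan is to prove the two halves of ``strictly weaker'' separately. First, $\varphi_1$-displacement equivalence implies $\varphi_2$-displacement equivalence. Second, there is a pair of Delone sets that are $\varphi_2$-equivalent but not $\varphi_1$-equivalent.

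The first half is immediate. If $\phi$ satisfies ${\rm disp}(\phi,R)=\OO(\varphi_1(R))$, then $\varphi_1=o(\varphi_2)$ gives ${\rm disp}(\phi,R)\le \varphi_2(R)$ for all large $R$. On the remaining bounded range of $R$, ${\rm disp}(\phi,R)$ is finite and nondecreasing in $R$, while $\varphi_2$ is increasing and positive, hence bounded below there. So ${\rm disp}(\phi,R)=\OO(\varphi_2(R))$. I also record that $\varphi_2\to\infty$: since $\varphi_1$ is increasing and positive, it is bounded below by a positive constant $c$ on $[1,\infty)$, and $\varphi_1=o(\varphi_2)$ then forces $\varphi_2(R)\ge c\,\varphi_2(R)/\varphi_1(R)\to\infty$. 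This unboundedness is exactly what the counting argument below needs.

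For strictness I take $\Lambda_1=\Z^d$ and let $\Lambda_2=h(\Z^d)$ be a radial push-out. Set $h(x)=x\bigl(1+\psi(\norm{x})/\norm{x}\bigr)$, where $\psi$ is a smooth modification of $\varphi_2$ with $\psi\equiv 0$ near the origin, $\psi\le\varphi_2$, and $\psi\ge\varphi_2-c_0$. I would first verify that $h$ is a biLipschitz homeomorphism of $\R^d$, so that $\Lambda_2$ is Delone. The radial derivative is $1+\psi'(r)$, and $\psi'$ is bounded because $\varphi_2$ is concave and increasing. The tangential stretch is $1+\psi(r)/r$, and concavity gives $\psi(r)=\OO(r)$; both factors are at least $1$. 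The bijection $h:\Lambda_1\to\Lambda_2$ moves $x$ by exactly $\psi(\norm{x})\le\varphi_2(\norm{x})$. Hence ${\rm disp}(h,R)\le\varphi_2(R)$, and the two sets are $\varphi_2$-equivalent.

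The main step is the lower bound, which I would get by a counting (Hall-type) argument. Suppose some bijection $\phi:\Z^d\to\Lambda_2$ had ${\rm disp}(\phi,R)\le D(R)$ with $D=\OO(\varphi_1)=o(\psi)$. Then $\phi$ maps $\Z^d\cap B(0,R)$ injectively into $\Lambda_2\cap B(0,R+D(R))$, so
\[
\#(\Z^d\cap B(0,R))\le\#(\Lambda_2\cap B(0,R+D(R)))=\#(\Z^d\cap B(0,R')),
\]
where $R'+\psi(R')=R+D(R)$. Because $\psi$ is increasing and $\psi(R)-D(R)\to\infty$, we get $R-R'\ge \tfrac12\bigl(\psi(R)-D(R)\bigr)$ for large $R$. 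Here I would use that $\psi'$ is bounded, so $\psi(R')\ge\psi(R)-\OO(R-R')$, to control the implicit equation.

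Lattice point counting in balls gives $\#(\Z^d\cap B(0,s))=\vol(B(0,s))+\OO(s^{d-1})$. Combining, the right-hand side is smaller than the left by at least $c\,R^{d-1}\bigl(\psi(R)-D(R)\bigr)-\OO(R^{d-1})$, which is positive for large $R$ since $\psi-D\to\infty$. This contradiction shows $\Lambda_1$ and $\Lambda_2$ are not $\varphi_1$-equivalent.

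The delicate points are the bookkeeping with the implicit radius $R'$ and making sure the gain $\psi(R)-D(R)$ genuinely dominates the $\OO(1)$ error from the lattice-count boundary term. Both rest on $\varphi_2\to\infty$ and $\varphi_1=o(\varphi_2)$. The case $d=1$ follows by the same argument, with intervals in place of balls.
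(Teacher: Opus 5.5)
Your proof is correct. The survey gives no argument for this statement. It only cites \cite[Corollary~1.7]{DymondKaluza-2024}, where the strict hierarchy appears as part of a broader study of $\varphi$-displacement equivalence. So your proof is a genuinely independent, self-contained route.

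Your argument runs as follows. The inclusion $\OO(\varphi_1)\subseteq\OO(\varphi_2)$ gives the easy direction. You also observe that $\varphi_2\to\infty$ is forced. For strictness, you take a single radial push-out $h$ of $\Z^d$ by a Lipschitz version of $\varphi_2$. You combine this with the injection $\Z^d\cap\overline{B}(0,R)\hookrightarrow h(\Z^d)\cap\overline{B}(0,R+D(R))$ and lattice-point counting in balls.

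This buys a bit more than the statement. The one pair $\Z^d$, $h(\Z^d)$ is $\varphi_2$-equivalent, yet not $\varphi$-equivalent for every $\varphi=o(\varphi_2)$ at once. Since $h$ is a biLipschitz homeomorphism, the separating pair also lies inside a single BL class, consistent with the last theorem of that section. What the cited work adds is the systematic picture: the comparison with BL equivalence and the continuum of BL classes inside each unbounded $\varphi$-class. Your counting argument does not touch these.

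Two small corrections. First, the bound $R-R'\ge\tfrac12\bigl(\psi(R)-D(R)\bigr)$ is false in general. For $\varphi_2(R)=5R$ one gets $R-R'\approx\tfrac16\bigl(5R-D(R)\bigr)$. What your Lipschitz estimate actually gives, from $R-R'=\psi(R')-D(R)\ge\psi(R)-L(R-R')-D(R)$ with $L=\sup\psi'$, is $R-R'\ge\bigl(\psi(R)-D(R)\bigr)/(1+L)$. That is all the counting step needs.

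Second, no smoothing is required. Take $\psi=(\varphi_2-\varphi_2(1))_+$. It is nondecreasing and vanishes on $[0,1]$. It is $L$-Lipschitz, with $L$ the right derivative of $\varphi_2$ at $1$, by concavity. The biLipschitz property of $h$ then follows directly from the identity $\norm{a\theta-b\eta}^2=(a-b)^2+ab\norm{\theta-\eta}^2$, valid for unit vectors $\theta,\eta$ and $a,b\ge0$.
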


While BL equivalence does not precisely belong to the $\varphi$-displacement equivalence spectrum, it is positioned between the bounded and the unbounded equivalence relations. The following theorem contains the content of \cite[Theorems~1.9, 5.1, 6.1]{DymondKaluza-2024}.

\begin{thm}
    Let $d\geq 2$. Then $\varphi$-displacement equivalence of Delone sets in $\R^d$ is stronger than BL equivalence if and only if $\varphi$ is bounded, that is, when $\varphi$-displacement equivalence is BD equivalence, in which case every BL equivalence class contains representatives from continuously many BD classes. If $\varphi$ is unbounded, then every $\varphi$-displacement equivalence class contains representatives from continuously many BL classes.
\end{thm}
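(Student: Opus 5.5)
The plan is to treat the two regimes of $\varphi$ separately. In the unbounded case the key is a single construction that plants non-realisable densities inside far-away cubes whose size is controlled by $\varphi$.

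\emph{Bounded $\varphi$.} If $\varphi$ is bounded, then $\varphi$-displacement equivalence is exactly BD equivalence. Proposition \ref{prop:BD_implies_BL} then shows that it is at least as strong as BL equivalence. The statement that every BL class contains continuously many BD classes is the result of Dymond and Kalu{\v z}a quoted at the end of \S\ref{subsec:dichotomy}, which I would simply invoke. That statement also shows that BL equivalence does not imply BD equivalence, so the implication is strict.

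\emph{Unbounded $\varphi$.} The remaining claim is that every $\varphi$-displacement class contains continuously many BL classes. In particular it contains two non-rectifiably-related sets, so $\varphi$-displacement equivalence is not stronger than BL, which gives the ``only if'' direction.

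Fix a Delone set $\Lambda$. Fix a non-realisable density $\rho:[0,1]^d\to[1-\delta,1+\delta]$ from Theorem \ref{thm:non-Jac_function}; this is where $d\ge2$ is used. Since $\varphi$ is unbounded, choose scales $s_k\to\infty$ and cubes $Q_k$ of edge $s_k$ with the following properties:
\begin{enumerate}
\item the cubes are pairwise far apart;
\item $Q_k\subset B(0,R_k)$ with $\operatorname{diam} Q_k\le \varphi(R_k)$;
\item $s_k$ grows fast enough that the $Q_k$ form a sparse family.
\end{enumerate}
Let $N_k=\#(\Lambda\cap Q_k)=\Theta(s_k^d)$.

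In each $Q_k$ I build two point patterns $A_k$ and $B_k$, each with exactly $N_k$ points, with Delone constants uniform in $k$ and compatible with $\Lambda$ near $\partial Q_k$:
\begin{enumerate}
\item $B_k$ has uniform density; it is a rescaled grid, corrected in a boundary layer.
\item $A_k$ discretises $\rho$ as in the sketch of (2)$\Rightarrow$(1) in Theorem \ref{thm:Jac_implies_BL}: subdivide $Q_k$ into subcubes carrying about $\rho$-proportional numbers of points, with rounding errors dumped into a thin boundary layer.
\end{enumerate}
Exact counts are possible because $N_k/s_k^d$ is pinned between the Delone density bounds of $\Lambda$, and $\rho$ is within $\delta$ of $1$.

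For $\omega\in\Omega$ (Exercise \ref{exe:Omega=cont.}), define $\Lambda_\omega$ by replacing $\Lambda\cap Q_k$ with $A_k$ if $\omega(k)=1$ and with $B_k$ if $\omega(k)=0$, leaving $\Lambda$ unchanged elsewhere. Any bijection that is the identity outside $\bigcup Q_k$ and maps $\Lambda\cap Q_k$ onto the new pattern inside $Q_k$ has displacement at most $\operatorname{diam}Q_k\le\varphi(R_k)$ on $B(0,R_k)$. The cubes with $R_k\le R$ then contribute displacement $\OO(\varphi(R))$, since $\varphi$ is increasing. Hence every $\Lambda_\omega$ is $\varphi$-displacement equivalent to $\Lambda$.

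\emph{Pairwise non-BL.} Let $\sigma\ne\omega$ in $\Omega$. They differ at infinitely many $k$; say $\omega(k)=1$ and $\sigma(k)=0$ along a subsequence. Suppose $f:\Lambda_\omega\to\Lambda_\sigma$ were $L$-biLipschitz. Restrict $f$ to $A_k$, rescale domain and range by $s_k^{-1}$, and normalise by translations. Arzel\`a--Ascoli then gives a biLipschitz limit $\Phi:[0,1]^d\to\R^d$, exactly as in the sketch of proof of the encoding proposition of \cite{BhatDymond-2025}.

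Two facts identify the limit measure. First, the empirical measures of $A_k$ converge to $c\rho(u)\,du$. Second, the empirical measures on the image side converge to a uniform density, because $f(A_k)$ lies in $\Lambda_\sigma$, whose local density near $f(Q_k)$ is uniform.

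I expect this second fact to be the main obstacle. The image $f(Q_k)$ has diameter $\OO(Ls_k)$, so it may leave $Q_k$ and meet unmodified parts of $\Lambda$ or other cubes. I would handle this in two steps:
\begin{enumerate}
\item Make the modified cubes so sparse that no other $Q_j$ meets the $Ls_k$-neighbourhood of $Q_k$.
\item Also make $\Lambda_\sigma$ uniform on the $Ls_k$-neighbourhood of $Q_k$. To do so, define the patterns on the enlarged concentric cube of edge $(2L'+1)s_k$ for every integer $L'$ up to a slowly growing bound, with the $\rho$-pattern placed only in the central subcube.
\end{enumerate}
Given this, $\Phi_\#(\rho\,du)$ is a constant multiple of Lebesgue measure on $\Phi([0,1]^d)$. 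Hence $|\det D\Phi|$ is a constant multiple of $\rho$ a.e., and a rescaling contradicts non-realisability of $\rho$. The sets $\Lambda_\omega$, $\omega\in\Omega$, are therefore pairwise BL-inequivalent, and $|\Omega|=2^{\aleph_0}$.
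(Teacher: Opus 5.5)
Your bounded case is fine and matches the paper, which for this theorem only cites \cite[Theorems~1.9, 5.1, 6.1]{DymondKaluza-2024}. The gap is in the pairwise non-BL step for unbounded $\varphi$.

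Your repair (sparse cubes, and $\Lambda_\sigma$ uniform on an enlarged cube of edge $(2L'+1)s_k$ around $Q_k$) presupposes that $f(Q_k)$ lies within $\OO(Ls_k)$ of $Q_k$. Nothing forces this, because a biLipschitz bijection has no displacement control. Already $x\mapsto Ax$ with $A\in {\rm GL}_d(\Z)$ moves points of $\Z^d$ linearly far. From $\norm{x-x_0}/L\le\norm{f(x)-f(x_0)}\le L\norm{x-x_0}$ you only learn two things: $f(Q_k)$ has diameter at most $L\sqrt d\,s_k$, and it lies somewhere in a shell $\{R_k/L-C\le\norm{y}\le LR_k+C\}$. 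Meanwhile $s_k\le\varphi(R_k)$ may be $o(R_k)$. On most of that shell, $\Lambda_\sigma$ coincides with the original, arbitrary $\Lambda$. So the rescaled image-side measures converge to some uncontrolled density $\eta$, and $\Phi_\#(\rho\,du)=\eta\,dy$ yields no contradiction. For instance, if $\Lambda$ looks like a rescaled copy of $\rho$ there, $\Phi$ can simply be a similarity. The step you yourself flag as the main obstacle is therefore not established.

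To close the gap, the target must be uniform at scales $\gg s_k$ throughout the whole shell, for every $L$. This can be arranged inside the $\varphi$-displacement class:
\begin{enumerate}
\item First redistribute the points of $\Lambda$ evenly within each cube of a partition of $\R^d$ into cubes of side $\asymp\varphi(\norm{x})$. Keep the counts per cube, and leave a margin so the Delone constants stay uniform.
\item Only then plant the patterns, at scales $s_k=o(\varphi(R_k))$ and radii with $R_{k+1}/R_k\to\infty$.
\end{enumerate}
Then, for fixed $L$ and large $k$, the shell contains no other planted cube. Also, $f(Q_k)$ meets only boundedly many partition cubes, each of side $\gtrsim\varphi(R_k/L)\ge\varphi(R_k)/L\gg Ls_k$ by concavity.

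The limit $\eta$ is then piecewise constant on finitely many cones. So you only obtain $\absolute{\det D\Phi}=\rho/c$ on some nonempty open subset of $[0,1]^d$. Hence $\rho$ must be chosen \emph{nowhere} realisable. One way is to place rescaled copies of a non-realisable density on disjoint subsquares that meet every open set.

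A smaller point concerns the displacement bound. You need $\diam Q_k\lesssim\varphi(\dist(0,Q_k))$, not $Q_k\subset B(0,R_k)$ with $\diam Q_k\le\varphi(R_k)$. As written, $Q_k$ may meet $B(0,R)$ for some $R\ll R_k$, and there a displacement of size $\varphi(R_k)$ is not $\OO(\varphi(R))$.
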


We conclude this section with an open problem, see \cite[Conjecture~1.8]{DymondKaluza-2024}, about the way in which the notions of $\omega$-regularity and $\varphi$-displacement equivalence are related.

\begin{open}
    Let $d\geq 2$. Is it true that there exists a Delone set in $\R^d$ that is $\varphi$-displacement equivalent to $\Z^d$ but not $\omega$-regular with respect to $\Z^d$ if and only if $R\cdot\omega(1/R)=o(\varphi(R))$?
\end{open}

\subsection{A short remark about random point processes}

Delone sets can be constructed randomly in various ways. This can be done by adding randomness to existing constructions, for example, by randomly choosing a lattice in the cut-and-project construction or by applying random substitution rules on a set of prototiles. Another method is to randomly perturb a Delone set. For example, a \emph{random perturbation of $\Z^d$} is a random point process of the form $\Pi=\{x+v_x\mid x\in\Z^d\}$, where the random translation vectors $(v_x)_{x\in\Z^d}$ are usually assumed to be
independent and identically distributed. When chosen uniformly inside a ball of radius $1/2$, or in a way similar to part (2) of Exercise \ref{ex:repetitive_Delone_sets}, for example, they give rise to point sets that are almost surely both uniformly discrete and relatively dense. Although arbitrary bounded perturbations need not preserve
uniform discreteness, and standard i.i.d.\ Gaussian perturbations are
almost surely not Delone, they are nevertheless locally finite random
point processes and admit meaningful matching problems with the
original lattice. A natural question, which can be viewed as related to the various equivalence relations described above, is to find matchings with optimal displacement, see for example \cite{ElboimSpinkaYakir-2026}, \cite{Yakir-2022} and references therein.

A random point process is \emph{hyperuniform} if the variance of the random variable of counting points inside a large ball grows strictly slower than the volume of the ball. Hyperuniformity can be viewed as a measure of long-range order in random point processes. We refer to Torquato's survey \cite{Torquato-2018} for background, applications, and further references. The definition is comparable to the Laczkovich and the Burago--Kleiner sufficient conditions for BD and BL equivalence in that it similarly relies on the growth rate of the discrepancy. For recent results about hyperuniformity in cut-and-project random point processes and further references on hyperuniformity in aperiodic order, we refer the reader to the recent work of Bj\"{o}rklund and Hartnick \cite{BjorklundHartnick-2024}.

\subsection*{Acknowledgements}
We are deeply grateful to Val\'{e}rie Berth\'{e}, Michel Rigo, and Manon Stipulanti for their thoughtful organization and warm hospitality at the ``Combinatorics, Automata, and Number Theory (CANT)'' school at CIRM, where the lectures on which this contribution is based were given. Their efforts made the school both enjoyable and fruitful, and we are thankful for the invitation to expand our notes into a chapter for the accompanying volume.
We would like to thank the school's participants for their insightful questions and stimulating discussions, which helped clarify and deepen several aspects of this survey. We are especially grateful to Val\'{e}rie Berth\'{e}, Samuel Petite, Ville Salo, Christopher-Lloyd Simon, and L\'{e}o Vivion for their particularly valuable comments and questions offered both during and after the talks.

\bibliographystyle{abbrv}
\bibliography{bib}

 \end{document}